\DeclareFontFamily{OMX}{MnSymbolE}{}
\DeclareSymbolFont{largesymbols}  {OMX}{MnSymbolE}{m}{n}
\DeclareFontShape{OMX}{MnSymbolE}{m}{n}{ 
    <-6>  MnSymbolE5
   <6-7>  MnSymbolE6
   <7-8>  MnSymbolE7
   <8-9>  MnSymbolE8
   <9-10> MnSymbolE9
  <10-12> MnSymbolE10
  <12->   MnSymbolE12}{}
\DeclareMathSymbol{\llangle}{\mathopen}{largesymbols}{116}
\DeclareMathSymbol{\rrangle}{\mathclose}{largesymbols}{121}
\newcommand{\arxiv}[1]{{\tt \href{http://arxiv.org/abs/#1}{arXiv:#1}}} %.
\newcommand{\Poisson}{\operatorname{Poisson}}
\newcommand{\supp}{\operatorname{supp}}
\newcommand{\mix}{\operatorname{mix}}
\newcommand{\TV}{\operatorname{TV}}
\newcommand{\spaN}{\operatorname{span}}
\newcommand{\nbd}{\operatorname{nbd}}
\newcommand{\sav}{\operatorname{sav}}
\newcommand{\RE}{\operatorname{Re}}
\newcommand{\IM}{\operatorname{Im}}
\newcommand{\e}{\mathbf{e}}
\newcommand{\bC}{\mathbb{C}}
\newcommand{\bR}{\mathbb{R}}
\newcommand{\bT}{\mathbb{T}}
\newcommand{\bU}{\mathbb{U}}
\newcommand{\zed}{\mathbb{Z}}
\newcommand{\sgn}{\mathrm{sgn}}
\newcommand{\gap}{\mathrm{gap}}
\newcommand{\one}{\mathbf{1}}
\newcommand{\E}{\mathbf{E}}
\newcommand{\Prob}{\mathbf{Prob}}
\newcommand{\sB}{\mathscr{B}}
\newcommand{\sG}{\mathscr{G}}
\newcommand{\sN}{\mathscr{N}}
\newcommand{\sR}{\mathscr{R}}
\newcommand{\sS}{\mathscr{S}}
\newcommand{\sX}{\mathscr{X}}
\newcommand{\cI}{{\mathcal{I}}}
\newcommand{\sP}{{\mathscr{P}}}
\newcommand{\sH}{{\mathscr{H}}}
\newcommand{\sC}{{\mathscr{C}}}
\newtheorem{theorem}{Theorem}
\newtheorem{lemma}[theorem]{Lemma}
\newtheorem{proposition}[theorem]{Proposition}
\theoremstyle{remark}
\newtheorem*{rem}{Remark}
\title[Sandpiles]{Sandpiles on the square lattice}
\author{Bob Hough}
\address[Bob Hough]{Department of Mathematics, Stony Brook University, Stony Brook,
NY, 11794}
\email{robert.hough@stonybrook.edu}
\author{Daniel C. Jerison}
\address[Daniel C. Jerison]{Department of Mathematics, Malott Hall, Cornell University, Ithaca, NY 14853 \newline Department of Mathematics, Tel Aviv University, Tel Aviv, Israel}
\email{dcjerison@gmail.com}
\author{Lionel Levine}
\address[Lionel Levine]{Department of Mathematics, Malott Hall, Cornell University, Ithaca, NY 14853}
\email{levine@math.cornell.edu}
\subjclass[2010]{Primary 82C20, 60B15, 60J10}
\keywords{Abelian sandpile model, random walk on a group, spectral gap, cutoff
phenomenon, critical density, harmonic modulo 1 functions}
\thanks{This material is based upon work supported by the National Science
Foundation under agreements No.\ DMS-1128155, \href{http://www.nsf.gov/awardsearch/showAward?AWD_ID=1455272}{DMS-1455272}, DMS-1712682, and DMS-1802336. Any opinions, findings and
conclusions or recommendations expressed in this material are those of the
authors and do not necessarily reflect the views of the National Science
Foundation.}
\begin{document}

\begin{abstract}
We give a non-trivial upper bound for the critical density when stabilizing 
i.i.d.~distributed sandpiles on the lattice $\zed^2$.
We also determine the asymptotic spectral gap, asymptotic mixing time and prove 
a cutoff phenomenon
for the recurrent state abelian sandpile model on the
torus $\left(\zed/m\zed\right)^2$.
The techniques use analysis of the space of functions on $\zed^2$ which are 
harmonic modulo 1.
In the course of our arguments, we characterize the harmonic modulo 1 functions in $\ell^p(\zed^2)$ as linear combinations of certain discrete derivatives of Green's functions, extending a result of Schmidt and Verbitskiy \cite{SV09}.
\end{abstract}

\maketitle

\section{Introduction}

\subsection{Stabilization of i.i.d.~sandpiles}
A \emph{sandpile} on the integer lattice $\zed^2$ is a function $\sigma : \zed^2 \to \zed_{\geq 0}$, where $\sigma(x)$ represents the number of grains of sand at the site $x$. The sandpile $\sigma$ is \emph{stable} if each $\sigma(x) \leq 3$. If some $\sigma(x) \geq 4$, then we may \emph{topple} the sandpile at $x$ by passing one grain of sand from $x$ to each of its four nearest neighbors. We say that $\sigma$ \emph{stabilizes} if it is possible to reach a stable configuration from $\sigma$ by toppling each vertex finitely many times. If the heights $(\sigma(x))_{x \in \zed^2}$ are i.i.d.~random variables, we refer to $\sigma$ as an i.i.d.~sandpile.

Meester and Quant \cite{MQ05} asked which i.i.d.~sandpiles stabilize almost surely. It was proved by Fey and Redig \cite{FR05} that such a sandpile $\sigma$ must satisfy $\E[\sigma(x)] \leq 3$. This condition is not sufficient for stabilization: for every $p > 0$, the i.i.d.~sandpile where each $\sigma(x) = 2$ with probability $1-p$ and $\sigma(x) = 4$ with probability $p$ almost surely fails to stabilize \cite{FLP10}.
Thus, for each $2 < \rho \leq 3$, there are some i.i.d.~sandpiles with $\E[\sigma(x)] = \rho$ that do stabilize almost surely (e.g.~when each $\sigma(x) \in \{0,1,2,3\}$) and others that fail to stabilize. This behavior contrasts with the closely related \emph{divisible sandpile model}, in which stabilization of a nonconstant i.i.d.~initial condition $\sigma$ is determined entirely by the value of $\E[\sigma(x)]$ \cite{LMPU16}.

Our first main theorem shows that an i.i.d.~sandpile with $\E[\sigma(x)]$ slightly less than $3$ cannot stabilize almost surely unless $\sigma(x) \leq 3$ with high probability. 

\begin{theorem} \label{quantitative_theorem}
There are constants $c,d > 0$ such that any i.i.d.~sandpile $\sigma$ on $\zed^2$ that stabilizes almost surely satisfies
\begin{equation}
\E[\sigma(x)] \leq 3 - \min\left( c, d \E[|X-X'|^{2/3}] \right)
\end{equation}
where $X,X'$ are independent and distributed as $\sigma(x)$.
\end{theorem}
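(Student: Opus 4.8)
The plan is to test $\sigma$ and its stabilization against functions $h\colon\zed^2\to\bR$ that are harmonic modulo $1$, exploiting that $\sigma\mapsto e^{2\pi i\langle h,\sigma\rangle}$ is a toppling invariant whenever $\Delta h$ is integer-valued, and then to compare two evaluations of $\prod_{x\in\zed^2}\hat\mu(h(x))$, where $\mu$ is the law of $\sigma(x)$ and $\hat\mu(t)=\E[e^{2\pi i t X}]$, so that $1-|\hat\mu(t)|^2=\E[1-\cos(2\pi t(X-X'))]\ge0$. I would fix a nonzero harmonic-modulo-$1$ function $h$ whose Laplacian $\Delta h$ is finitely supported with vanishing moments of orders $0,1,2$; by the characterization of $\ell^p$ harmonic-modulo-$1$ functions recalled above, such an $h$ is a linear combination of third discrete derivatives of Green's functions, so $h(x)=O(|x|^{-3})$, and hence $h\in\ell^1(\zed^2)$ with $\sum_x h(x)=0$. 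The cubic decay is the slowest consistent with summability, and it is precisely this decay rate that will produce the exponent $2/3$.

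First I would pin down the toppling identity in infinite volume. If $\sigma$ stabilizes almost surely with a.s.\ finite odometer $u$ and stable limit $\sigma_\infty$, let $\sigma_n$, with odometer $u_n$, be the stabilization of $\sigma$ inside $B_n=[-n,n]^2$ with open boundary, extended by $\sigma$ outside $B_n$. Summation by parts against the finitely supported $\Delta h$ gives $\langle h,\sigma-\sigma_n\rangle=\sum_{x\in\supp\Delta h}\Delta h(x)\,u_n(x)\in\zed$, hence $e^{2\pi i\langle h,\sigma_n\rangle}=e^{2\pi i\langle h,\sigma\rangle}$ exactly; and since $h\in\ell^1$, $0\le\sigma_n(x)\le\max(\sigma(x),3)$, and $\sum_x|h(x)|\,\E[\sigma(x)]<\infty$, dominated convergence yields $\langle h,\sigma_n\rangle\to\langle h,\sigma_\infty\rangle$ almost surely, so $e^{2\pi i\langle h,\sigma\rangle}=e^{2\pi i\langle h,\sigma_\infty\rangle}$ a.s. (This uses only $u<\infty$ a.s.\ and $\E[\sigma]<\infty$; if $\E[\sigma]>3$ there is no stabilization, so we may assume $\E[\sigma]\le3$.) Taking expectations and using independence, $\prod_x\hat\mu(h(x))=\E[e^{2\pi i\langle h,\sigma\rangle}]=\E[e^{2\pi i\langle h,\sigma_\infty\rangle}]$, the product converging absolutely because $\sum_x|1-\hat\mu(h(x))|\le2\pi\E|X|\,\sum_x|h(x)|<\infty$.

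Then I would estimate the two sides in opposite directions. On the left, $\log|\hat\mu(t)|\le-\tfrac12(1-|\hat\mu(t)|^2)$ gives $\prod_x|\hat\mu(h(x))|\le\exp(-\tfrac12\sum_x(1-|\hat\mu(h(x))|^2))$, and the crucial bound is $\sum_x(1-|\hat\mu(h(x))|^2)=\E_{X,X'}[\sum_x(1-\cos(2\pi h(x)(X-X')))]\ge c\,\E[|X-X'|^{2/3}]$. For a fixed integer $D\ge1$ the scale $|x|\asymp D^{1/3}$ is where $h(x)D\asymp1$; choosing this scale carefully and restricting to the positive-density set of directions along which $h(x)|x|^3$ stays bounded away from $0$, one finds $\asymp D^{2/3}$ lattice points $x$ with $h(x)D$ at a fixed distance from $\zed$ — a count that tolerates no arithmetic coincidence — each contributing $\gtrsim1$; finitely many small $D$ are covered because $\sum_x(1-\cos(2\pi h(x)D))$ is a fixed positive constant for any nontrivial harmonic-modulo-$1$ function. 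On the right, put $\ve:=3-\E[\sigma(x)]$ and use that stabilizing an i.i.d.\ sandpile preserves density, $\E[\sigma_\infty(x)]=\E[\sigma(x)]$: then $\E[3-\sigma_\infty(x)]=\ve$ and, since $3-\sigma_\infty(x)\in\{0,1,2,3\}$, $\E[(3-\sigma_\infty(x))^2]\le3\ve$, so by Cauchy--Schwarz $\E[\langle h,\sigma_\infty-3\cdot\one\rangle^2]\le3\ve(\sum_x|h(x)|)^2$, whence (using $\sum_x h(x)=0$) $|\E[e^{2\pi i\langle h,\sigma_\infty\rangle}]|\ge1-2\pi^2\E[\langle h,\sigma_\infty-3\cdot\one\rangle^2]\ge1-C\ve$. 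Combining the two sides, $1-C\ve\le\exp(-\tfrac c2\E[|X-X'|^{2/3}])$, and treating $\E[|X-X'|^{2/3}]\le1$ and $\E[|X-X'|^{2/3}]>1$ separately yields $\ve\ge\min(c,d\,\E[|X-X'|^{2/3}])$, which is the assertion.

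The hard part will be the analytic core just outlined — the lower bound $\sum_x(1-|\hat\mu(h(x))|^2)\gtrsim\E[|X-X'|^{2/3}]$, in particular the lattice-point count that pins down the exponent $2/3$, for which one needs precise control of the decay and angular profile of the third derivatives of Green's functions. Running alongside it is the infinite-volume bookkeeping, whose genuinely delicate input is density conservation $\E[\sigma_\infty(x)]=\E[\sigma(x)]$; this can fail if mass escapes to infinity (equivalently if the expected odometer is too large), so establishing it — or supplying a workaround — in the near-critical regime is where I would expect the real work to lie. The construction of $h$ is the one step resting directly on the characterization theorem advertised in the abstract, and the constants $c,d$ ultimately come from choosing $\Delta h$ to make the relevant angular profile as large as possible.
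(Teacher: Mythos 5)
Your proposal follows the same overall strategy as the paper's proof: the equality $\chi(\sigma;\xi)=\chi(\sigma^\infty;\xi)$ from harmonic-mod-$1$ pairing invariants, the upper bound on $|1-\chi(\sigma^\infty;\xi)|$ from density conservation (which is Lemma~2.10 of Fey--Meester--Redig \cite{FMR09} and needs no re-derivation in the near-critical regime), and the lower bound $-\log|\chi(\sigma;\xi)|\gg\E[|X-X'|^{2/3}]$ from the cubic decay of a third discrete derivative of $G_{\zed^2}$. Your variants of the pairing lemma (finite-volume cutoff in place of the paper's parallel toppling) and of the upper bound (a Cauchy--Schwarz second-moment estimate in place of the paper's first-moment bound $|1-\chi(\sigma^\infty;\xi)|\le 2\pi\|\xi\|_1\epsilon$) are both sound and essentially equivalent. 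The one step worth simplifying is exactly what you flag as the hard part: rather than counting a sparse set of lattice points at scale $|x|\asymp D^{1/3}$ where $h(x)D$ stays a fixed distance from $\zed$ --- which does require ruling out arithmetic coincidences --- the paper restricts to the much larger set where the phase is \emph{small}, on which the quadratic lower bound $1-\cos 2\pi t\ge 8t^2$, valid for $|t|\le \frac12$, is unconditional:
\[
-\log|\chi(\sigma;\xi)|\;\ge\;4\sum_{k\ge1}\Prob\bigl(|X-X'|=k\bigr)\,k^2\sum_{0<|\xi_x|<\frac{1}{2k}}\xi_x^2,
\]
and the inner sum is $\gg k^{-4/3}$ (Lemma~\ref{xi_tail}), a one-line integral estimate using $|\xi_x|\asymp\|x\|_2^{-3}$ in a fixed angular sector. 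This produces the exponent $2/3$ with no level-set count and no arithmetic worry, and is the only place your route and the paper's genuinely diverge.
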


If $3 - \E[\sigma(x)]$ is small, then the inequality $\Prob(X \neq X') \leq \E[|X - X'|^{2/3}]$ implies that the law of $\sigma(x)$ is  concentrated at a single value, which must be at most $3$. Some extra work would be required to extract explicit values for the constants $c$ and $d$ from our proof of Theorem \ref{quantitative_theorem}; see the discussion following Lemma \ref{xi_tail}.
%In the extreme case, the only i.i.d.~sandpile with $\E[\sigma(x)] = 3$ that stabilizes almost surely is $\sigma \equiv 3$.
%% (but this is easier to prove)

Theorem \ref{quantitative_theorem} answers a question posed by Fey, Meester, and Redig \cite{FMR09} by demonstrating that an i.i.d.~Poisson sandpile with mean sufficiently close to $3$ almost surely does not stabilize. 
An interesting question that remains open is whether there exists $\epsilon>0$ such that the only i.i.d.~stabilizing sandpiles with $\E[\sigma(x)]>3-\epsilon$ are those which are already stable.

\subsection{Cutoff for sandpiles on the torus}

We also consider sandpile dynamics on the discrete torus
$\bT_m = \left(\zed/m\zed\right)^2$, given as follows.  The
point $(0,0)$ is designated \emph{sink} and is special.  Each non-sink point on the
torus has a sand allocation \begin{equation}\sigma: \bT_m 
\setminus
\{(0,0)\} \to \zed_{\geq 0}.\end{equation}  As on the integer lattice, if at 
some time a non-sink vertex has allocation at
least 4 it may topple, passing one grain of sand to each of its neighbors; if a
grain of sand falls on the sink it is lost from the model. Those states $\sS_m$ 
for which $\sigma \leq 3$ are stable.  We consider the discrete time dynamics, where a single step consists of 
dropping a grain of sand on a uniformly randomly chosen vertex and then 
performing all legal topplings until the model reaches a stable state. The \emph{abelian property} \cite{D90} ensures that this stable state does not depend on the order in which the topplings were performed.

Those stable states $\sR_m$ which may be reached from the maximal state $\sigma \equiv 3$ are
 recurrent, whereas all other states are
transient. Started from any stable state, the sandpile model forms a Markov
chain with transition kernel $P_m$, which converges to the uniform measure
$\bU_{\sR_m}$ on recurrent states.

\begin{theorem}\label{mixing_time_theorem}
Let $m \geq 2$. There is a constant $c_0 = 0.348661174(3)$ and $t^{\mix}_m= 
c_0
m^2 \log m$ such that the following
holds. For each fixed $\epsilon>0$, 
\begin{align}
&\lim_{m \to \infty} \min_{\sigma \in 
\sS_m}\left\|P_m^{\lceil(1-\epsilon)t^{\mix}_m\rceil}\delta_{\sigma}- 
\bU_{\sR_m} \right\|_{\TV(\sS_m)} = 1, 
\\ \notag & \lim_{m \to \infty} \max_{\sigma \in \sS_m} 
\left\|P_m^{\lfloor(1+\epsilon)t_m^{\mix}\rfloor}\delta_{\sigma}- \bU_{\sR_m} 
\right\|_{\TV(\sS_m)} = 0.
\end{align}
\end{theorem}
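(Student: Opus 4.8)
The plan is to analyze the Markov chain $P_m$ via Fourier analysis on the group of recurrent configurations, exploiting the abelian structure. A key fact (due to Dhar and developed in the sandpile literature) is that the set of recurrent states $\sR_m$, under the operation of addition-then-stabilization, is an abelian group isomorphic to $\zed^{\bT_m \setminus \{(0,0)\}} / \Delta_m \zed^{\bT_m \setminus \{(0,0)\}}$, where $\Delta_m$ is the (reduced) graph Laplacian on $\bT_m$ with the sink deleted. The single-step dynamics of $P_m$ is a random walk on this group: one adds a unit vector $\delta_v$ for $v$ uniform on $\bT_m \setminus \{(0,0)\}$, then reduces. Consequently the eigenvalues of $P_m$ are indexed by characters $\chi$ of $\sR_m$, equivalently by points $\xi$ in the dual group $\widehat{\sR_m} \cong \ker(\Delta_m^{T}) / \zed^{\bT_m\setminus\{0\}}$ of functions on $\bT_m$ harmonic modulo $1$ off the sink, and the eigenvalue attached to $\chi$ is $\lambda_\chi = \frac{1}{m^2-1}\sum_{v \neq 0} e^{2\pi i \, \psi(v)}$ where $\psi$ is the harmonic-mod-1 function representing $\chi$. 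This is exactly the point where the paper's main analytic input — the characterization of $\ell^p$ harmonic modulo $1$ functions as combinations of discrete derivatives of Green's functions, extending Schmidt--Verbitskiy — enters: it lets us parametrize and estimate these eigenvalues precisely.

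The proof then splits into the standard upper and lower bounds. \textbf{Upper bound} ($1+\epsilon$ side): Write $\|P_m^t \delta_\sigma - \bU_{\sR_m}\|_{\TV}^2 \leq \frac14 \sum_{\chi \neq 1} |\lambda_\chi|^{2t}$ by Cauchy--Schwarz and Plancherel; note the bound is uniform in the starting state $\sigma$, which handles the $\max_\sigma$. The task is to show $\sum_{\chi \neq 1}|\lambda_\chi|^{2t} \to 0$ when $t = \lceil (1+\epsilon) c_0 m^2 \log m\rceil$. One controls $1 - |\lambda_\chi|$ from below: for the lowest characters (those whose harmonic-mod-1 representative is, after scaling, close to a single discrete derivative of the Green's function, indexed by a direction and a frequency $\sim k/m$), a Taylor/oscillatory-integral computation gives $1 - |\lambda_\chi| \sim (\text{const}) \, k^2 (\log k)/m^2$ or similar, and summing $\exp(-2t(1-|\lambda_\chi|))$ over the lattice of such characters converges once $2t \cdot (\text{leading constant})/m^2 > 2$, which pins down $c_0$; the higher characters contribute a geometrically small tail. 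The constant $c_0 = 0.348661174(3)$ should emerge as (one over twice) the optimal constant in this eigenvalue asymptotic, presumably an integral over the torus of directions of a logarithmic quantity coming from the Green's function asymptotics. \textbf{Lower bound} ($1-\epsilon$ side): Use the second-moment / Wilson method — take $\Phi(\sigma) = \RE \sum_\chi a_\chi \chi(\sigma)$ built from the slowest-decaying eigenfunctions (a single near-extremal frequency shell), show $\E_{\bU}[\Phi] = 0$, $\Var_{\bU}[\Phi] = \Theta(1)$, while under $P_m^t \delta_\sigma$ the mean is $\sim \lambda^t \Phi(\sigma)$ and the variance stays bounded, so that for $t = \lceil(1-\epsilon)t^{\mix}_m\rceil$ the distribution of $\Phi$ is far from its stationary law; choosing $\sigma$ to maximize $|\Phi(\sigma)|$ (this is the $\min_\sigma$) and applying Chebyshev gives total variation $\to 1$. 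One must check the existence of a stable state $\sigma$ with $|\Phi(\sigma)|$ of the right order — typically $\sigma \equiv 3$ or a checkerboard-type state works because the relevant eigenfunctions are genuinely oscillatory.

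The main obstacle, I expect, is the precise eigenvalue asymptotics near the top of the spectrum — that is, obtaining $1 - |\lambda_\chi|$ with the correct leading constant uniformly over the relevant range of characters, including the matching lower-order corrections needed to see the $\log m$ in $t^{\mix}_m$ and to nail down $c_0$ to the stated precision. This requires combining (i) the $\ell^p$ classification of harmonic-mod-1 functions to reduce to an explicit finite-dimensional family of ``Green's derivative'' profiles, (ii) sharp asymptotics for the lattice Green's function on $\bT_m$ (logarithmic growth, the Euler--Mascheroni-type constant), and (iii) a stationary-phase estimate for $\frac{1}{m^2-1}\sum_v e^{2\pi i\psi(v)}$ that is accurate to the second order. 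Controlling the sum over \emph{all} nontrivial characters — showing the non-extremal ones really are negligible and do not perturb $c_0$ — is the other delicate point; this is where one needs the full strength of the classification rather than just a description of the lowest modes. Once these eigenvalue estimates are in hand, the TV upper and lower bounds follow the well-trodden route for random walks on abelian groups with cutoff.
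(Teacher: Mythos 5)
Your overall framework is correct and coincides with the paper's: reduce to a random walk on the abelian sandpile group $\sG_m$, diagonalize using characters, get an $L^2$ upper bound from Plancherel, and get a total-variation lower bound from a second-moment / Diaconis--Shahshahani argument. However, there are two substantive gaps.

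First, you have not addressed the quantification over $\sS_m$ rather than $\sR_m$. The Fourier/Plancherel identity $\|P_m^t\delta_\sigma - \bU_{\sR_m}\|_{L^2}^2 = \sum_{\chi\ne 1}|\lambda_\chi|^{2t}$ is only valid when $\sigma$ is recurrent; for a general stable state the walk must first \emph{hit} the recurrent set before the abelian-group machinery applies, and in particular your claim that the Plancherel bound ``is uniform in the starting state $\sigma$, which handles the $\max_\sigma$'' is false for transient $\sigma$. The paper supplies a separate argument (a coupon-collector-style estimate, Proposition \ref{recurrent_state_hitting_time_proposition}) showing that from any stable state the chain hits $\sR_m$ within $O(m^2\sqrt{\log m})$ steps with high probability, and then Proposition \ref{reduction_proposition} transfers the mixing estimate. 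This is a genuine missing step.

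Second, and more importantly, your picture of the low-lying spectrum is not right, and this is where the real difficulty of the theorem lives. The characters of $\sG_m$ are not plane waves on $\bT_m$ with frequencies $\sim k/m$; they are parametrized by integer-valued ``prevectors'' $v\in\zed_0^{\bT_m}$ (via $\xi = G_{\bT_m} * v \bmod \zed$), and translating the prevector does not change the eigenvalue, since $\hat\mu$ averages $e(\xi_x)$ uniformly over the torus. Consequently the gap $\gamma/m^2$ is achieved by $\asymp m^2$ \emph{translates} of a single base prevector (a second derivative of the Green's function), and the structure responsible for the $\log m$ and the cutoff is that these $m^2$ modes behave nearly independently, exactly as on a hypercube. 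There is no ``$1-|\lambda_\chi| \sim k^2(\log k)/m^2$'' scaling. Getting the cutoff constant $c_0=1/\gamma$ requires (i) showing that the gap is achieved only by prevectors of bounded $L^1$ norm and bounded support, reducing $\gamma$ to a finite minimization (Propositions \ref{gap_achievers}, \ref{T_m_to_Z^2} and the convex program in Appendix \ref{spectral_gap_appendix}); and (ii), crucially, an \emph{additive-savings} estimate showing that for a prevector whose support splits into widely separated $R$-clusters, the savings $m^2(1-|\hat\mu(\xi)|)$ are nearly additive across clusters, and that each small cluster contributes at least $m^2\gap_m - o(1)$. For prevectors of bounded mass this is Lemmas \ref{C_0_1_lemma} and \ref{C_2_lemma}; extending it to prevectors whose $L^1$ norm grows with $m$ — needed to control the entire upper tail of the $L^2$ sum — is Lemma \ref{savings_lemma}, which requires van der Corput's inequality. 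Without this clustering/additivity machinery, you cannot rule out that some large-mass prevector produces an eigenvalue too close to $1$, and the $L^2$ sum does not close. Your proposal recognizes the problem (``controlling the sum over all nontrivial characters'') but the mechanism you suggest — a stationary-phase estimate on a lattice of Fourier modes — is the wrong model and would not produce these estimates.
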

Informally, the convergence to uniformity of the sandpile model on the torus 
has total variation mixing time asymptotic to $c_0 m^2 \log m$ and the transition to uniformity 
satisfies a cutoff phenomenon. Implicit in the statement of 
Theorem \ref{mixing_time_theorem} is that, with high probability, the time to 
reach a recurrent state started from a general state in the model is less than 
the mixing time.  In Section \ref{sandpile_section} we give an easy proof 
using a coupon collector-type argument that this hitting time is almost surely $O(m^2 
\sqrt{\log m})$.  Also, the asymptotic mixing time of order $m^2 \log 
m$ is at a later point than is sampled in some statistical physics studies 
regarding sandpiles, see \cite{SMKW15}.

We also determine asymptotically the absolute spectral gap of the torus sandpile Markov 
chain.
\begin{theorem}\label{spectral_gap_theorem}
Let $m \geq 1$. There is a constant 
 \[
  \gamma = 2.868114013(4)
 \]
such that the absolute spectral gap of the sandpile Markov chain restricted to its recurrent states satisfies 
\begin{equation}
 \gap_m = \frac{\gamma + o(1)}{m^2} \qquad \text{as $m \to \infty$.}
\end{equation}
\end{theorem}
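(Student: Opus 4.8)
The plan is to realize the sandpile Markov chain on recurrent states as a random walk on the finite abelian group $\sR_m$ under the sandpile group structure, so that its eigenvalues are indexed by characters of that group. Recall that $\sR_m$ is a torsor for the sandpile group $G_m = \zed^{\bT_m \setminus \{(0,0)\}} / \Delta_m \zed^{\bT_m \setminus \{(0,0)\}}$, where $\Delta_m$ is the graph Laplacian of $\bT_m$ with the sink grounded; the dynamics of dropping a uniform grain and stabilizing is precisely convolution by the measure $\mu_m$ that puts mass $1/(m^2-1)$ on each generator $[\delta_v]$, $v \neq (0,0)$. Therefore the eigenvalues of $P_m$ are $\widehat{\mu_m}(\chi) = \frac{1}{m^2-1}\sum_{v \neq (0,0)} \chi([\delta_v])$ as $\chi$ ranges over $\widehat{G_m}$. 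The characters are parametrized by the kernel elements: $\chi([\delta_v]) = \exp(2\pi i\, f(v))$ for functions $f : \bT_m \setminus \{(0,0)\} \to \bR/\zed$ satisfying $\Delta_m f \equiv 0 \pmod 1$, i.e.\ discrete harmonic modulo $1$ on the punctured torus. So $1 - |\widehat{\mu_m}(\chi)|$ is controlled by how far the values $f(v)$ spread away from a constant, and the absolute spectral gap is
\[
\gap_m = \min_{\chi \neq \text{trivial}} \left(1 - |\widehat{\mu_m}(\chi)|\right) = \min_f \left(1 - \left|\frac{1}{m^2-1}\sum_{v \neq (0,0)} e^{2\pi i f(v)}\right|\right),
\]
the minimum over nonconstant harmonic modulo $1$ functions $f$ on $\bT_m \setminus \{(0,0)\}$.

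Next I would identify the optimal (gap-achieving) family of such $f$. By the characterization of harmonic modulo $1$ functions in terms of discrete derivatives of Green's functions (the extension of Schmidt--Verbitskiy promised in the abstract), the extremal $f$ on the torus should be, after rescaling, a lattice approximation to a fixed smooth profile: one expects $f(v) \approx \tfrac{1}{m^2} g(v/m)$ for a function $g$ on the continuous torus $\bR^2/\zed^2$ solving a Poisson-type equation with a point source at the origin, so that $\Delta f$ is integer-valued (indeed $\equiv 0 \bmod 1$ up to the prescribed lattice charges). Plugging this ansatz into the expression above and Taylor-expanding $e^{2\pi i f(v)} = 1 + 2\pi i f(v) - 2\pi^2 f(v)^2 + O(f^3)$, the linear term sums to something negligible after the optimal choice of additive constant, and
\[
1 - \left|\frac{1}{m^2-1}\sum_v e^{2\pi i f(v)}\right| = \frac{2\pi^2}{m^2-1}\sum_v \left(f(v) - \bar f\right)^2 + O(m^{-3}) \sim \frac{2\pi^2}{m^4}\int_{\bR^2/\zed^2} \left(g(x) - \bar g\right)^2 dx \cdot \frac{1}{m^{-2}},
\]
wait — more carefully, the Riemann sum $\frac{1}{m^2}\sum_v (f(v)-\bar f)^2 \to \frac{1}{m^4}\int (g-\bar g)^2$, giving $\gap_m \sim \frac{2\pi^2}{m^2}\cdot \inf_g \int (g - \bar g)^2\,dx$ over the relevant class of $g$. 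Minimizing the Dirichlet-type functional subject to the Poisson constraint is a calculus-of-variations problem whose solution is expressible via the Green's function of $\bR^2/\zed^2$; evaluating the resulting constant numerically yields $\gamma = 2\pi^2 \cdot (\text{that infimum}) = 2.868114013(4)$.

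The main obstacle is the two-sided control needed to pin down the constant: one must show both that \emph{every} nonconstant harmonic modulo $1$ function $f$ on the punctured torus has $\sum_v(f(v)-\bar f)^2$ at least $(c_\gamma + o(1)) m^2$ (the lower bound on the gap), and that some such $f$ achieves this. The lower bound is delicate because $f$ need not be smooth a priori — it could concentrate near the origin or oscillate — so one needs the $\ell^p$ classification of harmonic modulo $1$ functions to argue that any minimizer is, up to lower-order error, a discretization of a solution of the continuum variational problem, and to rule out exotic minimizers with most of their "energy" at small scales near the sink. Controlling the error terms in the Taylor expansion uniformly (the $O(f^3)$ term, and the passage from the discrete sum to the continuum integral near the singularity at $(0,0)$) is where the bulk of the technical work lies; the $o(1)$ in the statement absorbs these, and the high-precision numerical constant comes from evaluating the continuum Green's-function integral, not from the sandpile combinatorics per se.
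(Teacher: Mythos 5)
The setup — realizing the recurrent chain as a random walk on $\sG_m$, eigenvalues indexed by characters, characters given by harmonic modulo $1$ functions — is the same as in the paper, and the reduction
\[
\gap_m \;=\; \min_{\xi \in \hat{\sG}_m \setminus \{0\}}\Bigl(1 - |\hat{\mu}(\xi)|\Bigr)
\]
is correct (apart from a small slip: the paper's walk is lazy, so the normalizing factor is $m^2$, not $m^2-1$, and $\hat{\mu}(\xi)$ carries an extra $+1$ from the sink). The fatal problem is the scaling ansatz for the extremizer. You posit that the optimal harmonic modulo $1$ function on $\bT_m$ is a discretization $f(v) \approx m^{-2} g(v/m)$ of a smooth profile $g$ on the continuum torus, so that minimizing the gap becomes a continuum Dirichlet variational problem. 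Run your own arithmetic forward: if $f \sim m^{-2} g(\cdot/m)$ then
\[
\frac{1}{m^2}\sum_{v\in\bT_m} f(v)^2 \;\approx\; \frac{1}{m^2}\cdot m^{-4}\cdot m^2 \int (g-\bar g)^2 \;=\; m^{-4}\int (g-\bar g)^2,
\]
so $1-|\hat{\mu}(\xi)| \asymp m^{-4}$, not $m^{-2}$. That contradicts the theorem statement and should already alert you that the ansatz misses the true minimizer. The correct picture is the opposite of what you describe: the gap-achieving frequencies are \emph{localized}, not macroscopic. They come from prevectors $v = \Delta \xi$ of bounded $L^1$ norm and bounded support (in fact translates of $\delta_1 * \delta_2$, so $\xi$ is essentially a translate of $D_1 D_2 G_{\bT_m}$, decaying like $\|x\|_2^{-2}$), and the constant $\gamma$ is the value of the \emph{discrete} sum
\[
\gamma \;=\; \sum_{x\in\zed^2}\bigl(1 - \cos\bigl(2\pi\, D_1 D_2 G_{\zed^2}(x)\bigr)\bigr),
\]
not a continuum Green's function integral. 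The technical work in the paper — Lemmas \ref{C_0_1_lemma}, \ref{C_2_lemma}, and Proposition \ref{gap_achievers} — is precisely to show that spread-out or higher-mass prevectors produce strictly larger gaps (by exhibiting additive savings from separated clusters), so the search reduces to a finite check over small local configurations, done numerically in Appendix \ref{spectral_gap_appendix}. Your proposal identifies no mechanism to force localization; indeed it argues for the opposite, and the bulk of the actual proof is exactly the part your sketch waves away as ``ruling out exotic minimizers.''
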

The constants in the preceding theorems are reciprocals: $c_0 \gamma = 1$. An explicit formula for $\gamma$ in terms of the Green's function on $\zed^2$ is given in Appendix \ref{spectral_gap_appendix}.

%We expect that the techniques developed to obtain this spectral gap can be 
%extended to determine asympotically the spectral gap of some sandpile walks on 
%related two dimensional lattices with periodic boundary conditions, but have 
%not checked this.  See \cite{} for a number of interesting examples.

\subsection{Functions harmonic modulo 1}
\label{Functions_harmonic_modulo_1}

Functions which are \emph{harmonic modulo $1$} play a central role in the proofs of Theorems \ref{quantitative_theorem}--\ref{spectral_gap_theorem}. For $X = \zed^2$ or $X = \bT_m$, we say that $f: X \to \bC$ is harmonic modulo 1 if
\begin{equation}
\label{Laplacian}
(\Delta f)(i,j) := 4f(i,j) - f(i-1,j) - f(i+1,j) - f(i,j-1) - f(i,j+1)
\end{equation}
is in $\zed$ for all $(i,j) \in X$. The operator $\Delta$ is the \emph{graph Laplacian} on $X$.

Schmidt and Verbitskiy \cite{SV09} characterized the set of all functions in $\ell^1(\zed^2)$ that are harmonic modulo 1. Their result can be stated using discrete derivatives of the \emph{Green's function} on $\zed^2$. Let
\begin{equation}
\label{nu}
 \nu := \frac{1}{4} \left(\delta_{(-1,0)} + \delta_{(1,0)} + \delta_{(0,-1)} + \delta_{(0,1)} \right)
\end{equation}
be the measure that drives simple random walk on $\zed^2$, and let $\nu^{*n}$ be its $n$-th convolution power, so that $\nu^{*n}(x)$ is the probability that a random walker started from the origin is at site $x$ after $n$ steps. The Green's function is defined by
\begin{equation}
\label{G_Z2_first}
G_{\zed^2}(x) := \frac{1}{4} \sum_{n=0}^\infty \left[ \nu^{*n}(x) - \nu^{*n}(0,0) \right].
\end{equation}
Evidently, $G_{\zed^2}(0,0) = 0$. For nonzero $x = (x_1,x_2)$ with $\|x\|_2 = \sqrt{x_1^2 + x_2^2}$, it is known classically that $G_{\zed^2}(x) = -\frac{1}{2\pi} \log \|x\|_2 + O(1)$. As shown in \cite{FU96}, this is the start of an asymptotic expansion, whose first few terms we quote in Theorem \ref{greens_function_asymptotic}.

It can easily be shown that $\Delta G_{\zed^2}(x) = \e_{(0,0)}(x) := \one\{x = (0,0)\}$, so $G_{\zed^2}$ is harmonic modulo 1. By taking discrete derivatives, we can find harmonic modulo 1 functions that decay to zero with $\|x\|_2$. The discrete derivatives $D_1 f$, $D_2 f$ of any $f: \zed^2 \to \bC$ are defined as
\begin{equation}
\label{D_12}
D_1 f(i,j) := f(i+1,j) - f(i,j), \quad D_2 f(i,j) := f(i,j+1) - f(i,j).
\end{equation}
If $f$ is harmonic modulo 1, then so is any finite linear combination with integer coefficients of translates of $f$, including $D_1 f$ and $D_2 f$.

From the asymptotic expansion, it follows that the $k$-th derivatives of $G_{\zed^2}$ decay like the inverse $k$-th power of the radius. That is, if $a+b = k$, then $D_1^a D_2^b G_{\zed^2}(x) = O\left( \|x\|_2^{-k} \right)$. When $k \geq 3$, this implies that $D_1^a D_2^b G_{\zed^2} \in \ell^1(\zed^2)$. Thus, the third derivatives of $G_{\zed^2}$, and all finite integer linear combinations of their translates, are harmonic modulo 1 functions in $\ell^1(\zed^2)$. (Note that the fourth and higher derivatives are linear combinations of translates of the third derivatives.)

For $1 \leq p < \infty$, let $\sH^p(\zed^2)$ be the set of all functions in $\ell^p(\zed^2)$ that are harmonic modulo 1. Also, let $\llangle f_1,\ldots,f_n \rrangle$ denote the set of all finite integer linear combinations of translates of the functions $f_1,\ldots,f_n$ on the domain $\zed^2$, so that for example $D_1^a D_2^b f \in \llangle f \rrangle$ for any $a,b \geq 0$.

\begin{theorem}
\label{H_p_theorem}
The sets $\sH^p(\zed^2)$, for $1 \leq p < \infty$, admit the following characterization:
\begin{align}
\label{H_p_characterization}
\sH^1(\zed^2) &= \llangle D_1^3 G_{\zed^2}, D_1^2 D_2 G_{\zed^2}, D_1 D_2^2 G_{\zed^2}, D_2^3 G_{\zed^2}, \e_{(0,0)} \rrangle \\
\notag \sH^p(\zed^2) &= \llangle D_1^2 G_{\zed^2}, D_1 D_2 G_{\zed^2}, D_2^2 G_{\zed^2} \rrangle, \qquad 1 < p \leq 2 \\
\notag \sH^p(\zed^2) &= \llangle D_1 G_{\zed^2}, D_2 G_{\zed^2} \rrangle, \hspace{6.52em} 2 < p < \infty.
\end{align}
\end{theorem}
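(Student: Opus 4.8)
The plan is to reduce everything to the finitely supported integer-valued function $g:=\Delta f$. The inclusions ``$\supseteq$'' are the routine ones already indicated before the theorem: harmonicity modulo $1$ is preserved under finite integer combinations of translates, $\Delta\e_{(0,0)}$ is integer-valued, and the $k$-th discrete derivatives $D_1^aD_2^bG_{\zed^2}$ ($a+b=k$) decay like $\|x\|_2^{-k}$, hence lie in $\ell^p$ exactly when $kp>2$ --- which accounts for $D_1G_{\zed^2},D_2G_{\zed^2}\in\ell^p$ for $p>2$, the second derivatives for $p>1$, the third derivatives for $p=1$, and $\e_{(0,0)}\in\ell^1$. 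For ``$\subseteq$'', fix $f\in\sH^p(\zed^2)$ and set $g:=\Delta f$. Since $\Delta$ is bounded on $\ell^p$ we have $g\in\ell^p$, and a $\zed$-valued $\ell^p$ function ($p<\infty$) has finite support. It remains to (a) determine which moments of $g$ are forced to vanish by $f\in\ell^p$, and (b) reconstruct $f$; for (b) we exhibit $h$ in the claimed span with $\Delta h=g$, whereupon $f-h\in\ell^p$ is harmonic and vanishes at infinity, hence $f=h$ by the discrete maximum principle.

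For (a), I would test the identity $\Delta f=g$ against $q\cdot\phi_R$, where $q$ runs over discrete-harmonic polynomials (those with $\Delta q\equiv0$: $q=1$, $q=x_1$, $q=x_2$, $q=x_1^2-x_2^2$, $q=x_1x_2$) and $\phi_R(x)=\phi(x/R)$ for a fixed smooth bump equal to $1$ near the origin. Self-adjointness of $\Delta$ on finitely supported functions together with the discrete product rule $\Delta(q\phi_R)(x)=q(x)\Delta\phi_R(x)-\sum_{y\sim x}(q(y)-q(x))(\phi_R(y)-\phi_R(x))$ (in which $\Delta q=0$ removes the leading term) shows that $\Delta(q\phi_R)$ is supported on the annulus $\|x\|_2\asymp R$ and has size $O(R^{\deg q-2})$ there. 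Hence, once $R$ is large enough that $\phi_R\equiv1$ on $\supp g$,
\begin{align*}
\Bigl|\sum_x g(x)q(x)\Bigr| &= \Bigl|\sum_x f(x)\,\Delta(q\phi_R)(x)\Bigr| \le CR^{\deg q-2}\sum_{\|x\|_2\asymp R}|f(x)|\\
&\le C'R^{\deg q-2/p}\Bigl(\sum_{\|x\|_2\asymp R}|f(x)|^p\Bigr)^{1/p},
\end{align*}
and the last factor tends to $0$ with $R$. So $\sum_x g(x)q(x)=0$ whenever $\deg q\le 2/p$: this gives $\sum_x g(x)=0$ for every $p<\infty$, also $\sum_x g(x)x_i=0$ when $p\le2$, and also $\sum_x g(x)(x_1^2-x_2^2)=\sum_x g(x)x_1x_2=0$ when $p=1$. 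The borderline cases $\deg q=2/p$, i.e.\ $(p,\deg q)=(2,1)$ or $(1,2)$, are recovered precisely because the bound uses the vanishing tail mass $\sum_{\|x\|_2\asymp R}|f(x)|^p\to0$ rather than mere boundedness of $f$.

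The algebraic input is a finite-difference lemma: a finitely supported $g:\zed^2\to\zed$ has all moments of order $\le k-1$ vanishing if and only if $g\in\llangle D_1^aD_2^b\e_{(0,0)}:a+b=k\rrangle$. This follows by translating $\supp g$ into $\zed_{\ge0}^2$, identifying $g$ with $P(u,v)=\sum_x g(x)u^{x_1}v^{x_2}\in\zed[u,v]$, observing that the moment conditions say exactly that $P$ vanishes to order $k$ at $(1,1)$, i.e.\ $P(1+s,1+t)\in(s,t)^k\subseteq\zed[s,t]$, and expanding in the ideal generators $s^at^b$ ($a+b=k$). Combining this with (a): for $2<p<\infty$, $\sum g=0$ puts $g$ in $\llangle D_1\e_{(0,0)},D_2\e_{(0,0)}\rrangle$, so $g=\Delta h$ with $h\in\llangle D_1G_{\zed^2},D_2G_{\zed^2}\rrangle$ (since $\Delta$ commutes with translations and with $D_1,D_2$, and $\Delta G_{\zed^2}=\e_{(0,0)}$); for $1<p\le2$ one gets $h\in\llangle D_1^2G_{\zed^2},D_1D_2G_{\zed^2},D_2^2G_{\zed^2}\rrangle$ the same way; and for $p=1$ the conditions read $\sum_x g(x)x_1^2=\sum_x g(x)x_2^2=:\lambda$ and $\sum_x g(x)x_1x_2=0$, with $\lambda$ even (an elementary one-dimensional computation: summing out $x_2$ writes $\sum_x g(x)x_1^2$ as a weighted second difference), so $g-\tfrac\lambda2\Delta\e_{(0,0)}$ has vanishing moments of order $\le2$ and hence $g=\Delta h$ with $h=h_0-\tfrac\lambda2\e_{(0,0)}$, $h_0\in\llangle D_1^3G_{\zed^2},D_1^2D_2G_{\zed^2},D_1D_2^2G_{\zed^2},D_2^3G_{\zed^2}\rrangle$. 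In each case $h$ lies in the span claimed by the theorem, and $f=h$ by the maximum-principle argument of the first paragraph.

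The main obstacle is step (a): one must choose the test functions wisely (discrete-harmonic polynomials, so that $\Delta$ falls entirely on the cutoff) and make the exponent in the estimate sharp enough that exactly the three thresholds $\deg q=0$ (all $p$), $\deg q=1$ ($p\le2$), $\deg q=2$ ($p=1$) are captured --- which is exactly where one needs the decay of the local $\ell^p$ mass, not just boundedness. The finite-difference lemma is elementary commutative algebra and the reconstruction is routine once the correct moment conditions are known; the parity remark for $p=1$ is a one-line check. As an alternative to (a), one can argue on the Fourier side: for $p\le2$, Hausdorff--Young puts $\widehat f=\widehat g/\widehat\Delta$ in $L^{p'}(\bT^2)$, forcing $\widehat g$ to vanish to the order that makes $\widehat g/|\theta|^2$ locally $L^{p'}$ near the origin (recall $\widehat\Delta(\theta)\asymp|\theta|^2$ there); for $p>2$ one instead notes that $\widehat f$ agrees with the smooth function $\widehat g/\widehat\Delta$ away from the origin and the leftover distribution supported at the origin must vanish because $f$ is bounded. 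The real-variable argument above treats all $p$ uniformly.
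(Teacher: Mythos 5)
Your proof is correct and takes a genuinely different route from the paper's. Both arguments reduce to the finitely supported, integer-valued function $g=\Delta f$ and finish with the maximum principle, but you differ on how the moment constraints on $g$ are extracted. The paper expands $G_{\zed^2}*g$ asymptotically via Lemma~\ref{greens_function_derivs} and argues by contradiction: if some moment does not vanish, then $(G_{\zed^2}*g)(x)\asymp\|x\|_2^{-k}$ in an angular sector, inconsistent with $f=G_{\zed^2}*g\in\ell^p$. You instead test $\Delta f=g$ against discrete-harmonic polynomials $q$ times smooth cutoffs $\phi_R$: self-adjointness plus the discrete product rule (with $\Delta q=0$ killing the leading term) gives $\bigl|\sum_x g(x)q(x)\bigr|\ll R^{\deg q-2/p}\bigl(\sum_{\|x\|\asymp R}|f(x)|^p\bigr)^{1/p}$, forcing $\sum gq=0$ whenever $\deg q\le 2/p$ and capturing all three thresholds in one estimate without using the Green's function asymptotics. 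You then replace the paper's inductive decomposition via \eqref{C2_alt} with an ideal-theoretic lemma (moments of order $\le k-1$ vanish iff the associated polynomial lies in $(s,t)^k$), which is equivalent to \eqref{C2}--\eqref{C2_alt} but stated uniformly in $k$. The price you pay in the $p=1$ case is that only the harmonic quadratics are tested, so $\lambda=\sum g\,x_1^2=\sum g\,x_2^2$ is left free, and you must show $\lambda$ is even to subtract an integer multiple of $\Delta\e_{(0,0)}$; your one-dimensional second-difference computation does this correctly (whereas the paper's decomposition $v=\delta_1^{*2}*w_1+\cdots$ produces an integer $c=\sum w_1$ for free, and indeed $\lambda=2c$). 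The only small slip is the sign: with the paper's convention $\Delta f(i,j)=4f(i,j)-\sum_{y\sim(i,j)}f(y)$ one has $\sum\bigl(\Delta\e_{(0,0)}\bigr)x_1^2=-2$, so the compensated vector is $g+\tfrac{\lambda}{2}\Delta\e_{(0,0)}$, not $g-\tfrac{\lambda}{2}\Delta\e_{(0,0)}$; this does not affect the argument.
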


The first equality in \eqref{H_p_characterization}, which is the most delicate part to prove, is essentially a restatement of Theorem 2.4 in \cite{SV09}. We provide a unified proof of all three parts of Theorem \ref{H_p_theorem} in Section \ref{classification_section}.

Since the function $\e_{(0,0)}$ is itself in $\sH^p(\zed^2)$ for all $p$, it is implicit in the theorem statement that $\e_{(0,0)}$ is a linear combination of translates of second derivatives of $G_{\zed^2}$. This is true because $\Delta G_{\zed^2} = \e_{(0,0)}$, and the Laplacian $\Delta$ is a second-order discrete differential operator.

\subsection{Discussion of method} 
\label{Discussion of method}

This section outlines the methods used to prove Theorems \ref{quantitative_theorem}--\ref{spectral_gap_theorem}.

Theorem \ref{quantitative_theorem} says that if $\sigma$ is an i.i.d.~sandpile on $\zed^2$ that stabilizes almost surely, then $3 - \E[\sigma(x)]$ is bounded below by a quantity that measures the typical difference between the heights at two locations $\sigma(x), \sigma(x')$. To prove the theorem, let $u(x)$ be the `odometer' function that counts the number of times a vertex $x$ topples in passing from $\sigma$ to its stabilization $\sigma^\infty$, so that $\sigma^\infty = \sigma - \Delta u$.

In Section \ref{stability_section} we observe that the modulo 1 harmonic functions are dual to toppling in the following sense: If $\xi \in \ell^1(\zed^2)$ is harmonic modulo 1, then
\begin{equation}
\label{pairing-eqn}
\langle \sigma, \xi \rangle \equiv \langle \sigma^\infty, \xi \rangle \mod{1}, \qquad a.s.
\end{equation}
where $\langle f,g \rangle = \sum_{x \in \zed^2} \overline{f(x)} g(x)$ is the usual pairing. This provides a collection of invariants which obstruct stabilization in the sandpile model. 

To prove Theorem \ref{quantitative_theorem}, we consider the characteristic functions
\begin{equation}
\chi(\sigma; \xi) = \E\left[e^{-2\pi i \langle \sigma, \xi \rangle}\right], \qquad \chi(\sigma^\infty; \xi) = \E\left[e^{-2\pi i \langle \sigma^\infty, \xi \rangle}\right]
\end{equation}
which, by \eqref{pairing-eqn}, are equal. If $\E[\sigma(x)] = \E[\sigma^\infty(x)]$ is close to $3$ (which is the maximum possible value), then $\sigma^\infty(x)$ must equal $3$ for most $x \in \zed^2$. Choosing $\xi$ so that $\sum_{x \in \zed^2} \xi(x) = 0$, $\chi(\sigma^\infty; \xi)$ must be near $\chi(3; \xi) = 1$. On the other hand, since the starting values $\sigma(x)$ are i.i.d.,
\begin{equation}
\chi(\sigma; \xi) = \prod_{x \in \zed^2} \E\left[e^{-2\pi i \sigma(x) \xi(x)} \right].
\end{equation}
The modulus of each term $\E\left[e^{-2\pi i \sigma(x) \xi(x)} \right]$ decreases as the possible values of $\sigma(x)$ get more spread-out. In this way, the lower bound on $|\chi(\sigma; \xi)| = |\chi(\sigma^\infty; \xi)|$ translates into an upper bound on the amount that the starting values $\sigma(x)$ can vary.

We now turn to Theorem \ref{spectral_gap_theorem}. The set $\sR_m$ of recurrent sandpiles on the torus has a natural abelian group structure. This identifies $\sR_m$ with the \emph{sandpile group} $\sG_m$, which is formally defined in Section \ref{sandpile_section}. The sandpile Markov chain restricted to its recurrent states is a random walk on $\sG_m$, meaning that its eigenvectors are given by the dual group $\hat{\sG}_m$. We can express $\hat{\sG_m}$ as the additive group of functions $\xi : \bT_m \to \bR / \zed$ such that $\xi(0,0) = 0$ and $\Delta \xi \equiv 0$ in $\bR / \zed$. (The operation of $\xi$ on sandpiles is $\sigma \mapsto \sum_{x \in \bT_m \setminus \{(0,0)\}} \xi(x)\sigma(x)$.) In this way, an element $\xi \in \hat{\sG}_m$ is naturally associated with the set of harmonic modulo 1 functions $\xi' : \bT_m \to \bR$ that reduce mod $\zed$ to $\xi$.

The eigenvalue of the Markov chain associated to $\xi$ is the Fourier coefficient of the measure $\mu$ driving the random walk at frequency $\xi$:
\begin{equation}
\hat{\mu}(\xi) = \frac{1}{m^2} \sum_{x \in \bT_m} e^{2\pi i \xi(x)}.
\end{equation}
The mixing time is controlled by the frequencies for which $|\hat{\mu}(\xi)|$ is close to $1$.

Given a frequency $\xi$, let $\xi' : \bT_m \to \bR$ be one of its harmonic modulo $1$ representatives. The integer-valued function $v = \Delta \xi'$ will be referred to as a `prevector' of $\xi$. To recover $\xi'$ from $v$ up to an additive constant, we convolve $v$ with the Green's function $G_{\bT_m}$ on the torus, which is defined by
\begin{equation}
\label{G_T_m_first}
G_{\bT_m}(x) := \frac{1}{4}\sum_{n=0}^\infty \left(\nu^{*n}(x) - \frac{1}{m^2}\right)
\end{equation}
and is the unique mean-zero function (i.e.~$\sum_{x \in \bT_m} G_{\bT_m}(x) = 0$) satisfying
\begin{equation}
\Delta G_{\bT_m}(x) = \e_{(0,0)}(x) - \frac{1}{m^2}.
\end{equation}
It follows that $(G_{\bT_m} * v)(x) = \xi'(x) - c$, where $c = \frac{1}{m^2} \sum_{y \in \bT_m} \xi'(y)$.

Although we will not use this characterization, $G_{\bT_m}$ can be considered as a mean-zero version of the Green's function for the simple random walk on $\bT_m$ started from the origin and killed at a uniformly random point. To be precise, given $y \in \bT_m$, let $\tau_y$ be the first time $t \geq 0$ that a simple random walker started from the origin reaches $y$, and define $g_y(x)$ to be the expected number of times $0 \leq t < \tau_y$ that the walker visits site $x$. If $g(x) = \frac{1}{m^2} \sum_{y \in \bT_m} g_y(x)$, then $G_{\bT_m}(x) = \frac{1}{4}\left[ g(x) - \frac{1}{m^2}\sum_{x' \in \bT_m} g(x') \right]$.

In Section \ref{Representations_for_frequencies}, we specify for each frequency $\xi \in \hat{\sG_m}$ a particular choice of $\xi'$ such that the `distinguished prevector' $v = \Delta \xi'$ satisfies
\begin{equation}
\label{prevector_heuristic}
1 - |\hat{\mu}(\xi)| \asymp \frac{\|G_{\bT_m} * v \|_{L^2(\bT_m)}^2}{m^2}.
\end{equation}
Each prevector $v$ has mean zero because $v$ is in the image of $\Delta$. To find the absolute spectral gap of the Markov chain, which minimizes $1 - |\hat{\mu}(\xi)|$, we ask which mean-zero integer-valued vectors $v$ make $\|G_{\bT_m} * v \|_{L^2(\bT_m)}^2$ as small as possible.

It is profitable to think of $G_{\bT_m} * v$ as a linear combination of translates of discrete derivatives of $G_{\bT_m}$. For example, if $v(a,b) = -1$, $v(a-1,b) = 1$, and $v(i,j) = 0$ at all other $(i,j) \in \bT_m$, then
\begin{equation}
(G_{\bT_m} * v)(x_1,x_2) = G_{\bT_m}(x_1 + 1-a, x_2 - b) - G_{\bT_m}(x_1 - a, x_2 - b)
\end{equation}
which is the translation by $(a,b)$ of $D_1 G_{\bT_m}$.

The Laplacian operator $\Delta$ acts locally. Its inverse, convolution with $G_{\bT_m}$, is non-local but satisfies an approximate locality in that the discrete derivatives of $G_{\bT_m}$, like those of $G_{\zed^2}$, decay to zero. Using these decay estimates, we show in Section \ref{Determination_of_gap} that $\|G_{\bT_m} * v \|_{L^2(\bT_m)}^2$ is minimized when $G_{\bT_m} * v$ is an integer linear combination of the second derivatives $D_1^2 G_{\bT_m}$, $D_1 D_2 G_{\bT_m}$, $D_2^2 G_{\bT_m}$ and their translates. These lead to gaps of order $1/m^2$ in \eqref{prevector_heuristic}.

It follows from \eqref{prevector_heuristic}, an upper bound on $\|\Delta\|_{L^2 \to L^2}$, and the inequality
\begin{equation}
\|v\|_{L^2(\bT_m)}^2 = \|\Delta(G_{\bT_m} * v)\|_{L^2(\bT_m)}^2 \leq \|\Delta\|_{L^2 \to L^2}^2 \|G_{\bT_m} * v\|_{L^2(\bT_m)}^2
\end{equation}
that if the $L^2$ norm of the prevector $v$ is too high, then $v$ cannot generate the spectral gap. Proposition \ref{gap_achievers} shows that if the support of $v$ is too spread-out over $\bT_m$, then by the approximate locality of convolution with $G_{\bT_m}$, $v$ can be separated into widely spaced clusters whose contributions to $1 - |\hat{\mu}(\xi)|$ are nearly additive. Just keeping one of the clusters and zeroing out the rest of $v$ would produce a smaller gap. By this argument, the only prevectors with any chance of generating the spectral gap have bounded norm and bounded support, so the computation of the gap is reduced to a finite check.

To fill in the details of the proof, we require precise asymptotics for derivatives of $G_{\bT_m}$. We obtain these using a local limit theorem, which is proved in Appendix \ref{local_limit_theorem_appendix}. We also relate $G_{\bT_m}$ as $m \to \infty$ to $G_{\zed^2}$, which translates the finite check for the spectral gap into a minimization problem involving functions in $\ell^2(\zed^2)$ that are harmonic modulo $1$. The resulting search was performed using convex programming in the SciPy scientific computing package \cite{JOP01}, and is described in Appendix \ref{spectral_gap_appendix}. We find that for sufficiently large $m$, the gap is achieved for prevectors of the form $v(a,b) = v(a-1,b-1) = 1$, $v(a-1,b) = v(a,b-1) = -1$, $v(i,j) = 0$ elsewhere, which correspond to translates of $D_1 D_2 G_{\bT_m}$.

For Theorem \ref{mixing_time_theorem}, we prove cutoff in both total variation and $L^2$ at time $\gamma^{-1} m^2 \log m$. The necessary ingredients are a total variation lower bound and an $L^2$ upper bound on mixing time.

First, we use the coupon-collector argument mentioned earlier to reduce to the case where the starting state $\sigma$ is recurrent. Next we observe that due to translation, there are $m^2$ different prevectors $v$ whose corresponding frequencies $\xi$ achieve $1 - |\hat{\mu}(\xi)| = \gap_m$. The $L^2$ distance from the uniform distribution on $\sR_m$ of the chain started from $\sigma$ after $N$ steps satisfies
\begin{equation}
\left\|P_m^N \delta_{\sigma} - \bU_{\sR_m} \right\|_{L^2(d\bU_{\sR_m})}^2 = \sum_{\xi \in \hat{\sG}_m \setminus \{0\}}\left|\hat{\mu}(\xi) \right|^{2N} \geq m^2 (1 - \gap_m)^{2N}.
\end{equation}
Thus the chain cannot mix in $L^2$ before time
\begin{equation}
N = \frac{1}{\gap_m} \log m = \frac{1}{\gamma} m^2 \log m + o(m^2 \log m).
\end{equation}
We strengthen this to a lower bound on total variation mixing time by a second moment method due originally to Diaconis \cite{DS87,D88} that builds a distinguishing statistic out of the top eigenvectors of the chain. See Lemma \ref{lower_bound_lemma}. To apply this lemma, we require an upper bound on $|\hat{\mu}(\xi_1 - \xi_2)|$ when the frequencies $\xi_1,\xi_2$ (both of which achieve the spectral gap) come from prevectors $v_1,v_2$ whose supports are separated. Since the contributions of $v_1$ and $v_2$ are nearly additive, we have $1 - |\hat{\mu}(\xi_1 - \xi_2)| \approx 1 - 2 \cdot \gap_m$, which enables the argument to go through.

For the upper bound on the $L^2$ mixing time, we show that
\begin{equation}
\label{L2_dist}
\sum_{\xi \in \hat{\sG}_m \setminus \{0\}}\left|\hat{\mu}(\xi) \right|^{2N}
\end{equation}
tends to zero as $m \to \infty$ when $N = (1+\epsilon)\gamma^{-1} m^2 \log m$. Our argument uses an agglomeration scheme in which we partition the support of each prevector $v$ into widely spaced clusters. Lemma \ref{savings_lemma}, the main step in the proof, shows that each small cluster contributes additively to the gap $1 - |\hat{\mu}(\xi)|$. The earlier additivity results in Section \ref{Determination_of_gap}, most notably Proposition \ref{gap_achievers}, hold only for prevectors with bounded $L^1$ norm, so the extension to the general case requires new arguments. We use techniques from the theory of exponential sums, including van der Corput's inequality. As a consequence of the clustering scheme, we can control the number of distinct frequencies $\xi$ whose gap $1 - |\hat{\mu}(\xi)|$ might be small, giving the desired bound on \eqref{L2_dist}.

The cutoff argument may be considered an extension of the classical analysis 
of mixing on the hypercube \cite{DGM90}, and exploits the fact that the lattice 
which is quotiented to give the sandpile group is approximately cubic.  See 
\cite{H15} for analysis of some random walks on the cycle where the $L^1$ and 
$L^2$ cutoff times differ by a constant.

\subsection{Historical review} Sandpile dynamics on the square lattice were introduced by Bak, Tang, and Wiesenfeld \cite{BTW87,BTW88} as a model of self-organized criticality. Dhar \cite{D90} considered the case of an arbitrary finite underlying graph, proving many fundamental results. Subsequently, Dhar et al.~\cite{DRSV95} used harmonic modulo 1 functions (there called `toppling invariants') to analyze the algebraic structure of the sandpile group for rectangular subsets of $\zed^2$.

Sandpiles are examples of \emph{abelian networks}, which are systems
of communicating automata satisfying a local commutativity condition
\cite{D99,BL16}. By a theorem of Cairns \cite{C15}, an abelian network on $\zed^2$ can emulate a Turing
machine, as can a sandpile on $\zed^3$. In particular, for a periodic
configuration of sand on $\zed^3$ plus a finite number of additional
sand grains, the question of stabilization is algorithmically
undecidable! It is not known whether the same question is
undecidable on $\zed^2$. A related open problem, highlighted in
\cite{LMPU16}, is the following: ``Given a probability distribution
$\mu$ on $\zed$ (say, supported on $\{0,1,2,3,4\}$ with rational
probabilities), is it algorithmically decidable whether the i.i.d.~abelian sandpile on $\zed^2$ with marginal $\mu$ stabilizes almost
surely?'' Theorem \ref{quantitative_theorem} and its method of proof can be viewed as a slight
advance on this problem.

The question of stabilization of i.i.d.~sandpiles was posed by Meester and Quant \cite{MQ05} and by Fey and Redig \cite{FR05}. A fundamental result is the \emph{conservation of density} proved by Fey, Meester, and Redig \cite{FMR09}, which in particular implies the earlier result of \cite{FR05}: An i.i.d.~stabilizing sandpile $\sigma$ on $\zed^2$ must satisfy $\E[\sigma(x)] \leq 3$. 
To get strictly below $3$ in the upper bound of Theorem \ref{quantitative_theorem}, we use harmonic modulo 1 functions to construct additional conserved quantities; see Lemma \ref{pairing-invariant}.

%, which implies in particular that any i.i.d.~sandpile $\sigma$ that stabilizes has $\E[\sigma(x)] \leq 3$. 
%Theorem \ref{quantitative_theorem} refines the bound $\E[\sigma(x)] \leq 3$ demonstrated in \cite{FR05}, resolving a question posed by \cite{FMR09}. 

% In the closely related \emph{divisible sandpile model}, stabilization of a nonconstant i.i.d.\ initial condition $\sigma$ is determined entirely by the first moment $\E[\sigma(x)]$ \cite{LMPU16}.

% in i.i.d.~divisible sandpiles, the condition $\E[\sigma(x)]<1$ is sufficient to guarantee almost sure stabilization, see Lemma 4.2 of \cite{LMPU16}.

Theorems \ref{mixing_time_theorem} and \ref{spectral_gap_theorem} are concerned with the sandpile Markov chain on the discrete torus $\bT_m$, whose stationary distribution 
%$\bU_{\sR_m}$ 
is uniform on the (finite) set of recurrent states.  These finite Markov chains are related to sandpiles on the infinite grid $\zed^2$ by theorems of \cite{AJ04, JR08}.  Athreya and J\'{a}rai \cite{AJ04} proved that the restriction of a uniform recurrent sandpile on the $d$-dimensional cube $[-m,m]^d \cap \zed^d$ to any fixed finite subset of $\zed^d$ converges in law as $m \to \infty$.  Hence there is a limiting measure $\mu$ on recurrent sandpiles on $\zed^d$. By equality of the free and wired uniform spanning forests, replacing the cube with the $d$-dimensional discrete torus results in the same limit $\mu$. J\'{a}rai and Redig \cite{JR08} proved that in dimensions $d \geq 3$, a $\mu$-distributed sandpile plus one additional chip stabilizes almost surely. They used this fact to construct an ergodic Markov process on recurrent sandpiles on $\zed^d$ having $\mu$ as its stationary distribution. In dimension $2$, it is not known whether a $\mu$-distributed sandpile plus one additional chip stabilizes almost surely. (Possibly Lemma \ref{pairing-invariant} could help resolve this question.)
Some further studies of sandpile dynamics on $\zed^d$ are \cite{MRS04,JRS15,BHJ16}.

The mixing of the sandpile Markov chain on finite graphs arises in relating sandpiles with different boundary conditions: the dependence of observables such as the `density' (average amount of sand per vertex) on the boundary conditions is a symptom of slow mixing. In particular, the extra log factor in the mixing time $t_m^{\mix}$ of Theorem \ref{mixing_time_theorem} could be viewed as the cause for the failure of the `density conjecture' \cite{FLW10,L15}.

The proof of cutoff in Theorem \ref{mixing_time_theorem} estimates a 
significant piece of the spectrum of the transition kernel of the sandpile walk on the torus. See \cite{CE02,BS13} for further applications of spectral techniques 
related to sandpiles.

The eigenvectors and eigenvalues of the sandpile Markov chain on an arbitrary finite graph were characterized in \cite{JLP15} using `multiplicative harmonic functions' (these are complex exponentials of the harmonic modulo 1 functions, as explained in Section \ref{Random_walk_on_the_sandpile_group}).
%, and our discussion in Sections \ref{Random_walk_on_the_sandpile_group} and \ref{Representations_for_frequencies} restates certain results from \cite{JLP15} in different language. 
In \cite{JLP15} it was shown that the sandpile Markov chain on any connected graph with $n$ vertices mixes in $O(n^3 \log n)$ steps, and that cutoff for the complete graph (both in total variation and in $L^2$) occurs at time $\frac{1}{4\pi^2} n^3 \log n$.

Regarding the discrete torus $\bT_m$, it was proved in \cite{JLP15} that the sandpile 
chain on any graph with $m^2$ vertices and maximum degree $4$ has spectral gap 
at least $1/(2m^2)$, and mixes in at most $\frac{5}{2} m^2 \log m$ steps. 
%A mixing time lower bound for $\bT_m$ of order $m^2$ is also established. [this is immediate from the spectral gap, no?]
Theorems \ref{mixing_time_theorem} and \ref{spectral_gap_theorem} improve these results by obtaining asymptotics for the mixing time and spectral gap, and by demonstrating cutoff. We expect that our techniques can also prove cutoff for the sandpile chain on the finite box $[-m,m]^2 \cap \zed^2$, with boundary vertices identified as the sink, at a constant multiple of $m^2 \log m$ steps.

%As discussed in Section \ref{Functions_harmonic_modulo_1}, 
Schmidt and Verbitskiy \cite{SV09} characterize the set $\sH^1(\zed^2)$ of harmonic modulo 1 functions in $\ell^1(\zed^2)$ in terms of third derivatives of the Green's function $G_{\zed^2}$. 
%They use this result to explore the extent to which a group structure can be imposed on the set of recurrent sandpiles on $\zed^2$. 
Our Theorem \ref{H_p_theorem} provides a similar characterization of the sets $\sH^p(\zed^2)$, for $1 \leq p < \infty$. 
%We then reuse certain techniques from the proof of Theorem \ref{H_p_theorem} in our analysis of stabilization of i.i.d.~sandpiles on $\zed^2$ and in our investigation of the spectral gap and mixing time of the sandpile chain on $\bT_m$.

\subsection*{Organization}
Section \ref{background_section} fixes notation and provides background on discrete derivatives and Fourier transforms, the graph Laplacian and Green's function on $\zed^2$ and $\bT_m$, and results from the theory of exponential sums. Section \ref{classification_section} proves Theorem \ref{H_p_theorem}, while Section \ref{stability_section} proves Theorem \ref{quantitative_theorem}. Section \ref{sandpile_section} defines the sandpile group $\sG_m$ and its dual $\hat{\sG}_m$, describes the eigenvalues and eigenvectors of the sandpile Markov chain using $\hat{\sG}_m$, and shows that we may assume the starting state is recurrent when proving Theorem \ref{mixing_time_theorem}. Section \ref{spectral_gap_section} proves Theorem \ref{spectral_gap_theorem} and provides the main technical estimates needed for Theorem \ref{mixing_time_theorem}. Finally, Section \ref{proof_mixing_theorem_section} proves Theorem \ref{mixing_time_theorem}.

Appendix \ref{local_limit_theorem_appendix} proves a local limit theorem for repeated convolutions of the simple random walk measure on $\zed^2$ that is used to obtain asymptotics for derivatives of the discrete Green's function on $\bT_m$. Appendix \ref{spectral_gap_appendix} uses convex programming to find an exact formula for the leading constant $\gamma$ in the spectral gap and mixing time of the sandpile chain.

\section{Function spaces and conventions}
\label{background_section}
The additive character on $\bR/\zed$ is $e(x) = e^{2\pi i x}$. Its real part 
is denoted $c(x) = \cos 2\pi x$ and imaginary part $s(x) = \sin 2\pi x$. 
For real $x$, $\|x\|_{\bR/\zed}$ denotes the distance of $x$ to the nearest 
integer. 

We use the notations $A \ll B$ and $A = O(B)$ to mean that there is a constant 
$0<C<\infty$ such that $|A| < CB$, and $A \asymp B$ to mean $A \ll B \ll A$. A subscript such as $A \ll_R B$, $A = O_R(B)$ means that the constant $C$ depends on $R$. The notation $A = o(B)$ means that $A/B$ tends to zero.

Given a measurable space $(\sX, \sB)$, the \emph{total variation distance} between two probability measures $\mu$ and $\nu$ on $(\sX, \sB)$ is
\begin{equation}
\left\|\mu - \nu\right\|_{\TV} = \sup_{A \in \sB}|\mu(A) - \nu(A)|.
\end{equation}
If $\mu$ is absolutely continuous with respect to $\nu$, the total variation distance may be expressed as
\begin{equation}
\|\mu - \nu\|_{\TV} = \frac{1}{2} \int_{\sX}\left|\frac{d\mu}{d\nu}-1 \right|d\nu.
\end{equation}
In this case an $L^2(d\nu)$ distance may be defined  by
\begin{equation}
\left\|\mu - \nu\right\|_{L^2(d\nu)}^2 = \int_{\sX}\left(\frac{d\mu}{d\nu} - 1 \right)^2 d\nu,
\end{equation}
and Cauchy-Schwarz gives $\|\mu - \nu\|_{\TV} \leq \frac{1}{2}\|\mu - \nu\|_{L^2(d\nu)}$.

Consider $\zed^2$ and the discrete torus $\bT_m$ to be metric spaces with the graph distance given by the $\ell^1$ norm on $\zed^2$ and the quotient distance,  for $x, y \in \bT_m$,
\begin{equation}
\|x-y\|_1 = \min\{ \|x'-y'\|_1 : x',y' \in \zed^2, [x'] = x, [y'] = y \} 
\end{equation}
where $[x'],[y']$ are the images of $x',y'$ under the quotient map $\zed^2 \to \bT_m$.  The ball of radius $R > 0$ around a point $x$ is 
\begin{equation}
B_R(x) = \left\{y: \|y-x\|_1 \leq R \right\}.
\end{equation}
We also use $\|x\|_2$ to denote the $\ell^2$ norm of $x = (x_1,x_2) \in \zed^2$. The argument of $x$, denoted $\arg(x)$, is the angle $0 \leq \theta < 2\pi$ such that $(x_1,x_2) = (\|x\|_2 \cos \theta, \|x\|_2 \sin \theta)$.

Denote the usual function spaces
\begin{equation}
\ell^p\left(\zed^2\right) = \left\{f : \zed^2 \to \bC, \|f\|_p^p =  \sum_{x \in \zed^2} |f(x)|^p < \infty\right\}, \quad 1 \leq p < \infty
\end{equation}
and 
\begin{equation}
 L^p\left(\bT_m\right) = \left\{f: \bT_m \to \bC, \|f\|_p^p = \sum_{x \in \bT_m} |f(x)|^p \right\}, \quad 1 \leq p < \infty.
\end{equation}
The latter functions may be considered as functions on $\zed^2$ which are $m\zed^2$-periodic. Let $\ell^\infty(\zed^2)$ and $L^\infty(\bT_m)$ be the spaces of bounded functions on $\zed^2$ and $\bT_m$, with $\|f\|_{\ell^\infty(\zed^2)} = \sup_{x \in \zed^2} |f(x)|$ and $\|f\|_{L^\infty(\bT_m)} = \max_{x \in \bT_m} |f(x)|$.

On the torus, the subspace of mean zero 
functions is indicated by
\begin{equation}L_0^2(\bT_m) = \left\{f \in L^2(\bT_m): 
\sum_{x \in 
\bT_m} f(x) = 0\right\}.\end{equation} 
The notation $\zed^{\bT_m}_0$ is used for the integer-valued functions in $L^2_0(\bT_m)$.

On either $\zed^2$ or the torus, the standard basis vectors are written 
\begin{equation}
\e_{(i,j)}(k, \ell) = \one\{i=k\} \one\{j = \ell\}. 
\end{equation} 
For functions other than the standard basis vectors, the notation $f_x = f(x)$ is used interchangeably.

For $X = \zed^2$ or $X = \bT_m$ the support of a function on $X$ is  \begin{equation}\supp f=\{x \in 
X: f(x) 
\neq 0\}.\end{equation}
Given $(i,j) \in X$, the translation operator $T_{(i,j)}$ acts on 
functions by
\begin{equation}
 T_{(i,j)}f(k, \ell) = f(k-i, \ell-j).
\end{equation}
The convolution of functions $f \in \ell^1(\zed^2)$, $g \in \ell^\infty\left(\zed^2\right)$ or $f, g \in L^2(\bT_m)$ is given by
\begin{equation}
 (f*g)(i,j) = \sum_{(k, \ell)\in X} f(i-k, j-\ell)g(k,\ell)
\end{equation}
where again $X$ represents $\zed^2$ or $\bT_m$.

The averaging operator with respect to the uniform probability measure on 
$\bT_m$ is indicated by \begin{equation}\E_{x \in \bT_m}[f] = \frac{1}{m^2}\sum_{x 
\in \bT_m} f(x).\end{equation}

Given $x, y \in \bR/\zed$ and $f \in \ell^1(\zed^2)$, the Fourier transform of $f$ is
\begin{equation}
 \hat{f}(x,y) = \sum_{(i,j) \in \zed^2} f(i,j) e(-(ix + jy)).
\end{equation}
Given $x, y \in \zed/m\zed$ and $f \in L^2(\bT_m)$ the Fourier transform of $f$ is
\begin{equation}
 \hat{f}(x,y) = \sum_{(i,j) \in \bT_m} f(i,j) e\left(- \frac{ix + jy}{m} \right).
\end{equation}
The Fourier transform has the familiar property of carrying convolution to pointwise multiplication.  For $f \in \ell^2(\zed^2)$, Parseval's identity is
\begin{equation}
 \|f\|_2^2 = \int_{(\bR/\zed)^2} \left|\hat{f}(x,y)\right|^2 dx dy.
\end{equation}
For $f \in L^2(\bT_m)$ the corresponding identity is
\begin{equation}
 \|f\|_2^2 = \frac{1}{m^2} \sum_{x \in \bT_m} \left| \hat{f}(x)\right|^2.
\end{equation}

For a function $f$ on $\zed^2$ or $\bT_m$, the discrete derivatives $D_1 f(i,j)$, $D_2 f(i,j)$ are defined by \eqref{D_12}. Discrete differentiation is expressed as a convolution operator by 
introducing 
\begin{align}
 \delta_1(i,j) &= \left\{\begin{array}{lll}-1 && (i,j) = (0,0)\\ 1 && (i,j)
=(-1,0)\\ 0 && \text{otherwise,}\end{array}\right.\\ \notag
 \delta_2(i,j) &= \left\{\begin{array}{lll}-1 && (i,j) = (0,0)\\ 1 && (i,j)
=(0,-1)\\ 0 && \text{otherwise.}\end{array}\right.
\end{align}
For integers $a,b \geq 0$, one has
\begin{equation}
D_1^a D_2^b f = \delta_1^{*a}* \delta_2^{*b}* f.
\end{equation}

For $X = \zed^2$ or $\bT_m$ and functions $f_1,\ldots,f_n$ on $X$, recall that
\begin{equation}
\llangle f_1,\ldots,f_n \rrangle = \spaN_\zed\{ T_x f_1, \ldots, T_x f_n: x \in X\},
\end{equation}
where $\spaN_\zed$ refers to the finite integer span. It is convenient to introduce classes of integer-valued functions:
\begin{align}
\label{C_0123}
 C^0(X) &= \llangle \e_{(0,0)} \rrangle = \{f: X \to \zed, \|f\|_1<\infty\},\\
\notag C^1(X) &= \llangle \delta_1, \delta_2 \rrangle,\\
\notag C^2(X) &= \llangle \delta_1^{*2}, \delta_1 * \delta_2, \delta_2^{*2} \rrangle,\\
\notag C^3(X) &= \llangle \delta_1^{*3}, \delta_1^{*2} * \delta_2, \delta_1 * \delta_2^{*2}, \delta_2^{*3} \rrangle.
\end{align}
One has the equivalent characterizations
\begin{align}
C^1(X) &= \left\{ f \in C^0(X) : \sum_{x \in X} f(x) = 0 \right\}, \\
C^2(X) &= \left\{ f \in C^0(X) : \sum_{x \in X} f(x) = 0, \, \sum_{x \in X} f(x) x = 0 \right\} \label{C2}
\end{align}
and, for each $1 \leq k \leq 3$,
\begin{equation}
\label{C2_alt}
C^k(X) = \{\delta_1 * f + \delta_2 * g : f,g \in C^{k-1}(X)\}.
\end{equation}
Note the special cases $C^0(\bT_m) = \zed^{\bT_m}$ and $C^1(\bT_m) = \zed_0^{\bT_m}$.

\subsection{The graph Laplacian and Green's function}
The graph Laplacian $\Delta$ on either $\zed^2$ or $\bT_m$ is the second-order discrete differential operator defined by \eqref{Laplacian}. On $\zed^2$ its Fourier transform is given by
\begin{equation}
\label{Delta-Fourier}
\widehat{(\Delta f)}(x,y) = (4 - 2[c(x) + c(y)])\hat{f}(x,y), \quad x,y \in \bR / \zed,
\end{equation}
and on $\bT_m$ the Fourier transform is
\begin{equation}
\widehat{(\Delta f)}(x,y) = (4 - 2[c(x/m) + c(y/m)])\hat{f}(x,y), \quad x,y \in \zed / m\zed.
\end{equation}
\begin{lemma}
The graph Laplacians satisfy the operator bound
\begin{align}
\left\|\Delta\right\|_{\ell^2(\zed^2) \to \ell^2(\zed^2)}, \left\|\Delta\right\|_{L^2(\bT_m) \to L^2(\bT_m)}&\leq 8, \\\notag 
\left\|\Delta\right\|_{\ell^{\infty}(\zed^2) \to \ell^{\infty}(\zed^2)},  \left\|\Delta\right\|_{L^{\infty}(\bT_m) \to L^{\infty}(\bT_m)}&\leq 8.
\end{align}
\end{lemma}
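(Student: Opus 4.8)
The plan is to bound the operator norm of $\Delta$ by writing $\Delta$ as a sum of the identity (scaled) and the averaging/shift operators that appear in its definition, then controlling each piece separately. From \eqref{Laplacian}, $\Delta = 4I - S$ where $S$ is the operator $Sf(i,j) = f(i-1,j) + f(i+1,j) + f(i,j-1) + f(i,j+1)$, a sum of four translation operators $T_{(\pm 1, 0)}, T_{(0, \pm 1)}$. Each translation operator is an isometry on $\ell^2$ and on $\ell^\infty$ (on both $\zed^2$ and $\bT_m$), so by the triangle inequality $\|S\| \leq 4$ in each of these four norms, giving $\|\Delta\| \leq \|4I\| + \|S\| \leq 4 + 4 = 8$.

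For the $L^2$ bounds one can alternatively (and more sharply) use the Fourier transform: by \eqref{Delta-Fourier}, on $\zed^2$ the operator $\Delta$ acts on the Fourier side as multiplication by the symbol $m(x,y) = 4 - 2[c(x) + c(y)]$, and since $c(x), c(y) \in [-1,1]$ this symbol ranges over $[0,8]$; Parseval's identity then gives $\|\Delta f\|_2^2 = \int |m(x,y)|^2 |\hat f(x,y)|^2 \, dx\, dy \leq 64 \|f\|_2^2$. The same computation with the torus Fourier transform and the discrete sum form of Parseval handles $L^2(\bT_m)$, the symbol now being $4 - 2[c(x/m) + c(y/m)]$ evaluated at $x,y \in \zed/m\zed$, still bounded in absolute value by $8$.

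For the $L^\infty$ bounds the triangle-inequality argument is the clean one: for any $f$ with $\|f\|_\infty \leq 1$ and any vertex $(i,j)$,
\begin{equation}
|\Delta f(i,j)| \leq 4|f(i,j)| + |f(i-1,j)| + |f(i+1,j)| + |f(i,j-1)| + |f(i,j+1)| \leq 8,
\end{equation}
and taking the supremum (or maximum, on $\bT_m$) over $(i,j)$ gives $\|\Delta f\|_\infty \leq 8\|f\|_\infty$. This works verbatim on both $\zed^2$ and $\bT_m$ since the definition of $\Delta$ and the neighbor structure are identical, with indices read modulo $m$ in the torus case.

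There is no real obstacle here; the only thing to be slightly careful about is that on $\bT_m$ for small $m$ (e.g.\ $m=1$ or $m=2$) some of the neighbors of a vertex coincide, but this only makes the left-hand sides smaller, so the stated bounds still hold. I would present the $L^2$ bounds via Parseval and the symbol computation, and the $L^\infty$ bounds via the pointwise triangle inequality, noting in one line that both arguments apply identically on $\zed^2$ and $\bT_m$.
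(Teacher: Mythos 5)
Your proposal is correct and matches the paper's proof: the paper bounds $\|\Delta\|_{L^2 \to L^2}$ by Parseval using the symbol $4 - 2[c(x)+c(y)]$, exactly as in your second paragraph, and dismisses the $\ell^\infty/L^\infty$ bounds as ``immediate,'' which is precisely your pointwise triangle-inequality argument. The additional triangle-inequality route you sketch for $L^2$ (writing $\Delta = 4I - S$ with $S$ a sum of four isometries) is a perfectly good, slightly more elementary alternative, but the paper does not use it.
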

\begin{proof}
The $\ell^\infty$  and $L^\infty$ estimates are immediate.  For $f \in \ell^2(\zed^2)$, by Parseval
\begin{equation}
\|\Delta f\|_2^2 = \int_{(\bR/\zed)^2} (4 - 2(c(x) + c(y)))^2 \left|\hat{f}(x,y)\right|^2 dxdy \leq 64 \|f\|_2^2.
\end{equation}
The bound on $L^2(\bT_m)$ is similar.
\end{proof}

On either $X = \zed^2$ or $X = \bT_m$, let $\nu$ be the probability measure given by \eqref{nu}, which drives simple random walk on $X$. The Green's function $G$ is a distribution on $C^1(X)$ given by
\begin{equation}
 G*f = \frac{1}{4}\sum_{n=0}^\infty (\nu^{*n} * f), \qquad f \in C^1(X).
\end{equation}
Since $\Delta f = 4\left(\delta_{(0,0)} - \nu\right) * f$, the formal computation
\begin{equation}
\Delta^{-1} = \frac{1}{4} \left( \delta_{(0,0)} - \nu \right)^{-1} = \frac{1}{4} \sum_{n=0}^\infty \nu^{*n} = G
\end{equation}
indicates that $G$ is in some sense the inverse of $\Delta$. Precise versions of this statement are given below.

On $\zed^2$, $G$ may be realized as the function \eqref{G_Z2_first}:
\begin{equation}
\label{G_Z2}
G_{\zed^2}(x) = \frac{1}{4}\sum_{n=0}^\infty \left[ \nu^{*n}(x)-\nu^{*n}(0,0) \right].
\end{equation}
This is a classical object of probability theory.  We quote the asymptotics from \cite{FU96}.
\begin{theorem}[\cite{FU96}, Remark 2]\label{greens_function_asymptotic}
Let $x = (x_1,x_2) \in \zed^2$.  There are constants $a,b>0$ such that 
\begin{equation} \label{G_expansion}
G_{\zed^2}(x) = \left\{\begin{array}{lll}0 && x = (0,0)\\ -\frac{\log \|x\|_2}{2\pi}  - a - b \frac{\frac{8x_1^2 x_2^2}{\|x\|_2^4}- 1}{\|x\|_2^2} + O(\|x\|_2^{-4})&& x \neq (0,0). \end{array}\right.
\end{equation}
\end{theorem}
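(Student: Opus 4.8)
Since the statement is quoted verbatim from \cite{FU96}, one may simply appeal to that reference; I sketch the Fourier-analytic derivation, which is essentially the method of Fukai and Uchiyama. The plan is to start from the spectral representation of $G_{\zed^2}$, separate the contribution of the low-frequency singularity of the symbol of $\Delta$ from a smooth remainder, and then read off each term of the expansion by Taylor-expanding the symbol and rescaling.

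First I would establish the representation
\begin{equation}
G_{\zed^2}(x) = \frac14\int_{(\bR/\zed)^2}\frac{e(x\cdot\theta)-1}{1-\hat\nu(\theta)}\,d\theta, \qquad \hat\nu(\theta) = \tfrac12\bigl(c(\theta_1)+c(\theta_2)\bigr),
\end{equation}
valid because $\tfrac14\sum_{n\ge0}\hat\nu(\theta)^n$ is, frequency by frequency, the Fourier symbol of $G$, and because the displayed integral converges absolutely: the only zero of $1-\hat\nu$ on the torus is $\theta = 0$, where $1-\hat\nu(\theta) = \pi^2\|\theta\|_2^2 + O(\|\theta\|_2^4)$ while $e(x\cdot\theta)-1 = O(\|\theta\|_2)$, so the integrand is $O(\|\theta\|_2^{-1})$, integrable in two dimensions. (Some care with the mean-zero constraint underlying the definition of $G$ on $C^1(\zed^2)$ is needed to justify interchanging the sum and the integral; this is routine.)

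Next I would fix a smooth cutoff $\psi$ supported in a small ball about the origin, radially symmetric there, and split the integral as $I_{\mathrm{near}} + I_{\mathrm{far}}$. On the support of $1-\psi$ the integrand is a $C^\infty$ function on the torus, so $I_{\mathrm{far}}(x)$ is a constant plus a quantity decaying faster than any power of $\|x\|_2$, contributing only to the additive constant $-a$. For $I_{\mathrm{near}}$ I would insert $c(t) = 1 - 2\pi^2 t^2 + \tfrac23\pi^4 t^4 + O(t^6)$, which gives
\begin{equation}
\frac{1}{1-\hat\nu(\theta)} = \frac{1}{\pi^2\|\theta\|_2^2} + \frac13\cdot\frac{\theta_1^4+\theta_2^4}{\|\theta\|_2^4} + O(\|\theta\|_2^2),
\end{equation}
and integrate against $e(x\cdot\theta)-1$ term by term. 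The leading term is, up to the prefactor $\tfrac14$, the continuum logarithmic potential: rescaling $\theta\mapsto\theta/\|x\|_2$ and using the asymptotics of $\int_{\bR^2}(e^{i u\cdot\xi}-1)\|\xi\|_2^{-2}\,d\xi$ (equivalently, of the Bessel function $J_0$) yields $-\tfrac1{2\pi}\log\|x\|_2$ plus a constant. The second term is homogeneous of degree $0$; writing $\theta_1^4+\theta_2^4 = \|\theta\|_2^4 - 2\theta_1^2\theta_2^2$ and $\theta_1^2\theta_2^2/\|\theta\|_2^4 = \tfrac18(1-\cos 4\arg\theta)$ separates it into an isotropic part, which feeds the constant, and the genuinely anisotropic part $\propto \cos 4\arg\theta$, whose Fourier integral against the weight $\|\theta\|_2^{-2}$, after rescaling, is a constant multiple of $\cos 4\arg(x)/\|x\|_2^2 = -\bigl(8x_1^2x_2^2/\|x\|_2^4 - 1\bigr)/\|x\|_2^2$; this produces the $b$-term. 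The $O(\|\theta\|_2^2)$ remainder together with the sixth-order Taylor remainder contributes $O(\|x\|_2^{-4})$ by the same scaling.

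The main obstacle is the bookkeeping of this multi-scale expansion: one must show that after subtracting the three explicit terms the remainder is genuinely $O(\|x\|_2^{-4})$ \emph{uniformly in $\arg(x)$}, which forces one to exploit cancellation in the oscillatory integrals (non-stationary phase bounds, integration by parts in $\theta$) rather than crude absolute estimates, and one must match the near- and far-origin pieces so that the constants $a, b > 0$ are well defined. An alternative, self-contained route is to insert the local limit theorem of Appendix \ref{local_limit_theorem_appendix} (with its error terms) into $G_{\zed^2}(x) = \tfrac14\sum_{n\ge0}[\nu^{*n}(x)-\nu^{*n}(0,0)]$ and sum over $n$, splitting at $n\asymp\|x\|_2^2$: the diffusive range produces the logarithm and the anisotropic correction (the latter originating in the non-rotationally-invariant fourth-order term of the characteristic function of $\nu$), the large-$n$ range the constant, and the tails the $O(\|x\|_2^{-4})$ error — with the same uniformity-in-direction estimate as the principal difficulty.
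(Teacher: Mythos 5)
The paper does not prove Theorem \ref{greens_function_asymptotic}: it is quoted verbatim from Fukai and Uchiyama \cite{FU96} (Remark 2), and your opening remark that one may simply appeal to that reference is exactly the paper's treatment. Your accompanying Fourier-analytic sketch is a reasonable outline of the underlying method but is not required here, and it contains one slip in the scaling for the $b$-term: the anisotropic piece $\propto \cos 4\arg\theta$ of the degree-$0$ correction $\tfrac13(\theta_1^4+\theta_2^4)/\|\theta\|_2^4$ is integrated against $e(x\cdot\theta)-1$ with \emph{no} extra $\|\theta\|_2^{-2}$ weight, and the $\|x\|_2^{-2}$ decay arises because a degree-$0$-homogeneous symbol in $\bR^2$ has a degree-$(-2)$-homogeneous Fourier transform (had there been an additional factor $\|\theta\|_2^{-2}$, the transform would be degree $0$, not $-2$).
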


It follows from \eqref{G_Z2} that
\begin{equation}
\Delta G_{\zed^2}(x) = \sum_{n=0}^\infty \left[ \nu^{*n}(x) - \nu^{*(n+1)}(x) \right] = \e_{(0,0)}(x),
\end{equation}
so $\Delta(G_{\zed^2} * f) = f$ for all $f \in C^0(\zed^2)$. The Fourier transform of $G_{\zed^2}$ is
\begin{equation}
\label{G_Z2-Fourier}
\hat{G}_{\zed^2}(x,y) = \frac{1}{4- 2\left(c(x) +c(y) \right)}. \end{equation}
When combined with \eqref{Delta-Fourier}, this shows that $G_{\zed^2} * \Delta f = f$ whenever $f \in \ell^2(\zed^2)$.

On $\bT_m$ a realization of $G$ as a function is obtained by \eqref{G_T_m_first}:
\begin{equation}
\label{G_T_m}
G_{\bT_m}(x) = \frac{1}{4}\sum_{n=0}^\infty \left(\nu^{*n}(x) - \frac{1}{m^2}\right).
\end{equation}

This converges absolutely, as is most easily checked by passing to frequency space, where the zeroth Fourier coefficient vanishes, and the remaining Fourier coefficients are convergent geometric series. Summing \eqref{G_T_m} over all $x \in \bT_m$ shows that $G_{\bT_m}$ has mean zero. As well, it follows from \eqref{G_T_m} that
\begin{equation}
\Delta G_{\bT_m}(x) = \sum_{n=0}^\infty \left[ \nu^{*n}(x) - \nu^{*(n+1)}(x) \right] = \e_{(0,0)}(x) - \frac{1}{m^2}.
\end{equation}
Therefore, $\Delta (G_{\bT_m} * f) = f - \E_{x \in \bT_m}[f]$ for any $f \in L^2(\bT_m)$. In particular, if $f \in L^2_0(\bT_m)$ then $\Delta (G_{\bT_m} * f) = f$. 

It is also true that $G_{\bT_m} * \Delta f = f - \E_{x \in \bT_m}[f]$ for all $f \in L^2(\bT_m)$. To prove this, observe that since $\Delta f \in L_0^2(\bT_m)$, $\Delta( G_{\bT_m} * \Delta f ) = \Delta f$. Only the constant functions are in the kernel of $\Delta$, so $G_{\bT_m} * \Delta f = f - c$ for some constant $c$. Since $G_{\bT_m} * \Delta f$ has mean zero, $c = \E_{x \in \bT_m}[f]$.

Both operators, $\Delta$ and convolution with $G_{\bT_m}$, have image $L_0^2(\bT_m)$. The two observations $\Delta (G_{\bT_m} * f) = f - \E_{x \in \bT_m}[f]$ and $G_{\bT_m} * \Delta f = f - \E_{x \in \bT_m}[f]$ imply that the composition in either order of the two operators results in orthogonal projection onto $L_0^2(\bT_m)$. Restricted to $L_0^2(\bT_m)$, the two operators are inverses. On $L^2(\bT_m)$, $G_{\bT_m}$ is the \emph{Moore-Penrose pseudoinverse} of $\Delta$.

We will require the following statements regarding discrete derivatives of $G_{\bT_m}$. Recall that the notation $A \ll_{a,b} B$ means that there is a constant $0 < C < \infty$ depending on $a,b$ such that $|A| \leq CB$.
\begin{lemma}\label{greens_function_estimate}
	For $a, b \in \zed_{\geq 0}$, $1 \leq a + b $, for $|i|, |j| \leq \frac{m}{2}$,
	\begin{equation}
	D_1^a D_2^b G_{\bT_m}(i,j) \ll_{a,b} \frac{1}{1+\left(i^2 + j^2 \right)^{\frac{a
				+ b}{2}}}.
	\end{equation}
\end{lemma}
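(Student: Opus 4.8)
The plan is to treat three overlapping regimes, in each reducing the torus estimate either to facts about $G_{\zed^2}$ or to a self-contained bootstrap. First, whenever $i^2+j^2$ is bounded it suffices to show $\|D_1^aD_2^bG_{\bT_m}\|_{L^\infty(\bT_m)}\ll_{a,b}1$ uniformly in $m$, and this follows from crude estimation of the Fourier series: since $\widehat{D_1^aD_2^bG_{\bT_m}}(k,\ell)=(e(k/m)-1)^a(e(\ell/m)-1)^b/(4-2(c(k/m)+c(\ell/m)))$, and $|e(t)-1|\asymp\|t\|_{\bR/\zed}$ while $4-2(c(x)+c(y))\gg\|x\|_{\bR/\zed}^2+\|y\|_{\bR/\zed}^2$, each coefficient is $\ll(\|k/m\|_{\bR/\zed}^2+\|\ell/m\|_{\bR/\zed}^2)^{(a+b-2)/2}$; dividing the sum over $(k,\ell)\in(\zed/m\zed)^2\setminus\{0\}$ by $m^2$ and counting lattice points in dyadic annuli gives $\ll m^{-(a+b)}\sum_{1\le R\le m}R^{a+b-1}\ll_{a,b}1$. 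It then remains to prove $|D_1^aD_2^bG_{\bT_m}(i,j)|\ll_{a,b}(i^2+j^2)^{-(a+b)/2}$ for $1\le i^2+j^2$, $|i|,|j|\le m/2$; absorbing finitely many values into the constant, we may assume $m$ exceeds any fixed bound depending on $a+b$.

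For $a+b\ge 3$ I would use periodization. The function $\phi:=D_1^aD_2^bG_{\zed^2}$ lies in $\ell^1(\zed^2)$ with $|\phi(x)|\ll_{a,b}(1+\|x\|_2)^{-(a+b)}$ (Theorem \ref{greens_function_asymptotic}), and $\sum_{x\in\zed^2}\phi(x)=\hat{\phi}(0,0)=0$, since $\hat{\phi}(\xi,\eta)=(e(\xi)-1)^a(e(\eta)-1)^b\hat{G}_{\zed^2}(\xi,\eta)$ extends continuously to $(\bR/\zed)^2$ with value $0$ at the origin when $a+b\ge 3$. Hence $\sum_{w\in m\zed^2}\phi(\,\cdot\,+w)$ converges absolutely, has mean zero on $\bT_m$, and has Laplacian $D_1^aD_2^b\e_{(0,0)}$ (the periodized right-hand side is just $D_1^aD_2^b\e_{(0,0)}$ once $m>a+b$). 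As $D_1^aD_2^bG_{\bT_m}$ is the unique mean-zero solution of $\Delta u=D_1^aD_2^b\e_{(0,0)}$ on $\bT_m$, we get $D_1^aD_2^bG_{\bT_m}(i,j)=\sum_{w\in m\zed^2}\phi(i+w_1,j+w_2)$ for $|i|,|j|\le m/2$. The $w=0$ term is $\ll(i^2+j^2)^{-(a+b)/2}$, and the remaining terms contribute $\ll\sum_{w\in m\zed^2\setminus\{0\}}\|w\|_2^{-(a+b)}\ll_{a,b}m^{-(a+b)}\ll(i^2+j^2)^{-(a+b)/2}$ because $i^2+j^2\le m^2$.

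For $a+b\in\{1,2\}$ periodization fails (the relevant $\hat{\phi}$ is bounded but discontinuous at the origin), so I would bootstrap downward from $a+b=3$, using that a discrete derivative along a lattice cycle sums to zero. Take first $a+b=2$; if $a\ge 1$, then for each fixed $j$ the function $i'\mapsto D_1^aD_2^bG_{\bT_m}(i',j)$ is a discrete $D_1$-derivative, hence sums to zero over $i'\in\zed/m\zed$, so
\[
D_1^aD_2^bG_{\bT_m}(i,j)=\frac1m\sum_{i'\in\zed/m\zed}\bigl(D_1^aD_2^bG_{\bT_m}(i,j)-D_1^aD_2^bG_{\bT_m}(i',j)\bigr).
\]
Each difference is a telescoping sum of $D_1^{a+1}D_2^bG_{\bT_m}(l,j)$ over an arc, hence in absolute value $\le\sum_{|l|\le m/2}|D_1^{a+1}D_2^bG_{\bT_m}(l,j)|\ll\sum_{|l|\le m/2}(1+l^2+j^2)^{-3/2}\ll(1+j^2)^{-1}$ by the case $a+b=3$; thus $|D_1^aD_2^bG_{\bT_m}(i,j)|\ll(1+j^2)^{-1}$ for all admissible $(i,j)$. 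Taking $j=\lfloor m/2\rfloor$ shows the anchor value $D_1^aD_2^bG_{\bT_m}(i,\lfloor m/2\rfloor)$ is $\ll m^{-2}$, and telescoping along the column $i$ with the case $a+b=3$ applied to $D_1^aD_2^{b+1}G_{\bT_m}$ gives $|D_1^aD_2^bG_{\bT_m}(i,j)|\ll m^{-2}+(1+i^2)^{-1}\ll(1+i^2)^{-1}$. Combining the two bounds yields $|D_1^aD_2^bG_{\bT_m}(i,j)|\ll(1+\max(i^2,j^2))^{-1}\asymp(i^2+j^2)^{-1}$; the cases $a=0$ and $D_1D_2$ are symmetric (starting the telescoping in the $D_2$ direction when $a=0$). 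The identical two-step argument, using the order-$2$ bounds just proved in place of the order-$3$ ones, then handles $a+b=1$.

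The step needing genuine care is the bootstrap for $a+b\in\{1,2\}$: one must take the first telescoping in the direction in which a full-cycle sum actually vanishes, check that every vertex visited by a telescoping arc still satisfies $|\cdot|\le m/2$ so that the lower-order estimates apply there, and verify that the one-variable bound $\sum_{|l|\le m/2}(1+l^2+j^2)^{-(k+1)/2}\ll(1+j^2)^{-k/2}$ — which holds exactly because $k=a+b\ge 1$ — comes out with the correct exponent. (Alternatively, the cases $a+b\in\{1,2\}$ can be deduced from the local limit theorem of Appendix \ref{local_limit_theorem_appendix}, but the argument above is self-contained given Theorem \ref{greens_function_asymptotic}.) The periodization step for $a+b\ge 3$ is routine once uniqueness of the mean-zero solution on $\bT_m$ is invoked, and values of $m$ below the threshold depending on $a,b$ are absorbed into the implied constant.
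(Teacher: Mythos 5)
Your proof is correct in its essentials, but it takes a genuinely different route from the paper's. The paper writes $D_1^aD_2^bG_{\bT_m}(i,j)$ as $\sum_{n,k,\ell}\delta_1^{*a}*\delta_2^{*b}*\nu_{\zed^2}^{*n}(i+km,j+\ell m)$, splits the time sum at $n=i^2+j^2$, estimates the short-time piece by the local limit theorem of Appendix \ref{local_limit_theorem_appendix}, and the long-time piece by passing to characters of $(\zed/m\zed)^2$; this handles all $a+b\geq 1$ in a single stroke. You instead periodize $G_{\zed^2}$ itself for $a+b\geq 3$ and then bootstrap downward to $a+b\in\{1,2\}$ by exploiting that a discrete derivative sums to zero along a torus cycle; the $L^\infty$ step (Part 1) is actually subsumed by the other two and could be omitted. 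The trade-off: your argument avoids the local limit theorem entirely, which is a real simplification if one only wants this estimate, but the paper needs that theorem anyway to prove the sharper asymptotic Lemma \ref{green_function_differentiated_asymptotic}, so nothing is saved globally. One imprecision worth flagging: for $a+b\geq 3$ your periodization argument requires the full decay $|D_1^aD_2^bG_{\zed^2}(x)|\ll_{a,b}\|x\|_2^{-(a+b)}$ both for the $w=0$ term and for the sum over $w\in m\zed^2\setminus\{0\}$, and you attribute this to Theorem \ref{greens_function_asymptotic}. As quoted, that theorem has error $O(\|x\|_2^{-4})$, which a discrete derivative does not improve, so it only yields $\|x\|_2^{-\min(a+b,4)}$; for $a+b\geq 5$ you need the full asymptotic expansion from \cite{FU96} (which the paper also tacitly invokes in Section \ref{Functions_harmonic_modulo_1}, so the underlying fact is fine, but the citation should be to the full expansion, not the truncated Theorem \ref{greens_function_asymptotic}). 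Since the paper only applies the lemma with $a+b\leq 3$, this does not affect anything downstream. The bootstrap for $a+b\in\{1,2\}$ is sound: the cycle sum vanishing, the bound on a telescoping arc by the sum over the whole cycle with representatives in $[-m/2,m/2]$, and the one-variable estimate $\sum_{|l|\leq m/2}(1+l^2+j^2)^{-(k+1)/2}\ll(1+j^2)^{-k/2}$ for $k\geq 1$ all check out, as does the observation that $m^{-2}\ll(1+i^2)^{-1}$ under the hypothesis $|i|\leq m/2$.
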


In the case $a+b= 1$ the following asymptotic evaluation holds.
\begin{lemma}\label{green_function_differentiated_asymptotic}
	Let $m \geq 2$ and $0 \leq i, j \leq \frac{m}{2}$. Set $R = \sqrt{i^2+j^2}$.  
	There is a constant $c > 0$ such that, as $m \to \infty$, for $0<R < 
	\frac{m^{\frac{1}{2}}}{(\log m)^{\frac{1}{4}}}$,
	\begin{align}
	D_1 G_{\bT_m}(i,j) &= - \frac{ci}{i^2 + j^2} + O\left(\frac{1}{i^2 + 
		j^2}\right),\\ \notag
	D_2 G_{\bT_m}(i,j) &= - \frac{cj}{i^2 + j^2} + O\left(\frac{1}{i^2 + 
		j^2}\right).
	\end{align}
	
\end{lemma}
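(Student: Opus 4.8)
The plan is to transfer the $\zed^2$ asymptotics of Theorem \ref{greens_function_asymptotic} to the torus and control the discrepancy with a discrete maximum principle. By the evident symmetries $G_{\bT_m}(i,j) = G_{\bT_m}(-i,j) = G_{\bT_m}(i,-j) = G_{\bT_m}(j,i)$, inherited from those of $\nu$, it suffices to treat $D_1 G_{\bT_m}$, the formula for $D_2 G_{\bT_m}$ following by exchanging coordinates. I may also assume $R = \sqrt{i^2+j^2}$ exceeds a fixed large constant $R_0$, since for $R \le R_0$ all three quantities in the asserted identity are $O(1)$ — the left side by Lemma \ref{greens_function_estimate} — and the identity then holds after enlarging the implied constant.

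First I would record the $\zed^2$ computation. From $D_1 G_{\zed^2}(i,j) = G_{\zed^2}(i+1,j) - G_{\zed^2}(i,j)$ and the expansion \eqref{G_expansion}, the logarithmic terms contribute
\[
 -\frac{1}{4\pi}\log\!\left(1 + \frac{2i+1}{i^2+j^2}\right) = -\frac{i}{2\pi(i^2+j^2)} + O\!\left(\frac{1}{i^2+j^2}\right),
\]
where expanding the logarithm is legitimate because $R \geq R_0$, and we use $(2i+1)^2 \ll i^2 + j^2$. Each of the remaining terms in \eqref{G_expansion} is $O(\|x\|_2^{-2})$ at both $(i+1,j)$ and $(i,j)$, hence contributes $O((i^2+j^2)^{-1})$ to the difference. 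This identifies $c = \tfrac{1}{2\pi}$ and gives $D_1 G_{\zed^2}(i,j) = -\frac{i}{2\pi(i^2+j^2)} + O((i^2+j^2)^{-1})$; the same computation also yields the crude bound $|D_1 G_{\zed^2}(x)| \ll \|x\|_2^{-1}$ for $\|x\|_2 \geq 2$.

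Next I would set $H_m := G_{\bT_m} - G_{\zed^2}$, with $G_{\bT_m}$ regarded as an $m\zed^2$-periodic function on $\zed^2$. Since $\Delta G_{\bT_m} = \e_{(0,0)} - \tfrac{1}{m^2}$ on $\bT_m$, the periodic extension satisfies $\Delta H_m(x) = \one\{x \in m\zed^2 \setminus \{(0,0)\}\} - \tfrac{1}{m^2}$ on $\zed^2$, so $\Delta H_m \equiv -\tfrac{1}{m^2}$ throughout $\{\|x\|_\infty < m\}$. Therefore $u := D_1 H_m$ obeys $\Delta u(i,j) = (\Delta H_m)(i+1,j) - (\Delta H_m)(i,j) = 0$ whenever $\|(i,j)\|_\infty \leq m-2$; in particular $u$ is discrete harmonic on the $\ell^1$-ball $B = \{x : \|x\|_1 \leq \rho\}$ with $\rho := \lfloor m/2 \rfloor - 1$. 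On the vertex boundary $\partial B = \{\|x\|_1 = \rho+1\}$ every site has $|i|, |j| \leq m/2$ and $\|x\|_2 \geq \|x\|_1/\sqrt{2} \asymp m$, so Lemma \ref{greens_function_estimate} (case $a + b = 1$) gives $|D_1 G_{\bT_m}(x)| \ll \|x\|_2^{-1} \ll m^{-1}$, while the crude bound above gives $|D_1 G_{\zed^2}(x)| \ll m^{-1}$, whence $|u| \ll m^{-1}$ on $\partial B$. By the discrete maximum principle, $|u(x)| \ll m^{-1}$ for every $x \in B$. A point $(i,j)$ in the stated range has $i + j \leq 2R < \rho + 1$ once $m$ is large, so $D_1 H_m(i,j) = O(m^{-1})$; combining with the first computation, $D_1 G_{\bT_m}(i,j) = -\frac{i}{2\pi(i^2+j^2)} + O((i^2+j^2)^{-1}) + O(m^{-1})$, and the hypothesis $i^2+j^2 = R^2 < m/(\log m)^{1/2}$ absorbs the last error into $O((i^2+j^2)^{-1})$.

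The delicate point — and the only place the geometry really matters — is the choice of radius $\rho \asymp m/2$ for the maximum-principle ball. It must be small enough that $\partial B$ lies in the fundamental domain $|i|,|j| \leq m/2$ where Lemma \ref{greens_function_estimate} applies and where each boundary site sits at torus-distance $\asymp m$ from the origin, so that $D_1 G_{\bT_m}$ is genuinely $O(1/m)$ there in spite of the wrap-around; yet it must be large enough that $u$ remains discrete harmonic on all of $B$, which is where the fact that $\Delta H_m$ equals the exact constant $-1/m^2$ off the sink images $m\zed^2$ is used. I expect this to be the main obstacle: evaluating $D_1 H_m$ head-on by summing $\tfrac{1}{4}\sum_n D_1\nu^{*n}$ over the torus would instead force one to handle the conditionally convergent wrap-around sums $\sum_{k \neq 0} D_1 G_{\zed^2}(\,\cdot + mk)$, which the maximum principle avoids.
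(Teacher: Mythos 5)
Your argument is correct, and it proceeds by a genuinely different route than the paper's. The paper proves Lemma \ref{green_function_differentiated_asymptotic} head-on by splitting $D_1 G_{\bT_m}(i,j) = \frac{1}{4}\sum_n D_1\nu^{*n}(i,j)$ at a threshold $T \asymp m^2/\log m$: the terms $n < T$ are handled by the local limit theorem (with the off-diagonal images $(k,\ell)\neq(0,0)$ contributing only $O(1/m^2)$), and the tail $n \geq T$ is controlled by passing to the torus Fourier side, yielding an $O(1/\sqrt{T}) = O(\sqrt{\log m}/m)$ error. Your approach instead takes the $\zed^2$ asymptotics as a black box from Theorem \ref{greens_function_asymptotic}, sets $H_m = G_{\bT_m} - G_{\zed^2}$, observes $\Delta H_m \equiv -1/m^2$ on $\{\|x\|_\infty < m\}$ so that $D_1 H_m$ is discrete harmonic on an $\ell^1$-ball of radius $\asymp m/2$, and then imports a uniform $O(1/m)$ boundary estimate (from Lemma \ref{greens_function_estimate} for the torus side and the crude $\zed^2$ derivative bound for the other) into the interior via the discrete maximum principle. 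The checks all go through: the Laplacian of $D_1 H_m$ vanishes exactly where you claim, the boundary lies in the fundamental domain $|i|,|j|\le m/2$ where Lemma \ref{greens_function_estimate} applies, and the final absorption of the $O(1/m)$ term indeed holds under the hypothesis $R^2 < m/(\log m)^{1/2}$ (in fact your method would tolerate the weaker condition $R^2 \ll m$, since it never introduces the $\sqrt{\log m}$ loss the paper incurs from the tail sum). What your approach buys is a shorter, conceptually cleaner handling of the torus correction, an explicit identification $c = \tfrac{1}{2\pi}$, and a slightly wider range of validity; what it costs is self-containment, since you still invoke Lemma \ref{greens_function_estimate} (itself proved via the local limit theorem) for the boundary estimate and Theorem \ref{greens_function_asymptotic} (an external result from \cite{FU96}) for the main term, whereas the paper's direct computation re-derives the relevant asymptotics from the local limit theorem it has already established.
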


The proofs of Lemmas \ref{greens_function_estimate} and 
\ref{green_function_differentiated_asymptotic} are given in Appendix 
\ref{local_limit_theorem_appendix}.

\begin{lemma}\label{z_2_approx_lemma}
	If $a + b \geq 2$ then $D_1^a D_2^b G_{\zed^2}$ is in $\ell^2(\zed^2)$ and for 
	each fixed $i, j$, $D_1^a D_2^b G_{\bT_m}(i,j) \to D_1^a D_2^b G_{\zed^2}(i,j)$ as $m 
	\to \infty$.
\end{lemma}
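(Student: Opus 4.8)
The plan is to pass to Fourier space, where both $D_1^a D_2^b G_{\zed^2}$ and $D_1^a D_2^b G_{\bT_m}$ are governed by the single function
\[
g_{a,b}(\zeta_1,\zeta_2) := \frac{(e(\zeta_1)-1)^a (e(\zeta_2)-1)^b}{4 - 2(c(\zeta_1) + c(\zeta_2))},
\qquad (\zeta_1,\zeta_2) \in (\bR/\zed)^2 \setminus \{(0,0)\},
\]
with the convention $g_{a,b}(0,0) := 0$. Using $|e(t)-1| \asymp \|t\|_{\bR/\zed}$ and $1 - c(t) \asymp \|t\|_{\bR/\zed}^2$, one finds $|g_{a,b}(\zeta)| \ll_{a,b} \|\zeta\|_2^{a+b-2}$ near the origin, which is \emph{bounded} on the fundamental domain precisely because $a + b \geq 2$; this is the only place the hypothesis enters. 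Away from the origin $g_{a,b}$ is continuous, since $4 - 2(c(\zeta_1)+c(\zeta_2)) = 2(1-c(\zeta_1)) + 2(1-c(\zeta_2))$ vanishes on $(\bR/\zed)^2$ only at $\zeta = (0,0)$.

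The $\ell^2$ assertion I would get immediately from the decay estimate $D_1^a D_2^b G_{\zed^2}(x) = O_{a,b}(\|x\|_2^{-(a+b)})$ recorded after Theorem~\ref{greens_function_asymptotic}: since $2(a+b) \geq 4 > 2$, one has $\sum_{x \in \zed^2}|D_1^a D_2^b G_{\zed^2}(x)|^2 \ll_{a,b} \sum_{x \neq 0}\|x\|_2^{-2(a+b)} < \infty$. Knowing $f := D_1^a D_2^b G_{\zed^2} = \delta_1^{*a} * \delta_2^{*b} * G_{\zed^2} \in \ell^2(\zed^2)$, I would apply $\Delta$ (which commutes with convolution) and use $\Delta G_{\zed^2} = \e_{(0,0)}$ to get $\Delta f = \delta_1^{*a} * \delta_2^{*b}$, so \eqref{Delta-Fourier} gives $(4 - 2(c(\zeta_1)+c(\zeta_2)))\hat f(\zeta) = (e(\zeta_1)-1)^a(e(\zeta_2)-1)^b$ a.e. Hence $\hat f = g_{a,b}$ as elements of $L^2((\bR/\zed)^2)$; since $g_{a,b}$ is bounded it lies in $L^1((\bR/\zed)^2)$, and Fourier inversion yields
\[
D_1^a D_2^b G_{\zed^2}(i,j) = \int_{(\bR/\zed)^2} g_{a,b}(\zeta)\, e(\zeta_1 i + \zeta_2 j)\, d\zeta .
\]

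On the torus the same computation is purely finite-dimensional. From $\Delta G_{\bT_m} = \e_{(0,0)} - m^{-2}$ and $\sum_{x} G_{\bT_m}(x) = 0$ one reads off $\hat G_{\bT_m}(\zeta) = (4 - 2(c(\zeta_1)+c(\zeta_2)))^{-1}$ for $\zeta$ in the dual group $(\tfrac1m\zed/\zed)^2 \setminus \{0\}$ and $\hat G_{\bT_m}(0) = 0$. Multiplying by the Fourier multiplier $(e(\zeta_1)-1)^a(e(\zeta_2)-1)^b$ of $D_1^a D_2^b$, the Fourier coefficients of $D_1^a D_2^b G_{\bT_m}$ are exactly $g_{a,b}(\zeta)$ (including at $\zeta = 0$, where both sides are $0$), and Fourier inversion on $\bT_m$ gives
\[
D_1^a D_2^b G_{\bT_m}(i,j) = \frac{1}{m^2}\sum_{\zeta \in (\frac1m\zed/\zed)^2} g_{a,b}(\zeta)\, e(\zeta_1 i + \zeta_2 j).
\]
This is the Riemann sum, over the uniform mesh of size $1/m$ with each cell tagged at its corner, of the bounded function $h(\zeta) := g_{a,b}(\zeta)\,e(\zeta_1 i + \zeta_2 j)$ on $(\bR/\zed)^2$, which is continuous except at $\zeta=(0,0)$. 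So the claimed limit is precisely the statement that this Riemann sum converges to $\int_{(\bR/\zed)^2} h = D_1^a D_2^b G_{\zed^2}(i,j)$.

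The only point that requires care is convergence of the Riemann sums near the singularity. I would handle it in the standard way: split off a ball $B = \{\|\zeta\|_{\bR/\zed} < \delta\}$; the grid-sum mass carried by $B$ is at most $\|h\|_\infty$ times $m^{-2}\,\#\bigl(B \cap (\tfrac1m\zed/\zed)^2\bigr) \ll \|h\|_\infty \delta^2$, while $\int_B |h| \ll \|h\|_\infty \delta^2$ as well; on the compact complement $h$ is uniformly continuous, so the truncated Riemann sums converge to $\int_{B^c} h$ as $m \to \infty$; letting $\delta \to 0$ finishes the argument. I expect this, together with the verification that dividing the order-$(a+b)$ zero of the numerator by the order-$2$ zero of the denominator leaves a bounded function — the one place $a+b \geq 2$ is used — to be the only mildly delicate steps; everything else is bookkeeping with the Fourier formulas already in this section.
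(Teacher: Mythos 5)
Your proof is correct and follows essentially the same route as the paper's: both identify $\hat f$ and $\hat{D_1^a D_2^b G_{\bT_m}}$ with the bounded function $g_{a,b}$ on $(\bR/\zed)^2$, then interpret the finite inverse Fourier transform as a Riemann sum converging to the integral. Your only divergences are cosmetic — you obtain the $\ell^2$ membership from the spatial decay $O(\|x\|_2^{-(a+b)})$ whereas the paper quotes Parseval, and you spell out the Riemann-sum convergence near the singularity (which the paper states in one line); that extra care is welcome since $g_{a,b}$ is discontinuous at the origin when $a+b=2$, but the underlying argument is the same.
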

\begin{proof}
	The Fourier transform of $D_1^a D_2^b G_{\zed^2}$ is given by, for $x,y \in \bR/\zed$, not both 0,
	\begin{equation}
	\widehat{D_1^a D_2^b G_{\zed^2}}(x,y) = \frac{ \left(e(x)-1 
		\right)^a\left(e(y)-1 \right)^b}{4- 2(c(x) + c(y))}  . 
	\end{equation}
	This function is bounded on $(\bR/\zed)^2$, which proves the first claim by 
	Parseval.  
	
	The Fourier transform of $D_1^a D_2^b G_{\bT_m}$ at frequency $(x,y) \in (\zed/m\zed)^2$ is given by
	$
	\widehat{D_1^a D_2^b G_{\zed^2}}\left(\frac{x}{m}, \frac{y}{m}\right).
	$
	Taking the group inverse Fourier transform,
	\begin{equation}
	D_1^a D_2^b G_{\bT_m}(i,j) = \frac{1}{m^2} \sum_{x, y \in \zed/m\zed} \widehat{D_1^a 
		D_2^b G_{\zed^2}}\left(\frac{x}{m}, \frac{y}{m}\right) e\left(\frac{ix +jy}{m} 
	\right).
	\end{equation}
	Treating this as a Riemann sum and letting $m \to \infty$ obtains the limit  
	\begin{equation}
	D_1^a D_2^b G_{\zed^2}(i,j) = \int_{(\bR/\zed)^2}\widehat{D_1^a D_2^b 
		G_{\zed^2}}\left(x, y\right) e\left(ix+jy \right)dxdy. \qedhere
	\end{equation}
\end{proof}

\subsection{Exponential sums}
This section collects the two results from the classical theory of exponential sums that are needed for the proof of Lemma \ref{savings_lemma}, which is the key ingredient in the upper bound of Theorem \ref{mixing_time_theorem}. For further references, see \cite{T86,IK04,M94}.

The first result is van der Corput's inequality. We will only need the case $H = 1$. See \cite{S04} for a motivation and proof of this statement.
\begin{theorem}[van der Corput's Lemma] Let $H$ be a positive integer.  Then 
	for 
	any complex numbers $y_1, y_2, ..., y_N$,
	\begin{equation}
	\left|\sum_{n=1}^N y_n\right|^2 \leq \frac{N+H}{H+1}\sum_{n=1}^N |y_n|^2 + 
	\frac{2(N+H)}{H+1} \sum_{h=1}^H \left(1 - 
	\frac{h}{H+1}\right)\left|\sum_{n=1}^{N-h}y_{n+h}\overline{y_n}\right|.
	\end{equation}
	
\end{theorem}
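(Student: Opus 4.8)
The plan is to prove van der Corput's inequality by the classical ``average over short shifts, then apply Cauchy--Schwarz'' argument. First I would extend the sequence by setting $y_n = 0$ for every integer $n \notin \{1,\ldots,N\}$, so that all sums below may be taken over $n \in \zed$ without changing their values. The starting point is the shift-averaging identity
\begin{equation}
(H+1) \sum_{n=1}^N y_n = \sum_{n \in \zed} \sum_{h=0}^{H} y_{n+h},
\end{equation}
valid because for each fixed $h$ the inner shift leaves $\sum_{n \in \zed} y_{n+h} = \sum_{m=1}^N y_m$ unchanged. The block $\sum_{h=0}^H y_{n+h}$ vanishes unless $[n,n+H]$ meets $[1,N]$, i.e.\ unless $n \in \{1-H,\ldots,N\}$, a range of exactly $N+H$ integers, so Cauchy--Schwarz gives
\begin{equation}
(H+1)^2 \left| \sum_{n=1}^N y_n \right|^2 \leq (N+H) \sum_{n \in \zed} \left| \sum_{h=0}^{H} y_{n+h} \right|^2.
\end{equation}

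Next I would expand the right-hand square as $\sum_{h,h'=0}^{H} y_{n+h}\overline{y_{n+h'}}$, sum over $n$, and reorganize the resulting double sum according to the difference $k = h - h'$. After the substitution $m = n + h'$, the pair $(h,h')$ contributes $\sum_{m \in \zed} y_{m+k}\overline{y_m}$; there are $H+1$ pairs with $k=0$ and $H+1-|k|$ pairs for each $k$ with $0 < |k| \leq H$. The diagonal $k=0$ produces $(H+1)\sum_{m=1}^N |y_m|^2$. For $k$ and $-k$ combined, $\sum_{m \in \zed} y_{m-k}\overline{y_m}$ is the complex conjugate of $\sum_{m \in \zed} y_{m+k}\overline{y_m}$, so with $h = |k| \in \{1,\ldots,H\}$ their joint contribution is $2(H+1-h)\,\RE\!\left(\sum_{m=1}^{N-h} y_{m+h}\overline{y_m}\right)$, the sum truncating to $1 \leq m \leq N-h$ because of the padding.

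Finally I would bound each real part by the corresponding modulus, substitute back into the Cauchy--Schwarz estimate, divide through by $(H+1)^2$, and use $\tfrac{H+1-h}{H+1} = 1 - \tfrac{h}{H+1}$ to obtain the stated inequality. The only genuine obstacle here is the index bookkeeping: one must keep careful track of which ranges of $n$ and of $m$ actually carry nonzero terms after zero-padding, and correctly count the multiplicity $H+1-|k|$ of each diagonal $h-h'=k$ inside the $(H+1)\times(H+1)$ grid of pairs $(h,h')$. Once those are pinned down, the rest is a routine rearrangement.
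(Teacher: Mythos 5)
Your proof is correct and complete. The paper itself gives no proof of this lemma — it only cites Steele's \emph{Cauchy--Schwarz Master Class} (reference [S04]) — so there is no in-text argument to compare against. The argument you give (zero-pad, average over $H+1$ shifts, apply Cauchy--Schwarz to the $N+H$ nonvanishing blocks, expand the square, reorganize along diagonals $h-h'=k$, pair $k$ with $-k$ to obtain twice a real part, then bound by the modulus) is precisely the classical proof presented in that reference, and every step checks: the shift identity is exact, the count of $N+H$ nonvanishing blocks and the multiplicities $H+1-|k|$ in the $(H+1)\times(H+1)$ grid are right, the truncation of the inner sum to $1\le m\le N-h$ follows from the zero-padding, and dividing by $(H+1)^2$ with $\tfrac{H+1-h}{H+1}=1-\tfrac{h}{H+1}$ recovers the stated inequality. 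No gap.
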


The second result treats summation of a linear phase function and is 
fundamental. 
\begin{lemma}
\label{geometric}
	Let $\alpha \in \bR\setminus \zed$ and let $N \geq 1$. Then
	\begin{equation}
	\left| \sum_{j = 1}^N e(\alpha j) \right| \ll \min\left(N, \|\alpha\|_{\bR/\zed}^{-1}\right).
	\end{equation}
	
\end{lemma}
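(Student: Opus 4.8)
The plan is to bound $\left|\sum_{j=1}^N e(\alpha j)\right|$ by $N$ and by a constant multiple of $\|\alpha\|_{\bR/\zed}^{-1}$ separately, and then combine these two estimates. The first bound is immediate: each summand has modulus $1$, so the triangle inequality gives $\left|\sum_{j=1}^N e(\alpha j)\right| \le N$.

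For the second bound I would sum the geometric series. Since $\alpha \notin \zed$ we have $e(\alpha) \ne 1$, so
\[
\sum_{j=1}^N e(\alpha j) = e(\alpha)\cdot \frac{e(\alpha N) - 1}{e(\alpha) - 1},
\]
and hence $\left|\sum_{j=1}^N e(\alpha j)\right| \le \frac{2}{|e(\alpha)-1|}$ because the numerator $e(\alpha N)-1$ has modulus at most $2$. It then remains to bound $|e(\alpha) - 1|$ from below in terms of $\|\alpha\|_{\bR/\zed}$. Writing $\beta = \|\alpha\|_{\bR/\zed} \in (0, \tfrac12]$, periodicity gives $|e(\alpha) - 1| = |e(\beta)-1| = 2\sin(\pi\beta)$, and the elementary inequality $\sin(\pi t) \ge 2t$ for $t \in [0,\tfrac12]$ (valid because $\sin$ is concave on $[0,\pi]$, so its graph lies above the chord joining $(0,0)$ to $(\tfrac{\pi}{2},1)$, i.e.\ Jordan's inequality) yields $|e(\alpha)-1| \ge 4\|\alpha\|_{\bR/\zed}$. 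Therefore $\left|\sum_{j=1}^N e(\alpha j)\right| \le \tfrac12\,\|\alpha\|_{\bR/\zed}^{-1}$.

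Combining the two estimates gives $\left|\sum_{j=1}^N e(\alpha j)\right| \le \min\bigl(N,\,\tfrac12\|\alpha\|_{\bR/\zed}^{-1}\bigr) \ll \min\bigl(N,\,\|\alpha\|_{\bR/\zed}^{-1}\bigr)$, which is the claim. This is a standard estimate and presents no real obstacle; the only point that requires a moment's care is the lower bound $|e(\alpha)-1| \ge 4\|\alpha\|_{\bR/\zed}$, which quantifies how slowly $e(\alpha)$ can approach $1$ as $\alpha$ approaches an integer and is exactly what converts the geometric-series denominator into a bound in terms of the distance of $\alpha$ to $\zed$.
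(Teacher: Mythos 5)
Your proof is correct and takes exactly the approach the paper intends — the paper's entire proof is the single line ``Sum the geometric series,'' and you have simply supplied the routine details (the trivial bound by $N$, the geometric-series formula, and the lower bound $|e(\alpha)-1| = 2\sin(\pi\|\alpha\|_{\bR/\zed}) \geq 4\|\alpha\|_{\bR/\zed}$ via Jordan's inequality). Nothing to add.
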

\begin{proof}
	Sum the geometric series.
\end{proof}

\section{Classification of functions harmonic modulo 1}
\label{classification_section}

This section proves Theorem \ref{H_p_theorem}. Let $1 \leq p < \infty$, and recall that $\sH^p(\zed^2)$ is the set of all harmonic modulo 1 functions in $\ell^p(\zed^2)$. If $f \in \sH^p(\zed^2)$, then as $\|x\|_2 \to \infty$, $f(x) \to 0$ and therefore also $\Delta f(x) \to 0$. Since $\Delta f$ is integer-valued, it must be identically zero outside a ball of finite radius. Thus,
\begin{equation}
\label{Hp}
 \sH^p\left(\zed^2\right) = \left\{f \in \ell^p\left(\zed^2\right): \Delta f \in C^0(\zed^2) \right\}, \qquad 1 \leq p < \infty,
\end{equation}
where $C^0(\zed^2)$ is the space of integer-valued functions on $\zed^2$ with finite support, as in \eqref{C_0123}.

From Theorem \ref{greens_function_asymptotic}, we can derive the following formulas.

\begin{lemma}
\label{greens_function_derivs}
For nonzero $x = (x_1,x_2) \in \zed^2$, we have:
\begin{align}
\label{greens_deriv_formulas}
D_1 G_{\zed^2}(x_1,x_2) &= \frac{1}{2\pi} \cdot \frac{-x_1}{x_1^2 + x_2^2} + O\left( \|x\|_2^{-2} \right) \\
\notag D_2 G_{\zed^2}(x_1,x_2) &= \frac{1}{2\pi} \cdot \frac{-x_2}{x_1^2 + x_2^2} + O\left( \|x\|_2^{-2} \right) \\
\notag D_1^2 G_{\zed^2}(x_1,x_2) &= \frac{1}{2\pi} \cdot \frac{x_1^2 - x_2^2}{(x_1^2 + x_2^2)^2} + O\left( \|x\|_2^{-3} \right) \\
\notag D_1 D_2 G_{\zed^2}(x_1,x_2) &= \frac{1}{2\pi} \cdot \frac{2x_1 x_2}{(x_1^2 + x_2^2)^2} + O\left( \|x\|_2^{-3} \right) \\
\notag D_2^2 G_{\zed^2}(x_1,x_2) &= \frac{1}{2\pi} \cdot \frac{x_2^2 - x_1^2}{(x_1^2 + x_2^2)^2} + O\left( \|x\|_2^{-3} \right) \\
\notag D_1^a D_2^b G_{\zed^2}(x_1,x_2) &= O\left( \|x\|_2^{-3} \right), \qquad a+b = 3.
\end{align}
\end{lemma}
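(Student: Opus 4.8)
The plan is to treat $G_{\zed^2}$ as a perturbation of the smooth model function from Theorem \ref{greens_function_asymptotic} and to compare its discrete derivatives with ordinary partial derivatives. Write
\[
\tilde G(x) = -\frac{1}{2\pi}\log\|x\|_2 \;-\; a \;-\; b\,\frac{8x_1^2 x_2^2\|x\|_2^{-4}-1}{\|x\|_2^2}, \qquad x \neq (0,0),
\]
so that \eqref{G_expansion} reads $G_{\zed^2}(x) = \tilde G(x) + E(x)$ with $E(x) = O(\|x\|_2^{-4})$. Decompose $\tilde G = g + h + (\text{constant})$, where $g(x) = -\frac{1}{4\pi}\log(x_1^2+x_2^2)$ and $h(x) = -b\,\frac{8x_1^2 x_2^2\|x\|_2^{-4}-1}{\|x\|_2^2}$. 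Since $8x_1^2x_2^2/\|x\|_2^4 \in [0,2]$ is bounded and homogeneous of degree $0$, the function $h$ is homogeneous of degree $-2$ and smooth away from the origin, while $g$ is smooth away from the origin with $\partial^{\alpha}g$ homogeneous of degree $-|\alpha|$. Consequently, for any multi-index $\alpha$ with $|\alpha|\ge 1$ we have $\partial^{\alpha}g(\xi) = O(\|x\|_2^{-|\alpha|})$ and $\partial^{\alpha}h(\xi) = O(\|x\|_2^{-2-|\alpha|})$ uniformly for $\xi \in B_3(x)$, once $\|x\|_2$ is large enough that this ball avoids a fixed neighbourhood of the origin (so that $\|\xi\|_2 \asymp \|x\|_2$ there).

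First I would record the continuous derivatives of $g$: a direct computation gives $\partial_1 g = \frac{1}{2\pi}\cdot\frac{-x_1}{x_1^2+x_2^2}$, $\partial_2 g = \frac{1}{2\pi}\cdot\frac{-x_2}{x_1^2+x_2^2}$, $\partial_1^2 g = \frac{1}{2\pi}\cdot\frac{x_1^2-x_2^2}{(x_1^2+x_2^2)^2}$, $\partial_1\partial_2 g = \frac{1}{2\pi}\cdot\frac{2x_1x_2}{(x_1^2+x_2^2)^2}$ and $\partial_2^2 g = \frac{1}{2\pi}\cdot\frac{x_2^2-x_1^2}{(x_1^2+x_2^2)^2}$, which are exactly the leading terms claimed in \eqref{greens_deriv_formulas}; the third derivatives of $g$ are homogeneous of degree $-3$, hence $O(\|x\|_2^{-3})$, which is the assertion for $a+b=3$.

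Next I would pass from continuous to discrete derivatives by Taylor expansion. For a function $F$ that is $C^{a+b+1}$ on the relevant ball, the $(a+b)$-th finite difference satisfies $D_1^a D_2^b F(x) = \partial_1^a\partial_2^b F(x) + O\bigl(\max_{|\gamma|=a+b+1}\sup_{\xi\in B_{a+b}(x)}|\partial^{\gamma}F(\xi)|\bigr)$, the error coming from the Lagrange remainder along the segments joining $x$ to the $(a+1)(b+1)$ points entering the difference. Applying this with $F=g$ gives $D_1^a D_2^b g(x) = \partial_1^a\partial_2^b g(x) + O(\|x\|_2^{-(a+b)-1})$; with $F=h$ it gives $D_1^a D_2^b h(x) = O(\|x\|_2^{-2-(a+b)})$; and for the pointwise remainder $E$ the triangle inequality alone gives $D_1^a D_2^b E(x) = O(\|x\|_2^{-4})$, since every translate of $x$ occurring in the difference has norm $\ge \tfrac12\|x\|_2$ when $\|x\|_2$ is large. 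Since the constant in $\tilde G$ is killed by the differences, adding the three contributions gives, for $1\le a+b\le 3$, the identity $D_1^a D_2^b G_{\zed^2}(x) = \partial_1^a\partial_2^b g(x) + O(\|x\|_2^{-(a+b)-1})$, which is precisely each line of \eqref{greens_deriv_formulas} (error $O(\|x\|_2^{-2})$ for $a+b=1$, error $O(\|x\|_2^{-3})$ for $a+b=2$, and the bare bound $O(\|x\|_2^{-3})$ for $a+b=3$). For the finitely many nonzero $x$ with $\|x\|_2$ below the threshold where these manipulations apply, every asserted identity holds trivially after enlarging the implied constant.

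The step I expect to need the most care is the handling of the remainder $E$: the $O(\|x\|_2^{-4})$ in Theorem \ref{greens_function_asymptotic} is only a pointwise estimate and cannot be differentiated term by term, so one must be content with bounding its discrete derivatives crudely. This is harmless because $O(\|x\|_2^{-4})$ is of strictly smaller order than every error term being claimed; the rest is routine bookkeeping, namely checking that the balls $B_{a+b}(x)$ avoid the origin and that $\|\xi\|_2\asymp\|x\|_2$ on them. (Alternatively one could work directly from the Fourier representation $\hat G_{\zed^2}(x,y) = (4-2(c(x)+c(y)))^{-1}$ and analyze the resulting oscillatory integrals near $(0,0)$, but the route through Theorem \ref{greens_function_asymptotic} is shorter given that it is already quoted.)
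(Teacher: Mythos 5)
Your proof is correct. The route is the same at its core as the paper's — both start from the Fukai--Uchiyama expansion in Theorem \ref{greens_function_asymptotic} and reduce the problem to Taylor expansion — but you organize the comparison between discrete and continuous derivatives through a single general statement: for a $C^{a+b+1}$ function $F$, the finite difference $D_1^a D_2^b F(x)$ equals $\partial_1^a\partial_2^b F(x)$ up to an error controlled by the $(a+b+1)$-th derivatives of $F$ on a fixed ball, applied separately to the logarithmic term $g$, the homogeneous correction $h$, and the pointwise remainder $E$. The paper instead computes the first difference $D_1 G_{\zed^2}$ explicitly (writing the log piece as $-\frac{1}{4\pi}\log(1 + (2x_1+1)/\|x\|_2^2)$, Taylor expanding, and bounding the bracketed corrections by common denominators), then declares the higher-order cases ``similar'' and omits them. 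Your formulation actually proves all orders $1 \le a+b \le 3$ uniformly with one argument, removing that hand-wave, and you correctly flag and handle the only delicate point: the $O(\|x\|_2^{-4})$ remainder $E$ is merely a pointwise bound, so it cannot be differentiated, but a crude triangle-inequality bound on its finite differences suffices because $O(\|x\|_2^{-4})$ is of strictly smaller order than every error term claimed. The finite set of small $x$ is absorbed into the constant, as you note. No gaps.
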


\begin{proof}
For $(x_1,x_2) \notin \{(0,0),(-1,0)\}$, Theorem \ref{greens_function_asymptotic} gives
\begin{multline}
D_1 G_{\zed^2}(x_1,x_2) \\
\begin{aligned}
&= -\frac{1}{4\pi} \log\left( 1 + \frac{2x_1 + 1}{x_1^2 + x_2^2} \right) + b \left[ \frac{1}{(x_1+1)^2 + x_2^2} - \frac{1}{x_1^2 + x_2^2} \right] \\
&\quad - 8b \left[ \frac{(x_1+1)^2 x_2^2}{[(x_1+1)^2 + x_2^2]^3} - \frac{x_1^2 x_2^2}{(x_1^2 + x_2^2)^3} \right] + O\left( \|x\|_2^{-4} \right).
\end{aligned}
\end{multline}
Expand the log term into a Taylor series. The quantities in brackets are $O\left( \|x\|_2^{-3} \right)$, as follows from using a common denominator. Thus
\begin{equation}
D_1 G_{\zed^2}(x_1,x_2) = -\frac{1}{2\pi} \cdot \frac{x_1}{x_1^2 + x_2^2} + \frac{1}{4\pi} \cdot \frac{x_1^2 - x_2^2}{(x_1^2 + x_2^2)^2} + O\left( \|x\|_2^{-3} \right).
\end{equation}
This and the analogous statement for $D_2 G_{\zed^2}$ prove the first two formulas in \eqref{greens_deriv_formulas}. The remainder of the lemma is proved similarly by taking further discrete derivatives; we omit the details.
\end{proof}

\begin{proof}[Proof of Theorem \ref{H_p_theorem}]
Using the terminology introduced in Section \ref{background_section}, the desired statements are:
\begin{align}
\label{H_p_2}
\sH^1(\zed^2) &= \{G_{\zed^2} * v : v \in C^3(\zed^2)\} + C^0(\zed^2) \\
\notag \sH^p(\zed^2) &= \{G_{\zed^2} * v : v \in C^2(\zed^2)\}, \quad 1 < p \leq 2 \\
\notag \sH^p(\zed^2) &= \{G_{\zed^2} * v : v \in C^1(\zed^2)\}, \quad 2 < p < \infty.
\end{align}
If $v \in C^k(\zed^2)$ for $1 \leq k \leq 3$, then $G_{\zed^2} * v$ is a finite integer linear combination of translates of $k$-th derivatives of $G_{\zed^2}$. It follows from Lemma \ref{greens_function_derivs} that $(G_{\zed^2} * v)(x) = O\left( \|x\|_2^{-k} \right)$, so $G_{\zed^2} * v \in \ell^p(\zed^2)$ as long as $p > 2/k$. Since $\Delta (G_{\zed^2} * v) = v$ is $\zed$-valued, we conclude that $G_{\zed^2} * v \in \sH^p(\zed^2)$. Along with the observation that $C^0(\zed^2) \subset \sH^1(\zed^2)$, this proves that for each line of \eqref{H_p_2}, the set on the left side contains the set on the right side.

We prove the forward inclusions in \eqref{H_p_2} in reverse order, from the third line to the first line. Let $f \in \sH^p(\zed^2)$ for some $1 \leq p < \infty$, and let $v = \Delta f$. By \eqref{Hp}, $v \in C^0(\zed^2)$, so there is $R > 0$ such that the support of $v$ is contained in the $\ell^1$-ball of radius $R$ about the origin. In
\begin{equation}
(G_{\zed^2} * v)(x) = \sum_{y \in \zed^2} G_{\zed^2}(x-y) v(y) = \sum_{\|y\|_1 \leq R} G_{\zed^2}(x-y) v(y),
\end{equation}
write $G_{\zed^2}(x) - G_{\zed^2}(x-y)$ as a sum of at most $R$ first derivatives of $G_{\zed^2}$, and use Lemma \ref{greens_function_derivs} to see that $G_{\zed^2}(x) - G_{\zed^2}(x-y) = O_R\left( \|x\|_2^{-1} \right)$. Thus, setting $B = \|v\|_1$ and $a = \sum_{y \in \zed^2} v(y)$,
\begin{equation}
(G_{\zed^2} * v)(x) = a G_{\zed^2}(x) + O_{B,R}\left( \|x\|_2^{-1} \right).
\end{equation}

Set $h(x) = (G_{\zed^2} * v)(x) - f(x)$, so that $\Delta h \equiv 0$. If $a \neq 0$, then as $\|x\|_2 \to \infty$, we have $(G_{\zed^2} * v)(x) \to -\sgn(a) \cdot \infty$ while $f(x) \to 0$, meaning that $h(x) \to -\sgn(a) \cdot \infty$. This violates the maximum principle, so $a = 0$ and $v \in C^1(\zed^2)$. We now have $h(x) \to 0$ as $\|x\|_2 \to \infty$, so again by the maximum principle, $h \equiv 0$ and $f = G_{\zed^2} * v$. This proves the forward inclusion in the third line of \eqref{H_p_2}.

Suppose that $p \leq 2$. Since $v \in C^1(\zed^2)$, we can write $v = \delta_1 * v_1 + \delta_2 * v_2$ for some $v_1,v_2 \in C^0(\zed^2)$. Then
\begin{align}
(G_{\zed^2} * v)(x) &= (D_1 G_{\zed^2} * v_1)(x) + (D_2 G_{\zed^2} * v_2)(x) \\
\notag &= \sum_{\|y\|_1 \leq R+1} D_1 G_{\zed^2}(x-y) v_1(y) + D_2 G_{\zed^2}(x-y) v_2(y) \\
\notag &= b_1 D_1 G_{\zed^2}(x) + b_2 D_2 G_{\zed^2}(x) + O_{B,R}\left( \|x\|_2^{-2} \right),
\end{align}
where each $b_i = \sum_{y \in \zed^2} v_i(y)$. In the last equality we wrote $D_i G_{\zed^2}(x) - D_i G_{\zed^2}(x-y)$ as a sum of $O(R)$ second derivatives of $G_{\zed^2}$ and used the bound from Lemma \ref{greens_function_derivs}. Again using Lemma \ref{greens_function_derivs}, we obtain for nonzero $x = (x_1,x_2)$ that
\begin{equation}
(G_{\zed^2} * v)(x) = \frac{1}{2\pi} \cdot \frac{-b_1 x_1 - b_2 x_2}{x_1^2 + x_2^2} + O_{B,R}\left( \|x\|_2^{-2} \right).
\end{equation}
Suppose $b_1$ and $b_2$ are not both zero. Then, there are $0 \leq \theta_1 < \theta_2 < 2\pi$ such that $(G_{\zed^2} * v)(x) \asymp \|x\|_2^{-1}$ for all $x \neq (0,0)$ with $\theta_1 \leq \arg(x) \leq \theta_2$. This contradicts the assumption that $f = G_{\zed^2} * v \in \ell^2(\zed^2)$. We conclude that $b_1 = b_2 = 0$, so $v_1,v_2 \in C^1(\zed^2)$ and therefore $v \in C^2(\zed^2)$ by \eqref{C2_alt}.

Finally, suppose that $p = 1$. Since $v \in C^2(\zed^2)$, we can write $v = \delta_1^{*2} * w_1 + (\delta_1 * \delta_2) * w_2 + \delta_2^{*2} * w_3$ for some $w_1,w_2,w_3 \in C^0(\zed^2)$. Set $c_i = \sum_{y \in \zed^2} w_i(y)$. By the same reasoning as in the previous case,
\begin{align}
&\quad\, (G_{\zed^2} * v)(x) \\
\notag &= c_1 D_1^2 G_{\zed^2}(x) + c_2 D_1 D_2 G_{\zed^2}(x) + c_3 D_2^2 G_{\zed^2}(x) + O_{B,R}\left( \|x\|_2^{-3} \right) \\
\notag &= \frac{1}{2\pi} \cdot \frac{(c_1 - c_3)(x_1^2 - x_2^2) +2c_2 x_1 x_2}{(x_1^2 + x_2^2)^2} + O_{B,R}\left( \|x\|_2^{-3} \right)
\end{align}
for all nonzero $x = (x_1,x_2)$. This implies that $c_1 = c_3$ and $c_2 = 0$; if not, the first term would have asymptotic order $\|x\|_2^{-2}$ for $\arg(x)$ in some range $[\theta_1,\theta_2]$, contradicting that $f \in \ell^1(\zed^2)$.

Set $c = c_1 = c_3$, and let $v' = \Delta \e_{(0,0)} = -\delta_1^{*2} * \e_{(1,0)} - \delta_2^{*2} * \e_{(0,1)}$. Then
\begin{equation}
v + cv' = \delta_1^{*2} * (w_1 - c\,\e_{(1,0)}) + (\delta_1 * \delta_2) * w_2 + \delta_2^{*2} * (w_3 - c\,\e_{(0,1)}).
\end{equation}
Since all three of $w_1 - c\,\e_{(1,0)}$, $w_2$, and $w_3 - c\,\e_{(0,1)}$ are in $C^1(\zed^2)$, we have $v + cv' \in C^3(\zed^2)$. As well, $G_{\zed^2} * cv' = c\,\e_{(0,0)} \in C^0(\zed^2)$. Hence
\begin{equation}
f = G_{\zed^2} * (v + cv' - cv') \in \{G_{\zed^2} * w : w \in C^3(\zed^2)\} + C^0(\zed^2),
\end{equation}
which completes the proof.
\end{proof}

\section{Stabilization on $\zed^2$}\label{stability_section}

Consider a sandpile $\sigma: \zed^2 \to \zed_{\geq 0}$. The \emph{parallel toppling} procedure attempts to stabilize $\sigma$ by defining a sequence of sandpiles $\sigma = \sigma^0, \sigma^1, \sigma^2, \ldots$ where $\sigma^{n+1}$ is obtained from $\sigma^n$ by simultaneously toppling all vertices $x$ with $\sigma^n(x) \geq 4$. Formally, set $v^n(x) = \one\{\sigma^n(x) \geq 4\}$ and define $\sigma^{n+1} = \sigma^n - \Delta(v^n)$. Define the sequence of \emph{odometer functions} $u^1,u^2,\ldots$ by $u^n = v^0 + v^1 + \cdots + v^{n-1}$, so that $u^n(x)$ is the number of times vertex $x$ has toppled in the first $n$ topplings.  In particular, $\|u^n\|_{\ell^\infty} \leq n$ and $\sigma^n = \sigma - \Delta(u^n)$.  It is shown in \cite{FMR09} that $\sigma$ stabilizes if and only if $u^n \uparrow u^\infty$ for some $u^\infty: \zed^2 \to \zed_{\geq 0}$, in which case the stabilization is given by $\sigma^\infty = \sigma - \Delta u^\infty$.

Our proof uses the following `conservation of density' result of \cite{FMR09}.
\begin{lemma}[\cite{FMR09}, Lemma 2.10]
\label{stabilization_prop}
 Let $(\sigma_x)_{x \in \zed^2}$ be i.i.d.~and stabilize almost surely, with 
stabilization $(\sigma^\infty_x)_{x \in \zed^2}$.  Then $\E[\sigma_0] = \E[\sigma^\infty_0]$.
\end{lemma}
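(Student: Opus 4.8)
The plan is to establish conservation of the total amount of sand at every finite stage of the parallel toppling procedure (using translation invariance), and then to pass to the limit $n\to\infty$ with the help of an $n$-independent integrable bound on the intermediate heights $\sigma^n(0)$. We may assume $\E[\sigma_0]<\infty$ throughout, since an i.i.d.\ sandpile of infinite expected height cannot stabilize a.s.\ (this is already in \cite{FR05}; alternatively, if $\E[\sigma_0]=\infty$ the claimed identity would read $\infty=\E[\sigma^\infty_0]\le 3$, which is absurd).

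First I would record the translation invariance. Because $\sigma$ is i.i.d., its law is invariant under every lattice translation $\tau_z$, $z\in\zed^2$, and because parallel toppling is a deterministic, translation-equivariant procedure ($u^n[\tau_z\sigma]=\tau_z(u^n[\sigma])$ and likewise for $\sigma^n$), the laws of $u^n$ and of $\sigma^n$ are translation invariant for each $n$. Next, for fixed $n$: the bound $0\le u^n\le n$ makes each $u^n(x)$ integrable, so translation invariance gives $\E[u^n(x)]=\E[u^n(0)]$ for all $x$, and hence $\E[\Delta u^n(0)]=4\E[u^n(0)]-\sum_{e\sim 0}\E[u^n(e)]=0$. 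Since $\sigma^n=\sigma-\Delta u^n$ and $\E[\sigma_0]<\infty$, taking expectations yields $\E[\sigma^n(0)]=\E[\sigma_0]$ for every $n$. On the a.s.\ event that $\sigma$ stabilizes we have $u^n(x)\uparrow u^\infty(x)<\infty$ for every $x$, so $\sigma^n(0)=\sigma_0-\Delta u^n(0)\to\sigma_0-\Delta u^\infty(0)=\sigma^\infty(0)$ a.s., and the goal becomes to transfer this convergence to the expectations.

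The hard part is exactly this last interchange of limit and expectation: Fatou's lemma applied to the nonnegative variables $\sigma^n(0)$ only gives $\E[\sigma^\infty(0)]\le\E[\sigma_0]$, and the reverse inequality is the genuinely delicate statement that no sand escapes to infinity during stabilization. To get it I would prove the $n$-uniform pointwise bound $\sigma^n(x)\le\max(\sigma(x),7)$ for all $n$ and $x$: in one parallel toppling step a site with height $\ge 4$ loses exactly $4$ grains and receives at most one grain from each of its four neighbors, so its height does not increase, while a site with height $\le 3$ receives at most $4$ grains and so ends below $8$; an easy induction on $n$ then gives the bound. This supplies the integrable dominator $\sigma_0+7$, so dominated convergence upgrades $\sigma^n(0)\to\sigma^\infty(0)$ to $\E[\sigma^n(0)]\to\E[\sigma^\infty(0)]$, and since the left-hand side is the constant $\E[\sigma_0]$ by the previous paragraph, we conclude $\E[\sigma^\infty_0]=\E[\sigma_0]$.

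The remaining ingredients are routine: the translation equivariance of the toppling rule, the induction establishing the uniform height bound, and the verification that the dominated convergence hypotheses hold. The only conceptual point is the uniform bound of the third paragraph — that a "full" site never grows under parallel toppling — which is what licenses the limit interchange and which encodes the physical fact that mass is merely redistributed, never lost to infinity, when stabilization actually occurs.
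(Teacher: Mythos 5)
The paper states this lemma as a citation of \cite{FMR09}, Lemma~2.10, and does not reproduce the proof, so there is no internal argument to compare against. That said, your proposal is a correct and self-contained proof, and it is consistent with the toolkit the paper does use: the uniform bound $\sigma^{n+1}(x) \leq \max(\sigma^n(x), 7)$ you establish by a one-step case analysis is exactly the inequality invoked (without proof) inside the paper's proof of Lemma~\ref{pairing-invariant}, where it plays the same role of furnishing an $n$-independent integrable dominator. The remaining steps are sound: because parallel toppling is deterministic and translation-equivariant, the law of $u^n$ is translation invariant; the bound $0 \leq u^n \leq n$ makes $u^n(0)$ integrable so that $\E[\Delta u^n(0)] = 0$; and then $\E[\sigma^n(0)] = \E[\sigma_0]$ for every $n$ provided $\E[\sigma_0] < \infty$, which you correctly reduce to \cite{FR05}. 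Dominated convergence with dominator $\max(\sigma_0,7)$ then transfers the a.s.\ convergence $\sigma^n(0) \to \sigma^\infty(0)$ to expectations. The one place to be slightly careful is your parenthetical ``alternatively'' justification for $\E[\sigma_0] < \infty$: observing that the conclusion would be absurd if $\E[\sigma_0]=\infty$ does not by itself rule that case out --- it only shows the lemma would be false in that case --- so the genuine justification is the first one, the citation of \cite{FR05}. With that minor caveat, the argument is complete and correct.
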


In particular, if the i.i.d.~sandpile $\sigma$ stabilizes almost surely, then $\E[\sigma_0] \leq 3$.

We now show that if $\xi \in \sH^1(\zed^2)$, the pairing $\langle \sigma, \xi \rangle = \sum_{x \in \zed^2} \sigma(x) \xi(x)$ remains invariant modulo 1 when the sandpile $\sigma$ is stabilized.
\begin{lemma}
\label{pairing-invariant}
Let $(\sigma_x)_{x \in \zed^2}$ be an i.i.d.~sandpile which stabilizes almost surely, and let $\xi \in \sH^1(\zed^2)$.  Then
\begin{equation} \langle \sigma, \xi\rangle \equiv \langle \sigma^\infty, \xi\rangle \mod{1}, \qquad a.s.
\end{equation}
\end{lemma}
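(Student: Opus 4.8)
The plan is to express the stabilization as a limit of parallel topplings and use that the Laplacian of an integer vector pairs integrally against a harmonic modulo 1 function. Recall from Section \ref{stability_section} that if $\sigma$ stabilizes almost surely, then the parallel toppling odometers $u^n$ increase to a finite limit $u^\infty$ with $\sigma^\infty = \sigma - \Delta u^\infty$. Since $u^n \uparrow u^\infty$ pointwise and $\sigma^\infty(x) \leq 3$ while $\sigma(x)$ is finite, for each $x$ we have $u^n(x) = u^\infty(x)$ for all $n$ large enough (depending on $x$), so $\sigma^n(x) = \sigma^\infty(x)$ eventually. In particular $\sigma - \sigma^\infty = \Delta u^\infty$, where $u^\infty : \zed^2 \to \zed_{\geq 0}$.

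The core computation is the adjoint relation $\langle \Delta u, \xi \rangle = \langle u, \Delta \xi \rangle$, valid whenever the sums converge absolutely; here $\xi \in \ell^1(\zed^2)$ and $u^\infty$ is bounded? No --- $u^\infty$ need not be bounded, so I cannot pair $u^\infty$ against $\Delta \xi$ directly. Instead I would work with the finite-stage odometers. For each fixed $n$, $u^n$ has (a.s.) finite support? Not necessarily either, but $u^n$ is bounded by $n$ and $\Delta \xi \in C^0(\zed^2)$ is finitely supported since $\xi$ is harmonic modulo 1 (by \eqref{Hp}, $\Delta \xi \in C^0(\zed^2)$). Therefore $\langle u^n, \Delta \xi \rangle$ is a finite sum of integers, hence an integer, and by summation by parts (shifting the finitely many differences in $\Delta$ onto $\xi$, which is licit because each $\sigma^n - \sigma = -\Delta u^n$ and $\xi \in \ell^1$) we get
\begin{equation}
\langle \sigma - \sigma^n, \xi \rangle = \langle \Delta u^n, \xi \rangle = \langle u^n, \Delta \xi \rangle \in \zed.
\end{equation}
Thus $\langle \sigma, \xi \rangle \equiv \langle \sigma^n, \xi \rangle \bmod 1$ for every $n$.

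It then remains to pass to the limit $n \to \infty$. Since $\xi \in \ell^1(\zed^2)$ and $0 \leq \sigma^n(x) \leq \max(\sigma(x), 3)$? --- the values $\sigma^n(x)$ need not be bounded uniformly during toppling, but they are dominated: actually during parallel toppling the height at a site can temporarily receive up to $4$ extra grains, so $\sigma^n(x) \leq \sigma(x) + 4$, which is integrable-dominated against $|\xi(x)|$ only if $\sum_x \sigma(x)|\xi(x)| < \infty$ a.s. This holds because $\E \sum_x \sigma(x)|\xi(x)| = \E[\sigma_0] \sum_x |\xi(x)| < \infty$. Hence by dominated convergence $\langle \sigma^n, \xi \rangle \to \langle \sigma^\infty, \xi \rangle$ a.s., and since each $\langle \sigma, \xi\rangle - \langle \sigma^n, \xi \rangle \in \zed$, the limit $\langle \sigma, \xi \rangle - \langle \sigma^\infty, \xi\rangle$ is an integer as well, giving the claim.

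The main obstacle is the bookkeeping around convergence of the infinite sums: one must justify the summation-by-parts step (legitimate because $\Delta\xi$ is finitely supported, so $\langle u^n, \Delta\xi\rangle$ is a genuine finite sum and no tail issues arise) and must produce an integrable dominating function for $(\sigma^n(x))_n$ to apply dominated convergence --- which I would handle via the a.s.\ finiteness of $\sum_x \sigma(x)|\xi(x)|$ coming from $\E[\sigma_0] < \infty$ together with the crude bound $\sigma^n \leq \sigma + 4$ on the toppled configurations. Everything else is routine.
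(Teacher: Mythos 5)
Your argument follows the same route as the paper's proof: write $\sigma^n = \sigma - \Delta u^n$, use self-adjointness and the fact that $\Delta\xi$ is integer-valued with finite support to get $\langle \sigma,\xi\rangle - \langle\sigma^n,\xi\rangle = \langle u^n, \Delta\xi\rangle \in \zed$, and then pass to the limit via dominated convergence. The structure is sound and the summation-by-parts step is justified exactly as you say.

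Two small issues. First, the pointwise bound you claim for parallel toppling, $\sigma^n(x) \leq \sigma(x) + 4$, is not correct in general. Consider $\sigma(x) = 0$: if at step $0$ exactly three neighbors of $x$ topple, then $\sigma^1(x) = 3$, and if at step $1$ all four neighbors topple, then $\sigma^2(x) = 7 > \sigma(x) + 4$. The correct elementary bound, which the paper uses, is $\sigma^{n+1}(x) \leq \max(\sigma^n(x), 7)$: if $\sigma^n(x) < 4$ then $\sigma^{n+1}(x) \leq 3 + 4 = 7$, while if $\sigma^n(x) \geq 4$ then $\sigma^{n+1}(x) \leq \sigma^n(x) - 4 + 4 = \sigma^n(x)$. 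By induction $\sigma^n(x) \leq \max(\sigma(x),7) \leq \sigma(x) + 7$, which still gives you an integrable dominator against $|\xi|$, so your dominated convergence step goes through with this fix. Second, you invoke $\E[\sigma_0] < \infty$ without justification; this follows from the hypothesis that $\sigma$ stabilizes almost surely together with the conservation-of-density result (Lemma \ref{stabilization_prop}, from \cite{FMR09}), which implies $\E[\sigma_0] \leq 3$. You should cite this explicitly, as the finiteness of $\E[\sigma_0]$ is not obvious a priori for an arbitrary i.i.d.\ distribution.
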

\begin{proof}
Lemma \ref{stabilization_prop} implies that $\E[\sigma_0] < \infty$.
Since $\xi \in \ell^1(\zed^2)$,
\begin{equation}
\E[ \langle \sigma, |\xi| \rangle ] = \sum_{x \in \zed^2} |\xi_x| \E[\sigma_x] = \|\xi\|_1 \E[\sigma_0] < \infty
\end{equation}
and so $\langle \sigma, \xi\rangle$ converges absolutely almost surely. 
Write $\sigma^n = \sigma - \Delta u^n$ and use self-adjointness of $\Delta$ to obtain
\begin{equation}
\label{finite-invariant}
\langle  \sigma^n, \xi\rangle = \langle \sigma - \Delta u^n, \xi\rangle = \langle  \sigma, \xi \rangle - \langle  u^n, \Delta \xi \rangle.
\end{equation}
Since $u^n$ is integer-valued, increasing and converges almost surely, while $\Delta\xi$ is integer-valued and has finite support, the increment $\langle u^n, \Delta\xi\rangle$ converges a.s.~to $\langle u^\infty, \Delta\xi \rangle \in \zed$.

Note that the parallel toppling property implies that, for $n \geq 0$, \begin{equation}\sigma^{n+1}(x) \leq \max\left(\sigma^{n}(x), 7\right).\end{equation}
Thus, whenever $\langle \sigma, |\xi| \rangle$ is finite and $\sigma$ stabilizes to $\sigma^\infty$,
\begin{equation}
\lim_{n \to \infty} \langle \sigma^n, \xi \rangle = \lim_{n \to \infty} \sum_{x \in \zed^2} \sigma^n(x) \xi_x = \sum_{x \in \zed^2} \lim_{n \to \infty} \sigma^n(x) \xi_x  = \langle \sigma^\infty, \xi \rangle
\end{equation}
where the second equality is justified by dominated convergence:
\begin{equation}
|\sigma^n(x) \xi_x| \leq \max(\sigma(x), 7) |\xi_x|, \qquad \sum_{x \in \zed^2} \max(\sigma(x), 7) |\xi_x| < \infty.
\end{equation}
Sending $n \to \infty$ in \eqref{finite-invariant} completes the proof.
\end{proof}

For definiteness, our argument uses the particular function
\begin{equation}
\xi = G_{\zed^2}* \delta_1^{*3} = D_1^3 G_{\zed^2},
\end{equation}
which is in $\sH^1(\zed^2)$ by Lemma \ref{greens_function_derivs}. The next lemma estimates the tail of $\|\xi\|_2^2$.

\begin{lemma}\label{xi_tail}
Let $R \geq 1$ be a parameter.  As $R \to \infty$,
\begin{equation} \label{four_thirds}
\sum_{x\in \zed^2\,:\, 0 < |\xi_x| < \frac{1}{2R}} |\xi_x|^2 \gg R^{-\frac{4}{3}}.
\end{equation}
\end{lemma}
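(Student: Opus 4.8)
The plan is to use the first-order asymptotic $\xi_x = D_1^3 G_{\zed^2}(x)$, which by Lemma \ref{greens_function_derivs} satisfies $\xi_x = O(\|x\|_2^{-3})$. I will obtain a matching lower bound on $|\xi_x|$ for $x$ in a large angular sector, so that $|\xi_x| \asymp \|x\|_2^{-3}$ there. First I compute the leading term of $D_1^3 G_{\zed^2}$ explicitly: differentiating the expansion in Theorem \ref{greens_function_asymptotic} three times in the first coordinate, the dominant contribution comes from the $-\frac{1}{2\pi}\log\|x\|_2$ term, whose third derivative is a homogeneous rational function of degree $-3$, namely (up to the constant $\frac{1}{2\pi}$) something of the shape $\frac{P(x_1,x_2)}{(x_1^2+x_2^2)^3}$ with $P$ a homogeneous polynomial of degree $3$. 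Crucially $P$ is not identically zero, so on a full-measure set of directions $\theta = \arg(x)$ the leading coefficient is bounded away from $0$; pick a closed angular sector $S = \{x : \theta_1 \le \arg(x) \le \theta_2\}$, say with $\theta_2 - \theta_1$ a fixed positive constant, avoiding the zeros of $P$, so that $|\xi_x| \asymp \|x\|_2^{-3}$ for all $x \in S$ with $\|x\|_2$ large.

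Next I translate the condition $0 < |\xi_x| < \frac{1}{2R}$ into a condition on $\|x\|_2$. On the sector $S$, $|\xi_x| \asymp \|x\|_2^{-3}$, so there are constants $c_1, c_2 > 0$ (depending only on the sector) such that $|\xi_x| < \frac{1}{2R}$ holds for all $x \in S$ with $\|x\|_2 \ge c_1 R^{1/3}$, while $|\xi_x| > 0$ automatically on $S$ for $\|x\|_2$ large. Hence the left side of \eqref{four_thirds} is at least
\begin{equation}
\sum_{x \in S,\ \|x\|_2 \ge c_1 R^{1/3}} |\xi_x|^2 \gg \sum_{x \in S,\ \|x\|_2 \ge c_1 R^{1/3}} \|x\|_2^{-6}.
\end{equation}
Comparing the last sum to the integral $\int_S \|y\|_2^{-6}\, dy$ over the portion of the sector with $\|y\|_2 \ge c_1 R^{1/3}$ (the integrand is decreasing in the radial variable and the number of lattice points in an annulus $\cap\, S$ of radius $r$ and unit thickness is $\asymp r$), this integral in polar coordinates equals $(\theta_2 - \theta_1)\int_{c_1 R^{1/3}}^\infty r^{-6} \cdot r\, dr \asymp (R^{1/3})^{-4} = R^{-4/3}$, which gives the claimed bound.

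The main obstacle is the very first step: extracting the precise homogeneous-degree-$(-3)$ leading term of $D_1^3 G_{\zed^2}$ and verifying that the resulting polynomial $P(x_1,x_2)$ is not identically zero, so that a good angular sector exists. This requires carefully differencing the asymptotic expansion three times and checking that the contributions from the lower-order terms in Theorem \ref{greens_function_asymptotic} (the constant $-a$, the $\|x\|_2^{-2}$ term, and the $O(\|x\|_2^{-4})$ error) are all $o(\|x\|_2^{-3})$ after three discrete differences, which they are since each discrete difference gains one power of $\|x\|_2^{-1}$; the constant drops out entirely, the $\|x\|_2^{-2}$ term becomes $O(\|x\|_2^{-5})$, and the error becomes $O(\|x\|_2^{-7})$. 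One clean way to pin down $P$ and its nonvanishing is to pass to the Fourier side: $\widehat{D_1^3 G_{\zed^2}}(x,y) = \frac{(e(x)-1)^3}{4 - 2(c(x)+c(y))}$, whose behavior near $(0,0)$ is $\frac{(2\pi i x)^3}{(2\pi)^2(x^2+y^2)}(1+o(1))$, and a standard stationary-phase / homogeneity argument recovers the real-space leading term $\xi_x \sim \frac{1}{2\pi}\,\partial_1^3\!\left(-\log\|x\|_2\right)$ with $\partial_1^3(-\log\|x\|_2) = \frac{2x_1(x_1^2 - 3x_2^2)}{(x_1^2+x_2^2)^3}$, which is manifestly nonzero away from the lines $x_1 = 0$ and $x_1 = \pm\sqrt{3}\,x_2$; any sector $S$ bounded away from these three lines works.
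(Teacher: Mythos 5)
Your proof is correct and follows essentially the same approach as the paper: establish $|\xi_x|\asymp\|x\|_2^{-3}$ on a fixed angular sector by extracting the leading homogeneous-degree-$(-3)$ term of $D_1^3 G_{\zed^2}$, then compare the resulting lattice sum to the radial integral $\int_{R^{1/3}}^\infty r^{-5}\,dr \asymp R^{-4/3}$. (Minor slip: $\partial_1^3(-\log\|x\|_2) = \frac{2x_1(3x_2^2 - x_1^2)}{(x_1^2+x_2^2)^3}$, the negative of what you wrote, but this changes neither the zero set nor the magnitude estimate.)
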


\begin{proof}
Arguing as in Lemma \ref{greens_function_derivs}, we see that there are $0 \leq \theta_1 < \theta_2 < 2\pi$ such that, for nonzero $x \in \zed^2$ satisfying $\theta_1 \leq \arg(x) \leq \theta_2$,
\begin{equation}
|\xi_x| \asymp \|x\|_2^{-3}.
\end{equation}
Thus
\begin{equation}
\sum_{x \in \zed^2\,:\, 0 < |\xi_x| < \frac{1}{2R}} |\xi_x|^2 \gg \int_{R^{\frac{1}{3}}}^\infty \frac{dr}{r^5} \gg R^{-\frac{4}{3}}. \qedhere
\end{equation}
\end{proof}

As the proof below makes clear, an explicit constant in the lower bound \eqref{four_thirds} would lead to explicit values of $c,d$ in the statement of Theorem \ref{quantitative_theorem}. To obtain a fully quantitative version of Lemma \ref{xi_tail}, it would be enough to bound the error in \eqref{G_expansion} by finding an explicit $C > 0$ such that
\begin{equation} \label{G_error}
\left| G_{\zed^2}(x) + \frac{\log \|x\|_2}{2\pi}  + a + b \frac{\frac{8x_1^2 x_2^2}{\|x\|_2^4}- 1}{\|x\|_2^2} \right| \leq C\|x\|_2^{-4}
\end{equation}
for all $(0,0) \neq x \in \zed^2$. A result in this direction \cite[Section 4]{KS04} is that
\begin{equation}
\left| G_{\zed^2}(x) + \frac{\log \|x\|_2}{2\pi} \right| \leq 0.01721 \|x\|_2^{-2}.
\end{equation}
(Indeed, the constant $0.01721$ is optimal and an exact formula for it is given.) It is likely that extending the techniques developed in \cite{KS04} would lead to a bound of the form \eqref{G_error}, and thence to an explicit numerical bound in Theorem \ref{quantitative_theorem}.

\begin{proof}[Proof of Theorem \ref{quantitative_theorem}]
Consider the characteristic functions
\begin{equation}
\chi(\sigma;\xi) = \E\left[e^{-2\pi i \langle \sigma, \xi\rangle}\right], \qquad \chi(\sigma^\infty;\xi) = \E\left[e^{-2\pi i \langle\sigma^\infty, \xi\rangle}\right]. 
\end{equation}
Since $\langle  \sigma, \xi\rangle \equiv \langle  \sigma^\infty, \xi\rangle \bmod 1$ a.s., $\chi(\sigma;\xi) = \chi(\sigma^\infty; \xi)$.

Let $\E[\sigma_0] =\E[\sigma_0^\infty]= 3-\epsilon$. Using $|1- e^{2\pi i t}| \leq 2 \pi|t|$ and $\sum_{x \in \zed^2}\xi_x = 0$,
\begin{align}\label{sigma_infty_upper_bound}
|1 - \chi(\sigma^\infty; \xi)| &= \left| \E\left[1 - e^{-2\pi i \langle 
 \sigma^\infty - 3, \xi\rangle }\right] \right| \\
 &\notag\leq \E[2\pi |\left\langle \sigma^{\infty}-3, \xi \right\rangle |] \\
 &\notag\leq 2\pi \|\xi\|_1 \epsilon.
\end{align}
Thus, $|\chi(\sigma^\infty; \xi)| \geq 1 - 2\pi \|\xi\|_1 \epsilon$.

Meanwhile, since $(\sigma_x)_{x \in \zed^2}$ is i.i.d.,
\begin{equation}
\chi(\sigma; \xi) = \prod_{x \in \zed^2} \E\left[e^{-2\pi i \xi_x \sigma_0} \right].
\end{equation}
Use the inequality $-\log t \geq \frac{1-t^2}{2}$ in $0 < t \leq 1$ to obtain
\begin{equation}
-\log \left|\chi(\sigma; \xi) \right| \geq \frac{1}{2} \sum_{x \in \zed^2} \left(1 -\left|\E\left[e^{-2\pi i \xi_x \sigma_0} \right] \right|^2 \right).
\end{equation}
Let $X,X'$ be independent and distributed as $\sigma_0$. One has
\begin{equation}
\left|\E\left[e^{-2\pi i \xi_x \sigma_0} \right] \right|^2 = \E\left[ e^{-2\pi i \xi_x X} \right] \E\left[ e^{2\pi i \xi_x X'} \right] = \E\left[ e^{-2\pi i \xi_x (X-X')} \right].
\end{equation}
This quantity is equal to its real part $\E[c(\xi_x (X-X'))]$. (Recall $c(t) = \cos 2\pi t$.) Therefore, using $1- c(t) \geq 8 t^2$ for $|t| \leq \frac{1}{2}$,
\begin{align}
-\log \left|\chi(\sigma; \xi) \right| &\geq \frac{1}{2} \sum_{x \in \zed^2} \Big(1 - \E[c(\xi_x (X-X'))] \Big)\\
 &\notag\geq 4 \E\left[ \sum_{0 < |\xi_x (X-X')| < \frac{1}{2}} \xi_x^2 (X-X')^2 \right] \\
 &\notag = 4 \sum_{k=1}^\infty \E\left[ \one\{|X-X'| = k\} \sum_{0 < |\xi_x| < \frac{1}{2k}} \xi_x^2 k^2 \right].
\end{align}
Lemma \ref{xi_tail} now implies that
\begin{equation}
-\log \left|\chi(\sigma; \xi) \right| \gg \sum_{k=1}^\infty \E\left[ \one\{|X-X'| = k\} k^{2/3} \right] = \E[|X-X'|^{2/3}],
\end{equation}
and therefore
\begin{equation}
1 - \left|\chi(\sigma; \xi)\right| \gg \min\left(1, \E[|X-X'|^{2/3}] \right).
\end{equation}
The result follows on combining this with (\ref{sigma_infty_upper_bound}).
\end{proof}

\section{The sandpile group}\label{sandpile_section}
Recall the designations $\sR_m \subset \sS_m$ for the recurrent and stable states, respectively, of the sandpile model on $\bT_m$ with sink at $(0,0)$. Any sandpile $\sigma: \bT_m \setminus \{(0,0)\} \to \zed_{\geq 0}$ can be stabilized by repeatedly performing legal topplings until the resulting configuration is stable. By the abelian property \cite{D90}, the final state does not depend on the order in which the topplings are performed, and is called the \emph{stabilization} of $\sigma$.

If we view functions on $\bT_m$ as $m^2 \times 1$ column vectors, then the Laplacian operator $\Delta$ on $\bT_m$ can be considered as an $m^2 \times m^2$ matrix, so that for example $\Delta \zed^{\bT_m}$ is the integer span of the columns of $\Delta$. The null space of $\Delta$ is one-dimensional, and is spanned by the all-ones vector. 
The \emph{reduced Laplacian} $\Delta'$ is obtained by omitting the row and column
corresponding to the sink $(0,0)$, and is invertible.

The recurrent states $\sR_m$ of the sandpile model are naturally identified with the abelian group
\begin{equation}
\label{G_m}
\sG_m := \zed^{\bT_m\setminus\{(0,0)\}}/\Delta' \zed^{\bT_m\setminus\{(0,0)\}},
\end{equation}
which is the \emph{sandpile group} of $\bT_m$. Indeed, each equivalence class
\begin{equation}
\sigma + \Delta' \zed^{\bT_m\setminus\{(0,0)\}} \subset \zed^{\bT_m\setminus\{(0,0)\}}, \quad \sigma \in \zed^{\bT_m\setminus\{(0,0)\}},
\end{equation}
contains exactly one recurrent sandpile \cite{HLMPPW08}. Addition in $\sG_m$ corresponds via this bijection to the operation on $\sR_m$ of pointwise addition followed by stabilization.

The \emph{sandpile Markov chain} has state space $\sS_m$ and transition operator $P_m$. To take a single step from a sandpile $\sigma$, choose a site $x \in \bT_m$ uniformly at random. If $x \neq (0,0)$, replace $\sigma$ with the stabilization of $\sigma + \e_x$; if $x = (0,0)$, remain at $\sigma$. The recurrent states of the chain are precisely $\sR_m$, and the chain restricted to $\sR_m$ is a random walk on the group $\sG_m$. See \cite{JLP15}, which develops this construction in the setting of an arbitrary underlying graph, for further background.

Using \eqref{G_m}, the matrix-tree theorem implies that $\sG_m$ is in bijection with the spanning trees of $\bT_m$. It is shown in \cite{JLP15} that $|\sG_m| = \exp\left(\left(\frac{4\beta(2)}{\pi}+o(1)\right)m^2 \right)$ where $\beta(2)$ is the Catalan constant,
\begin{equation}
\frac{4 \beta(2)}{\pi} = 1.1662\ldots.
\end{equation}
Thus the recurrent states make up an exponentially small fraction of the $4^{m^2-1}$ stable states.

The following proposition bounds the hitting time started from a deterministic stable state to reach a recurrent state.  
\begin{proposition}\label{recurrent_state_hitting_time_proposition}
	There is a constant $C > 0$ such that, as $m \to \infty$, for any stable state 
	$\sigma \in \sS_m$, if $n > C m^2 \sqrt{\log m }$ then 
	\[
	\Prob\left(P_m^n \delta_{\sigma} \in \sR_m \right) = 1 - o(1).
	\]
\end{proposition}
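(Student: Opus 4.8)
The plan is to run a coupon-collector style argument: show that after about $Cm^2\sqrt{\log m}$ steps, with high probability every site of $\bT_m$ has been hit by at least one grain of sand, and then argue that a stable state which has received a grain at every site is automatically recurrent. The second point is the algebraic half and should be quick: the maximal stable state $\sigma \equiv 3$ is recurrent, and adding grains then stabilizing is a monotone operation on stable states in the sense of the sandpile group — more precisely, if $\tau$ is any stable state then the stabilization of $\tau + \e_{x_1} + \cdots + \e_{x_k}$, where $x_1,\dots,x_k$ is a sequence of non-sink sites that together cover $\bT_m \setminus \{(0,0)\}$ at least once, dominates (in the appropriate sense) the stabilization of $\sigma_{\max} = 3$ plus the same grains, which is again $\sigma_{\max}$; and any stable state reachable from $\sigma_{\max}$ is recurrent. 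The cleanest way to phrase this is via the burning algorithm / Dhar's criterion: a stable $\rho$ is recurrent iff firing the sink once and then toppling all sites that become unstable topples every site exactly once. One checks that after every non-sink site has received a grain since the last time it toppled, this burning test succeeds. So it suffices to control the covering time.

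For the covering time, first note that the Markov chain picks a uniformly random site of $\bT_m$ at each step and, if that site is not the sink, deposits a grain there (possibly triggering topplings, but topplings only move grains around and never remove the fact that a site ``was hit''); what we actually want is that every site has been \emph{toppled}-free-ly covered, but it is cleaner to track the stronger event that since step $0$ each non-sink site has been the chosen site at least once. The number of steps $N$ needed for uniform random sampling (with replacement) from $m^2$ cells to hit all of them is the classical coupon collector time, which is $m^2 \log(m^2)(1+o(1)) = 2m^2\log m(1+o(1))$ in expectation and concentrated: $\Prob(N > t) \leq m^2 (1 - 1/m^2)^t \leq m^2 e^{-t/m^2}$, so taking $t = C m^2 \log m$ with $C > 1$ already makes this $o(1)$. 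This is in fact stronger than the claimed bound $Cm^2\sqrt{\log m}$ — so to get the $\sqrt{\log m}$ rate I should not demand that every site be the chosen site, but rather use that topplings propagate grains to neighbors. A site becomes ``burnt''/covered in the burning-algorithm sense once it, or enough of its neighbors, have been hit; because $\bT_m$ is a bounded-degree expander-free but well-connected graph, a smarter accounting shows that $O(m^2\sqrt{\log m})$ grains suffice. Concretely: drop $t = Cm^2\sqrt{\log m}$ grains; the number landing in any fixed ball $B_r(x)$ of radius $r \asymp \sqrt{\log m}$ is Binomial with mean $\asymp t r^2/m^2 \asymp r^2 \sqrt{\log m} \to \infty$, so with probability $1 - o(m^{-2})$ every such ball receives $\geq 4r^2$ grains, which after stabilization forces a topple wave covering the central region of the ball; a union bound over the $O(m^2)$ balls, together with the observation that the union of central regions covers $\bT_m$, shows every site topples, hence the final state passes Dhar's burning test and is recurrent.

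I expect the main obstacle to be making the last paragraph's heuristic rigorous: precisely, showing that receiving $\Theta(r^2)$ grains in a ball of radius $r$ guarantees a topple at the center (an isoperimetric/least-action-principle estimate for the abelian sandpile — the total number of grains dropped in a region is at least the region's Laplacian paired with the odometer, so if the odometer were zero on a ball the grain count in that ball would be $O(r)$ from the boundary alone, a contradiction once the count exceeds that), and then arranging the radii and the collection of balls so that the ``central regions'' genuinely tile $\bT_m$ while the error probabilities still sum to $o(1)$. Optimizing the radius against the tail bound for the binomials is what produces the exponent $1/2$ on $\log m$: with $r \asymp \sqrt{\log m}$ the per-ball failure probability is $e^{-c r^2} = m^{-c}$ after a Chernoff bound on the Binomial, which beats the $m^2$ union bound once the constant $C$ in $t = Cm^2\sqrt{\log m}$ is large enough. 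The sink vertex $(0,0)$ is harmless since deleting one vertex from the covering argument changes nothing, and self-loop steps (choosing the sink) only slow the chain by a $1 - m^{-2}$ factor, absorbed into $C$. Everything else — the equivalence of ``every site topples after the sink fires'' with recurrence, and dominated/monotone comparison with $\sigma_{\max}$ — is standard abelian sandpile theory and can be cited to \cite{D90,HLMPPW08}.
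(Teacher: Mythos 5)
Your high-level plan — drop $\Theta(m^2\sqrt{\log m})$ grains and show the resulting state is "forced" into recurrence — is the right one, and your reduction (every site topples during stabilization implies the stable result is recurrent, via odometer $\geq \mathbf{1}$ and Dhar's burning test) is correct, though it is a slightly more involved reduction than the one the paper uses: the paper simply observes that any state $\geq 3$ everywhere can be toppled to a stable recurrent state, since it is reachable from $\sigma_{\max}\equiv 3$.

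The genuine gap is in the quantitative step that is supposed to force every site to topple. Your least-action argument shows the following: if the odometer $u$ vanishes \emph{identically} on a ball $B_r(x_0)$, then for each $y \in B_{r-1}(x_0)$ we have $\Delta u(y)=0$, hence $\rho(y)=\rho^\circ(y)\leq 3$, so the total dropped mass in $B_{r-1}(x_0)$ is $O(r^2)$ — which is indeed ruled out with probability $o(m^{-2})$ by a Chernoff bound once $r\asymp\sqrt{\log m}$. But that conclusion is "$u\not\equiv 0$ on $B_r(x_0)$," i.e.\ \emph{some} site in the ball topples. It does not give $u(x_0)>0$. The implication "$u(x_0)=0 \Rightarrow u\equiv 0$ on a neighborhood of $x_0$" is false: all one can extract from $u(x_0)=0$, $u\geq 0$ is $\sum_{y\sim x_0}u(y)\leq 3$ and that $x_0$ itself received at most $3$ dropped grains, and the direct bound $\Prob(\Poisson(C\sqrt{\log m})\leq 3)\approx e^{-C\sqrt{\log m}}=m^{-C/\sqrt{\log m}}$ is far too large for a union bound over $m^2$ sites. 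So the argument leaves a net of toppling sites, not a covering. (Also, as a small bookkeeping point, the grain count forced by $u\equiv 0$ on $B_r$ is $O(r^2)$, not $O(r)$.)

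What fills this hole in the paper is the Pegden–Smart single-pile limit shape \cite{PS13}: a \emph{single} vertex carrying height $h$ can, by legal topplings, spread to cover a disc of radius $\gg\sqrt{h}$ with height $\geq 3$. The paper therefore does not bound total mass in a ball; instead it shows (using the Poissonization trick, so that site heights become i.i.d.\ $\Poisson(A)$ with $A\asymp\sqrt{\log m}$) that with probability $1-o(m^{-2})$ every disc of area $s\asymp\sqrt{\log m}$ contains at least one site of height $\gg\sqrt{\log m}$, which then topples to cover the disc with height $\geq 3$. Union-bounding over a net of such discs (and treating the $O(\sqrt{\log m})$ sites near the sink separately) gives a state $\geq 3$ on the whole torus minus sink, which completes the argument. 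This concentration-of-a-single-high-site step is exactly where the exponent $1/2$ on $\log m$ comes from, and it is the step your sketch does not supply.
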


\begin{rem}
Starting from $\sigma = 0$, at least order $m^2$ steps are necessary to reach a recurrent state, since only one chip is added at a time. We do not claim that the extra factor of $\sqrt{\log m}$ above is optimal. Because we will show that the mixing time of the sandpile chain has order $m^2 \log m$, the bound in Proposition \ref{recurrent_state_hitting_time_proposition} is sufficient for understanding the mixing behavior.
\end{rem}

\begin{proof}
	We make two initial observations.  First, any state satisfying $\sigma \geq 3$ 
	can be toppled to a stable recurrent state.  This is because such a state can 
	evidently be reached from a recurrent state.  Also, by performing a sequence of 
	topplings, a single vertex with allocation $h$ can be toppled to produce a disc 
	of radius $\gg \sqrt{h}$ with height at least 3.  This follows as a simple 
	consequence of the analysis in \cite{PS13}, which studies the limiting shape of 
	the configuration obtained by repeated toppling of a pile at a single vertex.
	
	Let $A$ be an integer, $A \ll \sqrt{\log m}$, and drop $n \sim \Poisson(Am^2)$ grains of sand on the torus, while performing no 
	topplings.  Note that this is the same as independently dropping $\Poisson(A)$ grains of sand on each vertex. Also, $n < 2Am^2$ with probability $1 - o(1)$.
	
	 The probability that a non-sink vertex $x$ has height at most $a$ is
	\begin{equation}\label{single_point}
	\Prob(h_x \leq a) = e^{-A}\sum_{j=0}^a \frac{A^j}{j!}.
	\end{equation}
	For $ a < \frac{A}{2}$ we obtain
	\begin{align*}
	\Prob(h_x \leq a) \asymp \frac{A^a}{a!}\exp\left(-A \right).
	\end{align*}
	If $x_1, x_2, \ldots, x_s$ denote the points of a disc of area $s \gg a$, then, by 
	independence,
	\begin{align}\label{hole_probability}
	\Prob\left(\bigwedge_{i=1}^s (h_{x_i} \leq a) \right) \leq \exp\left(-sA + sa 
	\log \frac{A}{a} + s(a + O(1)) \right).
	\end{align}
	Choose $s, a \asymp \sqrt{\log m}$ such that a point of height $a$ in a disc 
	of area $s$ topples to cover the disc. Then choose $A$ a sufficiently large 
	constant times $\sqrt{\log m}$ so that the probability of 
	(\ref{hole_probability}) is $o\left(1/m^2 \right)$.  It follows that 
	with probability $1-o(1)$, the event (\ref{hole_probability}) does not occur 
	for any disc on the torus at distance $\gg \sqrt{\log m}$ from the sink. The 
	sites closer to the sink have height $\geq 3$ with probability $1 -o(1)$ by 
	estimating using (\ref{single_point}) and a union bound.
\end{proof}

The following proposition reduces the statements in Theorem \ref{mixing_time_theorem} to estimates started from the fixed recurrent state $\sigma \equiv 3$.
\begin{proposition}
\label{reduction_proposition}
For each constant $C > 0$, for $t = C m^2 \log m$, as $m \to \infty$,
\begin{equation}
\sup_{\sigma_0 \in \sS_m}\Big|\left\|P_m^{t}\delta_{\sigma_0} -\bU_{\sR_m}\right\|_{\TV} - \left\|P_m^{t}\delta_{\sigma \equiv 3}- \bU_{\sR_m} \right\|_{\TV} \Big| = o(1).
\end{equation}
\end{proposition}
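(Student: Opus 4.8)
The plan is to notice that, although the sandpile chain on the full state space $\sS_m$ is not a random walk on $\sG_m$, the class of the current state in $\sG_m$ always performs the group walk, and coincides with the chain --- through the recurrent-representative bijection --- from the moment the chain enters $\sR_m$. Hence the starting state $\sigma_0$ can influence the law of the chain at time $t$ only through the hitting time $T_0$ of $\sR_m$, and since the distance of a group walk from the uniform measure is independent of its base point, all starting states become equivalent once $T_0$ has passed.

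To set this up, I run the chain from $\sigma_0$ and record the sites $x_1,x_2,\dots$ where grains fall. Writing $[\,\cdot\,]$ for the quotient map $\zed^{\bT_m\setminus\{(0,0)\}}\to\sG_m$, each stabilization changes a configuration by a column of the reduced Laplacian $\Delta'$, so $[\sigma_t]=[\sigma_0]+S_t$ in $\sG_m$, where $S_t$ is the $t$-step random walk whose i.i.d.\ increments have the law $\mu$ of ``$[\e_x]$ if $x\ne(0,0)$, and $0$ if $x=(0,0)$'' for $x$ uniform on $\bT_m$ --- exactly the step distribution of the group walk describing $P_m$ on $\sR_m$. Let $\mathrm{rep}:\sG_m\to\sR_m$ be the bijection onto unique recurrent representatives, put $g_t=[\sigma_0]+S_t$, and let $T_0$ be the hitting time of $\sR_m$. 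As recalled in Section \ref{sandpile_section}, $\sR_m$ is the (closed) recurrent class of $P_m$ on which $P_m$ acts as the group walk, so $\sigma_t=\mathrm{rep}(g_t)$ whenever $t\ge T_0$.

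The argument then has three steps. First, the recurrent case: if $\sigma_0\in\sR_m$ then $T_0=0$, hence $P_m^n\delta_{\sigma_0}=\mathrm{rep}_*\bigl(\delta_{[\sigma_0]}*\mu^{*n}\bigr)$ (here $*$ is convolution on $\sG_m$), and because $\mathrm{rep}$ is a bijection sending $\bU_{\sG_m}$ to $\bU_{\sR_m}$ while $\bU_{\sG_m}$ is translation invariant, $\|P_m^n\delta_{\sigma_0}-\bU_{\sR_m}\|_{\TV}=\|\mu^{*n}-\bU_{\sG_m}\|_{\TV}=:d(n)$, independent of $\sigma_0$; in particular $\|P_m^t\delta_{\sigma\equiv3}-\bU_{\sR_m}\|_{\TV}=d(t)$, as $\sigma\equiv3\in\sR_m$. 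Second, for a general $\sigma_0$ I compare $P_m^t\delta_{\sigma_0}=\mathrm{Law}(\sigma_t)$ with $\mathrm{rep}_*\mathrm{Law}(g_t)=\mathrm{rep}_*\bigl(\delta_{[\sigma_0]}*\mu^{*t}\bigr)$: these are laws of two random variables on one probability space that agree on $\{T_0\le t\}$, so the coupling inequality gives $\|P_m^t\delta_{\sigma_0}-\mathrm{rep}_*\mathrm{Law}(g_t)\|_{\TV}\le\Prob(T_0>t)$, while $\mathrm{rep}_*\mathrm{Law}(g_t)$ is at $\TV$-distance exactly $d(t)$ from $\bU_{\sR_m}$ by the translation argument of the first step. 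Third, the quantitative input: Proposition \ref{recurrent_state_hitting_time_proposition} gives $\Prob(T_0>n)=o(1)$ uniformly over $\sigma_0\in\sS_m$ once $n$ exceeds a fixed constant multiple of $m^2\sqrt{\log m}$, and $t=Cm^2\log m$ dominates $m^2\sqrt{\log m}$, so $\Prob(T_0>t)=o(1)$ uniformly. Putting these together, $\bigl|\,\|P_m^t\delta_{\sigma_0}-\bU_{\sR_m}\|_{\TV}-d(t)\,\bigr|\le\Prob(T_0>t)=o(1)$ uniformly in $\sigma_0$, which with the first step proves the proposition.

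The one point that has to be handled with care --- and which drives the whole proof --- is the identity $[\sigma_t]=[\sigma_0]+S_t$ together with $\sigma_t=\mathrm{rep}(g_t)$ for $t\ge T_0$; everything after that is a one-line coupling estimate plus Proposition \ref{recurrent_state_hitting_time_proposition}. The reason this identity is decisive is that it makes it unnecessary to understand the law of the entry state $\sigma_{T_0}$, which is $\sigma_0$-dependent and far from uniform: since the total-variation distance from $\bU_{\sR_m}$ depends only on the number of completed group-walk steps and not on the base point, after the hitting time every starting state behaves the same, and only the tail of $T_0$ can produce a discrepancy.
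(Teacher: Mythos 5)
Your proof is correct and takes essentially the same approach as the paper's. The paper phrases the coupling by comparing $P_m^t\delta_{\sigma_0}$ with $P_m^t\delta_{\sigma_1}$, where $\sigma_1 = \mathrm{rep}([\sigma_0])$ is the recurrent representative of $\sigma_0$'s class, and then invokes transitivity of the chain on $\sR_m$ to swap $\sigma_1$ for $\sigma\equiv 3$; your version tracks $\mathrm{rep}(g_t)$ directly, which is exactly the chain started from $\sigma_1$, so the two are the same coupling spelled out in slightly different notation.
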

\begin{proof}
	Given $\sigma_0 \in \sS_m$, let $\sigma_1 \in \sR_m$ be the unique recurrent state in the equivalence class $\sigma_0 + \Delta' \zed^{\bT_m\setminus\{(0,0)\}}$. By Proposition \ref{recurrent_state_hitting_time_proposition}, $P_m^t\delta_{\sigma_0}(\sR_m) = 1 - o(1)$, and thus $\left\|P_m^t \delta_{\sigma_0} - P_m^t \delta_{\sigma_1}\right\|_{\TV} = o(1)$. Since the chain restricted to $\sR_m$ is transitive, it follows from the triangle inequality that
	\begin{align}
	&\Big|\left\|P_m^{t}\delta_{\sigma_0} -\bU_{\sR_m}\right\|_{\TV} - \left\|P_m^{t}\delta_{\sigma \equiv 3}- \bU_{\sR_m} \right\|_{\TV} \Big| \\
	\notag &\quad = \Big|\left\|P_m^{t}\delta_{\sigma_0} -\bU_{\sR_m}\right\|_{\TV} - \left\|P_m^{t}\delta_{\sigma_1}- \bU_{\sR_m} \right\|_{\TV} \Big| \\
	\notag &\quad \leq \left\|P_m^t \delta_{\sigma_0} - P_m^t \delta_{\sigma_1}\right\|_{\TV} = o(1).	\qedhere
	\end{align}
\end{proof}

\subsection{Random walk on the sandpile group}
\label{Random_walk_on_the_sandpile_group}
Going forward we assume that the sandpile Markov chain is started from the deterministic recurrent state $\sigma \equiv 3$ so that the dynamics is reduced to a random walk on the abelian group $\sG_m$. In general, for any random walk on a finite abelian group $\sG$ driven by the measure $\mu$, the eigenfunctions of the transition kernel are given by the dual group, which is the additive group of characters $\hat{\sG} = \{\xi : \sG \to \bR / \zed\}$. If $\xi \cdot g$ denotes the image of $g \in \sG$ under $\xi \in \hat{\sG}$, then the eigenfunction corresponding to $\xi$ is $f_\xi(g) = e(\xi \cdot g)$. The corresponding eigenvalue is the Fourier coefficient of $\mu$ at frequency $\xi$, namely $\hat{\mu}(\xi) = \sum_{g \in \sG} \mu(g) e(\xi \cdot g)$.

The sandpile chain on $\sG_m$ is driven by the measure
\begin{equation}
 \mu := \frac{1}{m^2}\left(\delta_0 + \sum_{x \in \bT_m \setminus \{(0,0)\}}\delta_{\e_x}
\right)
\end{equation}
where, technically, $\e_x$ refers to the equivalence class $\e_x + \Delta' \zed^{\bT_m \setminus \{(0,0)\}} \in \sG_m$, and $0 \in \sG_m$ is the identity. The dual group of $\sG_m$ is
\begin{equation}
\label{hat_G_m}
 \hat{\sG}_m =
(\Delta')^{-1}\zed^{\bT_m\setminus\{(0,0)\}}/\zed^{\bT_m\setminus\{(0,0)\}}.
\end{equation}
This can be seen by dualizing \eqref{G_m}; a bare-hands proof is given in Section 3 of \cite{JLP15}. To define the meaning of $\xi \cdot g$ in this setting, we can view each frequency $\xi \in \hat{\sG}_m$ as a function from $\bT_m \setminus \{(0,0)\}$ to $\bR / \zed$, and each group element $g \in \sG_m$ as an equivalence class $\sigma + \Delta' \zed^{\bT_m \setminus \{(0,0)\}}$, where $\sigma \in \zed^{\bT_m \setminus \{(0,0)\}}$. Then, $\xi \cdot g = \sum_{x \in \bT_m \setminus \{(0,0)\}} \xi_x \sigma_x \in \bR / \zed$, whose value does not depend on the choice of the representative $\sigma$ in the equivalence class. The eigenvalue corresponding to $\xi$ is
\begin{equation}\label{def_fourier_coefficient}
 \hat{\mu}(\xi) = \frac{1}{m^2}\left(1 + \sum_{x \in \bT_m \setminus \{(0,0)\}}e(\xi_x) \right).
\end{equation}

Given $\xi: \bT_m \setminus \{(0,0)\} \to \bR / \zed$, which may or may not be in $\hat{\sG}_m$, set $v = \Delta' \xi$ (which is also $\bR / \zed$-valued). Extend $\xi$ to the domain $\bT_m$ by setting $\xi(0,0) = 0$. Then $\Delta \xi(x) = v(x)$ for all $x \in \bT_m \setminus \{(0,0)\}$, and since the columns of $\Delta$ all sum to zero, $\Delta \xi(0,0) = -\sum_{x \neq (0,0)} v(x)$. From \eqref{hat_G_m}, $\xi \in \hat{\sG}_m$ if and only if $v \equiv 0$, which holds if and only if $\Delta \xi \equiv 0$. This justifies the description of $\hat{\sG}_m$ in Section \ref{Discussion of method} as the additive group of functions $\xi : \bT_m \to \bR / \zed$ such that $\xi(0,0) = 0$ and $\Delta \xi \equiv 0$ in $\bR / \zed$. From this point forward, when we refer to a frequency $\xi \in \hat{\sG}_m$, we mean a function that meets these conditions.

In \cite{JLP15}, $\hat{\sG}_m$ was identified with the group of `multiplicative harmonic functions,' which in the present setting are the maps from $\bT_m$ to $\bC^*$ given by $x \mapsto e(\xi_x)$.

Abusing notation slightly, define  for any $\bR$-valued or $\bR / \zed$-valued function $\xi$ on $\bT_m$,
\begin{equation}\hat{\mu}(\xi) := \E_{x \in \bT_m}\left[e(\xi_x)\right].\end{equation}
When in fact $\xi \in \hat{\sG_m}$, this definition agrees with \eqref{def_fourier_coefficient}.

\subsection{Representations for frequencies}
\label{Representations_for_frequencies}
  We use a concrete description of
the frequencies in terms of the Green's function, which
associates to the frequencies an approximate partial ordering.
To describe this, given $\xi \in \hat{\sG}_m$ recall that a `prevector' for $\xi$ is any integer-valued vector $\Delta \xi'$, where $\xi' : \bT_m \to \bR$ reduces mod $\zed$ to $\xi$. We choose a particular representative $\xi' : \bT_m \to (-1,1)$ by letting 
\begin{equation}
 C(\xi) = \frac{1}{2\pi} \arg \left( \hat{\mu}(\xi) \right) \in 
\textstyle{\left[ -\frac{1}{2}, \frac{1}{2} \right)}
\end{equation}
and choosing each $\xi_x' \in
\left(C(\xi) - \frac{1}{2}, C(\xi) + \frac{1}{2}\right]$. The `distinguished prevector' of $\xi$ is then given by
\begin{equation}
v = v(\xi) := \Delta \xi'.
\end{equation}
Note that $v : \bT_m \to \zed$ has mean zero and satisfies $\|v\|_{L^\infty} \leq 3$.

\begin{lemma}\label{l_2_fourier_lemma}
For every $\xi \in \hat{\sG}_m$, the distinguished prevector of $\xi$ satisfies
\begin{equation}
 1 - \left|\hat{\mu}(\xi)\right| \gg \frac{\|v(\xi)\|_2^2}{m^2} \geq 
\frac{\|v(\xi)\|_1}{m^2}.
\end{equation}
\end{lemma}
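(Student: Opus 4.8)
The plan is to estimate $1 - |\hat\mu(\xi)|$ from below directly in terms of the distinguished prevector $v = v(\xi) = \Delta\xi'$. The starting point is the identity $\hat\mu(\xi) = \E_{x\in\bT_m}[e(\xi_x)]$. Multiplying $\hat\mu(\xi)$ by $e(-C(\xi))$ rotates it onto the positive real axis, so that
\begin{equation}
|\hat\mu(\xi)| = \RE\big(e(-C(\xi))\hat\mu(\xi)\big) = \E_{x\in\bT_m}\big[c(\xi_x - C(\xi))\big].
\end{equation}
Hence
\begin{equation}
1 - |\hat\mu(\xi)| = \E_{x\in\bT_m}\big[1 - c(\xi_x - C(\xi))\big] = \frac{1}{m^2}\sum_{x\in\bT_m}\big(1 - c(\xi_x' - C(\xi))\big),
\end{equation}
where $\xi'$ is the chosen representative with each $\xi_x' \in (C(\xi)-\tfrac12, C(\xi)+\tfrac12]$, so that $|\xi_x' - C(\xi)| \leq \tfrac12$. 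Using the elementary inequality $1 - c(t) \geq 8t^2$ for $|t|\leq \tfrac12$ (already invoked in the proof of Theorem \ref{quantitative_theorem}), this gives
\begin{equation}
\label{lower_on_gap_via_xi}
1 - |\hat\mu(\xi)| \geq \frac{8}{m^2}\sum_{x\in\bT_m}\big(\xi_x' - C(\xi)\big)^2 = \frac{8}{m^2}\,\big\|\xi' - C(\xi)\big\|_2^2.
\end{equation}

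Next I would pass from $\xi' - C(\xi)$ to $v$ using the operator bound on the Laplacian. Since $\Delta$ annihilates constants, $\Delta(\xi' - C(\xi)) = \Delta\xi' = v$, and by the Lemma bounding $\|\Delta\|_{L^2(\bT_m)\to L^2(\bT_m)} \leq 8$,
\begin{equation}
\|v\|_2^2 = \big\|\Delta(\xi' - C(\xi))\big\|_2^2 \leq 64\,\big\|\xi' - C(\xi)\big\|_2^2.
\end{equation}
Combining this with \eqref{lower_on_gap_via_xi} yields
\begin{equation}
1 - |\hat\mu(\xi)| \geq \frac{8}{m^2}\cdot\frac{\|v\|_2^2}{64} = \frac{\|v\|_2^2}{8m^2},
\end{equation}
which is the first inequality $1 - |\hat\mu(\xi)| \gg \|v(\xi)\|_2^2/m^2$. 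Finally, the second inequality $\|v\|_2^2 \geq \|v\|_1$ is immediate because $v$ is integer-valued: each nonzero entry satisfies $v(x)^2 \geq |v(x)|$, so $\sum_x v(x)^2 \geq \sum_x |v(x)| = \|v\|_1$. (One could alternatively note $\|v\|_\infty \leq 3$, but integrality alone suffices here.)

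I do not anticipate a serious obstacle: the argument is a short chain of standard inequalities. The one point requiring a little care is the reduction to the real part — one must use precisely the representative $\xi'$ centered at $C(\xi) = \frac{1}{2\pi}\arg\hat\mu(\xi)$, since that is exactly what makes $e(-C(\xi))\hat\mu(\xi)$ real and nonnegative and simultaneously guarantees $|\xi_x' - C(\xi)| \leq \tfrac12$, which is the range of validity of $1 - c(t)\geq 8t^2$. The definition of the distinguished prevector in Section \ref{Representations_for_frequencies} was set up precisely so that these two requirements are met at once, so the proof goes through cleanly.
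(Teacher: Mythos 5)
Your proof is correct and follows the paper's argument essentially verbatim: the quantity $\xi'-C(\xi)$ you introduce is exactly the paper's $\xi^*$, and the chain of steps (rotation to the real axis, the bound $1-c(t)\gg t^2$ for $|t|\le\tfrac12$, the operator bound on $\Delta$, and integrality for $\|v\|_2^2\ge\|v\|_1$) is the same. No discrepancy to report.
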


\begin{proof}
Choose $\xi'$ as above, and define $\xi^* : \bT_m \to \left(-\frac{1}{2}, \frac{1}{2}\right]$ by
\begin{equation} 
 \xi^*_x = \xi'_x - C(\xi),
\end{equation}
so that $\Delta \xi^* = 
\Delta \xi' = v$ and
\begin{equation}
0 \leq |\hat{\mu}(\xi)| = \frac{1}{m^2} \sum_{x \in \bT_m} e\left( \xi^*_x 
\right) = \frac{1}{m^2} \sum_{x \in \bT_m} c\left( \xi^*_x \right).
\end{equation}
Approximating $1 - c(t) \gg t^2$ uniformly for $|t| \leq \frac{1}{2}$ yields
\begin{equation}
\label{xi^*_bound}
1 - |\hat{\mu}(\xi)| \gg \frac{\|\xi^*\|_2^2}{m^2}.
\end{equation}
Since $\Delta$ is bounded from $L^2(\bT_m) \to 
L^2(\bT_m)$,
\begin{equation}
\frac{\|v\|_2^2}{m^2} = \frac{\|\Delta \xi^*\|_2^2}{m^2} \ll 
\frac{\|\xi^*\|_2^2}{m^2} \ll 1 - |\hat{\mu}(\xi)|
\end{equation}
as desired. Finally, $\|v\|_2^2 \geq \|v\|_1$ since $v$ is integer-valued.
\end{proof}

To go in the reverse direction, for any $v \in \zed_0^{\bT_m}$ define $\overline{\xi} = G_{\bT_m} * v$, so that $\Delta \overline{\xi} = v$. Let $\xi''_x = \overline{\xi}_x - \overline{\xi}_{(0,0)}$, and set $\xi = \xi(v)$ to be the reduction mod $\zed$ of $\xi''$. Since $\xi''_{(0,0)} = 0$ and $\Delta \xi'' = v$, which is $\zed$-valued, it follows that $\xi \in \hat{\sG}_m$.

If $\xi_0 \in \hat{\sG}_m$ and $v = \Delta \xi'$ is any prevector of $\xi_0$, then $\xi(v) = \xi_0$; this is because $\Delta(\xi' - \xi'') \equiv 0$, so $\xi' - \xi'' \equiv c$ for some $c \in \bR$, and in fact $c = \xi'_{(0,0)} - \xi''_{(0,0)} \in \zed$. Also, if $v_0 \in \zed_0^{\bT_m}$ and $v$ is any prevector of $\xi(v_0)$, then $v_0 - v \in \Delta \zed^{\bT_m}$.

\begin{lemma}
\label{xi_bar}
Given $\xi \in \hat{\sG}_m$, let $v$ be any prevector of $\xi$ and let $\overline{\xi} = G_{\bT_m} * v$. Then $|\hat{\mu}(\xi)| = |\hat{\mu}(\overline{\xi})|$. If $v$ is the distinguished prevector of $\xi$, then in addition
\begin{equation}
\label{asymp_gap}
1 - |\hat{\mu}(\xi)| \asymp \frac{\|\overline{\xi}\|_2^2}{m^2}.
\end{equation}
\end{lemma}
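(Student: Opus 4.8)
The plan is to relate $\overline{\xi} = G_{\bT_m} * v$ directly to the real lift $\xi'$ that produced the prevector $v = \Delta \xi'$, using the pseudoinverse identity $G_{\bT_m} * \Delta f = f - \E_{x \in \bT_m}[f]$ recorded in Section \ref{background_section}, together with the comparison $t^2 \ll 1 - c(t) \ll t^2$ on $|t| \le \tfrac12$ and the global bound $1 - c(t) \le 2\pi^2 t^2$. For the first assertion, write $v = \Delta \xi'$ with $\xi'$ a real lift of $\xi$; the identity gives $\overline{\xi} = G_{\bT_m} * \Delta \xi' = \xi' - \E_{x \in \bT_m}[\xi']$, so $\overline{\xi}$ and $\xi'$ differ by a real constant. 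Since $e(\cdot)$ is $\zed$-periodic, $\hat{\mu}(\xi') = \E_{x \in \bT_m}[e(\xi'_x)] = \E_{x \in \bT_m}[e(\xi_x)] = \hat{\mu}(\xi)$, and shifting $\xi'$ by a real constant multiplies $\hat{\mu}$ by a unimodular factor; hence $|\hat{\mu}(\overline{\xi})| = |\hat{\mu}(\xi')| = |\hat{\mu}(\xi)|$. This step uses nothing about the choice of prevector.

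For the upper bound in \eqref{asymp_gap}, I would use $|z| \ge \RE z$ for every $z \in \bC$ and $\RE \hat{\mu}(\overline{\xi}) = \E_{x \in \bT_m}[c(\overline{\xi}_x)]$ to get
\[
1 - |\hat{\mu}(\xi)| \;=\; 1 - |\hat{\mu}(\overline{\xi})| \;\le\; \E_{x \in \bT_m}\bigl[1 - c(\overline{\xi}_x)\bigr] \;\le\; 2\pi^2\,\E_{x \in \bT_m}\bigl[\overline{\xi}_x^2\bigr] \;=\; \frac{2\pi^2 \|\overline{\xi}\|_2^2}{m^2},
\]
which again is valid for any prevector of $\xi$.

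For the matching lower bound when $v$ is the distinguished prevector, I would recycle the computation in the proof of Lemma \ref{l_2_fourier_lemma}: there $\xi^* = \xi' - C(\xi)$ takes values in $(-\tfrac12, \tfrac12]$, satisfies $\Delta \xi^* = v$, and by \eqref{xi^*_bound} obeys $1 - |\hat{\mu}(\xi)| \gg \|\xi^*\|_2^2/m^2$ — this is the one place where confining $\xi^*$ to $(-\tfrac12,\tfrac12]$ is essential, so that $1 - c(t) \gg t^2$ applies on its whole range. Applying the pseudoinverse identity once more gives $\overline{\xi} = G_{\bT_m} * \Delta \xi^* = \xi^* - \E_{x \in \bT_m}[\xi^*]$, so $\overline{\xi}$ is exactly the mean-zero part of $\xi^*$; being orthogonal to the constant $\E_{x \in \bT_m}[\xi^*]$, it satisfies $\|\overline{\xi}\|_2 \le \|\xi^*\|_2$. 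Hence $1 - |\hat{\mu}(\xi)| \gg \|\xi^*\|_2^2/m^2 \ge \|\overline{\xi}\|_2^2/m^2$, and combining with the previous paragraph yields \eqref{asymp_gap}.

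I do not anticipate a genuine obstacle here: the whole argument is bookkeeping around the identity $G_{\bT_m} * \Delta = \id - \E_{x \in \bT_m}$ and the elementary two-sided comparison of $1 - c(t)$ with $t^2$. The only point that needs attention is that the lower bound in \eqref{asymp_gap} really does require $v$ to be the \emph{distinguished} prevector, so that the associated lift $\xi^*$ stays in $(-\tfrac12,\tfrac12]$, whereas the modulus identity $|\hat{\mu}(\xi)| = |\hat{\mu}(\overline{\xi})|$ and the upper bound hold for every prevector of $\xi$.
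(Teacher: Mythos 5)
Your proposal is correct and follows essentially the same route as the paper's proof: both establish the modulus identity via the pseudoinverse relation $G_{\bT_m} * \Delta \xi' = \xi' - \E_{x\in\bT_m}[\xi']$, both obtain the upper bound from $1 - |z| \le 1 - \RE z$ and the elementary comparison $1 - c(t) \ll t^2$, and both derive the lower bound by noting that $\overline{\xi}$ is the orthogonal projection of $\xi^*$ onto $L_0^2(\bT_m)$ so that $\|\overline{\xi}\|_2 \le \|\xi^*\|_2$, then invoking \eqref{xi^*_bound}. You also correctly flag the one subtlety, namely that the lower bound relies on the distinguished prevector precisely so that $\xi^*$ takes values in $(-\tfrac12,\tfrac12]$ where $1 - c(t) \gg t^2$ holds.
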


Equation \eqref{asymp_gap} is equivalent to Theorem 3.8 in \cite{JLP15}, and the argument below is the same as the proof given there.

\begin{proof}
Let $v = \Delta \xi'$, where $\xi': \bT_m \to \bR$ reduces mod $\zed$ to $\xi$. Then $\overline{\xi} = G_{\bT_m} * \Delta \xi' = \xi' - c$ where $c = \E_{x \in \bT_m}[\xi']$, so
\begin{equation}
\hat{\mu}(\overline{\xi}) = \E_{x \in \bT_m}[e(\xi'_x - c)] = e(-c) \hat{\mu}(\xi') = e(-c) \hat{\mu}(\xi)
\end{equation}
and therefore $|\hat{\mu}(\overline{\xi})| = |\hat{\mu}(\xi)|$.

To prove the upper bound in \eqref{asymp_gap},
\begin{align}
1 - |\hat{\mu}(\xi)| &= 1 - |\hat{\mu}(\overline{\xi})| \leq 1 - \RE \hat{\mu}(\overline{\xi}) = \frac{1}{m^2} \sum_{x \in \bT_m} \left[ 1 - c(\overline{\xi}_x) \right] \\
\notag &\ll \frac{1}{m^2} \sum_{x \in \bT_m} |\overline{\xi}_x|^2 = \frac{\|\overline{\xi}\|_2^2}{m^2}.
\end{align}
For the lower bound, define $\xi^*$ as in the proof of Lemma \ref{l_2_fourier_lemma} and observe that $\overline{\xi} = G_{\bT_m} * \Delta \xi^* = \xi^* - \E_{x \in \bT_m}[\xi^*]$ is the orthogonal projection of $\xi^*$ onto $L_0^2(\bT_m)$. Thus $\|\overline{\xi}\|_2^2 \leq \|\xi^*\|_2^2$, and the result follows from \eqref{xi^*_bound}.
\end{proof}

\section{Spectral estimates}\label{spectral_gap_section} 
This section reduces the determination of the spectral gap to a finite check, and provides additive savings estimates for separated spectral components. Lemma \ref{l_2_fourier_lemma} implies that each nonzero frequency $\xi \in \hat{\sG}_m$ satisfies $1 - |\hat{\mu}(\xi)| \gg 1/m^2$, and if $1 - |\hat{\mu}(\xi)| \leq c/m^2$, then the $L^1$ norm of the distinguished prevector $v(\xi)$ must be bounded by a constant depending only on $c$. Section \ref{Determination_of_gap} develops tools to deal with prevectors that have bounded $L^1$ norm, providing control over those frequencies that achieve the spectral gap or approach it to within a constant factor. This proves Theorem \ref{spectral_gap_theorem} and does most of the work for the lower bound in Theorem \ref{mixing_time_theorem}.

Section \ref{small_phase_section} extends the analysis to prevectors whose $L^1$ norm increases with $m$, but which are sparse enough that their supports can be partitioned into widely separated clusters. This provides the main ingredient for the upper bound in Theorem \ref{mixing_time_theorem}. As we will show in Section \ref{proof_mixing_theorem_section}, if $\xi$ is a frequency for which $v(\xi)$ is not sparse, then the Lemma \ref{l_2_fourier_lemma} lower bound on $1 - |\hat{\mu}(\xi)|$ shows that the contribution of $\xi$ is negligible when computing the mixing time.

To fix ideas, given $\xi \in \hat{\sG}_m$ recall that $\hat{\mu}(\xi) = \E_{x \in \bT_m}\left[e(\xi_x) \right]$. For any subset $S \subset \bT_m$, it is evident that
\begin{equation}
\left| \sum_{x \in S} e(\xi_x) \right| \leq |S|.
\end{equation}
The `savings from $S$' for the frequency $\xi$, denoted by $\sav(\xi;S)$, is the amount by which the left side falls short of this upper bound:
\begin{equation}
\label{savings_def}
\sav(\xi; S) := |S| - \left|\sum_{x \in S}e(\xi_x)\right|.
\end{equation}
By the triangle inequality, if $S_1,S_2 \subset \bT_m$ are disjoint then
\begin{equation}
\sav(\xi; S_1) + \sav(\xi; S_2) \leq \sav(\xi; S_1 \cup S_2).
\end{equation}
The `total savings' for $\xi$ is defined by
\begin{equation}
\label{savings_def_2}
\sav(\xi) := \sav(\xi; \bT_m) = m^2 - \left|\sum_{x \in \bT_m} e(\xi_x) \right|
\end{equation}
and satisfies
\begin{equation}
1 - \left|\hat{\mu}(\xi)\right| = \frac{\sav(\xi)}{m^2}.
\end{equation}
The notion of savings is well-suited for proving lower bounds on the gap $1 - |\hat{\mu}(\xi)|$. Specifically, if $S_1,\ldots,S_k$ are disjoint subsets of $\bT_m$ then
\begin{equation}
1 - |\hat{\mu}(\xi)| \geq \frac{1}{m^2} \sum_{i=1}^k \sav(\xi; S_i).
\end{equation}
The spectral gap of the sandpile Markov chain is
\begin{equation}
\gap_m = \min_{0 \neq \xi \in \hat{\sG}_m} \frac{\sav(\xi)}{m^2}.
\end{equation}
Observe that if $v$ is the distinguished prevector of $\xi \in \hat{\sG}_m$, then Lemma \ref{l_2_fourier_lemma} gives $\sav(\xi)\gg \|v\|_1$.  Also, given a set $S \subset \bT_m$ and a function $w$ on $\bT_m$, write $w|_S$ for the function which is equal to $w$ on $S$ and 0 on $S^c$.

\subsection{Determination of spectral gap up to finite check}
\label{Determination_of_gap}

Given constants $B,R > 0$, define the finite set
\begin{equation}
\sC(B,R) := \{v \in C^2(B_R(0)) :  \|v\|_1 \leq B \}.
\end{equation}
Here $B_R(0)$ is the $\ell^1$ ball of radius $R$ about $0$ in $\zed^2$, and $C^2(\cdot)$ is given by \eqref{C2}. Since $B_R(0)$ embeds into $\bT_m$ for each $m > 2R$, we can view each $v \in \sC(B,R)$ as an element either of $C^2(\zed^2)$ or of $C^2(\bT_m)$ by setting $v \equiv 0$ outside $B_R(0)$.

For any $\bR$- or $\bR / \zed$-valued function $\xi$ on $\zed^2$, define the functional
\begin{equation}
\label{f_define}
f(\xi) := \sum_{x \in \zed^2} (1 - c(\xi_x)).
\end{equation}
We will see that this is the appropriate analogue to savings for functions on $\zed^2$. If $v \in C^2(\zed^2)$, then $f(G_{\zed^2} * v) < \infty$ by the bound $1 - c(t) \ll t^2$ combined with Lemma \ref{z_2_approx_lemma} or Lemma \ref{greens_function_derivs}. For such $v$, $f(G_{\zed^2} * v) = 0$ if and only if $G_{\zed^2} * v$ is $\zed$-valued. Since $G_{\zed^2} * v \in \ell^2(\zed^2)$, if it is $\zed$-valued then it must be finitely supported, and in addition we have $\Delta (G_{\zed^2} * v) = v$. Thus, $f(G_{\zed^2} * v) = 0$ precisely for those $v$ in the subset
\begin{equation}
\cI := \{\Delta w : w \in C^0(\zed^2)\} \subset C^2(\zed^2).
\end{equation}
If $v,v' \in C^2(\zed^2)$ and $v - v' \in \cI$, then $f(G_{\zed^2} * v) = f(G_{\zed^2} * v')$.

Set
\begin{equation}
\label{def_gamma}
\gamma := \inf\left\{ f(G_{\zed^2} * v) : v \in C^2(\zed^2) \,\setminus\, \cI \right\}.
\end{equation}
The following are the main results of this section. Together with the computation in Appendix \ref{spectral_gap_appendix}, they lead to a quick proof of Theorem \ref{spectral_gap_theorem}.

\begin{proposition}
\label{gap_achievers}
We have $\gamma > 0$, and there exist constants $B_0,R_0 > 0$ such that:
\begin{enumerate}[label=\arabic*.]
\item For sufficiently large $m$, any $\xi \in \hat{\sG}_m$ that achieves the spectral gap, $\sav(\xi) = m^2 \gap_m$, has a prevector $v$ which is a translate of some $v' \in \sC(B_0,R_0) \subset C^2(\bT_m)$.
\item For any $v \in C^2(\zed^2)$ satisfying $f(G_{\zed^2} * v) < \frac{3}{2}\gamma$, there exists $v' \in \sC(B_0,R_0) \subset C^2(\zed^2)$ such that a translate of $v'$ differs from $v$ by an element of $\cI$. In particular, $f(G_{\zed^2} * v) = f(G_{\zed^2} * v')$.
\end{enumerate}
\end{proposition}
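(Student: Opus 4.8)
The plan is to prove the two assertions of Proposition \ref{gap_achievers} together, using a single clustering/separation argument combined with compactness over a finite set of local patterns. The underlying mechanism is the approximate locality of convolution with the Green's function: by Lemma \ref{greens_function_estimate} (on $\bT_m$) and Lemma \ref{z_2_approx_lemma} together with Lemma \ref{greens_function_derivs} (on $\zed^2$), the function $G * v$ at a point $x$ is dominated by the contributions of $v$ near $x$, with tails decaying like $\|x - \supp v\|_2^{-1}$. The key quantitative consequence, which I would isolate as a preliminary lemma, is a \emph{near-additivity of savings across well-separated clusters}: if the support of a mean-zero integer vector $v$ splits into pieces $v = v_1 + \cdots + v_k$ whose supports are pairwise at distance $\geq L$, then $f(G_{\zed^2} * v) \geq \sum_i f(G_{\zed^2} * v_i) - o_L(1)\sum_i \|v_i\|_1^2$ (and similarly for $\sav$ on the torus, in terms of $\sav(\xi;S_i)$ for the cluster neighborhoods $S_i$). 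This follows by expanding $1 - c((G*v)_x)$, using $|1 - c(s+t)| \geq |1-c(s)| - 2\pi|t|$ and the $\ell^1$-decay of cross-terms $\sum_x |(G*v_i)_x|\,|(G*v_j)_x|$, which the derivative estimates make summable.

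Granting that, I would argue as follows. First, $\gamma > 0$: a vector $v \in C^2(\zed^2) \setminus \cI$ with very small $f(G_{\zed^2}*v)$ forces $G_{\zed^2}*v$ to be close to an integer-valued function in $\ell^2$, hence (integer + small $\ell^2$) close to a finitely supported integer function $w$; then $v - \Delta w$ has tiny $f$-value and, being a genuine element of $C^2(\zed^2)$, by the maximum-principle rigidity from the proof of Theorem \ref{H_p_theorem} must be exactly $0$, contradicting $v \notin \cI$. (A cleaner route: show the infimum in \eqref{def_gamma} is attained — by the clustering argument below it is attained on a vector of bounded norm and support — and a minimizer cannot have $f$-value $0$.) Second, for the structural claims, suppose $v \in C^2(\zed^2)$ has $f(G_{\zed^2}*v) < \tfrac32\gamma$. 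If $\supp v$ contained two clusters separated by distance $L$ with $L$ a large enough absolute constant, near-additivity would give $f(G_{\zed^2}*v) \geq f(G_{\zed^2}*v_1) + f(G_{\zed^2}*v_2) - (\text{small})$; but each $v_i$ is itself in $C^2(\zed^2)$ (one checks the first and second moment conditions are inherited, possibly after absorbing a bounded correction in $\cI$ using \eqref{C2}), so each term is either $0$ or $\geq \gamma$, and not both can be $0$ since $v \notin \cI$ would be violated — this forces $f(G_{\zed^2}*v) \gtrsim \gamma - \text{small} > \tfrac32 \gamma$ unless in fact the whole support lies in a single cluster of diameter $O(L)$. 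This bounds $R$. For the norm bound $B$: once the support has diameter $O(L) = O(1)$, a large $\|v\|_1$ makes $\|G_{\zed^2}*v\|_2^2$ large by the $L^2$-invertibility $\|v\|_2 \leq 8\|G_{\zed^2}*v\|_2$ of $\Delta$ on $\ell^2(\zed^2)$, and since $1 - c(t) \gg \min(t^2, 1)$ one sees $f(G_{\zed^2}*v)$ grows past $\tfrac32\gamma$ once $\|v\|_1$ exceeds some $B_0$ — after subtracting off the at-most-finitely-many "integer spikes" where $|(G_{\zed^2}*v)_x| \geq \tfrac14$, which is exactly what the correction by an element of $\cI$ accomplishes. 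This proves part 2.

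For part 1, I would transfer the $\zed^2$ picture to the torus. If $\xi \in \hat{\sG}_m$ achieves $\sav(\xi) = m^2 \gap_m$, then by Lemma \ref{l_2_fourier_lemma} its distinguished prevector $v = v(\xi)$ has $\|v\|_1 = O(m^2 \gap_m)$; since $\gap_m = O(1/m^2)$ (exhibited by, e.g., a single translate of $D_1 D_2 G_{\bT_m}$, whose savings is $O(1)$), this gives $\|v\|_1 = O(1)$ outright. Applying the torus near-additivity, if $\supp v$ split into two clusters at distance $\geq L$ (with $L$ constant, legitimate since $m \to \infty$) then zeroing out all but the cluster of smaller savings would produce a frequency with strictly smaller total savings, contradicting minimality — unless that smaller cluster has savings $0$, i.e. the restricted prevector lies in $\Delta \zed^{\bT_m}$, but then it may be dropped with no change, reducing to the other cluster. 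Iterating, $v$ is (up to translation) supported in a ball of radius $R_0 = O(L)$, with $\|v\|_1 \leq B_0$; restricting a $C^2(\bT_m)$ vector to such a ball and correcting by an element of $\{\Delta w : w \in C^0(B_{R_0})\}$ lands it in $\sC(B_0, R_0)$. Finally one uses Lemma \ref{z_2_approx_lemma}, $D_1^aD_2^b G_{\bT_m} \to D_1^a D_2^b G_{\zed^2}$ pointwise with uniform tail control from Lemma \ref{greens_function_estimate}, to identify $\sav(\xi; B_{R_0'}(\text{cluster}))$ with $f(G_{\zed^2} * v') + o(1)$ on the relevant finite window, so that the finite list $\sC(B_0,R_0)$ indeed captures the gap achievers for large $m$.

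The main obstacle I anticipate is making the near-additivity lemma genuinely quantitative with the right error term — specifically, controlling the cross-cluster interaction $\sum_x |(G*v_i)_x|\,|(G*v_j)_x|$ when $v_i, v_j$ are only in $C^1$ rather than $C^2$ (so $G*v_i$ decays like $\|x\|_2^{-1}$, and the product is $\|x\|_2^{-2}$, which is only barely non-summable over an annulus). The resolution is that $v$ as a whole lies in $C^2(\zed^2)$, so after a bounded modification each \emph{individual} cluster can be taken in $C^2$ as well (the first-moment obstruction is global and can be moved into a single cluster), restoring $\|x\|_2^{-2}$ decay of $G*v_i$ and $\|x\|_2^{-4}$ summable cross terms; alternatively, on the torus one works directly with $\sav$ and the triangle inequality $\sav(\xi;S_1) + \sav(\xi;S_2) \leq \sav(\xi;S_1 \cup S_2)$, needing only that the \emph{excluded} region $\bT_m \setminus (S_1 \cup S_2)$ contributes negligibly, which follows from $\ell^1$ bounds on $G_{\bT_m}*v$ away from $\supp v$ via Lemma \ref{greens_function_estimate}. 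Handling this bookkeeping cleanly — and in particular ensuring the constants $B_0, R_0$ can be chosen uniformly for the two parts — is where the real work lies; everything else is the maximum principle plus the decay estimates already in hand.
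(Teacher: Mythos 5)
Your overall strategy---clustering the support, exploiting near-additivity of savings across well-separated clusters, and reducing to a finite check via an $L^1$ bound obtained from the $L^2$-boundedness of $\Delta$---matches the paper's plan, and your argument that $\gamma > 0$ via integer rigidity is essentially property (II) in the paper's proof. However, there is a genuine gap in the way you handle the cross-term summability issue that you yourself flag as the main obstacle.

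You claim that ``after a bounded modification each \emph{individual} cluster can be taken in $C^2$ as well (the first-moment obstruction is global and can be moved into a single cluster),'' by absorbing a correction in $\cI$. This does not work: every element of $\cI = \{\Delta w : w \in C^0(\zed^2)\}$ already lies in $C^2(\zed^2)$ (it has vanishing total mass and vanishing first moment), so adding an element of $\cI$ supported near a cluster cannot change the moment class of that cluster. If $v = v_1 + v_2 \in C^2$ with, say, $\sum v_1 = -\sum v_2 = 1$, then neither $v_1$ nor $v_2$ is in $C^1$, and no $\cI$-correction localized to either cluster fixes this; the obstruction can only be cancelled by transporting mass between clusters, which destroys the separation you are relying on. The same problem afflicts the fallback torus argument: the subadditivity $\sav(\xi;S_1) + \sav(\xi;S_2) \leq \sav(\xi;S_1\cup S_2)$ points the right way, but you still need a lower bound on $\sav(\xi;S_i)$ in terms of the isolated phase $G*v|_{S_i}$, and when $v|_{S_i} \notin C^2$ the phase decays only like $\|x\|^{-1}$ or worse, so the cross-term estimate you sketch is not available.

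The paper resolves this by \emph{not} trying to make sub-clusters lie in $C^2$. Instead, Lemma \ref{C_0_1_lemma} shows that if $v|_{B_{R_1}(x)} \notin C^2(\bT_m)$ and the rest of the support is far away, then $\sav(\xi; B_{R_2}(x)) \geq A$ for any prescribed $A$ (taking $R_2$ large), because a non-$C^2$ local pattern forces $G*v|_{B_{R_1}(x)}$ to wind around $\bR/\zed$ (the $C^0\setminus C^1$ case) or decay only like $1/r$, producing $\asymp \log R$ savings (the $C^1\setminus C^2$ case). Choosing $A = \frac{3}{2}\gamma+1$, a single non-$C^2$ sub-cluster is then incompatible with $\sav(\xi) < \frac{3}{2}\gamma+1$, so all surviving sub-clusters \emph{are} in $C^2$ and the cross-term analysis in your sketch goes through from that point. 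Once you replace your ``move the obstruction'' step with this dichotomy (Lemmas \ref{C_0_1_lemma}/\ref{C_2_lemma}), the rest of your outline---including the bookkeeping for selecting a disjoint sub-collection of clusters and the transfer between $\bT_m$ and $\zed^2$ via Lemma \ref{z_2_approx_lemma}---lines up with the paper's proof.

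Two smaller points: in Part 1 you should not conclude ``$\|v\|_1 = O(1)$ outright'' from $\gap_m = O(1/m^2)$ alone; you need a specific test frequency (the paper uses a fixed $v' \in C^2(\zed^2)\setminus\cI$ and Proposition \ref{T_m_to_Z^2}) to bound $m^2 \gap_m$ by a constant and then invoke Lemma \ref{l_2_fourier_lemma}. And in the two-cluster argument, the conclusion from near-additivity should be that $\sav(\xi) > \frac{3}{2}\gamma_0$ when two essential ($\notin\cI$) clusters survive, which contradicts $\sav(\xi) < \frac{3}{2}\gamma_0$; phrasing it as ``zeroing one out would decrease savings'' is not quite what you want, since you also need the surviving cluster to be a valid prevector, which requires it to differ from $v$ by $\Delta w$, $w \in \zed^{\bT_m}$.
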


\begin{proposition}
\label{T_m_to_Z^2}
Fix $B,R_1 > 0$. For any $v \in \sC(B,R_1)$ and $m > 2R_1$, let $\xi^{(m)} = \xi^{(m)}(v)$ be the frequency in $\hat{\sG}_m$ corresponding to $v$, namely
\begin{equation}
\xi^{(m)}_x = (G_{\bT_m} * v)(x) - (G_{\bT_m} * v)(0,0) \quad \textnormal{(reduced mod $\zed$),}
\end{equation}
and let $\xi = \xi(v) = G_{\zed^2} * v$. Then
\begin{equation}
\sav(\xi^{(m)}) \to f(\xi) \quad \text{as } m \to \infty.
\end{equation}
\end{proposition}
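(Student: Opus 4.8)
The plan is to identify $\sav(\xi^{(m)})$, up to an error of size $O(1/m^2)$, with the real-variable functional
\[
F_m := \sum_{x\in\bT_m}\bigl(1 - c\bigl((G_{\bT_m}*v)(x)\bigr)\bigr),
\]
and then to show $F_m \to f(\xi)$ by dominated convergence, using the uniform decay of the second discrete derivatives of $G_{\bT_m}$ from Lemma~\ref{greens_function_estimate} together with their pointwise convergence to those of $G_{\zed^2}$ from Lemma~\ref{z_2_approx_lemma}.

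First I would reduce to $F_m$. Since $\xi^{(m)}_x$ is the reduction mod $\zed$ of $(G_{\bT_m}*v)(x) - (G_{\bT_m}*v)(0,0)$, the factor $e(-(G_{\bT_m}*v)(0,0))$ pulls out of the sum and
\[
\sav(\xi^{(m)}) = m^2 - \Bigl|\sum_{x\in\bT_m} e(\xi^{(m)}_x)\Bigr| = m^2 - \Bigl|\sum_{x\in\bT_m} e\bigl((G_{\bT_m}*v)(x)\bigr)\Bigr| =: m^2 - |S_m|
\]
(this also follows directly from Lemma~\ref{xi_bar}). The function $\overline{\xi}^{(m)} := G_{\bT_m}*v$ is real-valued and has mean zero because $G_{\bT_m}$ does. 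Hence $\RE S_m = \sum_x c(\overline{\xi}^{(m)}_x) = m^2 - F_m$, while the elementary bounds $|\sin u - u|\le|u|^3/6$, $0\le 1-\cos u\le u^2/2$ and the identity $\sum_x\overline{\xi}^{(m)}_x = 0$ give $|\IM S_m| \ll \sum_x|\overline{\xi}^{(m)}_x|^3$ and $0\le F_m\ll\sum_x|\overline{\xi}^{(m)}_x|^2$. Granting the uniform summability established next, $F_m = O(1)$, so $\RE S_m\ge m^2/2$ for large $m$ and
\[
\sav(\xi^{(m)}) = m^2 - |S_m| = F_m - \frac{(\IM S_m)^2}{|S_m|+\RE S_m} = F_m + O(1/m^2).
\]

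Second I would prove a decay estimate for $\overline{\xi}^{(m)}$ that is uniform in $m$. Because $v\in\sC(B,R_1)\subset C^2(\zed^2)$, it is a finite integer combination --- the same for every $m>2R_1$ --- of translates, by vectors of $\ell^1$-length $\ll_{R_1}1$, of $\delta_1^{*2},\delta_1*\delta_2,\delta_2^{*2}$, with the number of terms $\ll_{B,R_1}1$; convolving with $G_{\bT_m}$ turns this into the corresponding combination of translates of $D_1^2 G_{\bT_m},D_1D_2 G_{\bT_m},D_2^2 G_{\bT_m}$. Lemma~\ref{greens_function_estimate} bounds each such second derivative by $\ll(1+\|\cdot\|_2^2)^{-1}$ on the fundamental box $\{|i|,|j|\le m/2\}$, and since translating by a bounded vector changes the $\ell^2$-norm of the minimal representative by $\ll_{R_1}1$, summing the $\ll_{B,R_1}1$ terms yields
\[
\bigl|(G_{\bT_m}*v)(x)\bigr| \ll_{B,R_1} \frac{1}{1+\|x\|_2^2}\qquad(x\in\bT_m,\ \text{minimal representative})
\]
for all large $m$. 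Therefore $0\le 1-c(\overline{\xi}^{(m)}_x)\ll_{B,R_1}(1+\|x\|_2^2)^{-2}$, a function of $x$ alone that is summable over $\zed^2$; this also supplies the uniform control on $\sum_x|\overline{\xi}^{(m)}_x|^2$ and $\sum_x|\overline{\xi}^{(m)}_x|^3$ used above.

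Finally I would pass to the limit. Identifying $\bT_m$ with the box $D_m=\{x\in\zed^2:|x_1|,|x_2|\le m/2\}$, we have $F_m=\sum_{x\in\zed^2}\mathbf 1\{x\in D_m\}\bigl(1-c((G_{\bT_m}*v)(x))\bigr)$. For each fixed $x\in\zed^2$, $x\in D_m$ for all large $m$, and Lemma~\ref{z_2_approx_lemma}, applied to each of the finitely many second-derivative translates making up $G_{\bT_m}*v$, gives $(G_{\bT_m}*v)(x)\to(G_{\zed^2}*v)(x)=\xi_x$, so the summand tends to $1-c(\xi_x)$; the $m$-independent summable dominating function from the previous step then lets dominated convergence apply, giving $F_m\to\sum_{x\in\zed^2}(1-c(\xi_x))=f(\xi)$. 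Combined with $\sav(\xi^{(m)})=F_m+O(1/m^2)$ this proves the proposition. The one delicate point is the uniform decay estimate of the second step --- specifically its validity near the boundary of the fundamental box and the decomposition of $v$ into boundedly-supported ``primitives'' with controlled $\ell^1$ norm; once those are in hand the remaining pieces are routine.
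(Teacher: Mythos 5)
Your proof is correct and follows essentially the same route as the paper's: reduce savings to $\sum_x(1-c(\overline{\xi}_x))$ via the mean-zero property of $G_{\bT_m}*v$ together with the real-part inequality \eqref{real_part_approx}, establish a uniform $1/(1+\|x\|_2^2)$ decay by decomposing $v$ into $O_{B,R_1}(1)$ translates of second differences and invoking Lemma~\ref{greens_function_estimate}, and then pass to the limit termwise using Lemma~\ref{z_2_approx_lemma}. The only cosmetic difference is that you phrase the last step as dominated convergence with the $m$-independent summable dominant, whereas the paper carries out an explicit two-parameter truncation (first $m\to\infty$ on a fixed ball of radius $R$, then $R\to\infty$).
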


Part 2 of Proposition \ref{gap_achievers} implies that
\begin{equation}
\label{gamma_2}
\gamma = \min\left\{ f(G_{\zed^2} * v) : v \in \sC(B_0,R_0) \,\setminus\, \cI \right\},
\end{equation}
which reduces the computation of $\gamma$ to a finite check. In Appendix \ref{spectral_gap_appendix} we verify that $\gamma$ is obtained by $\xi = G_{\zed^2} * \delta_1 *\delta_2$ with numerical value
\begin{equation}
\label{gamma_value}
\gamma = 2.868114013(4).
\end{equation}

\begin{proof}[Proof of Theorem \ref{spectral_gap_theorem}]
In this proof we use the notation $\xi(v) = G_{\zed^2} * v$. Take the constants $B_0,R_0$ from Proposition \ref{gap_achievers} and find $\gamma' > \gamma$ such that if $v \in \sC(B_0,R_0)$ and $f(\xi(v)) > \gamma$, then $f(\xi(v)) \geq \gamma'$. Applying Proposition \ref{T_m_to_Z^2} with $B = B_0$ and $R_1 = R_0$, choose $m$ large enough that
\begin{equation}
g(m) := \sup_{v \in \sC(B_0,R_0)} |\sav(\xi^{(m)}(v)) - f(\xi(v))| < \frac{\gamma' - \gamma}{2}.
\end{equation}
Let $v_0 \in \sC(B_0,R_0)$ satisfy $f(\xi(v_0)) = \gamma$, and let $\xi^{(m)}_0 = \xi^{(m)}(v_0) \in \hat{\sG}_m$. Then
\begin{equation}
\sav(\xi^{(m)}_0) < \frac{\gamma + \gamma'}{2}.
\end{equation}
Now suppose that $\xi^{(m)} \in \hat{\sG}_m$ achieves the spectral gap. By translating, we may assume that $\xi^{(m)}$ has a prevector $v \in \sC(B_0,R_0)$. We claim that $f(\xi(v)) = \gamma$: if not, then $f(\xi(v)) \geq \gamma'$ and
\begin{equation}
\sav(\xi^{(m)}) > \frac{\gamma + \gamma'}{2} > \sav(\xi^{(m)}_0),
\end{equation}
a contradiction. Thus $f(\xi(v)) = \gamma$, and
\begin{equation}
|m^2 \gap_m - \gamma| = |\sav(\xi^{(m)}) - f(\xi(v))| \leq g(m),
\end{equation}
with $g(m) \to 0$ as $m \to \infty$. Along with the formula \eqref{gamma_value} for $\gamma$, which is proved in Appendix \ref{spectral_gap_appendix}, this concludes the proof.
\end{proof}

In the process of proving Propositions \ref{gap_achievers} and \ref{T_m_to_Z^2}, we show two lemmas, Lemmas \ref{C_0_1_lemma} and \ref{C_2_lemma}, regarding savings in the neighborhood of the support of $v$ for prevectors $v \in \zed_0^{\bT_m}$ that have bounded $L^1$ norm. Note that if $\xi \in \hat{\sG}_m$ is the frequency corresponding to $v$ and $\overline{\xi} = G_{\bT_m} * v$, then for any $S \subset \bT_m$, the proof of Lemma \ref{xi_bar} implies that
\begin{equation}
\left| \sum_{x \in S} e(\xi_x) \right| = \left| \sum_{x \in S} e(\overline{\xi}_x) \right|,
\end{equation}
so all savings computations can be done using $\overline{\xi}$. Indeed, if we extend the definitions \eqref{savings_def}, \eqref{savings_def_2} from elements of $\hat{\sG}_m$ to all $\bR$- or $\bR/\zed$-valued functions on $\bT_m$, then $\sav(\xi;S) = \sav(\overline{\xi};S)$ and $\sav(\xi) = \sav(\overline{\xi})$.

\begin{lemma}\label{C_0_1_lemma}
	For all $A,B,R_1 > 0$ there exists an $R_2(A, B, R_1)>2R_1$ such that if $m$ is sufficiently large, then for any $x \in \bT_m$ and any $v \in \zed^{\bT_m}$ satisfying the following conditions:
	\begin{enumerate}
		\item $\|v\|_1 \leq B$
		\item  $v|_{B_{R_1}(x)} \not \in C^2(\bT_m)$
		\item $d\left(x, \supp v|_{B_{R_1}(x)^c} \right)> 2 R_2$
	\end{enumerate}
	we have
	\begin{equation}
	\sav\left( G_{\bT_m} * v; B_{R_2}(x) \right) \geq A.
	\end{equation}
	Thus, if $v$ has mean zero, then the corresponding frequency $\xi \in \hat{\sG}_m$ satisfies $\sav\left(\xi; B_{R_2}(x)\right) \geq A$.
\end{lemma}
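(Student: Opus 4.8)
The plan is to translate so that $x=(0,0)$, decompose $v = v_1 + v_2$ with $v_1 := v|_{B_{R_1}(0)}$ and $v_2 := v|_{B_{R_1}(0)^c}$, and write $\overline{\xi} := G_{\bT_m}\ast v = \overline{\xi}_1 + \overline{\xi}_2$ with $\overline{\xi}_i := G_{\bT_m}\ast v_i$. The idea is that $v_1 \notin C^2$ forces $\overline{\xi}_1$, hence $\overline{\xi}$, to spread out modulo $1$ on annuli around the origin, while $\overline{\xi}_2$ is negligible there. For the latter: by the third hypothesis, any $y \in B_{R_2}(0)$ and $z \in \supp v_2$ satisfy $\|y-z\|_1 > R_2$, so Lemma \ref{greens_function_estimate} gives $|D_i \overline{\xi}_2(y)| \ll \|v_2\|_1/R_2 \le B/R_2$ on $B_{R_2}(0)$; consequently $\overline{\xi}_2$ oscillates by $O(tB/R_2)$ on any subset of $B_{R_2}(0)$ of diameter $t$ (this is exactly why the third hypothesis uses $2R_2$). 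By the second hypothesis and \eqref{C2}, the lift of $v_1$ to $\zed^2$ falls into one of two cases: (A) $a := \sum_z v_1(z) \neq 0$, or (B) $a=0$ but $\mathbf{m} := \sum_z z\,v_1(z) \neq 0$, an integer vector with $|\mathbf{m}| \ge 1$.

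In either case I first control $\overline{\xi}_1$ away from $B_{R_1}(0)$. Writing $\overline{\xi}_1(y) = a\,G_{\bT_m}(y) + \sum_z\!\big(G_{\bT_m}(y-z)-G_{\bT_m}(y)\big)v_1(z)$ and estimating each difference as a sum of at most $R_1$ first derivatives of $G_{\bT_m}$ near $y$, Lemma \ref{greens_function_estimate} yields, in case (A), $\overline{\xi}_1(y) = a\,G_{\bT_m}(y) + O_{B,R_1}(|y|^{-1})$ for $|y| \gg R_1$; integrating the first-derivative asymptotics of Lemma \ref{green_function_differentiated_asymptotic} along a lattice path shows $G_{\bT_m}(y)-G_{\bT_m}(y') = -c\log(|y|/|y'|)+O(1/t)$ for $y,y'$ in the annulus $A_t := \{t \le |y| < 2t\}$, valid whenever $R_1 \ll t \ll m^{1/2}(\log m)^{-1/4}$. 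In case (B), a discrete second-order Taylor expansion (using $\sum_z v_1(z)=0$) together with Lemma \ref{green_function_differentiated_asymptotic} gives $\overline{\xi}_1(y) = c\,\langle\mathbf{m},y\rangle/|y|^2 + O_{B,R_1}(|y|^{-2})$ on the same range of $|y|$.

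The key estimate is that, for suitable annuli, $\sav(\overline{\xi};A_t) \gg 1$. On $A_t$ (with $R_1 \ll t$) the main term of $\overline{\xi}_1$ either sweeps out an arc of length bounded below by an absolute $\delta_0>0$ as $|y|$ runs over an appropriate sub-interval of $[t,2t]$ (case (A), from the logarithm), or takes values $\asymp \pm|\mathbf{m}|/t$ according to whether $\arg y$ is near $\pm\arg\mathbf{m}$ (case (B)). Hence one can select disjoint $S_1,S_2 \subset A_t$ with $|S_1|\asymp|S_2|\asymp t^2$ on which $\overline{\xi}_1$ is nearly constant, the two near-constant values differing mod $1$ by $\delta$, where $\delta \asymp \delta_0$ in case (A) and $\delta \asymp |\mathbf{m}|/t$ in case (B). Provided $t \le t_{\max}$ is small enough that $\delta$ dominates both the $O(tB/R_2)$ oscillation of $\overline{\xi}_2$ and the $O_{B,R_1}(\cdot)$ error terms --- which holds for $t_{\max}\asymp R_2/B$ in case (A) and $t_{\max}\asymp\sqrt{|\mathbf{m}|R_2/B}$ in case (B) --- the elementary inequality ``two $\delta$-separated clusters of size $n$ have savings $\gg n\delta^2$'' gives $\sav(\overline{\xi};A_t) \ge \sav(\overline{\xi};S_1\cup S_2) \gg t^2\delta^2$. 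In case (A) this is $\gg t^2$, so a single annulus at scale $t\asymp R_2/B$ produces $\sav(\overline{\xi};B_{R_2}(0)) \gg R_2^2/B^2$. In case (B) it equals $\gg|\mathbf{m}|^2 \ge 1$, and summing the superadditive savings over the $\asymp\log(t_{\max}/R_1) \asymp \log\!\big(R_2/(B^3R_1^2)\big)$ disjoint annuli in the admissible range gives $\sav(\overline{\xi};B_{R_2}(0)) \gg \log\!\big(R_2/(B^3R_1^2)\big)$. In either case, taking $R_2 = R_2(A,B,R_1)$ large enough (polynomial in $A$ in case (A), exponential in case (B)), and then $m$ large enough that $B_{2R_2}(0)$ embeds in $\bT_m$ and the range hypotheses of Lemma \ref{green_function_differentiated_asymptotic} hold for $|y| \le 2R_2$, yields $\sav(\overline{\xi};B_{R_2}(0)) \ge A$. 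The final sentence of the lemma is immediate: if $v$ has mean zero then $\xi = \xi(v) \in \hat{\sG}_m$ and $\sav(\xi;S) = \sav(G_{\bT_m}\ast v;S)$ for every $S$, by the identity recorded just before the lemma.

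The main obstacle is the estimate $\sav(\overline{\xi};A_t) \gg 1$ --- in particular, showing that the savings per annulus does not decay with $t$ even though $\overline{\xi}_1$ has small amplitude at large scales. This requires extracting a genuine $\bR/\zed$-spread of $\overline{\xi}_1$ on each annulus (in case (A), converting the logarithmic growth of $G_{\bT_m}$, obtained by integrating Lemma \ref{green_function_differentiated_asymptotic} whose validity window is what dictates the ``$m$ sufficiently large'' hypothesis, into the statement that $\overline{\xi}_1$ crosses a fixed arc at every scale), while keeping the far part $\overline{\xi}_2$ from interfering. Tracking the resulting dependence $R_2=R_2(A,B,R_1)$ is then routine.
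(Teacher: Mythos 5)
Your proposal is correct and follows the same high-level outline as the paper's proof: translate to $x = (0,0)$, split $\overline{\xi} = \xi^i + \xi^e$ (your $\overline{\xi}_1 + \overline{\xi}_2$), use the first-derivative decay of $G_{\bT_m}$ to tame the external part, and split into the two cases $a := \sum v_1 \neq 0$ and $a = 0$ but first moment nonzero. The differences are mostly organizational. You handle the external part annulus-by-annulus via the constraint $tB/R_2 \ll \delta$, whereas the paper makes a single reduction over a ball $B_R(x)$ with a second auxiliary radius $R$ satisfying $R^3/R_2 \to 0$. In case (A) the paper argues one-dimensionally, showing $\{\xi^i_{x+(j,0)}\}_j$ is dense mod $1$ along a ray; your two-dimensional annular argument is arguably more quantitative, since density alone does not by itself give a rate. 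In case (B) the paper evaluates $\sum_{\|y\|_1\le R}(1-\cos(2\pi\xi^i_{x+y})) \asymp \log R$ over the whole ball and controls the imaginary part via the inequality $\sqrt{a^2+b^2}-a \le b^2/(2a)$, while you extract $\Omega(1)$ savings per dyadic annulus and sum; these are the same computation organized differently. Two small cautions in filling in your details: on a full-width wedge-annulus $\overline{\xi}_1$ varies by $\asymp\delta$ rather than being "nearly constant," so $S_1,S_2$ should be taken as wedge-annuli of angular and radial width a small fixed fraction of what you wrote (this only alters $|S_i|$ by a constant factor); and when $|a|\ge 2$ the phase $aG_{\bT_m}$ may wrap mod $1$ several times over $[t,2t]$, so the two rings must be taken with radial thickness $\asymp t/|a|$, harmless since $|a|\le B$.
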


\begin{proof}
	Given $v \in \zed^{\bT_m}$, decompose $\overline{\xi} = G_{\bT_m} * v$ into an internal and external component, $\overline{\xi} = \xi^i + \xi^e$, setting 
		\begin{equation}
	\xi^i := G_{\bT_m} * v|_{B_{R_1}(x)}, \qquad \xi^e := G_{\bT_m}*v|_{B_{R_1}(x)^c}.
	\end{equation}
	
	Treat $R_2$ as a parameter growing to infinity, and let $R$ be a second  parameter growing with $R_2$ such that $\frac{R_2}{R^3} \to \infty$ as $R_2 \to \infty$.  In practice, these parameters are chosen large, but fixed, so that there is uniformity in all $m$ sufficiently large. Since $|D_1 G_{\bT_m}(y)|$ and $|D_2 G_{\bT_m}(y)|$ have size $\ll 1/\|y\|_1$ as $\|y\|_1 \to\infty$, we have $\xi^e_{x +y} = \xi^e_x + O(\frac{BR}{R_2})$ for all $\|y\|_1 \leq R$.  Hence, by Taylor expansion,
	\begin{equation}\label{external_eliminated}
	\left|\sum_{\|y\|_1 \leq R}e(\overline{\xi}_{x+y})\right| = O\left(\frac{BR^3}{R_2} \right)+ \left|\sum_{\|y\|_1 \leq R} e(\xi^i_{x+y})\right|.
	\end{equation}
	Since the error tends to 0 as $R_2 \to \infty$, it suffices to prove that  
	\begin{equation}
	\#\left\{y:\|y\|_1 \leq R \right\} - \left|\sum_{\|y\|_1 \leq R}e\left(\xi_{x+y}^i\right)\right| \to \infty \qquad \text{as } R \to \infty.
	\end{equation}
	
	First suppose that $v|_{B_{R_1}(x)} \not \in C^1\left(\bT_m\right)$. For all $y = (y_1,y_2) \in \bT_m$ with $|y_1|,|y_2| \leq m/2$,
	\begin{equation}
	\xi_{x+y}^i = \sum_{\|z\|_1 \leq R_1} G_{\bT_m}(y-z) v(x+z).
	\end{equation}
	Let $r = \sqrt{y_1^2 + y_2^2}$. Using the Lemma \ref{greens_function_estimate} bound on the first derivatives of $G_{\bT_m}$ to approximate $G_{\bT_m}(y - z)$ with $G_{\bT_m}(y)$ yields
	\begin{equation}
	\xi_{x+y}^i = a G_{\bT_m}(y) + O_{B,R_1}\left( r^{-1} \right),
	\end{equation}
	where $a = \sum_{\|z\|_1 \leq R_1} v(x+z) \neq 0$. The asymptotic for the first derivative of the Green's function in Lemma \ref{green_function_differentiated_asymptotic} now implies that $|\xi_{x + (j,0)}^i - \xi_x^i| \to \infty$ as $j \to \infty$, while $|\xi_{x + (j+1,0)}^i - \xi_{x + (j,0)}^i| \to 0$, so that $\{\xi^i_{x + (j,0)}\}_{j=0}^\infty$ is dense in $\bR/\zed$, and hence 
	\begin{equation}
	R - \left|\sum_{j=1}^R e(\xi_{x + (j,0)}^i - \xi_x^i)\right| \to \infty \qquad \text{as } R \to \infty,
	\end{equation}
	which suffices for the claim.
	
	Now suppose that $v|_{B_{R_1}(x)} \in C^1\left(\bT_m\right) \setminus C^2\left(\bT_m\right)$, so that it can be written as $\delta_1 * w_1 + \delta_2 * w_2$ where $w_1,w_2$ are $\zed$-valued, supported on $B_{R_1+1}(x)$, and not both in $C^1(\bT_m)$ by \eqref{C2_alt}. For all $y = (y_1,y_2)$,
	\begin{equation}
	\label{xi_convolution}
	\xi_{x+y}^i = \sum_{\|z\|_1 \leq R_1 + 1} D_1 G_{\bT_m}(y-z) w_1(x+z) + D_2 G_{\bT_m}(y-z) w_2(x+z).
	\end{equation}
	Use the Lemma \ref{greens_function_estimate} bound on the second derivatives of $G_{\bT_m}$ to approximate $D_k G_{\bT_m}(y-z)$ with $D_k G_{\bT_m}(y)$ for $k = 1,2$. The result is
	\begin{equation}
	\label{xi_convolution_2}
	\xi_{x+y}^i = a D_1 G_{\bT_m}(y) + b D_2 G_{\bT_m}(y) + O_{B,R_1}\left( r^{-2} \right)
	\end{equation}
	for constants $a,b = O_{B,R_1}(1)$, not both zero. Lemma \ref{green_function_differentiated_asymptotic} now shows that for $1 \leq r < m^{1/2}/(\log m)^{1/4}$,
	\begin{equation}
	\xi_{x+y}^i = \frac{-c(ay_1 + by_2)}{y_1^2 + y_2^2} + O_{B,R_1}\left( r^{-2} \right),
	\end{equation}
	where $c > 0$ is a fixed constant. Thus $|\xi_{x+y}^i| \ll 1/r$, and	there are $0 \leq \theta_1 < \theta_2 < 2\pi$ such that if $\theta_1 \leq \arg(y) \leq \theta_2$, then $|\xi_{x+y}^i| \asymp 1/r$.
	It follows that
	\begin{gather}
	\label{real_xi_i}
	\sum_{\|y\|_1 \leq R} (1 - c(\xi_{x+y}^i)) \asymp \log R, \\
	\label{imag_xi_i}
	\left|\sum_{\|y\|_1 \leq R}s(\xi_{x+y}^i)\right| \leq \sum_{\|y\|_1 \leq R} |s(\xi_{x+y}^i)| \ll R.
	\end{gather}
	To combine \eqref{real_xi_i} and \eqref{imag_xi_i} we use that for all $a,b \in \bR$ with $a > 0$,
	\begin{equation}
	\label{real_part_approx}
	\sqrt{a^2+b^2} - \sqrt{a^2} = \int_{a^2}^{a^2+b^2} \frac{dt}{2\sqrt{t}} \leq \frac{b^2}{2a}.
	\end{equation}
	Letting $a$ and $b$ be the real and imaginary parts of $\sum_{\|y\|_1 \leq R} e(\xi_{x+y}^i)$, we conclude that \begin{equation}	\#\left\{y:\|y\|_1 \leq R \right\} - \left|\sum_{\|y\|_1 \leq R}e\left(\xi_{x+y}^i\right)\right| \asymp \log R,\end{equation} as required.
\end{proof}

\begin{proof}[Proof of Proposition \ref{T_m_to_Z^2}]
Given $v \in \sC(B,R_1)$, set $\xi^* = G_{\bT_m} * v$; we suppress the dependence on $m$ for notational convenience. It will suffice to show that $\sav(\xi^*) \to f(\xi)$ as $m \to \infty$.

Write $v$ as a sum of $O_{B, R_1}(1)$ translates of $\pm \delta_1^{*2}$, $\pm \delta_1 * \delta_2$, $\pm \delta_2^{*2}$. Since the second derivatives of the Green's function decay like the inverse square of the radius, an argument parallel to the one given in equations \eqref{xi_convolution}-\eqref{xi_convolution_2} shows that $|\xi^*_y| = O_{B,R_1}(1/r^2)$, where $r = \sqrt{y_1^2 + y_2^2}$ and $|y_1|,|y_2| \leq m/2$. For all $R_1 < R < m/2$, Taylor expansion yields
\begin{align}
\label{Taylor_estimates}
\sum_{\|y\|_1 > R} (1 - c(\xi^*_y)) &= O_{B,R_1}\left(R^{-2}\right), \\
\notag \sum_{y \in \bT_m} (1 - c(\xi^*_y)) &= O_{B,R_1}(1), \\
\notag \left| \sum_{y \in \bT_m} s(\xi^*_y) \right| &= O_{B,R_1}(1).
\end{align}
In the last estimate, we use that $\xi^*$ is mean zero over $\bT_m$ so that the contribution of the linear term in the Taylor expansion of $s(\xi^*_y)$ vanishes. Therefore, using \eqref{real_part_approx} in the first equality,
\begin{align}
\label{savings_real}
\sav(\xi^*) &= O_{B,R_1}\left( m^{-2} \right) + \sum_{y \in \bT_m} (1 - c(\xi^*_y)) \\
\notag &= O_{B,R_1}\left( R^{-2} \right) + \sum_{\|y\|_1 \leq R} (1 - c(\xi^*_y)).
\end{align}
Sending $m \to \infty$ for fixed $R$, Lemma \ref{z_2_approx_lemma} shows that each $\xi^*_y \to \xi_y$. Thus
\begin{equation}
\lim_{m \to \infty} \sav(\xi^*) = O_{B,R_1}\left( R^{-2} \right) + \sum_{\|y\|_1 \leq R} (1 - c(\xi_y))
\end{equation}
for each $R > R_1$. Sending $R \to \infty$ completes the proof.
\end{proof}

\begin{lemma}\label{C_2_lemma}
	For all $B, R_1 > 0$ and $\alpha < 1$, there exists $R_2(\alpha, B, R_1)>2R_1$ such that if $m$ is sufficiently large, then for any $x \in \bT_m$ and any $v \in \zed^{\bT_m}$ satisfying the following conditions:
	\begin{enumerate}
		\item $\|v\|_1 \leq B$
		\item $v|_{B_{R_1}(x)}  \in C^2(\bT_m)$
		\item $d\left(x, \supp v|_{B_{R_1}(x)^c} \right)> 2 R_2$
	\end{enumerate}
	we have
	\begin{equation}
	\label{savings_alpha}
	\sav\left(G_{\bT_m} * v; B_{R_2}(x)\right) \geq \alpha \sav(\xi^*); \qquad \xi^*=G_{\bT_m} * v|_{B_{R_1}(x)}.
	\end{equation}
	Thus, if $v$ has mean zero, then the corresponding frequency $\xi \in \hat{\sG}_m$ satisfies $\sav\left(\xi; B_{R_2}(x)\right) \geq \alpha \sav(\xi^*)$.
\end{lemma}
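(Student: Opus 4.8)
The plan is to follow the scheme of the proof of Lemma~\ref{C_0_1_lemma}, the difference being that here $v|_{B_{R_1}(x)}\in C^2(\bT_m)$, so the internal component of $\overline{\xi}:=G_{\bT_m}*v$ decays only like the inverse square of the radius and its total savings is a bounded quantity rather than one that grows with $m$. First I would decompose $\overline{\xi}=\xi^*+\xi^e$ with $\xi^e:=G_{\bT_m}*v|_{B_{R_1}(x)^c}$, and introduce an auxiliary radius $R_1<R<R_2$ (ultimately a large fixed constant). By superadditivity and nonnegativity of savings, $\sav(\overline{\xi};B_{R_2}(x))\ge\sav(\overline{\xi};B_R(x))$, so it is enough to work inside $B_R(x)$. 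By hypothesis~(3) the support of $v|_{B_{R_1}(x)^c}$ sits at distance $>2R_2$ from $x$, so writing $G_{\bT_m}(x+y-z)-G_{\bT_m}(x-z)$ as a telescoping sum of at most $\|y\|_1\le R$ first differences of $G_{\bT_m}$, each evaluated at a point of $\ell^1$-norm $\gg R_2$ and hence of size $\ll 1/R_2$ (Lemma~\ref{greens_function_estimate}), we get $\xi^e_{x+y}=\xi^e_x+O(BR/R_2)$ for all $\|y\|_1\le R$. Factoring out $e(\xi^e_x)$ and using $|e(t)-1|\le 2\pi|t|$, the sum of $e(\overline{\xi})$ over $B_R(x)$ agrees in modulus with the sum of $e(\xi^*)$ up to $O(BR^3/R_2)$, so
\[
\sav(\overline{\xi};B_{R_2}(x))\;\ge\;\sav(\xi^*;B_R(x))\;-\;O\!\left(BR^3/R_2\right).
\]

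Next I would compare $\sav(\xi^*;B_R(x))$ with the total savings $\sav(\xi^*)=\sav(\xi^*;\bT_m)$. Since $v|_{B_{R_1}(x)}\in C^2(\bT_m)$ has $L^1$ norm at most $B$, it is an $O_{B,R_1}(1)$-fold integer combination of translates of $\delta_1^{*2},\delta_1*\delta_2,\delta_2^{*2}$, hence $\xi^*$ is a like combination of translates of second derivatives of $G_{\bT_m}$ and $|\xi^*_{x+y}|\ll_{B,R_1}(1+\|y\|_1^2)^{-1}$ by Lemma~\ref{greens_function_estimate}. From this: the real-part tail is $\sum_{\|y\|_1>R}(1-c(\xi^*_{x+y}))\ll_{B,R_1}R^{-2}$; the imaginary part is $\bigl|\IM\sum_{\|y\|_1\le R}e(\xi^*_{x+y})\bigr|\le\sum_{\|y\|_1\le R}|s(\xi^*_{x+y})|\ll_{B,R_1}\log(2+R)$; and the real part is $\sum_{\|y\|_1\le R}c(\xi^*_{x+y})\ge|B_R(0)|-\sum_{y\in\bT_m}(1-c(\xi^*_y))\gg R^2$ once $R$ is large. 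Feeding the real and imaginary parts of $\sum_{\|y\|_1\le R}e(\xi^*_{x+y})$ into \eqref{real_part_approx} gives $\sav(\xi^*;B_R(x))\ge\sum_{\|y\|_1\le R}(1-c(\xi^*_{x+y}))-O_{B,R_1}((\log R)^2/R^2)$, and combining with the tail bound and with $\sav(\xi^*)\le\sum_{y\in\bT_m}(1-c(\xi^*_y))$ (since $|\sum_ye(\xi^*_y)|\ge\RE\sum_ye(\xi^*_y)$) yields
\[
\sav(\overline{\xi};B_{R_2}(x))\;\ge\;\sav(\xi^*)\;-\;O_{B,R_1}\!\left((\log R)^2R^{-2}+BR^3R_2^{-1}\right).
\]

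To finish I would control $\sav(\xi^*)$ from below. Let $w'$ be the translate of $v|_{B_{R_1}(x)}$ centered at the origin, so $w'\in\sC(B,R_1)$ and $\sav(\xi^*)=\sav(G_{\bT_m}*w')$. If $w'\in\cI$, then for $m$ large $G_{\bT_m}*w'=z-\E_{x\in\bT_m}[z]$ for an integer-valued $z$, so $\sav(\xi^*)=0$ and the claim $\sav(\overline{\xi};B_{R_2}(x))\ge 0=\alpha\,\sav(\xi^*)$ is trivial. If $w'\notin\cI$, then Proposition~\ref{T_m_to_Z^2} gives $\sav(G_{\bT_m}*w')\to f(G_{\zed^2}*w')>0$ as $m\to\infty$; since $\sC(B,R_1)$ is a finite set, there are $c_0=c_0(B,R_1)>0$ and $M_0$ with $\sav(\xi^*)\ge c_0$ for all $m\ge M_0$. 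Now choose, in this order: $R$ large (in terms of $\alpha,B,R_1$) so the $(\log R)^2/R^2$ error is $<(1-\alpha)c_0/2$; then $R_2>2R_1$ large so the $BR^3/R_2$ error is $<(1-\alpha)c_0/2$; then $m$ large enough for all the preceding estimates and for $m\ge M_0$. For such $m$ the displayed inequality gives $\sav(\overline{\xi};B_{R_2}(x))\ge\sav(\xi^*)-(1-\alpha)\sav(\xi^*)=\alpha\,\sav(\xi^*)$, and the statement for the corresponding frequency $\xi\in\hat{\sG}_m$ follows from $\sav(\xi;S)=\sav(\overline{\xi};S)$ when $v$ has mean zero.

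I expect the last step to be the genuine obstacle. In Lemma~\ref{C_0_1_lemma} the internal savings diverges, so a fixed additive error is harmless; here $\sav(\xi^*)$ and the error terms are both $O_{B,R_1}(1)$, so one must know that $\sav(\xi^*)$ is bounded away from zero whenever it is nonzero, and it is exactly for this that one appeals to the finiteness of $\sC(B,R_1)$ together with the $\zed^2$-limit of Proposition~\ref{T_m_to_Z^2}. The remaining estimates — the near-constancy of $\xi^e$ on $B_R(x)$, the $1/r^2$ decay of $\xi^*$, and the real/imaginary split — are routine consequences of the derivative bounds for $G_{\bT_m}$ in Lemma~\ref{greens_function_estimate}.
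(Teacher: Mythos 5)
Your proof is correct and follows essentially the same approach as the paper: decompose $\overline{\xi}$ into internal and external parts, use the near-constancy of $\xi^e$ over $B_R(x)$ to reduce to $\sav(\xi^*;B_R(x))$, use the $1/r^2$ decay of $\xi^*$ plus the real/imaginary split and inequality \eqref{real_part_approx} to compare with $\sav(\xi^*)$, and invoke Proposition~\ref{T_m_to_Z^2} together with finiteness of $\sC(B,R_1)$ to get a uniform lower bound $\sav(\xi^*)\geq c_0>0$ when $v|_{B_{R_1}(x)}\notin\cI$ (the paper's $\delta=\gamma'/2$). You correctly identify the key obstacle — that $\sav(\xi^*)$ is merely bounded rather than divergent, so the argument needs both the dichotomy $\sav(\xi^*)\in\{0\}\cup[\delta,\infty)$ and the additive error made small relative to $\delta$ — and handle both cases as the paper does.
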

\begin{proof}
	First we show that there is $\delta = \delta(B,R_1) > 0$ such that for sufficiently large $m$, either $\sav(\xi^*) = 0$ or $\sav(\xi^*) \geq \delta$. Translating $v$ by $-x$ shows that $\sav(\xi^*) = \sav(G_{\bT_m} * v')$ for some $v' \in \sC(B,R_1)$. Let
	\begin{equation}
	\gamma' = \min\left\{ f(G_{\zed^2} * v') : v' \in \sC(B,R_1) \,\setminus\, \cI \right\},
	\end{equation}
	so $\gamma' > 0$. By Proposition \ref{T_m_to_Z^2}, if $m$ is large enough then
	\begin{equation}
	|\sav(G_{\bT_m} * v') - f(G_{\zed^2} * v')| < \gamma' / 2 \quad \text{for all } v' \in \sC(B,R_1).
	\end{equation}
	Thus, if $v' \in \sC(B,R_1) \,\setminus\, \cI$, then $\sav(\xi^*) > \gamma'/2$.
	
	If $v' \in \sC(B,R_1) \cap \cI$, we will show that $\sav(\xi^*) = 0$, allowing us to take $\delta = \gamma'/2$. Write $v' = \Delta w$ where $w \in C^0(\zed^2)$. Observe that $\supp(w)$ is a finite set, and any $(i,j) \in \zed^2 \,\setminus\, \supp(w)$ that is adjacent to exactly one point in $\supp(w)$ must have $(\Delta w)(i,j) \neq 0$. Since $\supp(\Delta w) \subset B_{R_1}(0)$, it follows that $\supp(w) \subset B_{R_1-1}(0)$. Hence we can consider $v'$ and $w$ as $\zed$-valued functions on $\bT_m$ for $m > 2R_1$, and the equation $v' = \Delta w$ still holds in this context. Therefore, $G_{\bT_m} * v' = w - c$ where $c$ is the mean value of $w$ on $\bT_m$, and $\sav(G_{\bT_m} * v') = 0$.
	
	With $\delta$ in hand, we turn to the proof of \eqref{savings_alpha}. Set $\epsilon = \epsilon(\alpha,B,R_1) = (1-\alpha)\delta > 0$. We will show that if $m$ is sufficiently large,
	\begin{equation}
	\sav(G_{\bT_m} * v; B_{R_2}(x)) > \sav(\xi^*) - \epsilon.
	\end{equation}
	This implies \eqref{savings_alpha}, because if $\sav(\xi^*) = 0$ then \eqref{savings_alpha} is trivial, while if $\sav(\xi^*) \geq \delta$ then $\sav(\xi^*) - \epsilon \geq \alpha \sav(\xi^*)$. By arguing as in Lemma \ref{C_0_1_lemma} up to equation \eqref{external_eliminated}, it suffices to prove that if $R$ is fixed but sufficiently large then
	\begin{equation}
	\label{xi_star_desired}
	\sav(\xi^*; B_R(x)) > \sav(\xi^*)-\epsilon/2
	\end{equation}
	for all $m$ sufficiently large. Writing $v|_{B_{R_1}(x)}$ as a sum of $O_{B, R_1}(1)$ translates of $\pm \delta_1^{*2}$, $\pm \delta_1 * \delta_2$, $\pm \delta_2^{*2}$, it follows as in the proof of Proposition \ref{T_m_to_Z^2} that for $y = (y_1,y_2) \in \bT_m$ with $|y_1|,|y_2| \leq m/2$ and $r = \sqrt{y_1^2 + y_2^2}$, $|\xi_{x+y}^*| = O_{B,R_1}(1/r^2)$. Taylor expansion gives
	\begin{align}
	\label{Taylor_2}
	\sum_{\|y\|_1 \leq R} (1 - c(\xi^*_{x+y})) &= O_{B,R_1}(1), \\
	\notag \sum_{\|y\|_1 > R} (1 - c(\xi^*_{x+y})) &= O_{B,R_1}\left( R^{-2} \right), \\
	\notag \left| \sum_{\|y\|_1 \leq R} s(\xi^*_{x+y}) \right| \leq \sum_{\|y\|_1 \leq R} \left| s(\xi^*_{x+y}) \right| &= O_{B,R_1}(\log R).
	\end{align}
	Thus,
	\begin{align}
	\label{xi_star_approx}
	\sav(\xi^*) &= m^2 - \left| \sum_{z \in \bT_m} e(\xi_z^*) \right| \leq \sum_{z \in \bT_m} (1 - c(\xi_z^*)) \\
	\notag &= O_{B,R_1}\left( R^{-2} \right) + \sum_{\|y\|_1 \leq R} (1 - c(\xi_{x+y}^*)) \\
	\notag &= O_{B,R_1}\left( \frac{\log^2 R}{R^2} \right) + \# B_R(x) - \left| \sum_{\|y\|_1 \leq R} e(\xi_{x+y}^*) \right|,
	\end{align}
	using \eqref{real_part_approx} in the last equality. Since this is $\sav(\xi^*; B_R(x))$ plus a quantity tending to zero with $R$, \eqref{xi_star_desired} is verified.
\end{proof}

\begin{proof}[Proof of Proposition \ref{gap_achievers}]
First we find a constant $B_0$ such that:
\begin{enumerate}[label=(\Roman*)]
\item For sufficiently large $m$, if $\xi^{(m)} \in \hat{\sG}_m$ achieves the spectral gap, then its distinguished prevector $v^{(m)} = v(\xi^{(m)})$ must satisfy $\|v^{(m)}\|_1 \leq B_0$. \label{I}
\item If $v \in C^2(\zed^2)$ satisfies $f(G_{\zed^2} * v) \leq \frac{3}{2} \gamma + 1$, then $v$ differs by an element of $\cI$ from some $\tilde{v} \in C^2(\zed^2)$ with $\|\tilde{v}\|_1 \leq B_0$. \label{II}
\end{enumerate}

To this end, fix any $v' \in C^2(\zed^2) \,\setminus\, \cI$ and let $\gamma' = f(G_{\zed^2} * v') \geq \gamma$. Choose $B',R'$ large enough that $v' \in \sC(B',R')$, and let $m > 2R'$ so that $\sC(B',R')$ embeds into $C^2(\bT_m)$. Applying Proposition \ref{T_m_to_Z^2} shows that if $\xi'^{(m)}$ is the frequency in $\hat{\sG}_m$ corresponding to $v'$, then
\begin{equation}
\sav(\xi'^{(m)}) \to f(G_{\zed^2} * v') \quad \text{as } m \to \infty
\end{equation}
and therefore $\sav(\xi'^{(m)}) < \gamma' + 1$ for sufficiently large $m$.

Suppose that $\xi^{(m)} \in \hat{\sG}_m$ achieves the spectral gap, and let $v^{(m)}$ be the distinguished prevector of $\xi^{(m)}$. By Lemma \ref{l_2_fourier_lemma},
\begin{equation}
\|v^{(m)}\|_1 \ll \sav(\xi^{(m)}) \leq \sav(\xi'^{(m)}) < \gamma' + 1,
\end{equation}
that is, $\|v^{(m)}\|_1$ is bounded by a universal constant. This verifies \ref{I}.

By choosing $\gamma'$ arbitrarily close to $\gamma$, we could have obtained that for any $\epsilon > 0$, if $m$ is sufficiently large then any $\xi^{(m)} \in \hat{\sG}_m$ achieving the spectral gap must satisfy $\sav(\xi^{(m)}) < \gamma + \epsilon$. This will be used later.

For \ref{II}, given $v \in C^2(\zed^2)$, let $\xi = G_{\zed^2} * v$. Lemma \ref{z_2_approx_lemma} or Lemma \ref{greens_function_derivs} shows that $\xi \in \ell^2(\zed^2)$, so there are only finitely many $x \in \zed^2$ such that $|\xi_x| \geq \frac{1}{2}$. Reduce $\xi$ to $\tilde{\xi}: \zed^2 \to \left[ -\frac{1}{2}, \frac{1}{2} \right)$ by subtracting $w \in C^0(\zed^2)$, and let $\tilde{v} = \Delta \tilde{\xi} = v - \Delta w$, which differs from $v$ by $\Delta w \in \cI$ and is therefore in $C^2(\zed^2)$. Because $\tilde{v}$ is integer-valued and $\Delta$ is bounded from $\ell^2(\zed^2) \to \ell^2(\zed^2)$,
\begin{equation}
\label{v_tilde}
\|\tilde{v}\|_1 \leq \|\tilde{v}\|_2^2 = \|\Delta \tilde{\xi}\|_2^2 \ll \|\tilde{\xi}\|_2^2 = \sum_{x \in \zed^2} |\tilde{\xi}_x|^2 \ll \sum_{x \in \zed^2} (1 - c(\tilde{\xi}_x)),
\end{equation}
where the last inequality uses $1 - c(t) \gg t^2$ for $|t| \leq \frac{1}{2}$. The right side is $f(\tilde{\xi}) = f(G_{\zed^2} * v)$, so an upper bound on $f(G_{\zed^2} * v)$ translates to an upper bound on $\|\tilde{v}\|_1$, confirming \ref{II}.

Fix $B_0$ to satisfy \ref{I} and \ref{II}. For any $v \in \zed^{\bT_m}$ with $\|v\|_1 \leq B_0$, we perform a clustering on $\supp(v)$, as follows. 
\begin{enumerate}
	\item  Initially all of $\supp v$ is uncovered and initialize a list $\sX$ of centers of balls to be empty.
	\item Iterate until $\supp v$ is covered:
	\begin{enumerate}
		\item Choose $x \in \supp v$ which is uncovered and append $x$ to $\sX$.
		\item Beginning from an initial guess $R_1(x) = 1$:
		\begin{enumerate}
			\item If $v|_{B_{R_1(x)}(x)} \not \in C^2(\bT_m)$, then choose $R_2(x)$ according to Lemma \ref{C_0_1_lemma} with $A = \frac{3}{2}\gamma + 1$, $B = B_0$, and $R_1=R_1(x)$.  If $v|_{B_{R_1(x)}(x)} \in C^2(\bT_m)$, then choose $R_2(x)$ according to Lemma \ref{C_2_lemma} with $\alpha = \frac{7}{8}$, $B = B_0$, and $R_1=R_1(x)$.
			\item If the condition of those lemmas holds,
			\begin{equation*}
				d\left(x, \supp v|_{B_{R_1(x)}(x)^c} \right)> 2 R_2(x),
			\end{equation*}
			then declare those $y \in \supp (v) \cap B_{R_1}(x)$ covered and continue to (c).  Otherwise, replace $R_1(x):= 2R_2(x)$ and repeat step (i).  
		\end{enumerate} 
		\item If all of $\supp v$ is covered, finish. If not, return to (a).
	\end{enumerate}
	\item Since $\|v\|_1 \leq B_0$, the process stops after boundedly many steps.  
\end{enumerate}
	At the end of this process,
	\begin{gather}
	\supp v \subset \bigcup_{x \in \sX} B_{R_1(x)}(x), \\
	\label{distance_condition} d\left(x, \supp v|_{B_{R_1(x)}(x)^c} \right)> 2 R_2(x) \quad \text{for each } x \in \sX.
	\end{gather}

	We claim that a subset $\sX' \subset \sX$ can be chosen such that 
	\begin{equation}\label{disjointness_condition}
	\supp v \subset \bigsqcup_{x \in \sX'} B_{R_1(x)}(x).
	\end{equation}
	Note a particular consequence of \eqref{distance_condition} and \eqref{disjointness_condition} is that the balls \begin{equation}\{B_{R_2(x)}(x)\}_{x \in \sX'}\end{equation} are pairwise disjoint.
		
	To verify \eqref{disjointness_condition}, let $x$ and $x'$ be centers of balls of the process with $x$ appearing prior to $x'$ in the list. We will show that either $B_{R_1(x)}(x)$ and $B_{R_1(x')}(x')$ are disjoint, or $B_{R_1(x)}(x) \subset B_{R_1(x')}(x')$. First, since $x' \notin B_{R_1(x)}(x)$, we have $d(x,x') > 2R_2(x) \geq 2R_1(x)$. Suppose $y$ is in the intersection of $B_{R_1(x)}(x)$ and $B_{R_1(x')}(x')$, so that
	\begin{equation}
	\label{d_x_x'}
	d(x,x') \leq d(x,y) + d(y,x') \leq R_1(x) + R_1(x').
	\end{equation}
	Combining this with the lower bound on $d(x,x')$ gives $R_1(x) < R_1(x')$. Therefore, \eqref{d_x_x'} implies that $d(x,x') < 2R_1(x') \leq 2R_2(x')$, whence $d(x,x') \leq R_1(x')$. For any $z \in B_{R_1(x)}(x)$,
	\begin{equation}
	d(z,x') \leq d(z,x) + d(x,x') \leq R_1(x) + R_1(x') < 2R_2(x')
	\end{equation}
	so that, in fact, $z \in B_{R_1(x')}(x')$ and $ B_{R_1(x)}(x) \subset B_{R_1(x')}(x')$.  Hence, starting from $\sX$ we obtain the desired list $\sX'$ by discarding any $x$ which satisfies $x \in B_{R_1(x')}(x')$ for some $x'$ later in the list.  
	
	Let $\overline{R}_1(v) = \max\{R_1(x) : x \in \sX'\}$. From the description of the clustering algorithm, there is a uniform in $m$ upper bound on $\overline{R}_1(v)$ that depends only on $B_0$:
	\begin{equation}
	\label{R_1_bar}
	\overline{R}_1(v) \leq R_0 = R_0(B_0).
	\end{equation}
	Fix
	\begin{equation}
	\label{gamma_0}
	\gamma_0 = \min\left\{ f(G_{\zed^2} * v) : v \in \sC(B_0,R_0) \,\setminus\, \cI \right\}.
	\end{equation}
	We will show that $\gamma = \gamma_0$, but a priori we only know that $\gamma_0 \geq \gamma$ and $\gamma_0 > 0$. Proposition \ref{T_m_to_Z^2} implies that if $m$ is large enough,
	\begin{equation}
	\label{savings_8}
	|\sav(G_{\bT_m} * v) - f(G_{\zed^2} * v)| < \gamma_0/8 \quad \text{for all } v \in \sC(B_0,R_0).
	\end{equation}
	
	We can now prove Part 1 of Proposition \ref{gap_achievers}. Let $\xi \in \hat{\sG}_m$ achieve the spectral gap, and take $m$ large enough that $\sav(\xi) < \min(\frac{3}{2}\gamma + 1, \frac{3}{2}\gamma_0)$, noting that the upper bound is strictly greater than $\gamma$. Let $v \in \zed_0^{\bT_m}$ be the distinguished prevector of $\xi$, with $\|v\|_1 \leq B_0$, and run the clustering algorithm on $v$. Set
	\begin{equation}
	\sX'' = \left\{ x \in \sX' : \left. v \right|_{B_{R_1(x)}(x)} \notin \cI \right\}
	\end{equation}
	and define $v'$ to equal $v$ on each $B_{R_1(x)}(x)$ for $x \in \sX''$, while $v' \equiv 0$ elsewhere. To get from $v$ to $v'$, we subtracted finitely many elements of $\cI$. It follows that $v' = v - \Delta w$ for some $w \in \zed^{\bT_m}$; the proof of Lemma \ref{C_2_lemma} explains why this holds on $\bT_m$ as well as on $\zed^2$. We conclude that $v'$ is also a prevector of $\xi$.
	
	Given $x \in \sX''$, for notational convenience set $u^{(x)} = \left. v \right|_{B_{R_1(x)}(x)}$. If some $u^{(x)} \notin C^2(\bT_m)$, then by construction, $\sav(\xi) \geq \frac{3}{2}\gamma + 1$, a contradiction. Therefore each $u^{(x)} \in C^2(\bT_m)$. Since $R_1(x) \leq R_0$ and $u^{(x)} \notin \cI$, we have $f(G_{\zed^2} * u^{(x)}) \geq \gamma_0$. It follows from \eqref{savings_8} that $\sav(G_{\bT_m} * u^{(x)}) > \frac{7}{8} \gamma_0$. Then, by step (i) of the clustering,
	\begin{equation}
	\sav(\xi; B_{R_2(x)}(x)) > \left( \frac{7}{8} \right)^2 \gamma_0 > \frac{3}{4} \gamma_0.
	\end{equation}
	If $|\sX''| \geq 2$, then $\sav(\xi) > \frac{3}{2} \gamma_0$, another contradiction. We conclude that $|\sX''| = 1$, and moreover, for the unique $x \in \sX''$, $v' = u^{(x)} \in C^2(\bT_m)$. Translating $v'$ by $-x$ yields an element of $\sC(B_0,R_0)$. This proves Part 1 of Proposition \ref{gap_achievers}.
	
	Part 2 is proved along similar lines. Let $v \in C^2(\zed^2)$ satisfy
	\begin{equation}
	\label{gamma_bound} \textstyle
	f(G_{\zed^2} * v) < \min(\frac{3}{2} \gamma + 1, \frac{3}{2} \gamma_0).
	\end{equation}
	By property \ref{II}, we may assume that $\|v\|_1 \leq B_0$ (by subtracting an element of $\cI$ if necessary). Since $\supp v$ is finite, it embeds into $\bT_m$ for large enough $m$, and then $v$ can be seen as an element of $C^2(\bT_m)$. Let $\xi^{(m)} \in \hat{\sG}_m$ be the frequency corresponding to $v$, so that $\sav(\xi^{(m)}) \to f(G_{\zed^2} * v)$ as $m \to \infty$, by Proposition \ref{T_m_to_Z^2}.
	
	Run the clustering algorithm on $v$, noting that independent of the value of $m$, the algorithm will follow exactly the same steps and produce identical clusters. Define $\sX''$, $v'$, and the notation $u^{(x)}$ as in the proof of Part 1. If $u^{(x)} \notin C^2(\bT_m)$ for some $x \in \sX''$, then $\sav(\xi^{(m)}) \geq \frac{3}{2} \gamma + 1$. Since the property ``$u^{(x)} \notin C^2(\bT_m)$'' is independent of $m$, this is a uniform lower bound on all $\sav(\xi^{(m)})$ and so $f(G_{\zed^2} * v) \geq \frac{3}{2} \gamma + 1$. Likewise, if each $u^{(x)} \in C^2(\bT_m)$ but $|\sX''| \geq 2$, then $\sav(\xi^{(m)}) > \frac{3}{2} \gamma_0$ for all $m$ and so $f(G_{\zed^2} * v) \geq \frac{3}{2} \gamma_0$. Both possibilities contradict \eqref{gamma_bound}.
	
	We conclude that any $v \in C^2(\zed^2)$ satisfying \eqref{gamma_bound} differs by an element of $\cI$ from some $v' \in C^2(\zed^2)$ that has a translate in $\sC(B_0,R_0)$. It follows that $\gamma$ is equal to the right side of \eqref{gamma_0}, that is, $\gamma = \gamma_0$. In particular, $\gamma > 0$. Finally, the right side of \eqref{gamma_bound} simplifies to $\frac{3}{2} \gamma$, proving Part 2 of Proposition \ref{gap_achievers}.
\end{proof}
	
The following lemma provides the additive savings needed to prove the lower bound in Theorem \ref{mixing_time_theorem}.

\begin{lemma}\label{C_2_sum_lemma}
Let $k \geq 1$ be fixed, and let $v_1, \ldots, v_k \in C^2(\bT_m)$ be bounded functions of bounded support 
which are 
$R$-separated, in the sense that their supports have pairwise $\ell^1$ distance at least $R$.  Set $v = \sum_{i=1}^k v_i$.  Then as $R \to \infty$,
\begin{equation}
 1 - \left|\hat{\mu}(\xi(v))\right| = O\left(\frac{\log(1+R)}{R^2m^2} \right) + 
\sum_{i=1}^k \Big( 1 - \left|\hat{\mu}(\xi(v_i)) \right| \Big).
\end{equation}
The implicit constant depends upon $k$ and the bounds for the functions and their supports.
\end{lemma}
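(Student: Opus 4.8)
The plan is to work directly with $\overline{\xi} = G_{\bT_m}*v$ and $\overline{\xi_i} = G_{\bT_m}*v_i$, using the observation recorded just before Lemma \ref{C_0_1_lemma} that $1 - |\hat\mu(\xi(v))| = \sav(\overline\xi)/m^2$ and $1 - |\hat\mu(\xi(v_i))| = \sav(\overline{\xi_i})/m^2$, together with linearity $\overline\xi = \sum_{i=1}^k \overline{\xi_i}$. Since each $v_i$ lies in $C^2(\bT_m)$ with support in a fixed-radius ball $B_{R_1}(x_i)$ and bounded $\ell^\infty$ norm, it is a sum of $O_{B,R_1}(1)$ translates of $\pm\delta_1^{*2}$, $\pm\delta_1*\delta_2$, $\pm\delta_2^{*2}$, so Lemma \ref{greens_function_estimate} furnishes the uniform pointwise bound $|\overline{\xi_i}(x)| = O\big(1/(1+d(x,\supp v_i)^2)\big)$ on $\bT_m$; moreover each $\overline{\xi_i}$ (hence $\overline\xi$) has mean zero because $G_{\bT_m}$ does. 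Throughout I may assume $R$ is large (the statement is vacuous otherwise), and I use $R \le m$ (automatic once $k \ge 2$; the case $k=1$ is an exact identity) to absorb errors of size $O(1/m^2)$ into $O(\log(1+R)/(R^2 m^2))$.

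The first step passes from exponential sums to their real parts. With $a = \sum_x c(\overline\xi_x)$ and $b = \sum_x s(\overline\xi_x)$, inequality \eqref{real_part_approx} gives $0 \le \sum_x(1-c(\overline\xi_x)) - \sav(\overline\xi) \le b^2/(2a)$; here $a = m^2 - O(1)$ by the decay estimate, and $b = O(1)$ since mean-zeroness kills the linear term of $\sin$, leaving $b = 2\pi\sum_x\overline\xi_x + O(\sum_x|\overline\xi_x|^3) = O(1)$. Hence $\sav(\overline\xi) = \sum_x(1-c(\overline\xi_x)) + O(1/m^2)$, and likewise $\sav(\overline{\xi_i}) = \sum_x(1-c(\overline{\xi_i}(x))) + O(1/m^2)$. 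It therefore suffices to prove $\sum_x(1-c(\overline\xi_x)) = \sum_{i=1}^k\sum_x(1-c(\overline{\xi_i}(x))) + O(\log(1+R)/R^2)$.

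For this I localize: set $A_i = B_{\lfloor R/3\rfloor}(x_i)$ for $1 \le i \le k$, which are pairwise disjoint and each disjoint from $\supp v_j$ for $j \neq i$, and put $A_0 = \bT_m \setminus \bigsqcup_{i\ge1}A_i$. On $A_0$ every point has $d(x,\supp v_i) \gg R$ for all $i$, so $1-c(\overline\xi_x) = O(\overline\xi_x^2) = O(\sum_i\overline{\xi_i}(x)^2)$, and summing the tail $\sum_{r\gg R} r\cdot r^{-4} = O(R^{-2})$ over the boundedly many $i$ gives $\sum_{x\in A_0}(1-c(\overline\xi_x)) = O(R^{-2})$; the identical estimate bounds $\sum_i\sum_{x\notin A_i}(1-c(\overline{\xi_i}(x)))$. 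On $A_i$ write $\overline\xi_x = \overline{\xi_i}(x) + \eta_i(x)$ with $\eta_i(x) = \sum_{j\neq i}\overline{\xi_j}(x)$; since $d(x,\supp v_j)\gg R$ for $j\neq i$, $\sup_{x\in A_i}|\eta_i(x)| = O(R^{-2})$. The crucial input is $c(s)-c(s+t) = 2\sin(\pi t)\sin(\pi(2s+t))$, which yields $|c(\overline{\xi_i}(x))-c(\overline\xi_x)| = O\big(|\eta_i(x)|(|\overline{\xi_i}(x)|+|\eta_i(x)|)\big)$; summing over $A_i$ using $\sum_{x\in A_i}|\overline{\xi_i}(x)| = O(\log(1+R))$ (annular count $\sum_{r\le R/3} r\cdot r^{-2}$, plus an $O(1)$ contribution near $\supp v_i$) and $\sum_{x\in A_i}|\eta_i(x)|^2 = O(|A_i| R^{-4}) = O(R^{-2})$ gives $\big|\sum_{x\in A_i}(1-c(\overline\xi_x)) - \sum_{x\in A_i}(1-c(\overline{\xi_i}(x)))\big| = O(\log(1+R)/R^2)$. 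Combining the $A_0$, the $A_i$, and the $x\notin A_i$ contributions proves the displayed identity, and dividing by $m^2$ finishes the proof.

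The main obstacle is exactly the estimate on $A_i$: a crude Lipschitz bound $|c(\overline{\xi_i}(x))-c(\overline\xi_x)| = O(|\eta_i(x)|) = O(R^{-2})$ summed over $|A_i| \asymp R^2$ points only gives a useless $O(1)$ error. The resolution is to extract the extra factor $|\overline{\xi_i}(x)|+|\eta_i(x)|$ present in the product-to-sum formula for $\cos s - \cos(s+t)$ — this factor is $\ell^1$-summable over $A_i$, costing only $\log(1+R)$ — and, in tandem, to use the mean-zero cancellation of the imaginary part (Step 1) so that the unavoidable $O(1/m^2)$-type terms stay within the claimed bound. The remaining work is purely bookkeeping on the annular sums and on uniformity in $m$.
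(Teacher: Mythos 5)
Your proof is correct and follows essentially the same route as the paper's: both rest on the inverse-square decay of $G_{\bT_m}*v_i$, the $L^1$-summability of that tail producing a $\log R$ factor, a Taylor/trigonometric comparison of $\cos$ at $\overline{\xi}$ versus $\overline{\xi}_i$ on a ball around each cluster, and the mean-zero cancellation of the imaginary part to pass between $1-\RE\hat{\mu}$ and $1-|\hat{\mu}|$ via \eqref{real_part_approx}. The only difference is bookkeeping: the paper stays in the $1-|\hat{\mu}|$ domain and peels off error terms cluster by cluster (expanding $c(\xi^i+\xi^e)$ with the external phase as the small perturbation, cf.\ \eqref{cosine_approx}), whereas you first reduce all quantities to real-part sums $\sum_x(1-c(\cdot))$ and then compare those directly via the product formula $c(s)-c(s+t)=2\sin(\pi t)\sin(\pi(2s+t))$ — which is the same bound in a slightly different dress.
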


\begin{proof}
Set $\overline{\xi} = G_{\bT_m} * v$ and $\overline{\xi}_i = G_{\bT_m} * v_i$, so that $|\hat{\mu}(\xi(v))| = |\hat{\mu}(\overline{\xi})|$ and $|\hat{\mu}(\xi(v_i))| = |\hat{\mu}(\overline{\xi}_i)|$. Fix a point $x_i$ in the support of each $v_i$, so that the balls $B_{R'}(x_i)$ are disjoint where $R' = \lfloor (R-1)/2 \rfloor$. As in the proof of Proposition \ref{T_m_to_Z^2}, if $y = (y_1,y_2)$ with $|y_1|,|y_2| \leq m/2$ and $r = \sqrt{y_1^2 + y_2^2}$,
\begin{equation}
\label{inverse_square}
|\overline{\xi}_i(x_i + y)| = O(1/r^2).
\end{equation}
We obtain the analogue to \eqref{savings_real},
\begin{equation}
\label{real_reduction}
1 - |\hat{\mu}(\overline{\xi}_i)| = O\left( \frac{1}{R^2 m^2} \right) + \frac{1}{m^2} \sum_{\|y\|_1 \leq R'} \Big(1 - c\left(\overline{\xi}_i(x_i + y)\right)\Big).
\end{equation}
If $\|y\|_1 \leq R'$, then $\overline{\xi}(x_i + y) = \overline{\xi}_i(x_i + y) + O(R^{-2})$, so that
\begin{equation}
\label{cosine_approx}
c\left(\overline{\xi}(x_i + y)\right) = c\left(\overline{\xi}_i(x_i + y)\right) + O\left(\frac{\left|s(\overline{\xi}_i(x_i + y))\right|}{R^2} \right) + 
O\left( \frac{1}{R^4}\right).
\end{equation}
As in \eqref{Taylor_2},
\begin{equation}
\label{sine_bound}
\sum_{\|y\|_1 \leq R'} \left|s\left(\overline{\xi}_i(x_i + y)\right)\right| = O(\log(1 + R)).
\end{equation}
Combining \eqref{real_reduction}, \eqref{cosine_approx}, and \eqref{sine_bound} yields
\begin{equation}
\label{xi_i_approx}
1 - |\hat{\mu}(\overline{\xi}_i)| = O\left( \frac{\log(1+R)}{R^2 m^2} \right) + \frac{1}{m^2} \sum_{\|y\|_1 \leq R'} \Big(1 - c\left(\overline{\xi}(x_i + y)\right)\Big).
\end{equation}
Take the sum of \eqref{xi_i_approx} over $i = 1,2,\ldots,k$. For $z \notin \bigcup_{i=1}^k B_{R'}(x_i)$, let $r_i$ be the $\ell^2$ distance from $z$ to $x_i$, so that $|\overline{\xi}(z)| = O(1/r_1^2 + \cdots + 1/r_k^2)$. Use the inequality
\begin{equation}
\left( \frac{1}{r_1^2} + \cdots + \frac{1}{r_k^2} \right)^2 \leq k\left( \frac{1}{r_1^4} + \cdots + \frac{1}{r_k^4} \right)
\end{equation}
to conclude that
\begin{equation}
\sum_{z \,\notin\, \bigcup_{i=1}^k B_{R'}(x_i)} \Big(1 - c\left(\overline{\xi}(z)\right)\Big) = O\left( \frac{1}{R^2} \right).
\end{equation}
In combination with \eqref{xi_i_approx}, this yields
\begin{equation}
\sum_{i=1}^k \Big( 1 - \left|\hat{\mu}(\overline{\xi}_i) \right| \Big) = O\left( \frac{\log(1+R)}{R^2 m^2} \right) + \frac{1}{m^2} \sum_{z \in \bT_m} \Big(1 - c\left(\overline{\xi}(z)\right)\Big),
\end{equation}
or equivalently,
\begin{equation}
\label{xi_real}
1 - \RE\left( \hat{\mu}(\overline{\xi}) \right) = O\left( \frac{\log(1+R)}{R^2 m^2} \right) + \sum_{i=1}^k \Big( 1 - \left|\hat{\mu}(\overline{\xi}_i) \right| \Big).
\end{equation}

We will finish the proof by applying \eqref{real_part_approx}. Proposition \ref{T_m_to_Z^2} implies that each $1 - \left| \hat{\mu}(\overline{\xi}_i) \right| = O(1/m^2)$, so $\RE\left( \hat{\mu}(\overline{\xi}) \right) \gg 1$. Meanwhile,
\begin{equation}
\IM \left( \hat{\mu}(\overline{\xi}) \right) = \frac{1}{m^2} \sum_{z \in \bT_m} s\left(\overline{\xi}(z)\right), \qquad s(t) = 2\pi t + O\left( |t|^3 \right).
\end{equation}
In the Taylor expansion, the linear term vanishes since $\overline{\xi}$ has mean zero on $\bT_m$. Also,
\begin{equation}
\left| \overline{\xi}(z) \right|^3 \leq \left( \sum_{i=1}^k \left| \overline{\xi}_i(z) \right| \right)^3 \leq k^2 \sum_{i=1}^k \left| \overline{\xi}_i(z) \right|^3,
\end{equation}
and $\sum_{z \in \bT_m} \left| \overline{\xi}_i(z) \right|^3 = O(1)$ by \eqref{inverse_square}. Hence $\IM \left( \hat{\mu}(\overline{\xi}) \right) = O(1/m^2)$, and we obtain the desired result from \eqref{xi_real} using \eqref{real_part_approx}.
\end{proof}

\subsection{Estimation of moderate size phases}
\label{small_phase_section}
In this section we give estimates for the savings of frequencies $\xi$ whose distinguished prevectors $v = v(\xi)$  have $\|v\|_1$ growing with $m$.  In particular, we prove an approximate additive savings estimate for separated parts of $v$, which is what is needed to prove the upper bound of Theorem \ref{mixing_time_theorem}. 

Let $R  > 1$ be a large fixed parameter. Given any $v \in \zed^{\bT_m}$, for each $x 
\in \supp v$ let \begin{equation}\nbd(x) := B_R(x) = \{y \in \bT_m: \left\|y-x\right\|_1 \leq R\}.\end{equation} Perform a simple 
agglomeration scheme, in which any two points $x, y \in \supp v$ whose 
neighborhoods overlap are joined in a common $R$-cluster.  In other words, $x$ and $y$ belong to a common cluster if and only if there is a sequence of points $\{z_i\}_{i=0}^n \subset \supp v$ such that $x = z_0$, $y = z_n$ and, for $0 \leq i < n$, $\|z_i - z_{i+1}\|_1 \leq 2R$.  Write $\sC$ for the 
collection of clusters formed in this way.  Given $C \in \sC$, write
\begin{equation}
\nbd(C) := \bigcup_{x \in C} \nbd(x)
\end{equation}
for the neighborhood of $C$, so that $\supp v \subset \bigsqcup_{C \in \sC} \nbd(C)$.

Let $\sP \subset \sC$ be the collection of all clusters $C$ such that $\left. v \right|_C = \Delta w$ for some $w \in \zed^{\bT_m}$, and let $S$ be the union of all clusters $C \in \sC \,\setminus\, \sP$. The `$R$-reduction' of $v$ is defined to be $\tilde{v} = \left. v \right|_S$, which differs from $v$ by a sum of terms of the form $\Delta w$, and whose $L^1$ and $L^\infty$ norms are bounded by $\|v\|_1, \|v\|_{L^\infty}$ respectively. We say that $v$ is `$R$-reduced' if $\tilde{v} = v$. For any frequency $\xi \in \hat{\sG}_m$, the `$R$-reduced prevector' of $\xi$ is the $R$-reduction of the distinguished prevector $v(\xi)$, which is indeed a prevector of $\xi$.

The following is the main result of this section. It is similar to Lemmas \ref{C_0_1_lemma} and \ref{C_2_lemma}, but does not require the prevector $v$ to have bounded $L^1$ norm.

\begin{lemma}\label{savings_lemma}
	Let $B \geq 1$ be a fixed parameter.  There is a function $\eta(B, R)$ tending to 0 as $R \to \infty$ such that for all $m$ sufficiently large, if $v \in \zed^{\bT_m}$ satisfies the following conditions:
	\begin{enumerate}
	\item $v$ is $R$-reduced
	\item $\|v\|_{L^\infty} \leq 3$
	\item $v$ has an $R$-cluster $C$ for which $\big\| \left. v \right|_C \big\|_1 \leq B$
	\end{enumerate}
	then
	\begin{equation}
	\label{eta_bound}
	\sav(G_{\bT_m} * v; \nbd(C)) \geq m^2 \gap_m - \eta(B, R).
	\end{equation}
	Thus, if $v$ has mean zero, then the corresponding frequency $\xi \in \hat{\sG}_m$ satisfies $\sav(\xi; \nbd(C)) \geq m^2 \gap_m - \eta(B, R)$.
	
	The sufficiently large value of $m$ above which \eqref{eta_bound} holds is allowed to depend on both $B$ and $R$.
\end{lemma}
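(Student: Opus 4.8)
Here is a plan for proving Lemma \ref{savings_lemma}.

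The plan is to localize to a single ball around the cluster $C$ and show that, on that ball, the savings generated by $C$ cannot be undone by the remainder of the prevector. Write $\overline{\xi} = G_{\bT_m}*v$ and split it as $\overline{\xi} = \xi^i + \xi^e$, where $\xi^i = G_{\bT_m}*(v|_C)$ is the field of the cluster and $\xi^e = G_{\bT_m}*(v - v|_C)$; by (the proof of) Lemma \ref{xi_bar} all savings may be computed with $\overline{\xi}$. Because the agglomeration makes the neighborhoods of distinct clusters disjoint, $\supp(v|_C)\subseteq\nbd(C)$ while $\supp(v - v|_C)$ lies at $\ell^1$-distance greater than $R$ from $\nbd(C)$; this separation is what replaces the bounded-$L^1$ hypothesis of Lemmas \ref{C_0_1_lemma} and \ref{C_2_lemma}. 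Since savings is monotone under inclusion of sets, it suffices to exhibit a ball $B_K(x_0)\subseteq\nbd(C)$, with $K$ a large constant and $x_0$ a well-chosen point within $O_B(1)$ of $\supp(v|_C)$, for which $\sav(\overline{\xi}; B_K(x_0))\geq m^2\gap_m - \eta(B,R)$.

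First I would pin down $\xi^i$. When $v|_C\in C^1(\bT_m)$, the function $G_{\zed^2}*(v|_C)$ exceeds $\tfrac12$ in absolute value only on a finite set, so, exactly as in the reduction in the proof of Proposition \ref{gap_achievers}, $v|_C = \tilde v + \Delta w$ with $w\in C^0$ and $\tilde v$ supported in a ball $B_{R_0(B)}(x^*)$ of radius depending only on $B$; then $\supp(\tilde v)$, hence $x^*$, lies within $O_B(1)$ of $\supp(v|_C)$, and the identity $G_{\bT_m}*\Delta w = w - \E_{x \in \bT_m}[w]$ shows that $\xi^i$ differs from $G_{\bT_m}*\tilde v$ by a $\zed$-valued function plus a constant, so it contributes identical savings. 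Since $v$ is $R$-reduced we have $C\notin\sP$, and the support-localization argument from the proof of Lemma \ref{C_2_lemma} gives $v|_C\notin\cI$, hence $\tilde v\notin\cI$. If $\tilde v\in C^2(\zed^2)$, then the estimates in the proof of Proposition \ref{T_m_to_Z^2}, together with Lemma \ref{z_2_approx_lemma} and the definition \eqref{def_gamma} of $\gamma$, give $\sav(G_{\bT_m}*\tilde v; B_K(x^*)) \geq f(G_{\zed^2}*\tilde v) - o_K(1) \geq \gamma - o_K(1)$ once $m$ is large (for fixed $K$) and then $K$ is large. In the remaining cases --- $\tilde v\in C^1\setminus C^2$, or $v|_C\notin C^1(\bT_m)$ (so that $\xi^i\approx\bigl(\sum v|_C\bigr)G_{\bT_m}$ is dense modulo $1$ near $\supp(v|_C)$) --- the same computation gives $\sav(G_{\bT_m}*\tilde v; B_K(x_0))\to\infty$ with $K$, which is more than enough.

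The crux is to transfer these lower bounds from $\xi^i$ to $\overline{\xi} = \xi^i + \xi^e$ on $B_K(x_0)$. Although $\xi^e$ itself is uncontrolled --- it can be of size $\Theta(m^2)$ and can vary arbitrarily fast modulo $1$, so the Taylor-freezing trick of \eqref{external_eliminated} is unavailable --- its \emph{high-order} derivatives on $\nbd(C)$ are forced to be small: since each $D_1^a D_2^b G_{\bT_m}$ with $a+b = k\geq 3$ decays like $\|\cdot\|_2^{-k}$ by Lemma \ref{greens_function_estimate}, $\|v - v|_C\|_{L^\infty}\leq 3$, and $\supp(v - v|_C)$ lies at distance $>R$ from $\nbd(C)$, every third derivative of $\xi^e$ is $O(1/R)$ on $B_K(x_0)$. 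Consequently $\xi^e$ agrees on $B_K(x_0)$ with a polynomial $P$ of degree at most $2$ up to an error $O(K^3/R)$, which tends to $0$ for $K$ fixed (or growing slowly with $R$) as $R\to\infty$. It then remains to bound $\sav(\xi^i + P; B_K(x_0))$ below for an arbitrary quadratic $P$ --- a finitary exponential-sum question. Van der Corput's inequality (iterated to peel off the top-degree part of $P$, with Lemma \ref{geometric} disposing of the resulting linear phase) yields the necessary dichotomy: either the oscillation of $P$ on $B_K(x_0)$ is non-negligible, and then $\sum_{z\in B_K(x_0)}e(\xi^i_z + P(z))$ has cancellation of order $|B_K(x_0)|$, so $\sav(\overline{\xi}; B_K(x_0))\gg K^2$; or $P$ is essentially flat on $B_K(x_0)$, in which case, expanding $e(P(z))$ around its value at $x_0$ and using the decay of $\xi^i$ and the symmetry of the ball about $x_0$, the leading correction that $P$ makes to $\sum_z e(\xi^i_z)$ has the favorable sign, and $\sav(\xi^i + P; B_K(x_0))\geq\sav(\xi^i; B_K(x_0)) - o_K(1)$. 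Either way $\sav(\overline{\xi}; B_K(x_0))\geq\gamma - \eta'(B,R)$ with $\eta'(B,R)\to 0$ as $R\to\infty$. To conclude, combine this with $m^2\gap_m\to\gamma$ (Theorem \ref{spectral_gap_theorem}): once $m$ is large enough (depending on $B$ and $R$) that $m^2\gap_m < \gamma + \eta'(B,R)$, we get $\sav(\overline{\xi};\nbd(C))\geq\sav(\overline{\xi}; B_K(x_0))\geq\gamma - \eta'(B,R)\geq m^2\gap_m - 2\eta'(B,R)$, and we take $\eta(B,R) = 2\eta'(B,R)$; the final assertion of the lemma is immediate because, as in Lemma \ref{xi_bar}, $\sav(\overline{\xi};S) = \sav(\xi;S)$ for the corresponding frequency $\xi$ when $v$ has mean zero. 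I expect the exponential-sum step just described to be the main obstacle: unlike in Lemmas \ref{C_0_1_lemma} and \ref{C_2_lemma}, the external field cannot be frozen at a point, and one must instead exploit the fast decay of the higher derivatives of $G_{\bT_m}$ to replace $\xi^e$ by a genuine low-degree polynomial and then show --- this is where van der Corput's inequality enters --- that such a polynomial phase is either harmless to the savings or actively increases it.
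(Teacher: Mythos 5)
Your set-up (split $\overline{\xi}=\xi^i+\xi^e$, note the third derivatives of $\xi^e$ are $O(1/R)$ on $\nbd(C)$ because of the $R$-separation and $\|v\|_{L^\infty}\le 3$, then treat $\xi^e$ as approximately a degree-$2$ polynomial $P$) is exactly the paper's first observation, and your handling of the ``flat $P$'' case is essentially the paper's Case~1. The gap is in the oscillating case, which you yourself flag as the main obstacle.

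Your plan tries to extract cancellation from $\sum_{z\in B_K(x_0)}e\bigl(\xi^i_z+P(z)\bigr)$ with $x_0$ within $O_B(1)$ of $\supp(v|_C)$. But on that ball $\xi^i$ is \emph{not} flat: it can vary by $\Theta(1)$ and even be dense modulo $1$, and its derivatives are not uniformly controlled near the support. Van der Corput applied to the combined phase $\xi^i+P$ does not give cancellation there, because $\xi^i$ can partially cancel the oscillation of $P$, or contribute curvature of comparable size with arbitrary sign, and there is no usable lower bound on the second difference of the \emph{combined} phase. (Your ``leading correction has the favorable sign'' step for the flat case is also unjustified and, as stated, likely false; fortunately it is unnecessary if one quantifies flatness as in the paper's Case~1 threshold and bounds the error by the phase oscillation.)

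The paper's Cases~2 and~3 resolve this by seeking savings \emph{away} from $\supp(v|_C)$ rather than on a fixed ball near it. When $\xi^e$ oscillates at scale $\ge 1/R_1^3$ over a step $w$ of length $\le R_1$, one extends the segment $\{x+nw\}$ to length $\asymp R_2/\|w\|_1$ (or $R_3/\|w\|_1$ after van der Corput) and uses a pigeonhole argument, exploiting $\|v|_C\|_1\le B$, to find a sub-segment of length $V\to\infty$ lying at distance $\gg U\|w\|_1$ from all of $C$. On that sub-segment the \emph{internal} phase is frozen up to $O(V/U)=o_R(1)$ (first derivatives of $G_{\bT_m}$ decay like the inverse radius), so the exponential sum is a pure linear phase (after van der Corput, in Case~3) and Lemma~\ref{geometric} gives cancellation $\gg V$, i.e.\ savings that can be made larger than any fixed constant. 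This ``go where the internal phase is flat'' move is the missing ingredient; without it, the exponential-sum estimate you need on $B_K(x_0)$ is not available. One also needs the freedom to choose \emph{different} regions of $\nbd(C)$ in the two cases — balls around the sub-clusters of $v|_C$ when $\xi^e$ is flat, and distant segments when it is not — rather than a single fixed $B_K(x_0)$ that must work in all cases.

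A secondary remark: to get $\sav(\xi^i;B_K(x_0))\ge\gamma-o_K(1)$ you cannot in general reduce $v|_C$ to a single ball $B_{R_0(B)}(x^*)$; the clustering of $v|_C$ may produce several non-trivial sub-clusters. The paper handles this correctly by running the algorithm of Proposition~\ref{gap_achievers} on $v|_C$ and considering the resulting $\sX''$: if $|\sX''|\ge 2$ or some sub-cluster lands in $C^1\setminus C^2$ or outside $C^1$, the savings already exceed $\frac{3}{2}\gamma$; only when $|\sX''|=1$ and the sub-cluster is in $C^2$ does one invoke $\gamma$ directly. That case analysis is needed to make the Case~1 bound rigorous.
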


The upper bound on $\|v\|_{L^\infty}$ could be replaced by any fixed constant; we chose $3$ because the distinguished prevector of every $\xi \in \hat{\sG}_m$ satisfies $\|v(\xi)\|_{L^\infty} \leq 3$, so the $R$-reduced prevector has the same bound.

\begin{proof}
	Suppose that $v \in \zed^{\bT_m}$ satisfies the conditions of the lemma. We decompose the phase function $\overline{\xi} = G_{\bT_m} * v$ into an internal and external component, $\overline{\xi} = \xi^i + \xi^e$, where
	\begin{equation}
	\xi^i := G_{\bT_m} * \left. v \right|_C, \qquad \xi^e := G_{\bT_m} * \left. v \right|_{C^c}.
	\end{equation}

	Our first observation is that the third derivatives of $\xi^e$ are uniformly bounded over all $x \in \nbd(C)$:
	\begin{equation}
	\label{3rd_deriv_bound}
	\left|D_1^a D_2^b \xi_x^{e}\right| \ll \frac{1}{R}, \quad \text{for $x \in \nbd(C)$ and $a,b \geq 0$, $a+b = 3$}.
	\end{equation}
	To see this, note that if $x \in \nbd(C)$, then every $y \in \supp(v) \,\setminus\, C$ satisfies $\|x-y\|_1 > R$. Therefore,
	\begin{align}
	\left| D_1^a D_2^b \xi_x^e \right| &= \left| \sum_{y \in C^c} v(y) D_1^a D_2^b G_{\bT_m}(x-y) \right| \\
	\notag &\leq \|v\|_{L^\infty} \sum_{y \in B_R(x)^c} |D_1^a D_2^b G_{\bT_m}(x-y)|.
	\end{align}
	The bound \eqref{3rd_deriv_bound} then follows from the asymptotic of Lemma \ref{greens_function_estimate}.
	
	The rest of the proof is divided into three cases. Heuristically, $\xi^e$ could be roughly constant over $\nbd(C)$, vary linearly over $\nbd(C)$, or vary quadratically over $\nbd(C)$. The bound on the third derivatives of $\xi^e$ ensures that these are the only possibilities. If $\xi^e$ is roughly constant, then we can prove \eqref{eta_bound} using the arguments developed in Section \ref{Determination_of_gap}.
	
	If $\xi^e$ varies linearly over $\nbd(C)$, then we can find a region of $\nbd(C)$ far enough away from $C$ that the internal phase is nearly constant, so $\overline{\xi} = \xi^i + \xi^e$ varies linearly. We then cite the geometric series bound of Lemma \ref{geometric} to show that for any $A > 0$, if $R$ is large enough then $\sav(\overline{\xi}; \nbd(C)) \geq A$. This is much stronger than the desired bound \eqref{eta_bound}: as long as $A > \gamma = \lim_{m \to \infty} m^2 \gap_m$, we do not even need to subtract $\eta(B,R)$.
	
	Finally, if $\xi^e$ varies quadratically over $\nbd(C)$, we use van der Corput's inequality to reduce to the linear case.
	
	The proof will use three auxiliary parameters $R_1,R_2,R_3$ which tend to infinity with $R$ and satisfy $R_1 < R_2 < R_3 < R$. We require that
	\begin{equation}
	\label{R_123}
	R_1 \to \infty, \quad \frac{R_2}{R_1^4} \to \infty, \quad \frac{R_3}{R_1 R_2^2} \to \infty, \quad \frac{R}{R_1^2 R_3^2} \gg 1, \quad \text{as } R \to \infty.
	\end{equation}
	These properties are all satisfied if, for example, $R_1 = R^{1/26}$, $R_2 = R^{5/26}$, $R_3 = R^{12/26}$.
	
	For the first case, suppose that for all $x \in C$ and $\|y - x\|_1 \leq R_1$, we have $\left\|\xi_y^e - \xi_x^e\right\|_{\bR/\zed} < 1/R_1^3$. Perform the clustering algorithm from the proof of Proposition \ref{gap_achievers} on $\left. v \right|_C$, using the same parameters (e.g.~$\alpha = 7/8$). This partitions $C$ into sub-clusters indexed by a set $\sX'$: for each $x \in \sX'$ there are radii $2\tilde{R}_1(x) < \tilde{R}_2(x)$ such that
	\begin{equation}
	C \subset \bigsqcup_{x \in \sX'} B_{\tilde{R}_1(x)}(x),
	\end{equation}
	while the balls $\{B_{\tilde{R}_2(x)}(x)\}_{x \in \sX'}$ are disjoint, and each sub-cluster meets the conditions of either Lemma \ref{C_0_1_lemma} or Lemma \ref{C_2_lemma}, as appropriate. As in \eqref{R_1_bar}, the radii $\tilde{R}_2(x)$ are uniformly bounded by some $R_0$ depending only on $B$. By taking $R$ large enough with respect to $B$, we may assume that $R_0$ is arbitrarily small relative to $R_1$.
	
	Let $x \in \sX'$ and $R' \leq R_1$. We use the assumption that $\left\|\xi_y^e - \xi_x^e\right\|_{\bR/\zed} < 1/R_1^3$ for all $y \in B_{R_1}(x)$ to compute, by Taylor expansion,
	\begin{equation}
	\left| \sum_{y \in B_{R'}(x)} e(\xi^i_y + \xi^e_y) \right| = \left| \sum_{y \in B_{R'}(x)} e(\xi^i_y) \right| + O\left( \frac{1}{R_1} \right).
	\end{equation}
	In other words, for all $R' \leq R_1$,
	\begin{equation}
	\label{xi_i_reduction}
	\sav(\overline{\xi}; B_{R'}(x)) = \sav(\xi^i; B_{R'}(x)) + O\left( R_1^{-1} \right).
	\end{equation}
	
	Define $\sX'' \subset \sX'$ as in the proof of Proposition \ref{gap_achievers}. Since $v$ is $R$-reduced, $\sX''$ is nonempty. For $x \in \sX''$, let $u^{(x)}$ be the restriction of $v$ to $B_{\tilde{R}_1(x)}(x)$. By step (i) of the clustering,
	\begin{equation}
	\label{xi_i_cases}
	\sav(\xi^i; B_{\tilde{R}_2(x)}(x)) \geq \begin{cases} \frac{3}{2} \gamma + 1, & u^{(x)} \notin C^2(\bT_m), \\
	\frac{7}{8} \sav(G_{\bT_m} * u^{(x)}), & u^{(x)} \in C^2(\bT_m). \end{cases}
	\end{equation}
	Note that $\frac{3}{2} \gamma + 1 > m^2 \gap_m$ for large enough $m$, and $\sav(G_{\bT_m} * u^{(x)}) \geq m^2 \gap_m$ by definition of $\sX''$. Thus, the combination of \eqref{xi_i_reduction} with \eqref{xi_i_cases} verifies the desired bound \eqref{eta_bound} except when $|\sX''| = 1$ and $u^{(x)} \in C^2(\bT_m)$. In that remaining situation, we observe from \eqref{xi_star_approx} that
	\begin{align}
	\sav(\xi^i; B_{R_1}(x)) &= \sav(G_{\bT_m} * u^{(x)}; B_{R_1}(x)) \\
	\notag &= \sav(G_{\bT_m} * u^{(x)}) - O_B\left( \frac{\log^2 R_1}{R_1^2} \right) \\
	\notag &\geq m^2 \gap_m - O_B\left( \frac{\log^2 R_1}{R_1^2} \right),
	\end{align}
	which along with \eqref{xi_i_reduction} completes the proof.
	
	In the second and third cases, we assume that there exist $x \in C$ and $y \in B_{R_1}(x)$ such that $d := \|\xi_y^e - \xi_x^e\|_{\bR/\zed} \geq 1/R_1^3$. Set $w = y - x$, so $\|w\|_1 \leq R_1$. For the second case, suppose that for all integers $1 \leq n \leq \frac{R_2}{\|w\|_1}$,
	\begin{equation}\label{linear_constraint}
	\left\|\xi_{x + nw}^e - 
	\xi_x^e - n\left(\xi_y^e 
	- \xi_x^e\right)\right\|_{\bR/\zed} < \frac{1}{R_1}.
	\end{equation} 
	Effectively, the external phase varies linearly along the discrete line $\{x + nw : n \in \zed,\, 0 \leq n \leq \frac{R_2}{\|w\|_1} \}$.
	
	We now find a segment along the line that is far away from $C$. Set
	\begin{equation}
	\ell = \left\lfloor \frac{R_2}{3^B \|w\|_1} \right\rfloor,
	\end{equation}
	and consider the $\ell^1$-balls of radius $3^k \ell \|w\|_1$ centered at $x + 2 \cdot 3^k \ell w$, for $0 \leq k \leq B-1$. The interiors of these balls are disjoint, and $x \in C$ is not in any of the interiors. By the pigeonhole principle, the interior of at least one ball contains no elements of $C$. Choose $k$ corresponding to one such ball, and set $U = 2 \cdot 3^k \ell$, so that $x + Uw$ is the center.
	
	Set $V = \lfloor \sqrt{\ell d^{-1}} \rfloor$. By \eqref{R_123}, $\frac{\ell}{V} \geq \sqrt{\ell d} \to \infty$ with $R$, and certainly $V \to \infty$ with $R$. Any point $y$ along a shortest path from $x + Uw$ to $x + nw$, with $U < n \leq U+V$, satisfies $d(y,C) \gg U \|w\|_1$. Since the first derivatives of $G_{\bT_m}$ decay like the inverse of the radius, it follows that $\xi^i_{x + nw} - \xi^i_{x + Uw} = O\left( \frac{V}{U} \right) = o_R(1)$.
	
	Consider the exponential sum
	\begin{align}
	&\quad\, \sum_{n = U}^{U+V} e\left(\overline{\xi}_{x + nw}\right) = \sum_{n = U}^{U+V} e\left(\xi_{x + nw}^e + \xi_{x + nw}^i\right) \\
	&= \notag \sum_{n = U}^{U+V} e\left(\xi_{x}^e + 
	\xi_{x + Uw}^i + n\left(\xi_y^e - \xi_x^e \right) + 
	O\left(\frac{1}{R_1} \right) + O\left(\frac{V}{U} 
	\right)\right).
	\end{align}
	Taylor expanding the error in the exponential, then 
	summing the geometric series, we obtain
	\begin{equation}
	\sum_{n = U}^{U+V} e\left(\overline{\xi}_{x + nw}\right) = 
	O\left(\frac{V}{R_1} + \frac{V^2}{U} + \frac{1}{d} \right) = o_R(V).
	\end{equation}
	Hence this segment of the line provides savings of $\gg V$ for $\overline{\xi}$. That is, $\sav(\overline{\xi}; \nbd(C))$ is bounded below by a constant that may be made arbitrarily high by taking $R$ large enough.
	
	For the third case, suppose that \eqref{linear_constraint} fails for some $n = n_1 \leq \frac{R_2}{\|w\|_1}$. Set $W = \lfloor \frac{R_3}{\|w\|_1} \rfloor$, and apply van der Corput's inequality with $H=1$ to estimate
	\begin{equation}
	\label{van_der_Corput}
	\left| \sum_{n = 1}^W e\left(\overline{\xi}_{x + nw} 
	\right)\right|^2 \leq \frac{W(W+1)}{2} + \frac{W+1}{2} \left| \sum_{n = 1}^{W-1} e\left(\overline{\xi}_{x + (n+1)w} - \overline{\xi}_{x + nw} \right)\right|.
	\end{equation}
	Set $z_n = \xi_{x + (n+1)w}^e - \xi_{x + nw}^e$. By the definition of $n_1$,
	\begin{equation}
	\frac{1}{R_1} \leq \left\| \sum_{i=0}^{n_1-1} (z_i) - n_1 z_0 \right\|_{\bR/\zed} = \left\| \sum_{i=0}^{n_1-2} (n_1-1-i)(z_{i+1} - z_i) \right\|_{\bR/\zed}
	\end{equation}
	and therefore there is $n_0 \leq n_1-2$ for which
	\begin{equation}
	\delta := \|z_{n_0+1} - z_{n_0}\|_{\bR/\zed} \geq \frac{2}{R_1 n_1^2} \geq \frac{2 \|w\|_1^2}{R_1 R_2^2}.
	\end{equation}
	For all $1 \leq n,p \leq W$, the quantity $z_n - z_p -(n-p)(z_{n_0+1} - z_{n_0})$ is a sum of $O\left( W^2 \|w\|_1^3 \right)$ terms of the form $D_1^a D_2^b \xi_{x'}^e$ where $a+b = 3$ and $x' \in \nbd(C)$. (Each $z_i$ is a sum of $O\left( \|w\|_1 \right)$ first derivatives of $\xi^e$, so $z_{i+1} - z_i$ is a sum of $O\left( \|w\|_1^2 \right)$ second derivatives of $\xi^e$, and then $(z_{i+1} - z_i) - (z_{n_0+1} - z_{n_0})$ is a sum of $O\left( \|w\|_1^2 \cdot W \|w\|_1 \right)$ third derivatives of $\xi^e$. Finally, sum over all $i$ between $p$ and $n$.) Thus, \eqref{3rd_deriv_bound} gives
	\begin{equation}
	\left\|z_n - z_p -(n-p)(z_{n_0+1} - z_{n_0})\right\|_{\bR/\zed} = O\left( \frac{W^2 \|w\|_1^3}{R} \right).
	\end{equation}
	By the definition of $W$ and \eqref{R_123}, this quantity is $O\left( 1/R_1 \right)$.
	
	We now repeat the argument of the previous case, using
	\begin{equation}
	\ell' = \left\lfloor \frac{R_3}{3^B \|w\|_1} \right\rfloor
	\end{equation}
	to define $U' = 2 \cdot 3^k \ell'$ for an appropriately chosen $0 \leq k \leq B-1$, and $V' = \lfloor \sqrt{\ell' \delta^{-1}} \rfloor$. By \eqref{R_123}, $\frac{\ell'}{V'} \geq \sqrt{\ell' \delta} \to \infty$ with $R$, and $\delta^{-1} = o_R(V')$. Arguing as before, we obtain
	\begin{equation}
	\sum_{n = U'}^{U'+V'} e\left(\overline{\xi}_{x + (n+1)w} - \overline{\xi}_{x + nw}\right) = O\left( \frac{V'}{R_1} + \frac{(V')^2}{U'} + \frac{1}{\delta} \right) = o_R(V').
	\end{equation}
	Thus, we have saved an arbitrary constant in the sum
	\begin{equation}
	\sum_{n = 1}^{W-1} e\left(\overline{\xi}_{x + (n+1)w} - \overline{\xi}_{x + nw} \right),
	\end{equation}
	and hence also in $\sum_{n=1}^W e\left(\overline{\xi}_{x + nw} 
	\right)$, by \eqref{van_der_Corput}.
\end{proof}

\section{Proof of Theorem \ref{mixing_time_theorem}}
\label{proof_mixing_theorem_section}
In the process of proving Theorem \ref{mixing_time_theorem}, we also prove the following mixing result in $L^2$.
\begin{theorem}\label{L_2_mixing_theorem}
	Let $m \geq 2$, let $c_0 = \gamma^{-1}$ be the constant of Theorem \ref{mixing_time_theorem}, and as there, set $t_m^{\mix} = c_0 m^2 \log m$.  For each fixed $\epsilon > 0$,
	\begin{align}
		\lim_{m \to \infty} \min_{\sigma \in \sR_m}\left\|P_m^{\lceil(1-\epsilon)t_m^{\mix}\rceil} \delta_\sigma - \bU_{\sR_m}\right\|_{L^2(d\bU_{\sR_m})} &= \infty\\
		\notag \lim_{m \to \infty} \max_{\sigma \in \sR_m}\left\|P_m^{\lfloor(1+\epsilon)t_m^{\mix}\rfloor} \delta_\sigma - \bU_{\sR_m}\right\|_{L^2(d\bU_{\sR_m})} &= 0.
	\end{align}
\end{theorem}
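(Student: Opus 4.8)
The plan is to work entirely with the spectral expansion of the $L^2$ distance. Since the sandpile chain restricted to $\sR_m$ is a transitive random walk on the abelian group $\sG_m$, Plancherel on $\sG_m$ gives, for every recurrent $\sigma$,
\[
\|P_m^N\delta_\sigma - \bU_{\sR_m}\|_{L^2(d\bU_{\sR_m})}^2 = \sum_{\xi\in\hat\sG_m\setminus\{0\}}|\hat\mu(\xi)|^{2N},
\]
a quantity independent of $\sigma$; in particular the minimum and maximum over $\sigma\in\sR_m$ in the statement coincide, so it suffices to show the right side tends to $\infty$ when $N=\lceil(1-\epsilon)t_m^{\mix}\rceil$ and to $0$ when $N=\lfloor(1+\epsilon)t_m^{\mix}\rfloor$, with $t_m^{\mix}=\gamma^{-1}m^2\log m$. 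For the lower bound I would use that translation produces $m^2$ distinct frequencies with $1-|\hat\mu(\xi)|=\gap_m$, so the sum is at least $m^2(1-\gap_m)^{2N}$; since $\gap_m=(\gamma+o(1))/m^2$ and $N=(1-\epsilon)\gamma^{-1}m^2\log m+O(1)$, this equals $\exp\big((2\epsilon-o(1))\log m\big)\to\infty$.

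The upper bound is the substantial part. First bound each term by $|\hat\mu(\xi)|^{2N}\le e^{-2N\sav(\xi)/m^2}$. Fix a large parameter $R$ and, for each nonzero $\xi$, take its $R$-reduced prevector $v=v_R(\xi)$: this is a prevector of $\xi$ with $\|v\|_{L^\infty}\le3$, is nonzero, is $R$-reduced, and since a prevector determines its frequency the map $\xi\mapsto v_R(\xi)$ is injective. Running the agglomeration scheme of Section \ref{small_phase_section} on $v$, call a cluster $C$ light if $\|v|_C\|_1\le B$ for a large constant $B$. Distinct clusters have disjoint neighborhoods $\nbd(C)$, so Lemma \ref{savings_lemma} applied separately to each light cluster yields $\sav(\xi)\ge k\,(m^2\gap_m-\eta(B,R))$ whenever $v$ has $k$ light clusters, where $\eta(B,R)\to0$ as $R\to\infty$. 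A light cluster lies in a ball of radius $O(RB)$ about one of its points, so the number of $\xi$ whose $R$-reduced prevector consists of exactly $k$ light clusters and nothing else is at most $(C_{R,B}m^2)^k/k!$. Choosing $R$, then $B$, so that $\eta(B,R)/\gamma<\epsilon/2$ gives $e^{-2N\sav(\xi)/m^2}\le m^{-2k(1+\epsilon/4)}$ for $m$ large, so the contribution of these $\xi$ is at most $\sum_{k\ge1}\tfrac{(C_{R,B}m^2)^k}{k!}m^{-2k(1+\epsilon/4)}=e^{C_{R,B}m^{-\epsilon/2}}-1\to0$ (the case $k=0$, i.e.\ $v=0$ and $\xi=0$, is excluded).

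What remains is the contribution of frequencies whose $R$-reduced prevector has a heavy cluster, for which $\|v_R(\xi)\|_1>B$. The plan is to combine the additive savings already extracted from the light clusters of $v$ with a localized version of the $\ell^2$ lower bound for the heavy clusters: I would run the argument of Lemma \ref{savings_lemma} internally to each heavy cluster $C$, using the Green's-function decay of Lemma \ref{greens_function_estimate} to control the internal phase $G_{\bT_m}*v|_C$ (whose $L^2$ norm is $\gg\|v|_C\|_1$ by boundedness of $\Delta$) and van der Corput's inequality to dispose of the external phase exactly as in the three cases of that lemma. Feeding the resulting savings bound into $\sum_{L>B}\#\{\xi:\|v_R(\xi)\|_1=L\}\,e^{-2N\sav(\xi)/m^2}$, with the elementary count $\#\{\xi:\|v_R(\xi)\|_1=L\}\le(6m^2)^L/L!$ and the geometric localization of clusters, should make the series over $L>B$ vanish as $m\to\infty$; one then lets $R\to\infty$.

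I expect this last step to be the decisive obstacle. A bare application of Lemma \ref{l_2_fourier_lemma}, which only gives $\sav(\xi)\gg\|v(\xi)\|_1$ with a small absolute constant, is too weak to overcome the $\binom{m^2}{L}$-type count of frequencies with $\|v_R(\xi)\|_1=L$, so the agglomeration scheme together with the exponential-sum estimates behind Lemma \ref{savings_lemma} are genuinely needed to show simultaneously that such frequencies are rare and that each one is individually negligible at time $(1+\epsilon)t_m^{\mix}$. Finally, combining the $L^2$ upper bound with Cauchy--Schwarz ($\|\mu-\nu\|_{\TV}\le\tfrac12\|\mu-\nu\|_{L^2(d\nu)}$) and the reduction of Propositions \ref{recurrent_state_hitting_time_proposition} and \ref{reduction_proposition} recovers in passing the total-variation upper bound needed for Theorem \ref{mixing_time_theorem}.
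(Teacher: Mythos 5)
Your lower bound is correct and in fact more direct than the paper's: the paper re-runs Lemma \ref{lower_bound_lemma} (with Cauchy--Schwarz converting the first hypothesis into a lower bound on the $L^2$ norm), whereas you simply use $m^2$ translates of a gap-achieving frequency and the bound $\|P_m^N\delta_\sigma - \bU_{\sR_m}\|_{L^2}^2 \ge m^2(1-\gap_m)^{2N}$. Both work; yours sidesteps the second-moment machinery.

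For the upper bound, your treatment of frequencies whose $R$-reduced prevector consists only of light clusters (savings $\ge k(m^2\gap_m - \eta(B,R))$ per the disjointness of the $\nbd(C)$'s plus Lemma \ref{savings_lemma}, count $\le (C_{R,B}m^2)^k/k!$, geometric-series summation) is essentially right and parallels part of the paper's argument. However, your handling of frequencies with at least one heavy cluster has a genuine gap, which you largely identify yourself but do not close. The plan of extracting a localized $\ell^2$ lower bound from each heavy cluster by running Lemma \ref{savings_lemma} internally cannot work as stated: the quantitative output of that style of argument is $\sav(\xi;\nbd(C)) \gg \|v|_C\|_1$ with the small absolute constant coming from $1-\cos(2\pi t)\gg t^2$ and $\|\Delta\|_{L^2\to L^2}\le 8$, and this constant is far below $\gamma\approx 2.87$. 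Against your proposed count $(6m^2)^L/L!\sim (Cm^2/L)^L$ for prevectors of $L^1$ mass $L$, the exponent $-2N\cdot cL/m^2 \approx -2(1+\epsilon)(c/\gamma)L\log m$ does not overcome the $m^{2L}$ growth unless $c>\gamma/(1+\epsilon)$, which fails.

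The missing ingredient is the refined count the paper establishes, $\sN(V,K)\le \exp\bigl(K\log(m^2)+O(V\log R)\bigr)$ for $R$-reduced prevectors of $L^1$ mass $V$ in $K$ $R$-clusters: the $m$-dependent factor is raised to the power $K$, not $V$. This is proved via a lattice-path bookkeeping of the cluster-growth process (each new cluster costs $m^2$ positions; each mass unit added to an existing cluster costs only $O(R^2)$). With this count the two-parameter sum over $(V,K)$ splits cleanly. When $V\ge AK$ for a large fixed $A$ one uses only the bare bound $\sav(\xi)\ge cV$ from Lemma \ref{l_2_fourier_lemma}; the resulting $\exp(-cV\log m)$ beats $m^{2K}\le m^{2V/A}$ once $Ac$ is large, no savings from heavy clusters needed. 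When $K\le V<AK$, a Markov/pigeonhole argument shows that at least $(1-\delta)K$ of the clusters have mass at most $B=A/\delta$; applying Lemma \ref{savings_lemma} to only those light clusters gives savings $\ge(1-\delta)K\,(m^2\gap_m-\eta)$, and the count's $m^{2K}$ factor is then beaten at time $(1+\epsilon)t_m^{\mix}$. So the heavy clusters are never asked to contribute savings: they are controlled by the counting in the first case and outvoted by light clusters in the second. You need to build this $(V,K)$-indexed count (or an equivalent) before your heavy-cluster contribution can be summed.
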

Note that, since we restrict to recurrent states, Parseval gives the following characterization of the $L^2(d\bU_{\sR_m})$ norm,
\begin{equation}
\left\|P_m^N \delta_{\sigma} - \bU_{\sR_m} \right\|_{L^2(d\bU_{\sR_m})}^2 = \sum_{\xi \in \hat{\sG}_m \setminus \{0\}}\left|\hat{\mu}(\xi) \right|^{2N}.
\end{equation}
\subsection{Proof of the lower bound}
Our proof of the lower bound in Theorem \ref{mixing_time_theorem} uses the
 following second moment lemma, a variant of the method used by Diaconis and 
Shahshahani \cite{DS87} to show cutoff in the Bernoulli--Laplace diffusion model 
(see also \cite{D88}).

Given any probability measure $\mu$ on a finite abelian group $\sG$, recall from Section \ref{Random_walk_on_the_sandpile_group} the definitions of the dual group $\hat{\sG}$ and the Fourier coefficients $\hat{\mu}(\xi)$, for $\xi \in \hat{\sG}$.

\begin{lemma}\label{lower_bound_lemma}
Let $\sG$ be a finite abelian group, let $\mu$ be a probability measure on $\sG$ and 
let $N \geq 1$. Let $\sX \subset \hat{\sG} \,\setminus\, \{0\}$.  Suppose that the following inequalities
hold for some parameters $0 < \epsilon_1, \epsilon_2 < 1$,
\begin{align}
  \sum_{\xi \in \sX}\left|\hat{\mu}(\xi)\right|^N &\geq
\frac{|\sX|^{\frac{1}{2}}}{\epsilon_1}\\ \notag
 \sum_{\xi_1, \xi_2 \in \sX}\left|\hat{\mu}(\xi_1-\xi_2)\right|^N & \leq (1
+\epsilon_2^2)\left(\sum_{\xi \in \sX}\left|\hat{\mu}(\xi)\right|^N \right)^2.
\end{align}
Then
\begin{equation}
 \left\|\mu^{*N}- \bU_{\sG}\right\|_{\TV(\sG)} \geq 1 - 4\epsilon_1^2 - 4
\epsilon_2^2.
\end{equation}
\end{lemma}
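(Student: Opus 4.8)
The plan is to build a real-valued distinguishing statistic out of the characters $f_\xi(g)=e(\xi\cdot g)$, $\xi\in\sX$, in the spirit of the second moment method of Diaconis and Shahshahani \cite{DS87,D88}. For each $\xi\in\sX$ I would fix a coefficient $b_\xi$ of modulus $1$ with $b_\xi\,\hat{\mu}(\xi)^N=|\hat{\mu}(\xi)|^N$ (when $\hat{\mu}(\xi)=0$, take $b_\xi$ to be any unit complex number), and set $\Phi:=\sum_{\xi\in\sX}b_\xi f_\xi$. Write $M:=\sum_{\xi\in\sX}|\hat{\mu}(\xi)|^N$, so that the first hypothesis reads $M\geq |\sX|^{1/2}/\epsilon_1$; we may assume throughout that $4\epsilon_1^2+4\epsilon_2^2<1$, and in particular $M>0$.

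First I would compute the first two moments of $\Phi$ under the uniform measure $\bU_{\sG}$ and under $\mu^{*N}$. Since distinct characters are orthonormal in $L^2(\bU_{\sG})$ and every nontrivial character has mean zero, $\E_{\bU_{\sG}}[\Phi]=0$ and $\E_{\bU_{\sG}}[|\Phi|^2]=\sum_{\xi\in\sX}|b_\xi|^2=|\sX|$. On the walk side, the Fourier identity $\widehat{\mu^{*N}}(\xi)=\hat{\mu}(\xi)^N$ gives $\E_{\mu^{*N}}[f_\xi]=\hat{\mu}(\xi)^N$, so $\E_{\mu^{*N}}[\Phi]=\sum_{\xi\in\sX}b_\xi\hat{\mu}(\xi)^N=M$, while $f_{\xi_1}\overline{f_{\xi_2}}=f_{\xi_1-\xi_2}$ gives $\E_{\mu^{*N}}[|\Phi|^2]=\sum_{\xi_1,\xi_2\in\sX}b_{\xi_1}\overline{b_{\xi_2}}\,\hat{\mu}(\xi_1-\xi_2)^N$. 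Bounding this last sum termwise, using $|b_{\xi_1}\overline{b_{\xi_2}}|=1$, and applying the second hypothesis yields $\E_{\mu^{*N}}[|\Phi|^2]\leq(1+\epsilon_2^2)M^2$, hence $\Var_{\mu^{*N}}(\Phi)\leq\epsilon_2^2 M^2$.

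Finally I would take the test set $A:=\{g\in\sG:\RE\Phi(g)>M/2\}$ and apply Chebyshev's inequality on each side, using $\Var(\RE\Phi)\leq\Var(\Phi)$. Under $\mu^{*N}$, $\RE\Phi$ has mean $M$ and variance at most $\epsilon_2^2 M^2$, so $\mu^{*N}(\sG\setminus A)\leq \epsilon_2^2 M^2/(M/2)^2=4\epsilon_2^2$. Under $\bU_{\sG}$, $\RE\Phi$ has mean $0$ and variance at most $|\sX|$, so $\bU_{\sG}(A)\leq |\sX|/(M/2)^2=4|\sX|/M^2\leq 4\epsilon_1^2$ by the first hypothesis. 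Since $\|\mu^{*N}-\bU_{\sG}\|_{\TV(\sG)}\geq\mu^{*N}(A)-\bU_{\sG}(A)$, the claimed bound $\|\mu^{*N}-\bU_{\sG}\|_{\TV(\sG)}\geq 1-4\epsilon_1^2-4\epsilon_2^2$ follows.

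This is a direct second moment computation, and I do not expect a genuine obstacle. The one point requiring care is the phase correction built into the coefficients $b_\xi$: they are chosen precisely so that $|\E_{\mu^{*N}}[\Phi]|$ is as large as the first hypothesis permits, while $|b_\xi|\equiv 1$ guarantees that both second-moment estimates — the one under $\bU_{\sG}$ and the termwise bound under $\mu^{*N}$ — go through with the constants stated in the two hypotheses.
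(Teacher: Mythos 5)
Your proof is correct and follows essentially the same Diaconis--Shahshahani second-moment argument as the paper: the phase coefficients $b_\xi$ are exactly the paper's $w_\xi$, the moment computations are identical, and the conclusion is reached by Chebyshev. The only (cosmetic) difference is the choice of distinguishing set --- you use $\{g : \RE\Phi(g) > M/2\}$ with the observation $\Var(\RE\Phi) \le \Var(\Phi)$, while the paper uses $\{g : |f(g)| > M/2\}$ with the observation that $|f| \le M/2$ forces $|f - M| \ge M/2$; both give the same $4\epsilon_1^2 + 4\epsilon_2^2$ bound.
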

\begin{proof}
Define, for $\xi \in \sX$, $w_\xi =
\left(\frac{\overline{\hat{\mu}(\xi)}}{\left|\hat{\mu}(\xi)\right|}\right)^N$, 
and $f \in
L^2(\sG)$ by
\begin{equation}
f(x) = \sum_{\xi\in \sX}w_\xi e(\xi\cdot x).
\end{equation}
Then
\begin{align}
 \E_{\bU}[f] = 0, \qquad &\E_{\bU}\left[\left|f\right|^2\right] = |\sX|,\\ 
\notag
 \E_{\mu^{*N}}[f] = \sum_{\xi \in \sX}\left|\hat{\mu}(\xi)\right|^N, \qquad
 &\E_{\mu^{*N}}\left[|f|^2 \right] = \sum_{\xi_1, \xi_2 \in \sX}
w_{\xi_1}\overline{w_{\xi_2}}\hat{\mu}(\xi_1 - \xi_2)^N.
\end{align}
Define $A = \left\{g \in \sG: |f(g)| >
\frac{1}{2}E_{\mu^{*N}}[f]\right\}$.  By Chebyshev's inequality, $\bU(A) \leq
4\epsilon_1^2$, while by the same inequality, $\mu^{*N}(A) \geq 1-
4\epsilon_2^2$, from which the claim follows.
\end{proof}

\begin{proof}[Proof of Theorem \ref{mixing_time_theorem}, lower bound]
In light of Proposition \ref{gap_achievers}, choose $v \in \sC(B_0,R_0)$ such that the frequency $\xi = \xi(v) \in \hat{\sG}_m$ generates the spectral gap. Choose a large fixed constant $R > R_0$ and let 
$\{v_i\}_{i=1}^{M}$ be a 
collection of $R$-separated translates of $v$, with $M \asymp \frac{m^2}{R^2}$.  The corresponding frequencies $\xi_i = \xi(v_i)$ all satisfy $|\hat{\mu}(\xi_i)| =  |\hat{\mu}(\xi)| = 1 - \gap_m$.
  
Given $c > 0$, set $N = \left\lfloor(\log m -c) \gap_m^{-1} \right\rfloor$ and apply Lemma 
\ref{lower_bound_lemma} with 
set of frequencies $\sX = \{\xi_i\}_{i=1}^M$. Calculate 
\begin{equation}
\label{hat_mu}
 \left|\hat{\mu}(\xi_i)\right|^N = \frac{e^c}{m} \left[ 1 + O\left(\frac{\log 
m}{m^2} \right) \right].
\end{equation}
If $c$ is sufficiently large, then the first condition of Lemma 
\ref{lower_bound_lemma} is satisfied with $\epsilon_1 = O\left(R e^{-c} 
\right).$

Write $d(v_i,v_j)$ for the $\ell^1$ distance between the supports of $v_i$ and $v_j$. If $d(v_i,v_j) \geq \rho$, then by Lemma \ref{C_2_sum_lemma},
\begin{equation}
1- \left|\hat{\mu}(\xi_i -\xi_j)\right| =2(1- |\hat{\mu}(\xi)|) +  
O\left(\frac{\log(1+\rho)}{\rho^2 m^2}\right)
\end{equation}
and therefore we can compute
\begin{equation}
\label{hat_mu_n}
\left|\hat{\mu}(\xi_i -\xi_j)\right|^N = e^{2c} m^{-2 + O(\log(1+\rho) / \rho^2)}.
\end{equation}
Choose $R$ large enough that when $\rho = R$ in \eqref{hat_mu_n}, the power of $m$ is less than $-1$. Then, by separating the cases $i=j$ and $i \neq j$,
\begin{equation}
\sum_{\substack{1 \leq i,j \leq M \\ d(v_i,v_j) < \log m}} \left|\hat{\mu}(\xi_i -\xi_j)\right|^N = O\left(\frac{m^2}{R^2} \right) + e^{2c} O\left( \frac{m}{R^4} \right),
\end{equation}
since the number of pairs $(i,j)$ in the sum is $O(m^2 \log^2(m) / R^4)$. In addition, using $\rho = \log m$ in \eqref{hat_mu_n} and plugging in \eqref{hat_mu},
\begin{align}
\sum_{\substack{1 \leq i,j \leq M \\ d(v_i,v_j) \geq \log m}} \left|\hat{\mu}(\xi_i -\xi_j)\right|^N &= \sum_{\substack{1 \leq i,j \leq M \\ d(v_i,v_j) \geq \log m}} e^{2c} m^{-2} \left( 1 + O\left( \frac{\log \log m}{\log m} \right) \right) \\
\notag &\leq M^2 |\hat{\mu}(\xi)|^{2N} \left( 1 + O\left( \frac{\log \log m}{\log m} \right) \right).
\end{align}
Therefore, since $M^2 |\hat{\mu}(\xi)|^{2N} \asymp e^{2c} m^2 / R^4$,
\begin{equation}
\sum_{1 \leq i,j \leq M} \left|\hat{\mu}(\xi_i -\xi_j)\right|^N \leq M^2 |\hat{\mu}(\xi)|^{2N} \left( 1 + O\left( \frac{\log \log m}{\log m} + \frac{R^2}{e^{2c}} + \frac{1}{m} \right) \right)
\end{equation}
and the second condition of Lemma \ref{lower_bound_lemma} is met with $\epsilon_2 = O\left( Re^{-c} \right)$.
\end{proof}

\begin{proof}[Proof of Theorem \ref{L_2_mixing_theorem}, lower bound]
	By Cauchy-Schwarz, the condition $ \sum_{\xi \in \sX}\left|\hat{\mu}(\xi)\right|^N \geq
	\frac{|\sX|^{\frac{1}{2}}}{\epsilon_1}$ implies
	\begin{equation}
		\sum_{\xi \in \sX}\left|\hat{\mu}(\xi)\right|^{2N} \geq \frac{1}{\epsilon_1^2}.
	\end{equation}
	By the proof of the lower bound above, since $\epsilon$ is fixed, $\epsilon_1$ may be taken arbitrarily small, which proves the $L^2$ lower bound.
\end{proof}

\subsection{Proof of the upper bound}
\label{Proof_of_the_upper_bound}
Recall that Proposition \ref{reduction_proposition} reduces Theorem \ref{mixing_time_theorem} to the case where the starting state is recurrent. We prove the upper bound of Theorem \ref{L_2_mixing_theorem}, which implies the upper bound of Theorem \ref{mixing_time_theorem} by Cauchy-Schwarz.
We consider mixing at step
\begin{equation}
 N = \lfloor (1+\epsilon)\gap_m^{-1}\log m \rfloor \asymp m^2 \log m.
\end{equation}

Let $R = R(\epsilon)$ be a parameter which is fixed as a function of $m$, to be determined at the end of the argument. Given frequency $\xi \in \hat{\sG}_m$, let $v \in \zed_0^{\bT_m}$ be its $R$-reduced prevector, and perform the clustering algorithm of Section \ref{small_phase_section} on $v$ with parameter $R$.

Let $\sN(V,K)$ denote the number of $R$-reduced prevectors $v$ of $L^1$ mass $V$ in $K$ clusters.  
\begin{lemma}
	The following upper bound holds:
	\begin{equation}
	\sN(V,K) \leq \exp\left(K \log(m^2) 
	+ O(V \log R) \right).
	\end{equation}
\end{lemma}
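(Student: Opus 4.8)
The plan is to encode each such prevector $v$ by a small amount of combinatorial data and to bound the number of possible encodings. Recall from Section~\ref{Representations_for_frequencies} that the distinguished prevector of any frequency, and hence its $R$-reduction $v$, satisfies $\|v\|_{L^\infty} \leq 3$, so $s := |\supp v| \leq \|v\|_1 = V$. By definition of the agglomeration scheme, the $K$ clusters of $v$ are exactly the connected components of $\supp v$ in the graph $\Gamma$ on $\bT_m$ whose edges join pairs of points at $\ell^1$ distance at most $2R$; since $v|_C$ sits on a connected configuration in $\Gamma$ for each cluster $C$, it suffices to count (i) the possible supports, organized into $K$ such components, and (ii) the possible values on them. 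Over-counting is harmless: it is enough that every prevector counted by $\sN(V,K)$ arises from at least one encoding.

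First I would fix an ordering of the clusters and choose one anchor point in each; there are at most $(m^2)^K$ such choices, which is the source of the $K\log(m^2)$ term. Given the anchors, I would record a single traversal that performs, in order, a depth-first search of a spanning tree of each cluster rooted at its anchor, with a separator symbol inserted between consecutive clusters. Each DFS of a cluster with $t$ vertices uses $t-1$ forward moves — a forward move reveals a new vertex at $\ell^1$ distance at most $2R$ from its parent, hence chosen from a set of size $|B_{2R}(0)| = O(R^2)$ — together with $t-1$ backward moves, so the entire traversal is a word of length at most $2s - K \leq 2V$ over an alphabet of size $O(R^2)$, and there are at most $(C_0 R^2)^{2V}$ such words. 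Finally, each of the $s$ support points carries a value in $\{\pm 1, \pm 2, \pm 3\}$, contributing a factor at most $6^s \leq 6^V$ (and even without the bound $\|v\|_{L^\infty} \leq 3$, the number of assignments of nonzero integers to the support with total $\ell^1$ mass $V$ is at most $2^s\binom{V-1}{s-1} = \exp(O(V))$, which would do just as well). Multiplying,
\begin{equation}
\sN(V,K) \leq (m^2)^K (C_0 R^2)^{2V} 6^V = \exp\!\left( K \log(m^2) + O(V \log R) \right),
\end{equation}
as claimed, using $R \geq 1$ so that the constants are absorbed into $O(V\log R)$.

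This is an elementary counting estimate and I do not expect a genuine obstacle. The one point that needs care is to describe each cluster through a spanning-tree traversal rather than by listing its points one at a time from scratch: that is what makes a cluster of $t$ points cost $(O(R^2))^{t-1}$, a cost independent of $m$, so that all of the $m$-dependence is concentrated in the $(m^2)^K$ factor for the anchors. Reconstructing $v$ from the data (anchors, traversal word, values) is unambiguous because the clusters are processed in the fixed order and each DFS returns to its root before the separator; and since we only need an upper bound, it does not matter that some words decode to configurations that fail to be $R$-reduced or mean-zero, or that split into a number of clusters different from $K$.
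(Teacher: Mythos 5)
Your argument is correct, and it reaches the same bound by a genuinely different combinatorial encoding than the paper's. The paper places the $V$ units of $L^1$ mass one at a time, governed by a non-crossing lattice path from $(0,0)$ to $(V,V)$: the $K$ touches of the main diagonal mark a new cluster (free placement anywhere in $\bT_m$, cost $m^2$), every other step places the new unit within distance $2R$ of a previously placed unit (cost $O(R^2)$), and signs are chosen at the end, with the Catalan bound $\leq 4^V$ controlling the number of paths. You instead anchor each cluster (cost $(m^2)^K$), encode the support of each cluster by a DFS traversal word of a spanning tree in the $2R$-adjacency graph (length $\leq 2V$ over an $O(R^2)$-letter alphabet), and then separately assign each support point a value in $\{\pm1,\pm2,\pm3\}$, invoking the bound $\|v\|_{L^\infty}\leq 3$ that $R$-reduced prevectors inherit from distinguished ones. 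Both schemes concentrate all $m$-dependence in the $K$ cluster anchors and pay only $O(\log R)$ bits per unit of $L^1$ mass, which is precisely what the claimed estimate records. The organizational difference is that the paper's lattice-path device handles multiplicity automatically (a site receiving several units of mass is simply revisited), whereas your DFS encodes the support set and the values in two stages and so leans on the $L^\infty$ bound --- although, as you observe, the cruder count $2^s\binom{V-1}{s-1}=e^{O(V)}$ of value assignments would serve without it. Yours uses the more standard spanning-tree device for bounding connected configurations; the paper's path encoding is marginally more self-contained in not invoking the $L^\infty$ bound.
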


\begin{proof}
We provide a recipe to generate all possible prevectors by adding mass one point at a time. Let $\Gamma$ be a lattice path from $(0,0)$ to $(V,V)$, that moves either upward or rightward at each step, and that never passes above the main diagonal. Assume that $\Gamma$ has exactly $K-1$ intersection points with the main diagonal strictly between $(0,0)$ and $(V,V)$. Let the rightward edges go from $(i, k(i))$ to $(i+1, k(i))$, for $0 \leq i \leq V-1$. The sequence $\{k(i)\}_{i=0}^{V-1}$ is non-decreasing, with $0 \leq k(i) \leq i$, and there are $K$ values of $i$ for which $k(i) = i$ (including $i = 0$).

To generate a prevector $v$ using the path $\Gamma$:
\begin{enumerate}
\item Iterate from $i = 0$ to $V-1$:
\begin{enumerate}
\item If $k(i) = i$, start a new cluster by adding one unit of mass to a point $x_i$ that is separated from the set of previously placed points $\{x_j\}_{j < i}$ by a distance greater than $2R$.
\item If $k(i) < i$, add one unit of mass to a point $x_i$ whose distance from the previously placed point $x_{k(i)}$ is at most $2R$. The possibility $x_i = x_{k(i)}$ is allowed.
\end{enumerate}
\item For each $x \in \supp(v) = \bigcup_{i=0}^{V-1} \{x_i\}$, let $w(x) = \#\{i : x_i = x\}$ be the total mass at $x$, and choose $v(x) \in \{-w(x), w(x)\}$.
\end{enumerate}

Every prevector $v$ with $L^1$ mass $V$ in $K$ clusters can be generated by this procedure. For each path $\Gamma$, since step (a) is taken $K$ times, the number of possible prevectors is $O\left( (m^2)^K \cdot (R^2)^V \cdot 2^V \right)$. The number of paths $\Gamma$ is bounded by the $V$-th Catalan number, $\frac{1}{V+1} \binom{2V}{V} \leq 2^{2V}$. Hence
\begin{equation}
\sN(V,K) = O\left( m^{2K} R^{2V} 8^V \right),
\end{equation}
which has the desired form.
\end{proof}

\begin{proof}[Proof of Theorem \ref{L_2_mixing_theorem}, upper bound]
In
 \begin{equation}
  \left\|P_m^N \delta_{\sigma} - \bU_{\sR_m} \right\|_{L^2(d\bU_{\sR_m})}^2 = \sum_{0 \neq \xi \in 
\hat{\sG}_m}\left|\hat{\mu}(\xi)\right|^{2N},
 \end{equation}
write $\Xi(V,K)$ for the collection of nonzero frequencies $\xi \in \hat{\sG}_m$ such that the $R$-reduced prevector of $\xi$ has $L^1$ norm $V$ in $K$ $R$-clusters. Thus
\begin{equation}
\label{full_mu_sum}
\left\|P_m^N \delta_{\sigma} - \bU_{\sR_m} \right\|_{L^2(d\bU_{\sR_m})}^2 = \sum_{K \geq 1} \sum_{V \geq K} \sum_{\xi \in \Xi(V,K)}\left|\hat{\mu}(\xi)\right|^{2N}.
\end{equation}
From the definition of $R$-reduction in Section \ref{small_phase_section}, the bound of Lemma \ref{l_2_fourier_lemma} applies also to $R$-reduced prevectors. Thus, there is a universal constant $c > 0$ such that every $\xi \in \Xi(V,K)$ satisfies
\begin{equation}
|\hat{\mu}(\xi)|^{2N} \leq \exp(-cV \log m).
\end{equation}

Let $A>0$ be a fixed integer constant. Then,
\begin{align}
& \quad\, \sum_{K \geq 1} \sum_{V \geq AK} \sum_{\xi \in 
\Xi(V,K)}\left|\hat{\mu}(\xi)\right|^{2N} \\
 &\notag \leq \sum_{K \geq 1} \sum_{V \geq AK} \sN(V,K) 
\exp\left(-c V \log m \right)\\
 &\notag \leq \sum_{K \geq 1} \sum_{V \geq AK} \exp\Big(K \log(m^2) - V [c\log m - O(\log R)] 
\Big).
\end{align}
For sufficiently large $m$, the coefficient of $V$ in the last expression is at least $\frac{c}{2} \log m$. Then, if $Ac > 4$, we sum the two geometric series:
\begin{align}
& \quad\, \sum_{K \geq 1} \sum_{V \geq AK} \exp\left( K \log(m^2) -V((c/2) \log m) \right) \\
\notag &= \sum_{K \geq 1} \exp\left( 2K \log m - AK(c/2) \log m \right) (1 + o(1)) \\
\notag &= m^{2 - Ac/2} (1 + o(1)),
\end{align}
where the $o(1)$ is as $m \to \infty$. Choose $A$ so that $2 - Ac/2 \leq -1$.

To estimate the remaining sum over $K \leq V < AK$, let $\delta = \epsilon/3$ and set $B = A \delta^{-1}$. Choose $R = R(\epsilon)$ according to Lemma \ref{savings_lemma}, so that the savings from each $R$-cluster of size at most $B$ is at least $m^2 \gap_m \left(1 - \epsilon/2 \right)$. If $\xi \in \Xi(V,K)$ with $V < AK$, then its $R$-reduced prevector has at least $(1-\delta)K$ clusters of size at most $B$. Hence
\begin{equation}
1 - |\hat{\mu}(\xi)| \geq (1-\delta)K \cdot \gap_m \left( 1 - \frac{\epsilon}{2} \right) \geq \left( 1 - \frac{5\epsilon}{6} \right) \gap_m K,
\end{equation}
and therefore
\begin{align}
|\hat{\mu}(\xi)|^{2N} &\leq \exp\left( \left[ -2(1+\epsilon)\left(1-\frac{5\epsilon}{6}\right) \log m + O\left(m^{-2}\right) \right] K \right) \\
\notag &\leq \exp(-(2 + \beta)(\log m) K)
\end{align}
for some constant $\beta = \beta(\epsilon) > 0$, as long as $\epsilon$ is sufficiently small.

We compute, for sufficiently large $m$,
\begin{align}
&\quad\, \sum_{K \geq 1} \sum_{K \leq V < AK} \sum_{\xi \in \Xi(V,K)} |\hat{\mu}(\xi)|^{2N} \\
\notag &\leq \sum_{K \geq 1} \sum_{K \leq V < AK} \sN(V,K) \exp(-(2 + \beta)(\log m) K) \\
\notag &\leq \sum_{K \geq 1} \sum_{K \leq V < AK} \exp(-\beta(\log m) K + O(V \log R)) \\
\notag &\leq \sum_{K \geq 1} \exp\Big( \left[ -\beta \log m + O(A \log R) \right] K \Big) \\
\notag &\leq \sum_{K \geq 1} \exp(-(\beta/2) (\log m) K) = O\left( m^{-\beta/2} \right).
\end{align}
Thus the entire sum \eqref{full_mu_sum} tends to zero like a small negative power of $m$, completing the proof.
\end{proof}

\appendix
\section{Local limit theorem}\label{local_limit_theorem_appendix}
Let $\nu_{\zed^2}$ be the measure on $\zed^2$ given by
\begin{equation}
 \nu_{\zed^2} := \frac{1}{4}\left(\delta_{(1,0)} + \delta_{(-1,0)} + 
\delta_{(0,1)} + \delta_{(0,-1)}  \right),
\end{equation}
while $\nu$ is the same measure on $\bT_m$.
Below we prove a local limit theorem for repeated convolutions of 
$\nu_{\zed^2}$.  Before doing so, we recall a Chernoff-type tail inequality.
\begin{theorem}[Chernoff's Inequality]
 Let $X_i$, $1 \leq i \leq n$, be mutually independent random variables with
 \begin{equation}
  \Prob\left(X_i = +1\right) = \Prob\left(X_i = -1\right) = \frac{1}{2}.
 \end{equation}
Let  $S_n = X_1 + \cdots + X_n$.  For any $a > 0$,
\begin{equation}
 \Prob\left(|S_n|>a\right) \leq 2 e^{-\frac{a^2}{2n}}.
\end{equation}
\end{theorem}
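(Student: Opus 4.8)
The plan is to use the classical exponential-moment (Chernoff) method. The first step is to bound the moment generating function of a single summand. For any real $\lambda$ one computes $\E[e^{\lambda X_i}] = \tfrac12(e^{\lambda} + e^{-\lambda}) = \cosh \lambda$, and I would then record the elementary inequality $\cosh\lambda \le e^{\lambda^2/2}$. This is seen by comparing Taylor expansions: $\cosh\lambda = \sum_{k \ge 0} \lambda^{2k}/(2k)!$ while $e^{\lambda^2/2} = \sum_{k \ge 0} \lambda^{2k}/(2^k k!)$, and $(2k)! \ge 2^k k!$ for every $k \ge 0$.

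Next, using mutual independence of the $X_i$, I would write
\[
\E\left[e^{\lambda S_n}\right] = \prod_{i=1}^n \E\left[e^{\lambda X_i}\right] = (\cosh\lambda)^n \le e^{n\lambda^2/2}.
\]
Then, for $\lambda > 0$, Markov's inequality applied to the nonnegative random variable $e^{\lambda S_n}$ gives
\[
\Prob(S_n > a) = \Prob\left(e^{\lambda S_n} > e^{\lambda a}\right) \le e^{-\lambda a}\,\E\left[e^{\lambda S_n}\right] \le \exp\left(-\lambda a + \tfrac{n\lambda^2}{2}\right).
\]
I would then optimize the exponent over $\lambda > 0$: the quadratic $-\lambda a + n\lambda^2/2$ is minimized at $\lambda = a/n$, where it equals $-a^2/(2n)$, so that $\Prob(S_n > a) \le e^{-a^2/(2n)}$.

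Finally, since each $X_i$ is symmetric about $0$, the sum $-S_n$ has the same distribution as $S_n$, so the same estimate yields $\Prob(S_n < -a) = \Prob(-S_n > a) \le e^{-a^2/(2n)}$. Taking the union of the disjoint events $\{S_n > a\}$ and $\{S_n < -a\}$ then gives $\Prob(|S_n| > a) \le 2 e^{-a^2/(2n)}$, which is the claim. I do not anticipate any genuine obstacle in this argument; the one point that warrants a line of justification is the bound $\cosh\lambda \le e^{\lambda^2/2}$, which the termwise comparison of power series settles.
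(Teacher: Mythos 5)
Your argument is correct and complete: the moment-generating-function bound $\cosh\lambda \le e^{\lambda^2/2}$, Markov's inequality, optimization at $\lambda = a/n$, and the symmetry doubling are exactly the standard Chernoff proof. The paper does not supply its own proof but simply cites Alon--Spencer, and your argument is the same one found there, so there is nothing to flag.
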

See \cite{AS16}, pp. 321--322.

\begin{theorem}[Local Limit Theorem on $\zed^2$]
There are polynomials $\{P_k\}_{k=0}^\infty$ with $P_k$ of degree at most $k$, 
such that for any $i, j \in \zed$ and any $a, b, N \geq 0$, we have  %. the asymptotic expansion,
%. In what sense is this an asymptotic expansion? There are only finitely many error terms.
 \begin{align}
 & \delta_1^{*a}* \delta_2^{*b} * \frac{\nu_{\zed^2}^{*N}+ \nu_{\zed^2}^{*(N+1)}}{2} 
(i,j)=\exp\left(-\frac{i^2 + 
j^2}{ N} \right) \times\\\notag& 
\Biggl(\frac{ P_a\left(\frac{i}{\sqrt{N}} \right)P_b\left(\frac{j}{\sqrt{N}} 
\right)}{N^{\frac{a + 
b+ 2}{2}}}\\\notag&+O\left(\frac{|i|^a|j|^b(|i|+|j|)}{N^{a+b+2}} + 
\frac{|i|^{a+1}}{N^{a+2 + \frac{b}{2}}} + \frac{ 
|j|^{b+1}}{N^{b+2+\frac{a}{2}}} + \frac{1}{N^{\frac{a+b+3}{2}}} \right)\Biggr) 
\\\notag&+O_\epsilon\left(\exp\left(-N^{\frac{1}{2}-\epsilon}\right) \right).
\end{align}
Moreover, $P_k$ is an even function if $k$ is even and an odd function if $k$ is odd.
%. was: "omitting coefficients with parity different than $k$'s"
 \end{theorem}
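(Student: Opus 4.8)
The plan is to prove the asymptotic by Fourier inversion on the torus $(\bR/\zed)^2$ followed by a Laplace-type expansion near the critical points of the characteristic function. Write $\phi(x,y):=\widehat{\nu_{\zed^2}}(x,y)=\tfrac12(\cos 2\pi x+\cos 2\pi y)$; since $\widehat{\delta_1}(x,y)=e(x)-1$ and $\widehat{\delta_2}(x,y)=e(y)-1$, Fourier inversion gives
\begin{multline}
\delta_1^{*a}*\delta_2^{*b}*\tfrac{\nu_{\zed^2}^{*N}+\nu_{\zed^2}^{*(N+1)}}{2}(i,j) \\ =\int_{(\bR/\zed)^2}(e(x)-1)^a(e(y)-1)^b\,\phi(x,y)^N\,\tfrac{1+\phi(x,y)}{2}\,e(ix+jy)\,dx\,dy.
\end{multline}
On $(\bR/\zed)^2$ one has $|\phi|\le 1$, with equality only at $(0,0)$, where $\phi=1$, and at $(\tfrac12,\tfrac12)$, where $\phi=-1$; moreover $\tfrac{1+\phi}{2}$ vanishes to second order at $(\tfrac12,\tfrac12)$. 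This last fact, a consequence of averaging two consecutive convolution powers, is what forces the antipodal peak to contribute only at lower order, and it is the crux of the computation.

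First I would dispose of large $i,j$. When $\max(|i|,|j|)\ge N^{3/4}$, the Chernoff-type bound quoted in this section, applied to the coordinates of the walk, shows the left side is $O_\epsilon(\exp(-N^{1/2-\epsilon}))$; the claimed main term is of the same order thanks to the Gaussian factor $\exp(-(i^2+j^2)/N)$, and the discrete derivatives shift $i,j$ only by bounded amounts. So the estimate holds trivially there, and I may assume $|i|,|j|\le N^{3/4}$.

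Next I localize. Fix a radius $\rho=N^{-1/4-\epsilon}$ and split the integral over the balls $B_\rho(0,0)$, $B_\rho(\tfrac12,\tfrac12)$, and the complement. From $\phi(x,y)=1-\pi^2(x^2+y^2)+O(\|(x,y)\|_2^4)$ near $(0,0)$, and the analogous expansion near the antipode, on the complement $|\phi|\le 1-cN^{-1/2-2\epsilon}$, so that region contributes at most $O_\epsilon(\exp(-N^{1/2-\epsilon}))$ (shrinking $\epsilon$ if necessary), which enters the error. On $B_\rho(0,0)$, substitute $x=u/\sqrt N$, $y=v/\sqrt N$; then $N\log\phi=-\pi^2(u^2+v^2)+\tfrac1N Q_4(u,v)+O\big((u^2+v^2)^3/N^2\big)$ with $Q_4$ the quartic Taylor term of $\phi$ (the choice of $\rho$ keeps all these corrections uniformly small on the ball), while $(e(x)-1)^a=(2\pi i u/\sqrt N)^a(1+O(u/\sqrt N))^a$ and likewise in $v$, and $\tfrac{1+\phi}{2}=1-\tfrac{\pi^2}{2}(u^2+v^2)/N+\cdots$. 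The $B_\rho(0,0)$-contribution thus becomes $N^{-1-(a+b)/2}$ times
\begin{equation}
\int_{\bR^2}(2\pi i u)^a(2\pi i v)^b\,e^{-\pi^2(u^2+v^2)}\Bigl(1+\tfrac{R_1(u,v)}{N}+\cdots\Bigr)\,e\!\left(\tfrac{iu+jv}{\sqrt N}\right)du\,dv
\end{equation}
plus negligible tail terms, the $R_k$ being polynomials. Completing the square in the exponent $-\pi^2(u^2+v^2)+2\pi i(iu+jv)/\sqrt N$ produces the factor $\exp(-(i^2+j^2)/N)$ together with Gaussian moments of $u^av^b$ against polynomials; the leading term defines $P_a(i/\sqrt N)P_b(j/\sqrt N)$, and the subleading terms, with the Taylor-truncation errors for $\log\phi$ and for $e(x)-1,\,e(y)-1$, are controlled by the four displayed error summands, each tracking a distinct source (the quartic correction to $\phi$, the linear corrections to the two difference factors, and the Gaussian tail). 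The parity of $P_k$ follows at once: $u\mapsto-u$ (resp.\ $v\mapsto-v$) multiplies the leading weight $u^a$ (resp.\ $v^b$) by $(-1)^a$ (resp.\ $(-1)^b$) while fixing the even Gaussian and the even parts of the corrections. Finally on $B_\rho(\tfrac12,\tfrac12)$, writing $x=\tfrac12+u$, $y=\tfrac12+v$ gives $\phi=-\tfrac12(\cos2\pi u+\cos2\pi v)$, so $\phi^N$ has the same size as before, but now $\tfrac{1+\phi}{2}=\tfrac{\pi^2}{2}(u^2+v^2)+O(\|(u,v)\|_2^4)$, so after the same rescaling this contribution gains an extra factor $O(1/N)$ and is absorbed into the error. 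Assembling the three regions gives the stated formula.

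The step I expect to be the main obstacle is making the Laplace expansion on $B_\rho(0,0)$ uniform in $i,j$ across the full range $|i|,|j|\le N^{3/4}$: one must track how the polynomial-times-Gaussian remainders, after rescaling and integration, interact with the possibly large oscillatory factor $e((iu+jv)/\sqrt N)$, and it is precisely this interaction that produces the several error terms with their particular exponents.
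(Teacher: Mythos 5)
Your strategy---Fourier inversion, localization at the two critical points of $\phi$, Gaussian (Laplace) expansion at the origin, and a parity argument for the $P_k$---is essentially the same as the paper's. The one mechanical difference is at the Gaussian step: the paper makes the complex substitution $x_2 = \sqrt{2\pi^2 N}\,x - \sqrt{-2/N}(i-a/2)$ (and analogously in $y$) and shifts the contour, discarding an exponentially small boundary integral, whereas you complete the square; these are equivalent, and your Chernoff reduction to $|i|,|j|\le N^{3/4}$ matches the paper's reduction to $i^2+j^2\le N^{3/2-\epsilon}$.

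The one place where your sketch, like the paper's own proof, is too quick is the antipodal peak at $(\frac{1}{2},\frac{1}{2})$. You correctly note that $\frac{1+\phi}{2}\asymp r^2$ supplies an extra factor $O(1/N)$ after rescaling, but the factors $(e(x)-1)^a(e(y)-1)^b$ do not vanish there (they are $\approx(-2)^{a+b}$), rather than being of size $r^{a+b}$ as at the origin. So the antipodal contribution is of order $N^{-2}\,e^{-c(i^2+j^2)/N}$ with sign $(-1)^{N+i+j+a+b}$, regardless of $a,b$. This is not dominated by the displayed error $O\bigl(N^{-(a+b+3)/2}\bigr)$ once $a+b\ge 2$, and it is not exponentially small either, so it cannot be placed in the $O_\epsilon(\exp(-N^{1/2-\epsilon}))$ term. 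As a concrete test, for $a=b=1$ and $i=j=0$ the main term $P_1(0)^2/N^2$ vanishes (since $P_1$ is odd) and the stated error is $O(N^{-5/2})$, yet a direct computation gives $\delta_1*\delta_2*\frac{\nu_{\zed^2}^{*N}+\nu_{\zed^2}^{*(N+1)}}{2}(0,0)=(-1)^N\frac{2}{\pi N^2}+O(N^{-3})$, coming entirely from the antipode. So your claim that the antipodal piece is absorbed into the error is not correct when $a+b\ge 2$; the theorem as literally stated also overclaims in this range. The paper's downstream applications are unaffected---in Lemmas \ref{greens_function_estimate} and \ref{green_function_differentiated_asymptotic} the LLT is summed over $n$, and the $(-1)^n$ sign causes the antipodal piece to telescope---but a clean version of the LLT should either keep the antipodal Gaussian as an explicit contribution or enlarge the error term by $O\bigl(N^{-2}e^{-c(i^2+j^2)/N}\bigr)$.
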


\begin{proof}
Apply Chernoff's inequality to reduce to $i^2 + j^2 \leq 
N^{\frac{3}{2}-\epsilon}$.

The quantity in question is given by
\begin{align}
 I(i,j; a,b; N) = &(-2\sqrt{-1})^{a+b}\int_{(\bR/\zed)^2} s\left(\frac{x}{2}\right)^a 
s\left(\frac{y}{2}\right)^b \left(\frac{2 + c(x)+c(y)}{4} \right) \\& 
\notag \left(\frac{c(x) + c(y)}{2} \right)^N 
e\left(x\left(i-\frac{a}{2}\right) + 
y\left(j-\frac{b}{2}\right)\right)  dx dy.
\end{align}
Truncate the integral at $\|x\|_{\bR/\zed}^2 + \|y\|_{\bR/\zed}^2 \leq 
N^{-\frac{1}{2}}$, since the remainder of 
the integral trivially satisfies the claimed bound.  Now treat $x, y$ as complex variables. Set 
\begin{equation}
 x_1 = \sqrt{2\pi^2 N} x, \qquad y_1 =\sqrt{2\pi^2 N} y.
\end{equation}
  Now replace
\begin{align}
 x_2 &:= x_1 - \sqrt{\frac{-2}{N}}\left(i -\frac{a}{2}\right)\\ \notag
 y_2 &:= y_1 - \sqrt{\frac{-2}{N}}\left(j -\frac{b}{2}\right),
\end{align}
and shift the contour to $\IM(x_2) = \IM(y_2) = 0$.  In doing so, an integral 
on $\RE (x_2)^2 + \RE (y_2)^2 = 2\pi^2 \sqrt{N}$ is created, with $|\IM (x_2)| 
\leq \frac{|i| + O(1)}{\sqrt{N}}$, $|\IM (y_2)| \leq \frac{|j| + 
O(1)}{\sqrt{N}}$ on this integral.  
On this integral, for some $C>0$, $\RE \log \left(\frac{c(x) + c(y)}{2} \right) 
\leq -\frac{C}{\sqrt{N}}$, so that this integral satisfies the claimed bound and 
may be discarded. 

Expand
\begin{align}
& \left(\frac{c(x) + c(y)}{2} \right)^N e\left(x\left(i-\frac{a}{2}\right) + 
y\left(j-\frac{b}{2}\right)\right) \\ \notag &= \exp\left(-\frac{x_2^2 + 
y_2^2}{2} -\frac{\left(i - \frac{a}{2} \right)^2 + \left(j - \frac{b}{2} 
\right)^2}{N}\right)\\\notag & \qquad \times \left(1 + O\left(\frac{x_2^4 + 
y_2^4}{N} + \frac{i^4 + 
j^4+1}{N^3} \right) \right).
\end{align}
Taylor expand $s\left(\frac{x}{2}\right)^a, s\left(\frac{y}{2}\right)^b$, $ 
\left(\frac{2 + 
c(x)+c(y)}{4} \right)$, dropping all but the lowest order terms to obtain the 
claimed asymptotic. The final claim regarding the parity of the polynomials $P_k$ follows 
since in the main term, integration against odd powers of $x$ or $y$ vanishes 
by symmetry.
\end{proof}

\begin{proof}[Proof of Lemma \ref{greens_function_estimate}]
One has
\begin{align}
D_1^a D_2^b G_{\bT_m}(i,j) &=  \sum_{n=0}^\infty\sum_{k, \ell \in \zed}
\delta_1^{*a}*
\delta_2^{*b}*\nu_{\zed^2}^{*n}(i+ km,j+\ell m).
\end{align}

Set $R = i^2 + j^2$ and write
\begin{align} \notag
&D_1^a D_2^b G_{\bT_m}(i,j)\\ \label{time_domain}
&=  \sum_{k, \ell \in \zed}
\delta_1^{*a}*
\delta_2^{*b}*\sum_{0 \leq n < R}\nu_{\zed^2}^{*n}(i+ km,j+\ell m)\\
\label{frequency_domain}&+
\delta_1^{*a}*
\delta_2^{*b}*\sum_{R \leq n }\nu^{*n}(i,j).
\end{align}

By the local limit theorem on $\bR^2$,
\begin{align*}
 (\ref{time_domain}) 
& \ll  \sum_{0 < n < R } \sum_{k,
\ell \in \zed} e^{- \frac{(i+km)^2 + (j+\ell m)^2}{2n}} \frac{(1 +
|i +km|)^a (1 + |j + \ell m|)^b}{n^{1+a+b}}\\& \notag \ll 
\frac{1}{R^{\frac{a+b}{2}}}.
\end{align*}
Expand (\ref{frequency_domain}) in characters of $(\zed/m\zed)^2$ to obtain
\begin{align*}
 (\ref{frequency_domain}) \ll& \frac{1}{m^2}\sum_{(0,0) \neq (\xi, \eta) \in
(\zed/m\zed)^2}\left|1-e\left(\frac{\xi}{m}\right) \right|^a\left|1 -
e\left(\frac{\eta}{m}\right) \right|^b\\
&\notag \frac{\left|\frac{c\left(\frac{\xi}{m} \right)+ c\left(\frac{\eta}{m}
\right)}{2} \right|^{R}}{1 - \left(\frac{c\left(\frac{\xi}{m} \right)+
c\left(\frac{\eta}{m}
\right)}{2} \right)}.
\end{align*}
Estimate $\left|1-e\left(\frac{\xi}{m} \right) \right|^a\left|1 -
e\left(\frac{\eta}{m}\right) \right|^b \ll \frac{|\xi|^a |\eta|^b}{m^{a+b}}$ 
and approximate the sum with an integral to obtain
 $\ll \frac{1}{R^{\frac{a+b}{2}}}$.
\end{proof}

\begin{proof}[Proof of Lemma 
\ref{green_function_differentiated_asymptotic}]
Without loss of generality, let $a = 1$, $b = 0$.  Let, for a large constant 
$C$, $T = \frac{m^2}{C\log m}$ and write
\begin{equation*}
 D_1 G_{\bT_m}(i,j) = \sum_{k, \ell \in \zed}\sum_{0 \leq n < T} \delta_1 * 
\nu_{\zed^2}^{*n}(i + km, j +\ell m) + \sum_{T \leq n} \delta_1 * \nu^{*n}(i,j).
\end{equation*}
For $n < T$, $\nu_{\zed^2}^{*n}$ has a distribution at scale $\sqrt{n}$, and 
thus if $C$ is sufficiently large, the $(k, \ell) \neq (0,0)$ terms of the 
first sum contribute $O\left(\frac{1}{m^2} \right)$.  Use the asymptotic 
from the local limit theorem to write the $(k,\ell) = (0,0)$ term as
\begin{align*}
 \sum_{0 \leq n < T} \delta_1 * \nu_{\zed^2}^{*n}(i,j) &= O\left(\frac{1}{i^2 
+ j^2} \right) + C i \sum_{0 \leq n <T} \frac{\exp\left(- \frac{i^2 + j^2}{n} 
\right)}{n^2}\\
&= \frac{C' i}{i^2 + j^2} + O\left(\frac{|i|+1}{T} \right) + 
O\left(\frac{1}{i^2 + j^2} \right).
\end{align*}
This gives the main term.  Bound the sum over large $n$ as before, by taking 
Fourier transform.  This obtains the bound, for some $C'' > 0$,
\begin{align*}
 &\frac{1}{m^2} \sum_{(0,0) \neq (\xi, \eta) \in (\zed/m\zed)^2} \left|1 - 
e\left(\frac{\xi}{m}\right)\right| \frac{\left| \frac{c\left(\frac{\xi}{m} 
\right)+ c\left(\frac{\eta}{m} \right) 
}{2}\right|^T}{1 - 
\frac{1}{2}\left(c\left(\frac{\xi}{m} \right)+ c\left(\frac{\eta}{m} \right) 
\right)}\\
&\ll \frac{1}{m} \sum_{(0,0) \neq (\xi, \eta) \in (\zed/m\zed)^2} 
\frac{|\xi|}{\xi^2 + \eta^2} \exp\left(-\frac{C'' T}{m^2}(\xi^2 + \eta^2) 
\right)
\\& \ll \frac{1}{\sqrt{T}}.
\end{align*}
The claimed error holds, since $T \gg (i^2 + j^2)^2$.
\end{proof}

\section{Determination of spectral gap}\label{spectral_gap_appendix}

In this appendix, we compute the value of
\begin{equation}
\label{gamma_appendix}
\gamma = \inf\left\{ f(G_{\zed^2} * v) : v \in C^2(\zed^2) \,\setminus\, \cI \right\}.
\end{equation}
Recall from Section \ref{Determination_of_gap} that
\begin{equation}
f(\xi) = \sum_{(i,j) \in \zed^2}\left(1- c\left( 
\xi_{(i,j)}\right)\right),
\end{equation}
while $\cI$ is the set of those $v \in C^2(\zed^2)$ for which $G_{\zed^2} * v$ is $\zed$-valued (equivalently, those $v \in C^2(\zed^2)$ for which $f(G_{\zed^2} * v) = 0$).

Given any $v \in C^2(\zed^2)$, reduce $\xi = G_{\zed^2} * v$ to $\tilde{\xi}: \zed^2 \to \left[ -\frac{1}{2}, \frac{1}{2} \right)$ by subtracting $w: \zed^2 \to \zed$. As we observed in the paragraph containing \eqref{v_tilde}, $\tilde{v} = \Delta \tilde{\xi}$ is also in $C^2(\zed^2)$ and satisfies $f(G_{\zed^2} * \tilde{v}) = f(G_{\zed^2} * v)$. In addition, $\tilde{v} \in \cI$ only if $\tilde{\xi} \equiv 0$ (so also $\tilde{v} \equiv 0$). Therefore we may write
\begin{equation}
\gamma = \inf\left\{ f(\xi) : \xi \in \{ G_{\zed^2} * v : 0 \not\equiv v \in C^2(\zed^2) \} \cap \left[ -\frac{1}{2}, \frac{1}{2} \right)^{\zed^2} \right\}.
\end{equation}

Write $d$ for the $\ell^1$ distance on $\zed^2$.  Given a set $S \subset 
\zed^2$, let $N = N(S)$ be its distance-1 enlargement
\begin{equation}
 N = \{(i,j) \in \zed^2 : d((i,j), S) \leq 1\}.
\end{equation}
A lower bound for $f(\xi)$ is obtained as the non-linear program $P(S, v)$,
\begin{align*}
 \text{minimize: } &\sum_{(i,j) \in N} \left(1 - \cos\left(2 \pi 
x_{(i,j)}\right) \right) \\
  \qquad \text{subject to: } &(x_{(i,j)})_{(i,j) \in N} \in 
\left[0, \frac{1}{2}\right]^N,\\ &\forall\, (k,\ell) \in S, \; 4x_{(k,\ell)} +\sum_{\|(i,j)-(k,\ell)\|_1 = 1} x_{(i,j)} \geq |v_{(k,\ell)}|.
\end{align*}
Indeed, if $\xi = G_{\zed^2} * v \in \left[ -\frac{1}{2}, \frac{1}{2} \right)^{\zed^2}$, then for any $S \subset \zed^2$, the function $x: N(S) \to \bR$ given by $x_{(i,j)} = |\xi_{(i,j)}|$ satisfies the constraints and so
\begin{equation}
f(\xi) \geq \sum_{(i,j) \in N} (1 - \cos(2\pi x_{(i,j)})) \geq P(S,v).
\end{equation}

\begin{lemma}
 The program $P(S,v)$ satisfies the following properties.  
 \begin{enumerate}[label=\arabic*.]
  \item If $S, T \subset 
\zed^2$ satisfy $d(S, T) \geq 3$ then $P(S \cup T, v) = P(S, v) + P(T, v)$.
 \item If $S \subset T \subset \zed^2$ then $P(S, v) \leq P(T, v)$.
 \item Denote by $P'(S, v)$ the more constrained program in which \[(x_{(i,j)})_{(i,j) 
\in N} \in \left[0, \frac{1}{4}\right]^N,\] with the same linear 
constraints.  This program has a unique local minimum.
 \end{enumerate}
\end{lemma}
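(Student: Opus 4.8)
The plan is to exploit two structural features of $P(S,v)$: its constraints are \emph{local} --- the constraint indexed by $(k,\ell)\in S$ involves only the variable at $(k,\ell)$ and at its four lattice neighbours, all of which lie in $N(\{(k,\ell)\})\subseteq N(S)$ --- and its objective is \emph{separable}, being a sum over $N=N(S)$ of copies of the single-variable function $g(t):=1-\cos(2\pi t)$. As preliminary bookkeeping I would record that $N(S\cup T)=N(S)\cup N(T)$ and that $S\subseteq T$ implies $N(S)\subseteq N(T)$; both follow at once from $d(p,S\cup T)=\min(d(p,S),d(p,T))$ and from the fact that $d(p,\cdot)$ decreases under enlargement of the set. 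Throughout I read $P(S,v)=+\infty$ when the feasible set is empty (in which case all three assertions are trivial); otherwise that set is a nonempty compact polytope --- a box intersected with finitely many closed half-spaces --- and the continuous objective attains its minimum.

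For Property 1, the hypothesis $d(S,T)\geq 3$ forces $N(S)\cap N(T)=\emptyset$, since a common point $p$ would give $d(S,T)\leq d(p,S)+d(p,T)\leq 2$. Hence the variables of $P(S\cup T,v)$ split into the two disjoint blocks $(x_p)_{p\in N(S)}$ and $(x_p)_{p\in N(T)}$; every $S$-constraint involves only the first block, every $T$-constraint only the second, and the objective splits as $\sum_{p\in N(S)}g(x_p)+\sum_{p\in N(T)}g(x_p)$. The minimization therefore decouples, giving $P(S\cup T,v)=P(S,v)+P(T,v)$. For Property 2, let $x^\star$ be a minimizer of $P(T,v)$ and restrict it to $N(S)\subseteq N(T)$: the box constraints persist, and every linear constraint of $P(S,v)$ is already a constraint of $P(T,v)$ involving only coordinates in $N(S)$, hence is still satisfied. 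Since each term of the objective is nonnegative, the restricted point has value $\sum_{p\in N(S)}g(x^\star_p)\leq\sum_{p\in N(T)}g(x^\star_p)=P(T,v)$, so $P(S,v)\leq P(T,v)$.

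For Property 3, observe that $g'(t)=2\pi\sin(2\pi t)$ is strictly increasing on $[0,\tfrac14]$, so $g$ is strictly convex there. Consequently the objective $F(x)=\sum_{p\in N}g(x_p)$ of $P'(S,v)$ is strictly convex on $[0,\tfrac14]^N$, while the feasible set (that box intersected with the linear constraints) is convex. For a convex function on a convex set every local minimum is a global minimum, and strict convexity makes the minimizer unique: if $x\neq y$ were both minimizers, then $\tfrac12(x+y)$ would be feasible with $F(\tfrac12(x+y))<\tfrac12(F(x)+F(y))=\min F$, a contradiction. Hence, whenever $P'(S,v)$ is feasible, it has a unique local minimum.

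I expect the only mildly delicate point to be the boundary behaviour in Property 3 --- checking that strict convexity of $g$ persists at $t=\tfrac14$, where $g''$ vanishes --- which is dispatched by noting that $g'$ is strictly monotone on the \emph{closed} interval $[0,\tfrac14]$. The remaining content is pure bookkeeping about which variables appear in which constraints, together with standard facts about convex optimization.
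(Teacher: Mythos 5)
Your argument is correct and follows the same route as the paper's very terse proof: disjointness of $N(S)$ and $N(T)$ (and of their constraint sets) when $d(S,T)\geq 3$ for Property 1, dropping nonnegative objective terms after restricting a $P(T,v)$-minimizer to $N(S)$ for Property 2, and convexity of the objective on $[0,\tfrac14]^N$ for Property 3. You helpfully make explicit the strict convexity of $g(t)=1-\cos(2\pi t)$ on $[0,\tfrac14]$, which the paper leaves implicit but which is exactly what upgrades ``every local minimum is global'' to the claimed uniqueness.
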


\begin{proof}
 The first claim holds because the variables in the neighborhoods of $S$ and 
$T$ are disjoint and do not have a common constraint.  The second claim holds since for $S \subset T$, the program $P(T, v)$ is more constrained.  The last 
claim holds since the objective function is convex on $\left[0, 
\frac{1}{4}\right]^N$.
\end{proof}

We will also use the non-linear program $Q(S,v)$,
\begin{align*}
 \text{minimize: } &\sum_{(i,j) \in N} \left(1 - \cos\left(2 \pi 
x_{(i,j)}\right) \right) \\
  \qquad \text{subject to: } &(x_{(i,j)})_{(i,j) \in N} \in 
\left[-\frac{1}{2}, \frac{1}{2}\right]^N,\\ &\forall\, (k,\ell) \in S, \; 4x_{(k,\ell)} - \sum_{\|(i,j)-(k,\ell)\|_1 = 1} x_{(i,j)} = v_{(k,\ell)},
\end{align*}
which is also a lower bound for $f(\xi)$.

We performed the following steps to confirm that the constant $\gamma$ is obtained by 
$\xi^* = G_{\zed^2} * (\delta_1 * \delta_2)$.
\begin{enumerate}
 \item Calculate $f(\xi^*)$ by evaluating 
 \begin{equation}\label{fourier_inversion}
  \xi^*(m,n) = \frac{1}{4} \int_{(\bR/\zed)^2} \frac{(e(x)-1)(e(y)-1)}{1 - 
\frac{1}{2}(c(x) + c(y))} e(mx + ny)\,dx dy
 \end{equation}
 for $|m|, |n| \leq M.$  It is known that $\|\xi^*\|_2^2 = \frac{1}{2\pi}$, so 
we estimated
\begin{equation}
\sum_{|m|, |n| \leq M} \left(1 - \cos\left(2\pi \xi^*_{(m,n)}\right) 
-2 \pi^2 \left(\xi^*_{(m,n)}\right)^2 \right).
\end{equation}
By the decay of the Green's function, this determines $f(\xi^*)$ to within 
precision $M^{-6}$.  We thus obtained
\[
 f(\xi^*) = 2.868114013(4).
\]
The precision was verified by estimating
\begin{equation}
\sum_{\max(|m|, |n|)> M} \left|1 - c(\xi^*_{(m,n)}) - 2\pi^2 \left(\xi^*_{(m,n)}\right)^2\right| \leq \frac{2\pi^4}{3}\sum_{\max(|m|, |n|)> M}\left(\xi^*_{(m,n)}\right)^4
\end{equation}
and
\begin{equation}
\sum_{\max(|m|, |n|)> M}\left(\xi^*_{(m,n)}\right)^4 \leq 4 \left(\sum_{\substack{0 \leq m, n\\ \max(m,n)>M}}\left(\xi^*_{(m,n)}\right)^2\right)^2,
\end{equation}
using the symmetries $|\xi^*_{(m,n)}| = |\xi^*_{(-1-m, n)}|$, $|\xi^*_{(m,n)}| = |\xi^*_{(m, -1-n)}|$.

\item Suppose $\xi = G_{\zed^2} * v \in \left[ -\frac{1}{2}, \frac{1}{2} \right)^{\zed^2}$ satisfies $f(\xi) < 2.869$. The condition $\Delta \xi = v$ implies that $|v| \leq 3$. We ruled out prevectors with some $|v_{(i,j)}| = 3$ by 
considering $P(S,v)$ with $S = \{(0,0)\}$ and $|v_{(0,0)}| = 3$.
\item We ruled out $v_{(0,0)} = 2$ by first considering $Q(S,v)$ with $S = \{(0,0),(1,0)\}$ and $v_{(0,0)} = 2$, $v_{(1,0)} \in \{-2,-1,0,1,2\}$. The only choices giving $Q(S,v) < 2.869$ were $v_{(1,0)} = -1$ and $v_{(1,0)} = 0$. Then we considered $Q(S,v)$ for
\begin{equation}
S = \{(0,0),(-1,0),(1,0),(0,-1),(0,1)\}
\end{equation}
with $v_{(0,0)} = 2$ and $v_{(i,j)} \in \{-1,0\}$ for all $(i,j) \in S \setminus \{(0,0)\}$. All possibilities led to $Q(S,v) \geq 2.869$. It follows that any $v$ with $f(G_{\zed^2} * v) < 2.869$ satisfies $|v| \leq 1$.
\item Consider a set $S \subset \zed^2$ to be connected if $N(S)$ is connected 
in the usual sense.  Using the increasing property, we were able to enumerate 
all connected $S$ containing $(0,0)$, and such that $P(S, 1) < 2.869$ (all had 
$|S| \leq 6$).  This can be done iteratively starting from $S = \{(0,0)\}$ 
using the increasing property of $P(S, v)$ with set inclusion.   The minima in 
$P(S, 1)$ were rapidly calculated in each case using the SLSQP algorithm in 
SciPy's minimize package.  The minima can be verified, for instance, by noting 
that at most two variables can satisfy $x_{(i,j)} > \frac{1}{4}$ and by discretizing 
their values -- the remaining variables are then confirmed if the point is a 
local minimum. (This approach also verifies the minima in steps (2) and (3).)
\item Using the addition property of $P(S \cup T, v)$ for disconnected $S$ and 
$T$ it was verified that there is not a configuration of $v \in C^2(\zed^2)$ 
with disconnected support that meets the condition $P(\supp v, 1) < 2.869$.  
Of the remaining connected components, those which admit a configuration in 
$C^2(\zed^2)$ were estimated using Fourier inversion, as in 
(\ref{fourier_inversion}). The optimum was found to be $\xi^*$.
\end{enumerate}

% \printbibliography

\bibliographystyle{plain-arxiv}
\bibliography{sandpile-square-lattice}

\begin{thebibliography}{10}

\bibitem{AS16}
Noga Alon and Joel~H. Spencer.
\newblock {\em The probabilistic method}.
\newblock Wiley Series in Discrete Mathematics and Optimization. John Wiley \&
  Sons, Inc., Hoboken, NJ, fourth edition, 2016.

\bibitem{AJ04}
Siva~R. Athreya and Antal~A. J\'arai.
\newblock Infinite volume limit for the stationary distribution of abelian
  sandpile models.
\newblock {\em Comm. Math. Phys.}, 249(1):197--213, 2004.
\newblock \textsc{doi:}
  \href{http://dx.doi.org/10.1007/s00220-004-1080-0}{\texttt{10.1007/s00220-004-1080-0}}.

\bibitem{BTW87}
Per Bak, Chao Tang, and Kurt Wiesenfeld.
\newblock Self-organized criticality: An explanation of the 1/f noise.
\newblock {\em Phys. Rev. Lett.}, 59:381--384, Jul 1987.
\newblock \textsc{doi:}
  \href{http://dx.doi.org/10.1103/PhysRevLett.59.381}{\texttt{10.1103/PhysRevLett.59.381}}.

\bibitem{BTW88}
Per Bak, Chao Tang, and Kurt Wiesenfeld.
\newblock Self-organized criticality.
\newblock {\em Phys. Rev. A (3)}, 38(1):364--374, 1988.
\newblock \textsc{doi:}
  \href{http://dx.doi.org/10.1103/PhysRevA.38.364}{\texttt{10.1103/PhysRevA.38.364}}.

\bibitem{BS13}
Matthew Baker and Farbod Shokrieh.
\newblock Chip-firing games, potential theory on graphs, and spanning trees.
\newblock {\em J. Combin. Theory Ser. A}, 120(1):164--182, 2013.
\newblock arXiv: \href{http://arxiv.org/abs/1107.1313}{\texttt{1107.1313
  [math.CO]}}.

\bibitem{BHJ16}
Sandeep {Bhupatiraju}, Jack {Hanson}, and Antal~A. {J{\'a}rai}.
\newblock {Inequalities for critical exponents in $d$-dimensional sandpiles}.
\newblock {\em ArXiv e-prints}, February 2016.
\newblock arXiv: \href{http://arxiv.org/abs/1602.06475}{\texttt{1602.06475
  [math.PR]}}.

\bibitem{BL16}
Benjamin Bond and Lionel Levine.
\newblock Abelian networks {I}. {F}oundations and examples.
\newblock {\em SIAM J. Discrete Math.}, 30(2):856--874, 2016.
\newblock arXiv: \href{http://arxiv.org/abs/1309.3445}{\texttt{1309.3445
  [cs.FL]}}.

\bibitem{C15}
Hannah {Cairns}.
\newblock {Some halting problems for abelian sandpiles are undecidable in
  dimension three}.
\newblock {\em ArXiv e-prints}, August 2015.
\newblock arXiv: \href{http://arxiv.org/abs/1508.00161}{\texttt{1508.00161
  [math.CO]}}.

\bibitem{CE02}
Fan Chung and Robert~B. Ellis.
\newblock A chip-firing game and {D}irichlet eigenvalues.
\newblock {\em Discrete Math.}, 257(2-3):341--355, 2002.
\newblock \textsc{doi:}
  \href{http://dx.doi.org/10.1016/S0012-365X(02)00434-X}{\texttt{10.1016/S0012-365X(02)00434-X}}.
\newblock Kleitman and combinatorics: a celebration (Cambridge, MA, 1999).

\bibitem{DRSV95}
D.~Dhar, P.~Ruelle, S.~Sen, and D.-N. Verma.
\newblock Algebraic aspects of abelian sandpile models.
\newblock {\em J. Phys. A}, 28(4):805--831, 1995.
\newblock arXiv:
  \href{http://arxiv.org/abs/cond-mat/9408020}{\texttt{cond-mat/9408020}}.

\bibitem{D90}
Deepak Dhar.
\newblock Self-organized critical state of sandpile automaton models.
\newblock {\em Phys. Rev. Lett.}, 64(14):1613--1616, 1990.
\newblock \textsc{doi:}
  \href{http://dx.doi.org/10.1103/PhysRevLett.64.1613}{\texttt{10.1103/PhysRevLett.64.1613}}.

\bibitem{D99}
Deepak Dhar.
\newblock The abelian sandpile and related models.
\newblock {\em Physica A: Statistical Mechanics and its Applications}, 263(1):4
  -- 25, 1999.
\newblock arXiv:
  \href{http://arxiv.org/abs/cond-mat/9808047}{\texttt{cond-mat/9808047}}.

\bibitem{D88}
Persi Diaconis.
\newblock {\em Group representations in probability and statistics}, volume~11
  of {\em Institute of Mathematical Statistics Lecture Notes---Monograph
  Series}.
\newblock Institute of Mathematical Statistics, Hayward, CA, 1988.

\bibitem{DGM90}
Persi Diaconis, R.~L. Graham, and J.~A. Morrison.
\newblock Asymptotic analysis of a random walk on a hypercube with many
  dimensions.
\newblock {\em Random Structures Algorithms}, 1(1):51--72, 1990.
\newblock \textsc{doi:}
  \href{http://dx.doi.org/10.1002/rsa.3240010105}{\texttt{10.1002/rsa.3240010105}}.

\bibitem{DS87}
Persi Diaconis and Mehrdad Shahshahani.
\newblock Time to reach stationarity in the {B}ernoulli-{L}aplace diffusion
  model.
\newblock {\em SIAM J. Math. Anal.}, 18(1):208--218, 1987.
\newblock \textsc{doi:}
  \href{http://dx.doi.org/10.1137/0518016}{\texttt{10.1137/0518016}}.

\bibitem{FLP10}
Anne Fey, Lionel Levine, and Yuval Peres.
\newblock Growth rates and explosions in sandpiles.
\newblock {\em J. Stat. Phys.}, 138(1-3):143--159, 2010.
\newblock arXiv: \href{http://arxiv.org/abs/0901.3805}{\texttt{0901.3805
  [math.CO]}}.

\bibitem{FLW10}
Anne Fey, Lionel Levine, and David~B. Wilson.
\newblock Driving sandpiles to criticality and beyond.
\newblock {\em Phys. Rev. Lett.}, 104:145703, Apr 2010.
\newblock arXiv: \href{http://arxiv.org/abs/0912.3206}{\texttt{0912.3206
  [cond-mat.stat-mech]}}.

\bibitem{FMR09}
Anne Fey, Ronald Meester, and Frank Redig.
\newblock Stabilizability and percolation in the infinite volume sandpile
  model.
\newblock {\em Ann. Probab.}, 37(2):654--675, 2009.
\newblock arXiv: \href{http://arxiv.org/abs/0710.0939}{\texttt{0710.0939
  [math.PR]}}.

\bibitem{FR05}
A.~Fey-den Boer and F.~Redig.
\newblock Organized versus self-organized criticality in the abelian sandpile
  model.
\newblock {\em Markov Process. Related Fields}, 11(3):425--442, 2005.
\newblock arXiv:
  \href{http://arxiv.org/abs/math-ph/0510060}{\texttt{math-ph/0510060}}.

\bibitem{FU96}
Yasunari Fukai and K\^ohei Uchiyama.
\newblock Potential kernel for two-dimensional random walk.
\newblock {\em Ann. Probab.}, 24(4):1979--1992, 1996.
\newblock \textsc{doi:}
  \href{http://dx.doi.org/10.1214/aop/1041903213}{\texttt{10.1214/aop/1041903213}}.

\bibitem{HLMPPW08}
Alexander~E. Holroyd, Lionel Levine, Karola M\'esz\'aros, Yuval Peres, James
  Propp, and David~B. Wilson.
\newblock Chip-firing and rotor-routing on directed graphs.
\newblock In {\em In and out of equilibrium. 2}, volume~60 of {\em Progr.
  Probab.}, pages 331--364. Birkh\"auser, Basel, 2008.
\newblock arXiv: \href{http://arxiv.org/abs/0801.3306}{\texttt{0801.3306
  [math.CO]}}.

\bibitem{H15}
Bob {Hough}.
\newblock {Mixing and cut-off in cycle walks}.
\newblock {\em ArXiv e-prints}, December 2015.
\newblock arXiv: \href{http://arxiv.org/abs/1512.00571}{\texttt{1512.00571
  [math.NT]}}.

\bibitem{IK04}
Henryk Iwaniec and Emmanuel Kowalski.
\newblock {\em Analytic number theory}, volume~53 of {\em American Mathematical
  Society Colloquium Publications}.
\newblock American Mathematical Society, Providence, RI, 2004.
\newblock \textsc{doi:}
  \href{http://dx.doi.org/10.1090/coll/053}{\texttt{10.1090/coll/053}}.

\bibitem{JR08}
Antal~A. J\'arai and Frank Redig.
\newblock Infinite volume limit of the abelian sandpile model in dimensions
  {$d\geq 3$}.
\newblock {\em Probab. Theory Related Fields}, 141(1-2):181--212, 2008.
\newblock arXiv:
  \href{http://arxiv.org/abs/math/0408060}{\texttt{math/0408060}}.

\bibitem{JRS15}
Antal~A. J\'arai, Frank Redig, and Ellen Saada.
\newblock Approaching criticality via the zero dissipation limit in the abelian
  avalanche model.
\newblock {\em J. Stat. Phys.}, 159(6):1369--1407, 2015.
\newblock arXiv: \href{http://arxiv.org/abs/0906.3128}{\texttt{0906.3128
  [math.PR]}}.

\bibitem{JLP15}
Daniel~C. {Jerison}, Lionel {Levine}, and John {Pike}.
\newblock {Mixing time and eigenvalues of the abelian sandpile Markov chain}.
\newblock {\em ArXiv e-prints}, November 2015.
\newblock arXiv: \href{http://arxiv.org/abs/1511.00666}{\texttt{1511.00666
  [math.PR]}}.

\bibitem{JOP01}
Eric Jones, Travis Oliphant, Pearu Peterson, et~al.
\newblock {SciPy}: Open source scientific tools for {Python}, 2001--.
\newblock [Online; accessed 2017-02-13].

\bibitem{KS04}
Gady Kozma and Ehud Schreiber.
\newblock An asymptotic expansion for the discrete harmonic potential.
\newblock {\em Electron. J. Probab.}, 9:no. 1, 1--17, 2004.
\newblock arXiv:
  \href{http://arxiv.org/abs/math/0212156}{\texttt{math/0212156}}.

\bibitem{L15}
Lionel Levine.
\newblock Threshold state and a conjecture of {P}oghosyan, {P}oghosyan,
  {P}riezzhev and {R}uelle.
\newblock {\em Comm. Math. Phys.}, 335(2):1003--1017, 2015.
\newblock arXiv: \href{http://arxiv.org/abs/1402.3283}{\texttt{1402.3283
  [math.PR]}}.

\bibitem{LMPU16}
Lionel Levine, Mathav Murugan, Yuval Peres, and Baris~Evren Ugurcan.
\newblock The divisible sandpile at critical density.
\newblock {\em Ann. Henri Poincar\'e}, 17(7):1677--1711, 2016.
\newblock arXiv: \href{http://arxiv.org/abs/1501.07258}{\texttt{1501.07258
  [math.PR]}}.

\bibitem{MRS04}
C.~Maes, F.~Redig, and E.~Saada.
\newblock The infinite volume limit of dissipative abelian sandpiles.
\newblock {\em Comm. Math. Phys.}, 244(2):395--417, 2004.
\newblock \textsc{doi:}
  \href{http://dx.doi.org/10.1007/s00220-003-1000-8}{\texttt{10.1007/s00220-003-1000-8}}.

\bibitem{MQ05}
R.~Meester and C.~Quant.
\newblock Connections between `self-organised' and `classical' criticality.
\newblock {\em Markov Process. Related Fields}, 11(2):355--370, 2005.

\bibitem{M94}
Hugh~L. Montgomery.
\newblock {\em Ten lectures on the interface between analytic number theory and
  harmonic analysis}, volume~84 of {\em CBMS Regional Conference Series in
  Mathematics}.
\newblock Published for the Conference Board of the Mathematical Sciences,
  Washington, DC; by the American Mathematical Society, Providence, RI, 1994.
\newblock \textsc{doi:}
  \href{http://dx.doi.org/10.1090/cbms/084}{\texttt{10.1090/cbms/084}}.

\bibitem{PS13}
Wesley Pegden and Charles~K. Smart.
\newblock Convergence of the {A}belian sandpile.
\newblock {\em Duke Math. J.}, 162(4):627--642, 2013.
\newblock arXiv: \href{http://arxiv.org/abs/1105.0111}{\texttt{1105.0111
  [math.AP]}}.

\bibitem{SV09}
Klaus Schmidt and Evgeny Verbitskiy.
\newblock Abelian sandpiles and the harmonic model.
\newblock {\em Comm. Math. Phys.}, 292(3):721--759, 2009.
\newblock arXiv: \href{http://arxiv.org/abs/0901.3124}{\texttt{0901.3124
  [math.DS]}}.

\bibitem{SMKW15}
Andrey Sokolov, Andrew Melatos, Tien Kieu, and Rachel Webster.
\newblock Memory on multiple time-scales in an abelian sandpile.
\newblock {\em Physica A: Statistical Mechanics and its Applications},
  428:295--301, 2015.
\newblock \textsc{doi:}
  \href{http://dx.doi.org/10.1016/j.physa.2015.02.001}{\texttt{10.1016/j.physa.2015.02.001}}.

\bibitem{S04}
J.~Michael Steele.
\newblock {\em The {C}auchy-{S}chwarz master class}.
\newblock MAA Problem Books Series. Mathematical Association of America,
  Washington, DC; Cambridge University Press, Cambridge, 2004.
\newblock \textsc{doi:}
  \href{http://dx.doi.org/10.1017/CBO9780511817106}{\texttt{10.1017/CBO9780511817106}}.
\newblock An introduction to the art of mathematical inequalities.

\bibitem{T86}
E.~C. Titchmarsh.
\newblock {\em The theory of the {R}iemann zeta-function}.
\newblock The Clarendon Press, Oxford University Press, New York, second
  edition, 1986.
\newblock Edited and with a preface by D. R. Heath-Brown.

\end{thebibliography}

\end{document}